\pgfplotsset{compat=1.15}
\definecolor{grey}{rgb}{0.8,0.8,0.8}
\definecolor{purple}{rgb}{.5,0,.6}
\definecolor{forest}{rgb}{0.5,0.8,0.5}
\newcommand{\Z}{\mathbb{Z}}
\def\lT{{\mathscr{T}}}
\def\T{{\mathscr{T}^*}}
\def\Gr{{\textrm{Gr}}}
\newtheorem{theorem}{Theorem}[section]
\newtheorem*{theorem*}{Theorem}
\newtheorem{lemma}[theorem]{Lemma}
\newtheorem{proposition}[theorem]{Proposition}
\theoremstyle{definition}
\newtheorem{definition}[theorem]{Definition}
\newtheorem{example}[theorem]{Example}
\newtheorem{conjecture}[theorem]{Conjecture}
\newtheorem*{conjecture*}{Conjecture}
\theoremstyle{remark}
\newtheorem{remark}[theorem]{Remark}
\numberwithin{equation}{section}
\pgfplotsset{compat=1.15}
\begin{document}

\title[Double and Triple Dimer Partition Functions in $\Gr(3,n)$]{Twists of $\Gr(3,n)$ Cluster Variables as \\ Double and Triple Dimer Partition Functions}

\author[M. Elkin]{Moriah Elkin}
\address{University of Minnesota, Twin Cities, Minneapolis, MN}
\email{elkin048@umn.edu}

\author[G. Musiker]{Gregg Musiker}
\address{University of Minnesota, Twin Cities, Minneapolis, MN}
\email{musiker@umn.edu}

\author[K. Wright]{Kayla Wright}
\address{University of Minnesota, Twin Cities, Minneapolis, MN}
\email{kaylaw@umn.edu}

\keywords{}
\thanks{}

\begin{abstract}
We give a combinatorial interpretation for certain cluster variables in Grassmannian cluster algebras in terms of double and triple dimer configurations. More specifically, we examine several $\Gr(3,n)$ cluster variables that may be written as degree two or degree three polynomials in terms of Pl\"ucker coordinates, and give generating functions for their images under the twist map - a cluster algebra automorphism introduced in \cite{bfz96}. The generating functions range over certain double or triple dimer configurations on an associated plabic graph, which we describe using particular non-crossing matchings or webs (as in \cite{kup96}), respectively. These connections shed light on a conjecture appearing in \cite{clustering}, extend the concept of web duality introduced in \cite{fll2019}, and more broadly make headway on understanding Grassmannian cluster algebras for $\Gr(3,n)$.
\end{abstract}


\maketitle

\tableofcontents


\section{Introduction}\label{section::intro}
Cluster algebras are a well-loved object of study in algebraic combinatorics because of their deep connection to a myriad of mathematical fields. Many mathematicians are interested in finding combinatorial models for the generators of these algebras, which are otherwise only recursively defined. This paper addresses that question for certain cluster algebras coming from the Grassmannian, which is denoted $\Gr(k,n)$ and refers to the set of $k$-dimensional subspaces of $\mathbb{C}^n$. 
Focusing on the case of $k=3$, we discuss a connection between Grassmannian cluster algebras, $m$-fold dimer configurations and non-elliptic webs. In particular, we establish a dimer-theoretic model for certain generators of Grassmannian cluster algebras.

Scott in \cite{s06} was first to describe the cluster structure on $\Gr(k,n)$, and Postnikov pioneered the study of the combinatorics of these cluster algebras in \cite{p06}. In particular, Postnikov introduced \textit{plabic graphs}, which became the main combinatorial tool for studying these cluster structures. In the same paper, he defined a \textit{boundary measurement map} that linked \textit{Pl\"ucker coordinates}, coordinates that parameterize the Grassmannian as a projective variety and are always generators for the associated cluster algebras, to \textit{dimers} (almost perfect matchings) on plabic graphs. This map was later made more explicit by Talaska in \cite{Tal}. Recently, \cite{marshscott2016} and \cite{mullerspeyer}
used the boundary measurement map to give Laurent expansions for the images of Pl\"ucker coordinates under a certain famous automorphism called the \textit{twist map}, defined originally in \cite{bfz96}.

As shown in \cite[Prop. 8.10]{marshscott2016}, up to multiplication by frozen variables, the twist map on the Grassmannian sends cluster variables to cluster variables.
For $k=2$, every Grassmannian cluster variable is a Pl\"ucker coordinate, and thus every Pl\"ucker coordinate is the twist (up to frozens) of another Pl\"ucker coordinate. However, in Grassmannian cluster algebras with $k \geq 3$ and $n \geq 6$, some cluster variables are more complicated polynomials in Pl\"ucker coordinates, and some Pl\"ucker coordinates only appear as factors in the twists of these cluster variables.
This paper will focus on certain quadratic and cubic polynomials that first appear as cluster variables in $\Gr(3,6)$, $\Gr(3,8)$, and $\Gr(3,9)$, and will describe their twists combinatorially via the boundary measurement map, thus providing Laurent expansions for a larger set of cluster variables than simply twists of Pl\"ucker coordinates.

To do so, we examine the connection between products of $m$ Pl\"ucker coordinates and \textit{$m$-fold dimer configurations} (i.e. superimpositions of $m$ single dimer configurations) elucidated in \cite{lam2015} for $m=2$ and $3$. 2-fold or double dimer configurations may be described as \textit{non-crossing matchings}, and Theorem \ref{theorem:doubledimersXY} describes the matchings associated to the twists of the quadratic cluster variables $X$ and $Y$ defined in Scott. 
To describe the appropriate 3-fold or triple dimer configurations for cubic cluster variables, we require another combinatorial object called a \textit{web}, introduced in \cite{kup96}. Synthesizing novel graph-theoretic reasoning with versions of the results of \cite{marshscott2016} and \cite{mullerspeyer} yields our main theorems, Theorems \ref{theorem:connectivityforA}, \ref{theorem:connectivityforB}, \ref{theorem:connectivityforC}, and \ref{theorem:connectivityforZ}, which describe the twists of several cubic expressions in Pl\"ucker coordinates as corresponding to certain basis webs. We note that the matchings referenced in Theorem \ref{theorem:doubledimersXY} and the webs referenced in Theorems \ref{theorem:connectivityforC} and \ref{theorem:connectivityforZ} first appeared as motivational examples in \cite{fll2019} without the application to Laurent expansions provided here.

This paper is organized as follows. In Section \ref{section::prelims} we give background: we recall the cluster algebra structure of the Grassmannian in Section \ref{subsection: clusteralgbackground} and the terminology of plabic graphs in Section \ref{subsection: plabic}, describe the cluster variables we will model combinatorially in Section \ref{subsection:differences}, define the combinatorial models (dimer configurations) in Section \ref{subsection:dimers}, and define the twist map as a cluster algebra automorphism in Section \ref{subsection:twist}. In Section \ref{section:facewts}, we describe a weighting scheme on dimer configurations, and translate results of \cite{marshscott2016} and \cite{mullerspeyer} into our setting for use in proving our main theorems. Then, in Sections \ref{section:doubledimers} and \ref{section:tripledimers}, we give our double and triple dimer partition functions for twists of quadratic and cubic differences of Pl\"ucker coordinates; Section \ref{section:tripledimers} begins with exposition about webs and their connection to triple dimers, enumerates several relevant classes of webs, and culminates with our main theorems.
In Section \ref{section:duality}, we describe these theorems using the language of web duality introduced in \cite{fll2019}, and provide some explicit computations with standard Young tableaux. 
Finally, in Section \ref{section:constructionofC}, we justify a novel expression for a $\Gr(3,9)$ cluster variable introduced in Section \ref{subsection:differences} and referenced throughout.

We conclude with two appendices. The first, Section \ref{section:appendixlemmas}, contains the lengthy, pictorial proofs of vital lemmas posed in Section \ref{section:tripledimers}. The second, Section \ref{section:appendixAB}, gives explicit computations of Laurent polynomials using our results in Section \ref{section:tripledimers}; it also provides values of the twist map on many cluster variables in $\Gr(3,7)$ and $\Gr(3,8)$.

{\bf Acknowledgements:} We would like to give special thanks to Chris Fraser for assisting with the derivations in Subsection \ref{subsection:twist}, as well as many helpful conversations. We would also like to thank Pavlo Pylyavskyy and David Speyer for sharing their expertise and suggestions, as well as the referee for their careful reading. The first author was initially supported by an Undergraduate Research Opportunities Program award, and subsequently by the University of Minnesota, as her work on this paper partially fulfills the requirements for the degree of Master of Science. The second and third authors were supported by NSF Grants DMS-1745638 and DMS-1854162. 

\section{Preliminaries} \label{section::prelims}
We will use the following notation throughout the paper: let $[n] = \{1,2, \dots, n\}$, and for $1 \leq k \leq n$, let $\binom{[n]}{k}$ denote the set of all $k$-element subsets of $[n]$.

\subsection{The Grassmannian and its Cluster Structure} \label{subsection: clusteralgbackground}
In this section, we briefly review the cluster algebra structure on the Grassmannian; see \cite{fwz-ch6} for a more thorough exposition.

The \textbf{Grassmannian}, denoted $\Gr(k,n)$, is the space of $k$-dimensional linear subspaces of $n$-dimensional complex space. Equivalently, it is the space of full rank $k$-by-$n$ matrices written in row-reduced echelon form, where the corresponding linear subspace is given by the row span of the corresponding matrix. We will consider the \textbf{Pl\"ucker embedding} of $\Gr(k,n)$ into projective space of dimension $\binom{n}{k}-1$, defined as follows:
for any $J \in \binom{[n]}{k}$, we have a corresponding projective \textbf{Pl\"ucker coordinate} $\Delta_J$, and at a given $M \in \text{Mat}_{k \times n}(\mathbb{C})$ we define the value $\Delta_J(M)$ to be the maximal minor of $M$ using the column set $J$. For any $k, n$, the corresponding set of Pl\"ucker coordinates will satisfy certain quadratic \textbf{Pl\"ucker relations}.

\begin{example}\label{example:pluckerrelations}
Consider a generic element $M \in \Gr(2,4)$, represented by the following row-reduced matrix:
$$M = \begin{pmatrix}
1 & 0 & a & b\\
0 & 1 & c & d
\end{pmatrix}~~~~\text{ for } a,b,c,d \in \mathbb{C}$$
The set of Pl\"ucker coordinates is given by:
$$\Delta_{12} = \det \begin{pmatrix}
1 & 0\\
0 & 1
\end{pmatrix} = 1~~~,~~~\Delta_{13} = \det \begin{pmatrix}
1 & a\\
0 & c
\end{pmatrix} = c~~~,~~~\Delta_{14} = \det \begin{pmatrix}
1 & b\\
0 & d
\end{pmatrix} = d$$
$$\Delta_{23} = \det \begin{pmatrix}
0 & a\\
1 & c
\end{pmatrix} = -a~~~,~~~\Delta_{24} = \det \begin{pmatrix}
0 & b\\
1 & d
\end{pmatrix} = -b~~~,~~~\Delta_{34} = \det \begin{pmatrix}
a & b\\
c & d
\end{pmatrix} = ad-bc$$
These coordinates satisfy the algebraic relation
$(-b) \cdot c = 1 \cdot (ad-bc) + (-a) \cdot d,$
i.e.
\[\Delta_{24} \cdot \Delta_{13} = \Delta_{12} \cdot \Delta_{34}+ \Delta_{23} \cdot \Delta_{14}.\]

This equation is in fact the Ptolemy relation from ancient Greek geometry. To see the connection, draw a quadrilateral inscribed in a circle with vertices cyclically labeled $1,2,3,4$, and write $\Delta_{ij}$ for the distance between vertices $i$ and $j$. Then the above relation among the lengths of the sides and diagonals of the quadrilateral will always hold. 
\end{example}

We will consider the homogeneous coordinate ring of $\Gr(k,n)$, denoted $\mathbb{C}[\widehat{\Gr}(k,n)]$,
where $\widehat\Gr(k,n)$ is the affine cone over $\Gr(k,n)$, taken in the Pl\"ucker embedding. Scott showed in \cite{s06} that $\mathbb{C}[\widehat{\Gr}(k,n)]$ is a \textbf{cluster algebra} in the sense of \cite{fz02}.

We refer the reader to \cite{fwz-ch1-3} for an in-depth exposition of cluster algebras, but will briefly summarize here. In essence, a cluster algebra is a commutative ring generated by \textbf{cluster variables}; cluster variables are grouped into families called \textbf{clusters}, and are produced recursively from an initial cluster through a process called \textbf{mutation}. Mutation relations are encoded by a \textbf{quiver}, as follows.
\begin{definition} \label{definition:mutation}
Let $Q$ be a quiver with vertex set $Q_0$ and arrow set $Q_1$. For each mutable vertex $r \in Q_0$, define the \textbf{mutation in direction $\mathbf{r}$} of $Q$, denoted $\mu_r(Q)$, as another quiver on vertices $(Q_0 \setminus \{r\}) \cup \{r'\}$ obtained by the following three-step process:

\begin{itemize}
    \item For any arrow $s \rightarrow r$, draw an arrow $r' \rightarrow s$, and for any arrow $r \rightarrow t$, draw an arrow $t \rightarrow r'$;
    \item For any path $s \to r\to t$, draw an arrow $s \to t$;
    \item Delete any created 2-cycles.
\end{itemize}

We label the new vertex $r'$ according to the following relation:
\[r' r = \prod_{(s \to r) \in Q_1} s + \prod_{(r \to t) \in Q_1} t.\]
\end{definition}
\begin{example}\label{example:initialseed}
The quivers in Figure \ref{fig:quiverandmutation} illustrate the cluster structure in $\mathbb{C}[\widehat{\Gr}(2,5)]$. In the left quiver, the mutable vertices are $\Delta_{13}$ and $\Delta_{35}$; all other vertices are not mutable or \textbf{frozen}, which is indicated by drawing them in boxes or ``ice cubes."  The quiver on the right arises from mutation in direction $\Delta_{13}$, and the new vertex label is
\[\frac{\Delta_{12}\Delta_{35}+\Delta_{23}\Delta_{15}}{\Delta_{13}}=\Delta_{25}.\]

\begin{figure}
\centering
\tikzset{every picture/.style={line width=0.75pt}} 

\begin{tikzpicture}[x=0.75pt,y=0.75pt,yscale=-.9,xscale=.9]

\draw    (131,135) -- (101,135) ;
\draw [shift={(99,135)}, rotate = 360] [color={rgb, 255:red, 0; green, 0; blue, 0 }  ][line width=0.75]    (10.93,-3.29) .. controls (6.95,-1.4) and (3.31,-0.3) .. (0,0) .. controls (3.31,0.3) and (6.95,1.4) .. (10.93,3.29)   ;
\draw    (83,148) -- (95.3,181.13) ;
\draw [shift={(96,183)}, rotate = 249.62] [color={rgb, 255:red, 0; green, 0; blue, 0 }  ][line width=0.75]    (10.93,-3.29) .. controls (6.95,-1.4) and (3.31,-0.3) .. (0,0) .. controls (3.31,0.3) and (6.95,1.4) .. (10.93,3.29)   ;
\draw    (138,184) -- (148.43,148.92) ;
\draw [shift={(149,147)}, rotate = 106.56] [color={rgb, 255:red, 0; green, 0; blue, 0 }  ][line width=0.75]    (10.93,-3.29) .. controls (6.95,-1.4) and (3.31,-0.3) .. (0,0) .. controls (3.31,0.3) and (6.95,1.4) .. (10.93,3.29)   ;
\draw    (82,127) -- (82,104) ;
\draw [shift={(82,102)}, rotate = 90] [color={rgb, 255:red, 0; green, 0; blue, 0 }  ][line width=0.75]    (10.93,-3.29) .. controls (6.95,-1.4) and (3.31,-0.3) .. (0,0) .. controls (3.31,0.3) and (6.95,1.4) .. (10.93,3.29)   ;
\draw    (152,105) -- (152,127) ;
\draw [shift={(152,129)}, rotate = 270] [color={rgb, 255:red, 0; green, 0; blue, 0 }  ][line width=0.75]    (10.93,-3.29) .. controls (6.95,-1.4) and (3.31,-0.3) .. (0,0) .. controls (3.31,0.3) and (6.95,1.4) .. (10.93,3.29)   ;
\draw    (39,135) -- (65,135.93) ;
\draw [shift={(67,136)}, rotate = 182.05] [color={rgb, 255:red, 0; green, 0; blue, 0 }  ][line width=0.75]    (10.93,-3.29) .. controls (6.95,-1.4) and (3.31,-0.3) .. (0,0) .. controls (3.31,0.3) and (6.95,1.4) .. (10.93,3.29)   ;
\draw    (171,136) -- (197,136) ;
\draw [shift={(199,136)}, rotate = 180] [color={rgb, 255:red, 0; green, 0; blue, 0 }  ][line width=0.75]    (10.93,-3.29) .. controls (6.95,-1.4) and (3.31,-0.3) .. (0,0) .. controls (3.31,0.3) and (6.95,1.4) .. (10.93,3.29)   ;
\draw   (71,72) -- (99,72) -- (99,100) -- (71,100) -- cycle ;
\draw   (138,72) -- (166,72) -- (166,100) -- (138,100) -- cycle ;
\draw   (9,120) -- (37,120) -- (37,148) -- (9,148) -- cycle ;
\draw   (103,172) -- (131,172) -- (131,200) -- (103,200) -- cycle ;
\draw   (203,121) -- (231,121) -- (231,149) -- (203,149) -- cycle ;
\draw    (427,135) -- (463,135) ;
\draw [shift={(465,135)}, rotate = 180] [color={rgb, 255:red, 0; green, 0; blue, 0 }  ][line width=0.75]    (10.93,-3.29) .. controls (6.95,-1.4) and (3.31,-0.3) .. (0,0) .. controls (3.31,0.3) and (6.95,1.4) .. (10.93,3.29)   ;
\draw    (422,179) -- (408.74,145.86) ;
\draw [shift={(408,144)}, rotate = 68.2] [color={rgb, 255:red, 0; green, 0; blue, 0 }  ][line width=0.75]    (10.93,-3.29) .. controls (6.95,-1.4) and (3.31,-0.3) .. (0,0) .. controls (3.31,0.3) and (6.95,1.4) .. (10.93,3.29)   ;
\draw    (407,104) -- (407.91,125) ;
\draw [shift={(408,127)}, rotate = 267.51] [color={rgb, 255:red, 0; green, 0; blue, 0 }  ][line width=0.75]    (10.93,-3.29) .. controls (6.95,-1.4) and (3.31,-0.3) .. (0,0) .. controls (3.31,0.3) and (6.95,1.4) .. (10.93,3.29)   ;
\draw    (479,103) -- (479,125) ;
\draw [shift={(479,127)}, rotate = 270] [color={rgb, 255:red, 0; green, 0; blue, 0 }  ][line width=0.75]    (10.93,-3.29) .. controls (6.95,-1.4) and (3.31,-0.3) .. (0,0) .. controls (3.31,0.3) and (6.95,1.4) .. (10.93,3.29)   ;
\draw    (392,133) -- (369,133) ;
\draw [shift={(367,133)}, rotate = 360] [color={rgb, 255:red, 0; green, 0; blue, 0 }  ][line width=0.75]    (10.93,-3.29) .. controls (6.95,-1.4) and (3.31,-0.3) .. (0,0) .. controls (3.31,0.3) and (6.95,1.4) .. (10.93,3.29)   ;
\draw    (498,134) -- (524,134) ;
\draw [shift={(526,134)}, rotate = 180] [color={rgb, 255:red, 0; green, 0; blue, 0 }  ][line width=0.75]    (10.93,-3.29) .. controls (6.95,-1.4) and (3.31,-0.3) .. (0,0) .. controls (3.31,0.3) and (6.95,1.4) .. (10.93,3.29)   ;
\draw   (398,70) -- (426,70) -- (426,98) -- (398,98) -- cycle ;
\draw   (465,70) -- (493,70) -- (493,98) -- (465,98) -- cycle ;
\draw   (336,118) -- (364,118) -- (364,146) -- (336,146) -- cycle ;
\draw   (430,170) -- (458,170) -- (458,198) -- (430,198) -- cycle ;
\draw   (530,119) -- (558,119) -- (558,147) -- (530,147) -- cycle ;
\draw    (466,122) -- (433.54,95.27) ;
\draw [shift={(432,94)}, rotate = 39.47] [color={rgb, 255:red, 0; green, 0; blue, 0 }  ][line width=0.75]    (10.93,-3.29) .. controls (6.95,-1.4) and (3.31,-0.3) .. (0,0) .. controls (3.31,0.3) and (6.95,1.4) .. (10.93,3.29)   ;
\draw    (243,136) .. controls (286.56,95.41) and (287,169.49) .. (321.93,132.17) ;
\draw [shift={(323,131)}, rotate = 131.99] [color={rgb, 255:red, 0; green, 0; blue, 0 }  ][line width=0.75]    (10.93,-3.29) .. controls (6.95,-1.4) and (3.31,-0.3) .. (0,0) .. controls (3.31,0.3) and (6.95,1.4) .. (10.93,3.29)   ;

\draw (68,125.4) node [anchor=north west][inner sep=0.75pt]    {$\Delta _{1}{}_{3}$};
\draw (139,125.4) node [anchor=north west][inner sep=0.75pt]    {$\Delta _{35}$};
\draw (70,75.4) node [anchor=north west][inner sep=0.75pt]    {$\Delta _{2}{}_{3}$};
\draw (139,75.4) node [anchor=north west][inner sep=0.75pt]    {$\Delta _{3}{}_{4}$};
\draw (104,175.4) node [anchor=north west][inner sep=0.75pt]    {$\Delta _{1}{}_{5}$};
\draw (9,125.4) node [anchor=north west][inner sep=0.75pt]    {$\Delta _{1}{}_{2}$};
\draw (203,125.4) node [anchor=north west][inner sep=0.75pt]    {$\Delta _{4}{}_{5}$};
\draw (396,123.4) node [anchor=north west][inner sep=0.75pt]    {$\Delta _{2}{}_{5}$};
\draw (466,123.4) node [anchor=north west][inner sep=0.75pt]    {$\Delta _{35}$};
\draw (397,73.4) node [anchor=north west][inner sep=0.75pt]    {$\Delta _{2}{}_{3}$};
\draw (466,73.4) node [anchor=north west][inner sep=0.75pt]    {$\Delta _{3}{}_{4}$};
\draw (431,173.4) node [anchor=north west][inner sep=0.75pt]    {$\Delta _{1}{}_{5}$};
\draw (336,123.4) node [anchor=north west][inner sep=0.75pt]    {$\Delta _{1}{}_{2}$};
\draw (530,123.4) node [anchor=north west][inner sep=0.75pt]    {$\Delta _{4}{}_{5}$};
\draw (264,162.4) node [anchor=north west][inner sep=0.75pt]    {$\mu_{\Delta_{13}}$};

\end{tikzpicture}
\caption{An initial seed for $\mathbb{C}[\widehat{\Gr(2,5)]}$ and an example of quiver mutation.}
\label{fig:quiverandmutation}
\end{figure}
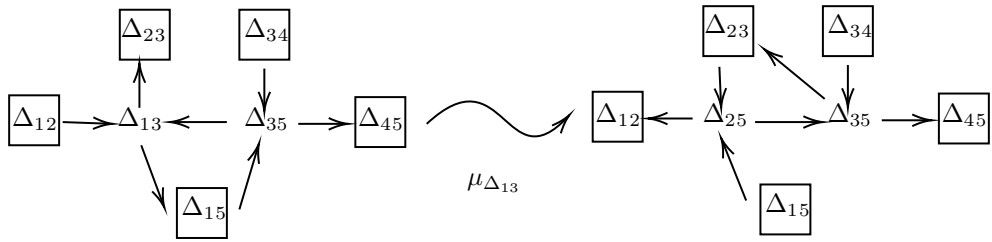

\end{example}

For any $k<n$, the ``rectangles seed" defined in \cite{fwz-ch6} provides an initial quiver and set of cluster variables that generate $\mathbb{C}[\widehat{\Gr}(k,n)]$ as a cluster algebra. 
These initial cluster variables are a subset of the Pl{\"u}cker coordinates, and all Pl{\"u}cker coordinates appear as cluster variables. However, for $k \geq 3$ and $n \geq 6$, certain cluster variables are homogeneous polynomial functions of degree greater than 1 in the Pl{\"u}cker coordinates.
We study several types of non-Pl\"ucker cluster variables throughout this paper.

While lower-dimensional cells of the Grassmannian also admit cluster algebra structures, see for instance \cite{mullerspeyer} which describes the twist map for cases of general positroid varieties, in this paper we focus on seeds associated to the \textit{top cell} of the Grassmannian. In particular, this restriction causes our cluster algebra structure to come equipped with frozen cluster variables, which correspond to determinants of circularly consecutive subsets of columns, i.e. Pl\"uckers of the form $\Delta_{i,i+1,i+2,\dots, i+k}$ where indices are taken modulo $n$.

\subsection{Plabic Graphs}
\label{subsection: plabic}
In this section, we introduce plabic graphs and their connection to the cluster algebra structure on $\mathbb{C}[\widehat{\Gr}(k,n)]$; this study was pioneered by Postnikov in \cite{p06}. See \cite{fwz-ch7} for a more detailed exposition.

\begin{definition}
A \textbf{plabic graph} $G$ is a \textbf{pla}nar \textbf{bic}olored graph embedded in a disk, with $n$ boundary vertices of degree 1, labeled $1,\dots, n$ clockwise. The embedding must be proper, i.e. the edges of $G$ must not cross, and each internal vertex of $G$ must be connected by a path to some boundary vertex of $G$.

All of our plabic graphs will in fact be bipartite, and all of our boundary vertices will be colored black. We will refer to the set of vertices of $G$ as $V(G)$, the set of edges of $G$ as $E(G)$, and the set of faces of $G$ as $F(G)$.
\end{definition}

We next define an important method of labeling the faces of a \textit{reduced} plabic graphs.
\begin{definition} \label{def:trip}
A \textbf{trip} (also called a \textbf{Postnikov strand} or \textbf{zigzag path}) $i \to j$ in a plabic graph is a directed path in $G$ that
\begin{enumerate}
    \item either connects two boundary vertices or is a closed cycle containing no boundary vertices
    \item obeys the \textit{rules of the road} by turning maximally right at black internal vertices and maximally left at white internal vertices.
\end{enumerate}
We may label any face $f \in F(G)$ with the set
\[I_f:=\{i~|~f \text{ lies to the left of the trip ending at vertex }i\}.\]
\end{definition}
As shown in \cite{p06}, the above definition will produce a labeling on the faces of the plabic graph such that each face is labeled by the same number of indices and no two faces have the same label. Note that these conventions for trips and face labels coincide with the conventions in Muller and Speyer's work \cite{mullerspeyer}.

One may construct a reduced plabic graph from certain initial seeds for $\Gr(k,n)$ (such as the rectangles seed of \cite{fwz-ch6}) by reversing the operation described in \cite[Definition 7.1.4]{fwz-ch7}: in particular, by taking the planar dual of the quiver, identifying frozen vertices with boundary faces, and checking that the face labels defined above agree with the vertex labels of the original quiver. Figure \ref{fig:plabic} depicts two examples of the result of such a construction, arising from the quivers in Figure 1.
We note that because we are working in the top cell of $\Gr(k,n)$, all trips as in Definition \ref{def:trip} are of the form $i\to i+k \mod n$, and boundary faces are labeled by circularly consecutive Pl\"ucker coordinates.

\begin{figure}[htb]
\centering

\tikzset{every picture/.style={line width=0.75pt}} 

\begin{tikzpicture}[x=0.75pt,y=0.75pt,yscale=-.75,xscale=.75]

\draw   (22,157) .. controls (22,87.14) and (78.64,30.5) .. (148.5,30.5) .. controls (218.36,30.5) and (275,87.14) .. (275,157) .. controls (275,226.86) and (218.36,283.5) .. (148.5,283.5) .. controls (78.64,283.5) and (22,226.86) .. (22,157) -- cycle ;
\draw  [fill={rgb, 255:red, 0; green, 0; blue, 0 }  ,fill opacity=1 ] (44,240) .. controls (44,236.96) and (46.46,234.5) .. (49.5,234.5) .. controls (52.54,234.5) and (55,236.96) .. (55,240) .. controls (55,243.04) and (52.54,245.5) .. (49.5,245.5) .. controls (46.46,245.5) and (44,243.04) .. (44,240) -- cycle ;
\draw  [fill={rgb, 255:red, 0; green, 0; blue, 0 }  ,fill opacity=1 ] (161,33) .. controls (161,29.96) and (163.46,27.5) .. (166.5,27.5) .. controls (169.54,27.5) and (172,29.96) .. (172,33) .. controls (172,36.04) and (169.54,38.5) .. (166.5,38.5) .. controls (163.46,38.5) and (161,36.04) .. (161,33) -- cycle ;
\draw  [fill={rgb, 255:red, 0; green, 0; blue, 0 }  ,fill opacity=1 ] (268,136) .. controls (268,132.96) and (270.46,130.5) .. (273.5,130.5) .. controls (276.54,130.5) and (279,132.96) .. (279,136) .. controls (279,139.04) and (276.54,141.5) .. (273.5,141.5) .. controls (270.46,141.5) and (268,139.04) .. (268,136) -- cycle ;
\draw   (98.5,117) -- (198.5,117) -- (198.5,197) -- (98.5,197) -- cycle ;
\draw    (148.5,116.5) -- (148.5,197.5) ;
\draw    (49.5,240) -- (98.5,197) ;
\draw    (148.5,116.5) -- (166.5,33) ;
\draw    (198.5,117) -- (273.5,136) ;
\draw  [fill={rgb, 255:red, 0; green, 0; blue, 0 }  ,fill opacity=1 ] (32,96) .. controls (32,92.96) and (34.46,90.5) .. (37.5,90.5) .. controls (40.54,90.5) and (43,92.96) .. (43,96) .. controls (43,99.04) and (40.54,101.5) .. (37.5,101.5) .. controls (34.46,101.5) and (32,99.04) .. (32,96) -- cycle ;
\draw  [fill={rgb, 255:red, 0; green, 0; blue, 0 }  ,fill opacity=1 ] (211,264) .. controls (211,260.96) and (213.46,258.5) .. (216.5,258.5) .. controls (219.54,258.5) and (222,260.96) .. (222,264) .. controls (222,267.04) and (219.54,269.5) .. (216.5,269.5) .. controls (213.46,269.5) and (211,267.04) .. (211,264) -- cycle ;
\draw  [fill={rgb, 255:red, 255; green, 255; blue, 255 }  ,fill opacity=1 ] (93,197) .. controls (93,193.96) and (95.46,191.5) .. (98.5,191.5) .. controls (101.54,191.5) and (104,193.96) .. (104,197) .. controls (104,200.04) and (101.54,202.5) .. (98.5,202.5) .. controls (95.46,202.5) and (93,200.04) .. (93,197) -- cycle ;
\draw  [fill={rgb, 255:red, 0; green, 0; blue, 0 }  ,fill opacity=1 ] (93,117) .. controls (93,113.96) and (95.46,111.5) .. (98.5,111.5) .. controls (101.54,111.5) and (104,113.96) .. (104,117) .. controls (104,120.04) and (101.54,122.5) .. (98.5,122.5) .. controls (95.46,122.5) and (93,120.04) .. (93,117) -- cycle ;
\draw  [fill={rgb, 255:red, 255; green, 255; blue, 255 }  ,fill opacity=1 ] (230.5,126.5) .. controls (230.5,123.46) and (232.96,121) .. (236,121) .. controls (239.04,121) and (241.5,123.46) .. (241.5,126.5) .. controls (241.5,129.54) and (239.04,132) .. (236,132) .. controls (232.96,132) and (230.5,129.54) .. (230.5,126.5) -- cycle ;
\draw    (37.5,96) -- (98.5,117) ;
\draw  [fill={rgb, 255:red, 255; green, 255; blue, 255 }  ,fill opacity=1 ] (62.5,106.5) .. controls (62.5,103.46) and (64.96,101) .. (68,101) .. controls (71.04,101) and (73.5,103.46) .. (73.5,106.5) .. controls (73.5,109.54) and (71.04,112) .. (68,112) .. controls (64.96,112) and (62.5,109.54) .. (62.5,106.5) -- cycle ;
\draw  [fill={rgb, 255:red, 255; green, 255; blue, 255 }  ,fill opacity=1 ] (143,116.5) .. controls (143,113.46) and (145.46,111) .. (148.5,111) .. controls (151.54,111) and (154,113.46) .. (154,116.5) .. controls (154,119.54) and (151.54,122) .. (148.5,122) .. controls (145.46,122) and (143,119.54) .. (143,116.5) -- cycle ;
\draw  [fill={rgb, 255:red, 0; green, 0; blue, 0 }  ,fill opacity=1 ] (193,117) .. controls (193,113.96) and (195.46,111.5) .. (198.5,111.5) .. controls (201.54,111.5) and (204,113.96) .. (204,117) .. controls (204,120.04) and (201.54,122.5) .. (198.5,122.5) .. controls (195.46,122.5) and (193,120.04) .. (193,117) -- cycle ;
\draw    (198.5,197) -- (216.5,264) ;
\draw  [fill={rgb, 255:red, 255; green, 255; blue, 255 }  ,fill opacity=1 ] (193,197) .. controls (193,193.96) and (195.46,191.5) .. (198.5,191.5) .. controls (201.54,191.5) and (204,193.96) .. (204,197) .. controls (204,200.04) and (201.54,202.5) .. (198.5,202.5) .. controls (195.46,202.5) and (193,200.04) .. (193,197) -- cycle ;
\draw  [fill={rgb, 255:red, 0; green, 0; blue, 0 }  ,fill opacity=1 ] (143,197.5) .. controls (143,194.46) and (145.46,192) .. (148.5,192) .. controls (151.54,192) and (154,194.46) .. (154,197.5) .. controls (154,200.54) and (151.54,203) .. (148.5,203) .. controls (145.46,203) and (143,200.54) .. (143,197.5) -- cycle ;
\draw   (380,153) .. controls (380,83.14) and (436.64,26.5) .. (506.5,26.5) .. controls (576.36,26.5) and (633,83.14) .. (633,153) .. controls (633,222.86) and (576.36,279.5) .. (506.5,279.5) .. controls (436.64,279.5) and (380,222.86) .. (380,153) -- cycle ;
\draw  [fill={rgb, 255:red, 0; green, 0; blue, 0 }  ,fill opacity=1 ] (402,236) .. controls (402,232.96) and (404.46,230.5) .. (407.5,230.5) .. controls (410.54,230.5) and (413,232.96) .. (413,236) .. controls (413,239.04) and (410.54,241.5) .. (407.5,241.5) .. controls (404.46,241.5) and (402,239.04) .. (402,236) -- cycle ;
\draw  [fill={rgb, 255:red, 0; green, 0; blue, 0 }  ,fill opacity=1 ] (519,29) .. controls (519,25.96) and (521.46,23.5) .. (524.5,23.5) .. controls (527.54,23.5) and (530,25.96) .. (530,29) .. controls (530,32.04) and (527.54,34.5) .. (524.5,34.5) .. controls (521.46,34.5) and (519,32.04) .. (519,29) -- cycle ;
\draw  [fill={rgb, 255:red, 0; green, 0; blue, 0 }  ,fill opacity=1 ] (626,132) .. controls (626,128.96) and (628.46,126.5) .. (631.5,126.5) .. controls (634.54,126.5) and (637,128.96) .. (637,132) .. controls (637,135.04) and (634.54,137.5) .. (631.5,137.5) .. controls (628.46,137.5) and (626,135.04) .. (626,132) -- cycle ;
\draw   (456.5,113) -- (556.5,113) -- (556.5,193) -- (456.5,193) -- cycle ;
\draw    (506.5,112.5) -- (506.5,193.5) ;
\draw    (407.5,236) -- (456.5,193) ;
\draw    (556.5,113) -- (524.5,29) ;
\draw    (556.5,193) -- (631.5,132) ;
\draw  [fill={rgb, 255:red, 0; green, 0; blue, 0 }  ,fill opacity=1 ] (390,92) .. controls (390,88.96) and (392.46,86.5) .. (395.5,86.5) .. controls (398.54,86.5) and (401,88.96) .. (401,92) .. controls (401,95.04) and (398.54,97.5) .. (395.5,97.5) .. controls (392.46,97.5) and (390,95.04) .. (390,92) -- cycle ;
\draw  [fill={rgb, 255:red, 0; green, 0; blue, 0 }  ,fill opacity=1 ] (569,260) .. controls (569,256.96) and (571.46,254.5) .. (574.5,254.5) .. controls (577.54,254.5) and (580,256.96) .. (580,260) .. controls (580,263.04) and (577.54,265.5) .. (574.5,265.5) .. controls (571.46,265.5) and (569,263.04) .. (569,260) -- cycle ;
\draw  [fill={rgb, 255:red, 255; green, 255; blue, 255 }  ,fill opacity=1 ] (426.5,214.5) .. controls (426.5,211.46) and (428.96,209) .. (432,209) .. controls (435.04,209) and (437.5,211.46) .. (437.5,214.5) .. controls (437.5,217.54) and (435.04,220) .. (432,220) .. controls (428.96,220) and (426.5,217.54) .. (426.5,214.5) -- cycle ;
\draw  [fill={rgb, 255:red, 0; green, 0; blue, 0 }  ,fill opacity=1 ] (451,193) .. controls (451,189.96) and (453.46,187.5) .. (456.5,187.5) .. controls (459.54,187.5) and (462,189.96) .. (462,193) .. controls (462,196.04) and (459.54,198.5) .. (456.5,198.5) .. controls (453.46,198.5) and (451,196.04) .. (451,193) -- cycle ;
\draw  [fill={rgb, 255:red, 255; green, 255; blue, 255 }  ,fill opacity=1 ] (551,113) .. controls (551,109.96) and (553.46,107.5) .. (556.5,107.5) .. controls (559.54,107.5) and (562,109.96) .. (562,113) .. controls (562,116.04) and (559.54,118.5) .. (556.5,118.5) .. controls (553.46,118.5) and (551,116.04) .. (551,113) -- cycle ;
\draw  [fill={rgb, 255:red, 0; green, 0; blue, 0 }  ,fill opacity=1 ] (501,112.5) .. controls (501,109.46) and (503.46,107) .. (506.5,107) .. controls (509.54,107) and (512,109.46) .. (512,112.5) .. controls (512,115.54) and (509.54,118) .. (506.5,118) .. controls (503.46,118) and (501,115.54) .. (501,112.5) -- cycle ;
\draw  [fill={rgb, 255:red, 255; green, 255; blue, 255 }  ,fill opacity=1 ] (588.5,162.5) .. controls (588.5,159.46) and (590.96,157) .. (594,157) .. controls (597.04,157) and (599.5,159.46) .. (599.5,162.5) .. controls (599.5,165.54) and (597.04,168) .. (594,168) .. controls (590.96,168) and (588.5,165.54) .. (588.5,162.5) -- cycle ;
\draw  [fill={rgb, 255:red, 0; green, 0; blue, 0 }  ,fill opacity=1 ] (551,193) .. controls (551,189.96) and (553.46,187.5) .. (556.5,187.5) .. controls (559.54,187.5) and (562,189.96) .. (562,193) .. controls (562,196.04) and (559.54,198.5) .. (556.5,198.5) .. controls (553.46,198.5) and (551,196.04) .. (551,193) -- cycle ;
\draw    (395.5,92) -- (456.5,113) ;
\draw  [fill={rgb, 255:red, 255; green, 255; blue, 255 }  ,fill opacity=1 ] (451,113) .. controls (451,109.96) and (453.46,107.5) .. (456.5,107.5) .. controls (459.54,107.5) and (462,109.96) .. (462,113) .. controls (462,116.04) and (459.54,118.5) .. (456.5,118.5) .. controls (453.46,118.5) and (451,116.04) .. (451,113) -- cycle ;
\draw    (506.5,193.5) -- (574.5,260) ;
\draw  [fill={rgb, 255:red, 255; green, 255; blue, 255 }  ,fill opacity=1 ] (501,193.5) .. controls (501,190.46) and (503.46,188) .. (506.5,188) .. controls (509.54,188) and (512,190.46) .. (512,193.5) .. controls (512,196.54) and (509.54,199) .. (506.5,199) .. controls (503.46,199) and (501,196.54) .. (501,193.5) -- cycle ;
\draw    (291,161) .. controls (330.6,131.3) and (326.1,181.97) .. (364.81,153.88) ;
\draw [shift={(366,153)}, rotate = 143.13] [color={rgb, 255:red, 0; green, 0; blue, 0 }  ][line width=0.75]    (10.93,-3.29) .. controls (6.95,-1.4) and (3.31,-0.3) .. (0,0) .. controls (3.31,0.3) and (6.95,1.4) .. (10.93,3.29)   ;

\draw (19,78.9) node [anchor=north west][inner sep=0.75pt]    {$1$};
\draw (218.5,267.4) node [anchor=north west][inner sep=0.75pt]    {$4$};
\draw (32,247.9) node [anchor=north west][inner sep=0.75pt]    {$5$};
\draw (284,124.9) node [anchor=north west][inner sep=0.75pt]    {$3$};
\draw (162,7.9) node [anchor=north west][inner sep=0.75pt]    {$2$};
\draw (50,154.4) node [anchor=north west][inner sep=0.75pt]    {$12$};
\draw (104,65.4) node [anchor=north west][inner sep=0.75pt]    {$23$};
\draw (188,75.4) node [anchor=north west][inner sep=0.75pt]    {$34$};
\draw (226,165.4) node [anchor=north west][inner sep=0.75pt]    {$45$};
\draw (135,235.4) node [anchor=north west][inner sep=0.75pt]    {$15$};
\draw (114,149.4) node [anchor=north west][inner sep=0.75pt]    {$13$};
\draw (163,151.4) node [anchor=north west][inner sep=0.75pt]    {$35$};
\draw (377,74.9) node [anchor=north west][inner sep=0.75pt]    {$1$};
\draw (576.5,263.4) node [anchor=north west][inner sep=0.75pt]    {$4$};
\draw (390,243.9) node [anchor=north west][inner sep=0.75pt]    {$5$};
\draw (642,120.9) node [anchor=north west][inner sep=0.75pt]    {$3$};
\draw (520,3.9) node [anchor=north west][inner sep=0.75pt]    {$2$};
\draw (405,145.4) node [anchor=north west][inner sep=0.75pt]    {$12$};
\draw (475,64.4) node [anchor=north west][inner sep=0.75pt]    {$23$};
\draw (576,95.4) node [anchor=north west][inner sep=0.75pt]    {$34$};
\draw (575,195.4) node [anchor=north west][inner sep=0.75pt]    {$45$};
\draw (477,226.4) node [anchor=north west][inner sep=0.75pt]    {$15$};
\draw (473,146.4) node [anchor=north west][inner sep=0.75pt]    {$25$};
\draw (521,146.4) node [anchor=north west][inner sep=0.75pt]    {$35$};
\draw (316,184.4) node [anchor=north west][inner sep=0.75pt]    {$\mu_{\Delta_{13}}$};

\end{tikzpicture}
\caption{Two plabic graphs for $\Gr(2,5)$ (with face labeling), arising from the quivers in Figure \ref{fig:quiverandmutation}.}
\label{fig:plabic}
\end{figure}
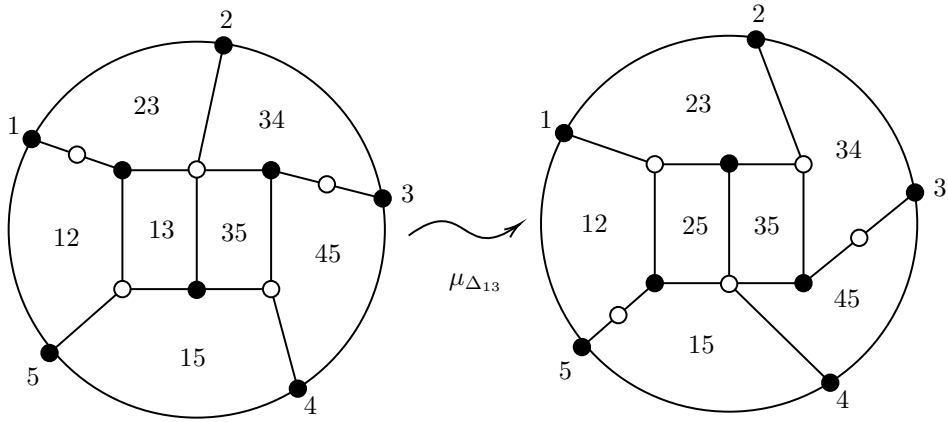

Mutations at a vertex of degree 4 in a quiver correspond to \textbf{square moves} in a plabic graph.
\begin{definition}\label{definition:squaremove}
Let $G$ be a plabic graph, and let $F$ be a square face of $G$ such that each vertex bordering $F$ is trivalent.\footnote{In general, performing a square move also requires that the vertices bordering $F$ alternate in color, but this is immediate from the fact that all plabic graphs we consider are bipartite.} Suppose that $F$ is bounded by strands with sinks $a,b,c,d$ such that $F$ is labeled by the $k$-element subset $Sbd := S \cup \{b,d\}$ for some $S \in \binom{[n]}{k-2}$ and $b,d \in [n] \setminus S$. Define the \textbf{square move} at face $Sbd$ to be the local move on $G$ that swaps the colors of all vertices bordering $Sab$, and updates the label of $Sbd$ to $Sac$. See Figure \ref{fig:squaremove}.
\begin{figure}
\centering
\begin{tikzpicture}[x=0.5pt,y=0.5pt,yscale=-1,xscale=1]

\draw   (70,42) -- (151.5,42) -- (151.5,123.5) -- (70,123.5) -- cycle ;
\draw   (341,40) -- (422.5,40) -- (422.5,121.5) -- (341,121.5) -- cycle ;
\draw    (70,42) -- (40.5,9.97) ;
\draw    (181,155.53) -- (151.5,123.5) ;
\draw    (341,40) -- (311.5,7.97) ;
\draw    (452,153.53) -- (422.5,121.5) ;
\draw    (178.5,9.97) -- (151.5,42) ;
\draw    (70,123.5) -- (43,155.53) ;
\draw    (341,121.5) -- (314,153.53) ;
\draw    (449.5,7.97) -- (422.5,40) ;
\draw  [fill={rgb, 255:red, 255; green, 255; blue, 255 }  ,fill opacity=1 ] (63.75,42) .. controls (63.75,38.55) and (66.55,35.75) .. (70,35.75) .. controls (73.45,35.75) and (76.25,38.55) .. (76.25,42) .. controls (76.25,45.45) and (73.45,48.25) .. (70,48.25) .. controls (66.55,48.25) and (63.75,45.45) .. (63.75,42) -- cycle ;
\draw  [fill={rgb, 255:red, 255; green, 255; blue, 255 }  ,fill opacity=1 ] (145.25,123.5) .. controls (145.25,120.05) and (148.05,117.25) .. (151.5,117.25) .. controls (154.95,117.25) and (157.75,120.05) .. (157.75,123.5) .. controls (157.75,126.95) and (154.95,129.75) .. (151.5,129.75) .. controls (148.05,129.75) and (145.25,126.95) .. (145.25,123.5) -- cycle ;
\draw  [fill={rgb, 255:red, 255; green, 255; blue, 255 }  ,fill opacity=1 ] (416.25,40) .. controls (416.25,36.55) and (419.05,33.75) .. (422.5,33.75) .. controls (425.95,33.75) and (428.75,36.55) .. (428.75,40) .. controls (428.75,43.45) and (425.95,46.25) .. (422.5,46.25) .. controls (419.05,46.25) and (416.25,43.45) .. (416.25,40) -- cycle ;
\draw  [fill={rgb, 255:red, 255; green, 255; blue, 255 }  ,fill opacity=1 ] (334.75,121.5) .. controls (334.75,118.05) and (337.55,115.25) .. (341,115.25) .. controls (344.45,115.25) and (347.25,118.05) .. (347.25,121.5) .. controls (347.25,124.95) and (344.45,127.75) .. (341,127.75) .. controls (337.55,127.75) and (334.75,124.95) .. (334.75,121.5) -- cycle ;
\draw  [fill={rgb, 255:red, 0; green, 0; blue, 0 }  ,fill opacity=1 ] (145.25,42) .. controls (145.25,38.55) and (148.05,35.75) .. (151.5,35.75) .. controls (154.95,35.75) and (157.75,38.55) .. (157.75,42) .. controls (157.75,45.45) and (154.95,48.25) .. (151.5,48.25) .. controls (148.05,48.25) and (145.25,45.45) .. (145.25,42) -- cycle ;
\draw  [fill={rgb, 255:red, 0; green, 0; blue, 0 }  ,fill opacity=1 ] (63.75,123.5) .. controls (63.75,120.05) and (66.55,117.25) .. (70,117.25) .. controls (73.45,117.25) and (76.25,120.05) .. (76.25,123.5) .. controls (76.25,126.95) and (73.45,129.75) .. (70,129.75) .. controls (66.55,129.75) and (63.75,126.95) .. (63.75,123.5) -- cycle ;
\draw  [fill={rgb, 255:red, 0; green, 0; blue, 0 }  ,fill opacity=1 ] (334.75,40) .. controls (334.75,36.55) and (337.55,33.75) .. (341,33.75) .. controls (344.45,33.75) and (347.25,36.55) .. (347.25,40) .. controls (347.25,43.45) and (344.45,46.25) .. (341,46.25) .. controls (337.55,46.25) and (334.75,43.45) .. (334.75,40) -- cycle ;
\draw  [fill={rgb, 255:red, 0; green, 0; blue, 0 }  ,fill opacity=1 ] (416.25,121.5) .. controls (416.25,118.05) and (419.05,115.25) .. (422.5,115.25) .. controls (425.95,115.25) and (428.75,118.05) .. (428.75,121.5) .. controls (428.75,124.95) and (425.95,127.75) .. (422.5,127.75) .. controls (419.05,127.75) and (416.25,124.95) .. (416.25,121.5) -- cycle ;
\draw    (199,81) -- (306.5,80.97) ;
\draw [shift={(308.5,80.97)}, rotate = 179.98] [color={rgb, 255:red, 0; green, 0; blue, 0 }  ][line width=0.75]    (10.93,-3.29) .. controls (6.95,-1.4) and (3.31,-0.3) .. (0,0) .. controls (3.31,0.3) and (6.95,1.4) .. (10.93,3.29)   ;
\draw    (276,81) -- (192.5,80.97) ;
\draw [shift={(190.5,80.97)}, rotate = 0.02] [color={rgb, 255:red, 0; green, 0; blue, 0 }  ][line width=0.75]    (10.93,-3.29) .. controls (6.95,-1.4) and (3.31,-0.3) .. (0,0) .. controls (3.31,0.3) and (6.95,1.4) .. (10.93,3.29)   ;
\draw   (74,247) -- (155.5,247) -- (155.5,328.5) -- (74,328.5) -- cycle ;
\draw   (345,245) -- (426.5,245) -- (426.5,326.5) -- (345,326.5) -- cycle ;
\draw    (74,247) -- (44.5,214.97) ;
\draw    (185,360.53) -- (155.5,328.5) ;
\draw    (345,245) -- (315.5,212.97) ;
\draw    (456,358.53) -- (426.5,326.5) ;
\draw    (182.5,214.97) -- (155.5,247) ;
\draw    (74,328.5) -- (47,360.53) ;
\draw    (345,326.5) -- (318,358.53) ;
\draw    (453.5,212.97) -- (426.5,245) ;
\draw  [fill={rgb, 255:red, 255; green, 255; blue, 255 }  ,fill opacity=1 ] (67.75,247) .. controls (67.75,243.55) and (70.55,240.75) .. (74,240.75) .. controls (77.45,240.75) and (80.25,243.55) .. (80.25,247) .. controls (80.25,250.45) and (77.45,253.25) .. (74,253.25) .. controls (70.55,253.25) and (67.75,250.45) .. (67.75,247) -- cycle ;
\draw  [fill={rgb, 255:red, 255; green, 255; blue, 255 }  ,fill opacity=1 ] (149.25,328.5) .. controls (149.25,325.05) and (152.05,322.25) .. (155.5,322.25) .. controls (158.95,322.25) and (161.75,325.05) .. (161.75,328.5) .. controls (161.75,331.95) and (158.95,334.75) .. (155.5,334.75) .. controls (152.05,334.75) and (149.25,331.95) .. (149.25,328.5) -- cycle ;
\draw  [fill={rgb, 255:red, 255; green, 255; blue, 255 }  ,fill opacity=1 ] (420.25,245) .. controls (420.25,241.55) and (423.05,238.75) .. (426.5,238.75) .. controls (429.95,238.75) and (432.75,241.55) .. (432.75,245) .. controls (432.75,248.45) and (429.95,251.25) .. (426.5,251.25) .. controls (423.05,251.25) and (420.25,248.45) .. (420.25,245) -- cycle ;
\draw  [fill={rgb, 255:red, 255; green, 255; blue, 255 }  ,fill opacity=1 ] (338.75,326.5) .. controls (338.75,323.05) and (341.55,320.25) .. (345,320.25) .. controls (348.45,320.25) and (351.25,323.05) .. (351.25,326.5) .. controls (351.25,329.95) and (348.45,332.75) .. (345,332.75) .. controls (341.55,332.75) and (338.75,329.95) .. (338.75,326.5) -- cycle ;
\draw  [fill={rgb, 255:red, 0; green, 0; blue, 0 }  ,fill opacity=1 ] (149.25,247) .. controls (149.25,243.55) and (152.05,240.75) .. (155.5,240.75) .. controls (158.95,240.75) and (161.75,243.55) .. (161.75,247) .. controls (161.75,250.45) and (158.95,253.25) .. (155.5,253.25) .. controls (152.05,253.25) and (149.25,250.45) .. (149.25,247) -- cycle ;
\draw  [fill={rgb, 255:red, 0; green, 0; blue, 0 }  ,fill opacity=1 ] (67.75,328.5) .. controls (67.75,325.05) and (70.55,322.25) .. (74,322.25) .. controls (77.45,322.25) and (80.25,325.05) .. (80.25,328.5) .. controls (80.25,331.95) and (77.45,334.75) .. (74,334.75) .. controls (70.55,334.75) and (67.75,331.95) .. (67.75,328.5) -- cycle ;
\draw  [fill={rgb, 255:red, 0; green, 0; blue, 0 }  ,fill opacity=1 ] (338.75,245) .. controls (338.75,241.55) and (341.55,238.75) .. (345,238.75) .. controls (348.45,238.75) and (351.25,241.55) .. (351.25,245) .. controls (351.25,248.45) and (348.45,251.25) .. (345,251.25) .. controls (341.55,251.25) and (338.75,248.45) .. (338.75,245) -- cycle ;
\draw  [fill={rgb, 255:red, 0; green, 0; blue, 0 }  ,fill opacity=1 ] (420.25,326.5) .. controls (420.25,323.05) and (423.05,320.25) .. (426.5,320.25) .. controls (429.95,320.25) and (432.75,323.05) .. (432.75,326.5) .. controls (432.75,329.95) and (429.95,332.75) .. (426.5,332.75) .. controls (423.05,332.75) and (420.25,329.95) .. (420.25,326.5) -- cycle ;
\draw    (224,285.97) -- (289,285.97) ;
\draw [shift={(291,285.97)}, rotate = 180] [color={rgb, 255:red, 0; green, 0; blue, 0 }  ][line width=0.75]    (10.93,-3.29) .. controls (6.95,-1.4) and (3.31,-0.3) .. (0,0) .. controls (3.31,0.3) and (6.95,1.4) .. (10.93,3.29)   ;
\draw [color={rgb, 255:red, 208; green, 2; blue, 27 }  ,draw opacity=1 ]   (199.75,359.75) .. controls (160.75,323.75) and (78,355.97) .. (80,304.97) ;
\draw [color={rgb, 255:red, 208; green, 2; blue, 27 }  ,draw opacity=1 ]   (80,304.97) .. controls (80.99,267.35) and (52.58,264.03) .. (57.83,207.69) ;
\draw [shift={(58,205.97)}, rotate = 95.91] [color={rgb, 255:red, 208; green, 2; blue, 27 }  ,draw opacity=1 ][line width=0.75]    (10.93,-3.29) .. controls (6.95,-1.4) and (3.31,-0.3) .. (0,0) .. controls (3.31,0.3) and (6.95,1.4) .. (10.93,3.29)   ;
\draw [color={rgb, 255:red, 126; green, 211; blue, 33 }  ,draw opacity=1 ]   (38,356.97) .. controls (66,319.97) and (145,366.97) .. (159,275.97) ;
\draw [color={rgb, 255:red, 126; green, 211; blue, 33 }  ,draw opacity=1 ]   (159,275.97) .. controls (176.73,242.48) and (147.89,226.45) .. (173.78,205.91) ;
\draw [shift={(175,204.97)}, rotate = 143.13] [color={rgb, 255:red, 126; green, 211; blue, 33 }  ,draw opacity=1 ][line width=0.75]    (10.93,-3.29) .. controls (6.95,-1.4) and (3.31,-0.3) .. (0,0) .. controls (3.31,0.3) and (6.95,1.4) .. (10.93,3.29)   ;
\draw [color={rgb, 255:red, 189; green, 16; blue, 224 }  ,draw opacity=1 ]   (30,223.97) .. controls (92,249.97) and (144,228.97) .. (161,284.97) ;
\draw [color={rgb, 255:red, 189; green, 16; blue, 224 }  ,draw opacity=1 ]   (161,284.97) .. controls (183.66,332.25) and (141.3,352.36) .. (171.57,370.16) ;
\draw [shift={(173,370.97)}, rotate = 208.61] [color={rgb, 255:red, 189; green, 16; blue, 224 }  ,draw opacity=1 ][line width=0.75]    (10.93,-3.29) .. controls (6.95,-1.4) and (3.31,-0.3) .. (0,0) .. controls (3.31,0.3) and (6.95,1.4) .. (10.93,3.29)   ;
\draw [color={rgb, 255:red, 74; green, 144; blue, 226 }  ,draw opacity=1 ]   (60,284.97) .. controls (98,201.97) and (185,250.97) .. (200,224.97) ;
\draw [color={rgb, 255:red, 74; green, 144; blue, 226 }  ,draw opacity=1 ]   (61,284.97) .. controls (44.59,316.81) and (73.82,348.66) .. (68.7,365.23) ;
\draw [shift={(68,366.97)}, rotate = 296.57] [color={rgb, 255:red, 74; green, 144; blue, 226 }  ,draw opacity=1 ][line width=0.75]    (10.93,-3.29) .. controls (6.95,-1.4) and (3.31,-0.3) .. (0,0) .. controls (3.31,0.3) and (6.95,1.4) .. (10.93,3.29)   ;
\draw (99,277.97) node [anchor=north west][inner sep=0.75pt]   [align=left] {Sbd};
\draw (96,212.97) node [anchor=north west][inner sep=0.75pt]   [align=left] {Sbc};
\draw (163,276.97) node [anchor=north west][inner sep=0.75pt]   [align=left] {Scd};
\draw (29,277.97) node [anchor=north west][inner sep=0.75pt]   [align=left] {Sab};
\draw (100,340.97) node [anchor=north west][inner sep=0.75pt]   [align=left] {Sad};
\draw (369,212.97) node [anchor=north west][inner sep=0.75pt]   [align=left] {Sbc};
\draw (306,276.97) node [anchor=north west][inner sep=0.75pt]   [align=left] {Sab};
\draw (372,277.97) node [anchor=north west][inner sep=0.75pt]   [align=left] {Sac};
\draw (373,338.97) node [anchor=north west][inner sep=0.75pt]   [align=left] {Sad};
\draw (437,277.97) node [anchor=north west][inner sep=0.75pt]   [align=left] {Scd};
\draw (44,187.97) node [anchor=north west][inner sep=0.75pt]   [align=left] {\textcolor[rgb]{0.82,0.01,0.11}{a}};
\draw (179,187.97) node [anchor=north west][inner sep=0.75pt]   [align=left] {\textcolor[rgb]{0.49,0.83,0.13}{b}};
\draw (175,373.97) node [anchor=north west][inner sep=0.75pt]   [align=left] {\textcolor[rgb]{0.74,0.06,0.88}{c}};
\draw (55,371.97) node [anchor=north west][inner sep=0.75pt]   [align=left] {\textcolor[rgb]{0.29,0.56,0.89}{d}};
\end{tikzpicture}
\caption{A square move, the plabic graph analogue of quiver mutation at a degree 4 vertex. Note that the new face label satisfies the Pl{\"u}cker relation $\Delta_{Sac} = \dfrac{\Delta_{Sab}\Delta_{Scd} + \Delta_{Sad}\Delta_{Sbc}}{\Delta_{Sbd}}$.}
\label{fig:squaremove}
\end{figure}
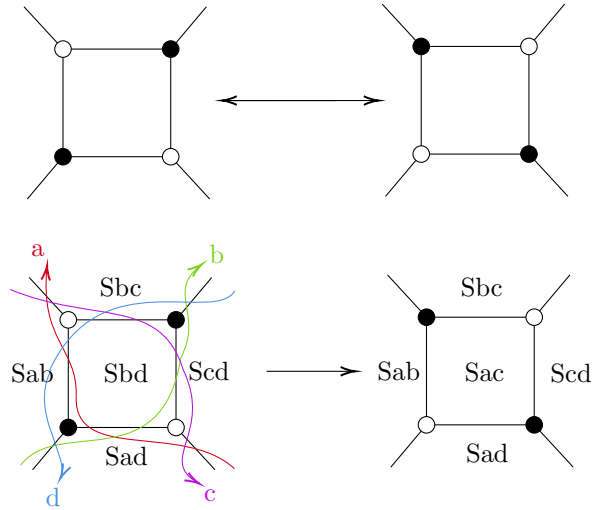
\end{definition}

We note that if one begins with a reduced plabic graph associated to a Pl\"ucker seed and applies a sequence of square moves, the resulting plabic graph will again correspond to a Pl\"ucker seed. Our work concerns non-Pl\"ucker cluster variables, which arise from quiver mutations at vertices with valence greater than 4.

\subsection{Quadratic and Cubic Differences of Pl\"ucker Coordinates}\label{subsection:differences}
This paper will focus on Grassmannian cluster algebras in the case $k=3$. In this section, we introduce notation for several classes of degree 2 and 3 cluster variables that appear in this setting, as well as another distinguished degree 3 polynomial in Pl{\"u}cker coordinates.

We begin by considering the case $n=6$; the corresponding cluster algebra is $\mathbb{C}[\widehat{\Gr}(3,6)]$, of finite type $D_4$. We adopt the conventions of \cite{s06}, and write $X$ to refer to the compound determinant 
\[\det \bigg( v_1 \times v_2 \quad v_3 \times v_4 \quad v_5 \times v_6\bigg).\]
Here $v_i$ denotes the $i$th column of $M \in \Gr(3,6)$, treated as a vector in $\mathbb{R}^3$, and $\times$ denotes the usual three-dimensional cross-product (taking two $\mathbb{R}^3$-vectors as input and outputting an $\mathbb{R}^3$-vector).  Using cross-product identities (2) and (3) from \cite{s06}, i.e. 
$$ u \cdot (v \times w) = (u \times v) \cdot w = \det ( u \quad u \quad w) \mathrm{~and~}
(u \times v) \cdot (w \times z) = \bigg( \begin{matrix} u\cdot w & u\cdot z \\ v\cdot w & v\cdot z\end{matrix}\bigg),$$
we can re-express $X$ as the quadratic difference
$\Delta_{134}\Delta_{256}-\Delta_{156}\Delta_{234},$
which may also be written as $\Delta_{124}\Delta_{356}-\Delta_{123}\Delta_{456}$ or $\Delta_{125}\Delta_{346}-\Delta_{126}\Delta_{345}.$
Analogously, we write
$Y$ to refer to the compound determinant
$$\det \bigg( v_6 \times v_1 \quad v_2 \times v_3 \quad v_4 \times v_5\bigg),$$ which can be re-expressed as a quadratic difference as
$\Delta_{145}\Delta_{236}-\Delta_{123}\Delta_{456}$,
$\Delta_{146}\Delta_{235}-\Delta_{156}\Delta_{234}$, or $\Delta_{136}\Delta_{245}-\Delta_{126}\Delta_{345}.$
Scott observes in \cite{s06} that $X$ and $Y$ appear as cluster variables for $\mathbb{C}[\widehat{\Gr}(3,6)]$.

When $n=8$, we have the cluster algebra $\mathbb{C}[\widehat{\Gr}(3,8)]$ of finite type $E_8$. Scott shows in this case that all dihedral translates of the following cubics appear as cluster variables:
\[A=\Delta_{134}\Delta_{258}\Delta_{167}-\Delta_{134}\Delta_{678}\Delta_{125}-\Delta_{158}\Delta_{234}\Delta_{167}\]
and
\[B=\Delta_{258}\Delta_{134}\Delta_{267}-\Delta_{234}\Delta_{128}\Delta_{567}-\Delta_{234}\Delta_{258}\Delta_{167}.\]
We note that when the dihedral group $D_8$ acts on the indices of the cubic function $A$ in $\Gr(3,8)$, the image is only of size $8$, i.e. only the cyclic translates.  For example, if we apply the reflection $\rho : i \to 9-i$ to each entry, we get $\rho(A) = \Delta_{568}\Delta_{147}\Delta_{238}-\Delta_{568}\Delta_{123}\Delta_{478}-\Delta_{148}\Delta_{567}\Delta_{238} = \sigma^7(A)$ where $\sigma : i \to i+1 \mod 8$.  On the other hand, the image of $D_8$'s action on the cubic function $B$ is indeed of size $16$. 

When $n=9$, the corresponding cluster algebra $\mathbb{C}[\widehat{\Gr}(3,9)]$ is of affine type. We will prove in Section \ref{section:constructionofC} that all dihedral translates of the following expression appear as cluster variables:
\[C = \Delta_{124}\Delta_{357}\Delta_{689}+\Delta_{123}\Delta_{456}\Delta_{789}-\Delta_{124}\Delta_{356}\Delta_{789}-\Delta_{123}\Delta_{457}\Delta_{689}.\]
Note that the image of the action of the dihedral group $D_9$ on $C$ has size $9$, since $C$ is invariant under the reflection $\rho: i \to 10-i$, i.e. $\rho(C)=C$.

We will also consider the expression
\[Z= \Delta_{145}\Delta_{278}\Delta_{369}-\Delta_{245}\Delta_{178}\Delta_{369}-\Delta_{123}\Delta_{456}\Delta_{789}-\Delta_{129}\Delta_{345}\Delta_{678} \in \mathbb{C}[\widehat{\Gr}(3,9)],\]
which appears in Example 8.1 of \cite{quantum20}. Its significance arises from the fact that every cluster monomial arises from a standard Young tableau, but not every standard Young tableau arises from a cluster monomial. The ones that do arise from cluster monomials are called {\bf real} tableaux, due to their manifestation in quantum affine algebras. In finite type, every Young tableau is real, but since $\Gr(3,9)$ is of affine type, some non-real tableaux appear. $Z$ is the lowest-degree element of $\Gr(3,9)$ that arises from a non-real tableau. We note that $\sigma^3(Z)=Z$ and $\rho(Z) = \sigma^8(Z)$, so the image of $D_9$ acting on $Z$ is simply of size $3$.

To extend these classes of cluster variables, we increase $n$. Given $n \geq n' \geq 3$ and a subset $I \subseteq [n]$ with $|I|=n'$, we define a projection $\pi_{n,I}:\Gr(3,n) \rightarrow \Gr(3,n')$ that retains exactly the columns indexed by $I$ of any matrix $M \in \Gr(3,n)$. We have from \cite{s06} that in the case of $n'=6$, for any $n\geq 6$ and $I \subseteq [n]$ with $|I|=6$, the expressions $X^I:=X \circ \pi_{6,I}$ and $Y^I:=Y \circ \pi_{6,I}$ appear as cluster variables of $\Gr(3,n)$. Theorem 8.8 of \cite{fominpasha2016} implies the analogous statement for expressions that project to $A$, $B$, and $C$.
The following widely expected conjecture, a reformulation of Conjecture 3.2 of \cite{clustering} in the case of $\Gr(3,n)$, generalizes this fact.
\begin{conjecture}\label{conj:projections}
Given $n \geq n' \geq 3$ and a subset $I \subseteq [n]$ with $|I|=n'$, we have that $x$ is a cluster variable in $\mathbb{C}[\widehat{\Gr}(3,n')]$ if and only if $\pi_{n,I} \circ x$ is a cluster variable in $\mathbb{C}[\widehat{\Gr}(3,n)]$.
\end{conjecture}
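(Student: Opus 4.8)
The plan is to split the biconditional into its two implications and, in both, reduce to the case of a single deleted column. Any projection $\pi_{n,I}$ factors as a composition of maps that each forget one column lying outside $I$, and the cyclic shift $i \mapsto i+1 \bmod n$ is a cluster algebra automorphism of $\mathbb{C}[\widehat{\Gr}(3,n)]$ that conjugates deletion of column $j$ to deletion of column $j+1$. Hence, after relabeling and inducting on $n-n'$, it suffices to treat the single deletion $\pi = \pi_{n,[n-1]}$, whose induced map on coordinate rings I denote $\pi^*\colon \mathbb{C}[\widehat{\Gr}(3,n-1)] \to \mathbb{C}[\widehat{\Gr}(3,n)]$. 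On Pl\"ucker generators $\pi^*(\Delta_J) = \Delta_J$ for $J \subseteq [n-1]$, so $\pi^*$ simply reinterprets a polynomial in the Pl\"uckers avoiding index $n$ as the same polynomial inside $\mathbb{C}[\widehat{\Gr}(3,n)]$, recovering the projected families $X^I, Y^I, \dots$ of Section~\ref{subsection:differences} as special cases. Since the Pl\"ucker relations of $\Gr(3,n-1)$ are exactly the relations of $\Gr(3,n)$ supported on $[n-1]$, the map $\pi^*$ is an injective ring homomorphism onto the subring generated by those Pl\"uckers, and both implications are compatible with composition, so this reduction is faithful.

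For the forward implication I would construct compatible seeds. The essential subtlety is that $\pi^*$ does not send frozens to frozens: for instance $\Delta_{n-2,n-1,1}$ is frozen in $\Gr(3,n-1)$ but is not circularly consecutive modulo $n$, so it becomes a non-frozen cluster variable of $\Gr(3,n)$. I would fix a seed of $\Gr(3,n-1)$ and embed its quiver into a seed of $\Gr(3,n)$ so that a mutation sequence of $\Gr(3,n-1)$ lifts to one of $\Gr(3,n)$ performed only at the lifted mutable vertices, with the images of the old frozens acting as extra coefficient vertices that are never mutated. This realizes $\pi$ as a quasi-cluster homomorphism, showing cluster variables pull back to cluster variables up to frozen factors; the Pl\"ucker-preserving form of $\pi^*$ is then used to rule out spurious factors and land on a cluster variable exactly. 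For the generators $X,Y,A,B,C$ this recovers the statements of \cite{s06} and \cite[Thm.~8.8]{fominpasha2016}.

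The reverse implication is where I expect the main difficulty, since recognizing which regular functions are cluster variables is hard in the infinite (affine and wild) type that occurs for $n \geq 9$. Rather than argue by mutation, I would look for an intrinsic invariant transported by $\pi$: candidate tools are the web-invariant description of $\Gr(3,n)$ cluster variables, the criterion that a cluster variable be \emph{real} (so that its powers remain cluster monomials, the property that fails for $Z$), and the explicit Laurent expansions for twists furnished by the double- and triple-dimer models of this paper. Concretely, I would try to show that if $x$ fails to be a cluster variable of $\Gr(3,n-1)$ --- because its associated web is non-elliptic but decomposable, or its tableau is non-real --- then the corresponding certificate pulls back to witness that $\pi^* x$ is not a cluster variable of $\Gr(3,n)$. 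The crux, and the step I do not expect to be routine, is establishing this ``only if'' compatibility across the change of frozen variables: unlike the forward direction there is no single seed that exhibits the failure, so one must produce a basis-theoretic or representation-theoretic invariant of $\Gr(3,n)$ that restricts correctly under $\pi^*$ --- which is essentially the full content of the conjecture.
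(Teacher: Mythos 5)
The statement you are trying to prove is not proved in the paper at all: it is stated as Conjecture~\ref{conj:projections}, explicitly described as ``widely expected'' and as a reformulation of Conjecture~3.2 of \cite{clustering}, with only special cases established (the projections of $X$ and $Y$ via \cite{s06}, and of $A$, $B$, $C$ via Theorem~8.8 of \cite{fominpasha2016}). So there is no ``paper's own proof'' to compare against, and any complete argument you gave would be a new result rather than a reconstruction.

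Your proposal is, by your own admission, not a complete proof, and the gaps are genuine. In the forward direction, the reduction to a single column deletion and the idea of lifting a seed of $\Gr(3,n-1)$ into $\Gr(3,n)$ as a quasi-cluster homomorphism is a reasonable strategy, but the step where you ``rule out spurious frozen factors and land on a cluster variable exactly'' is precisely where the work lies: a quasi-homomorphism only identifies cluster variables up to multiplication by Laurent monomials in the frozens, and showing that the normalization picks out $\pi^*x$ on the nose requires an argument you have not supplied (this is essentially what \cite{fominpasha2016} had to do for the planar-tree tensor diagrams, and it does not obviously extend to all cluster variables of $\Gr(3,n')$ once $n' \geq 9$ and the type is infinite). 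In the reverse direction you correctly identify that one needs an intrinsic, projection-compatible certificate of \emph{not} being a cluster variable, and you then concede that producing such an invariant ``is essentially the full content of the conjecture.'' That concession is accurate: no such invariant is currently known, the paper does not claim one, and the tools developed here (twists of $X$, $Y$, $A$, $B$, $C$, $Z$ as dimer partition functions) only address finitely many families of low-degree variables. As written, your proposal is a plan whose central step in each direction is missing, so it does not constitute a proof of the statement.
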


We note that the above expressions for $A$, $B$, and $C$ shed light on Conjecture 3.1 of \cite{clustering}, which in part posits that there are $24\binom{n}{8}+9\binom{n}{9}$ degree 3 cluster variables in $\Gr(3,n)$. Indeed, for any $n$ we have described exactly $24\binom{n}{8}+9\binom{n}{9}$ such cluster variables: $8\binom{n}{8}$ dihedral translates and projections of $A$, $16\binom{n}{8}$ dihedral translates and projections of $B$, and $9\binom{n}{9}$ dihedral translates and projections of $C$. It remains to show that there are no other degree 3 cluster variables.

\subsection{Dimer Configurations}\label{subsection:dimers}
We discuss the combinatorial model that dimers on plabic graphs provide for Pl\"ucker coordinates. This connection was discovered by Postnikov in \cite{p06} and developed by Talaska in \cite{Tal}. We also define $m$-fold dimers; in Sections \ref{section:doubledimers} and \ref{section:tripledimers}, we will describe how these objects extend the model to products of $m$ Pl\"ucker coordinates.

\begin{definition}\label{definition:dimer}
A \textbf{dimer configuration} (also called an \textbf{almost perfect matching}) $D$ on a plabic graph $G$ is a subset of edges of $G$ such that
\begin{enumerate}
    \item each interior vertex of $G$ is adjacent to exactly one edge in $D$, and
    \item each boundary vertex of $G$ is adjacent to either no edges or one edge in $D$.
\end{enumerate}
Let $\partial(D)$ be the set of boundary vertices adjacent to one edge in $D$; we call $\partial(D)$ the \textbf{boundary condition} for $D$.
Also let $\mathcal{D}(G)$ denote the set of dimer configurations on $G$, and define \[\mathcal{D}_J(G):=\{D \in \mathcal{D}(G)~|~\partial(D)=J\}.\]
\end{definition}

Given a plabic graph $G$ for $\Gr(k,n)$, we may assign
nonnegative real weights to its edges, and define the \textbf{edge weight} of any dimer $D$ to be the product of the weights of the edges it contains:
\[\text{wt}_e(D) = \prod_{e \in D} \text{wt}(e).\]
The following theorem, stated concisely in \cite{lam2015} with references to other work, relates dimer configurations to points in the Grassmannian.
\begin{theorem}[\cite{kuo04},\cite{p06},\cite{Tal},\cite{PSW},\cite{lam2015}]\label{theorem:edgewts}
Let $G$ be a plabic graph with black boundary vertices $1,2, \dots, n$, and let $k$ be the number of internal white vertices minus the number of internal black vertices in $G$. Also let $\text{wt}: E(G) \longrightarrow \mathbb{R}_{\geq 0}$ be any weight function on the edges of $G$. Then there exists some $\tilde{M}$ in the affine cone over the Grassmannian $\widetilde{\Gr}(k,n)$ such that for all $J \in \binom{[n]}{k}$,
$$\Delta_J(\tilde{M}) = \sum_{D \in \mathcal{D}_J(G)} \text{wt}_e(D).$$
\end{theorem}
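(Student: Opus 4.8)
The plan is to show that the numbers $P_J := \sum_{D \in \mathcal{D}_J(G)} \text{wt}_e(D)$, indexed by $J \in \binom{[n]}{k}$, are precisely the maximal minors of an explicit $k \times n$ matrix built from the weighted graph $G$; since the maximal minors of any matrix automatically satisfy the Plücker relations, this produces the desired point $\tilde{M}$ of the affine cone. First I would record the parity fact that forces the indexing set to be $\binom{[n]}{k}$: because $G$ is bipartite with all boundary vertices black, every edge of a dimer $D$ joins a white endpoint to a black endpoint, and each internal vertex is covered exactly once. Counting covered white versus black endpoints gives $|D| = W$ on one side and $|D| = B + |\partial(D)|$ on the other, where $W$ and $B$ are the numbers of internal white and internal black vertices. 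Hence $|\partial(D)| = W - B = k$ for every dimer, so $P_J \neq 0$ only when $|J| = k$, matching the data of a Plücker vector.

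Next I would construct $\tilde{M}$ as a gauge rescaling of Postnikov's boundary measurement matrix for the weighted plabic graph $G$, following \cite{p06} and \cite{Tal}. Fixing an acyclic orientation and a distinguished spanning structure on $G$, one reads off the columns of a $k \times n$ matrix whose entries are generating functions over directed paths from a chosen set of source vertices to the boundary. The core step is then to identify $\Delta_J(\tilde{M})$, expanded via the Cauchy--Binet and Lindström--Gessel--Viennot framework as a weighted sum over families of non-intersecting paths, with the dimer sum $P_J$. This is carried out through the standard path-to-matching correspondence: superimposing a fixed reference dimer with an arbitrary one decomposes into a union of paths and cycles, yielding a weight-preserving bijection between dimers with boundary condition $J$ and the path families counted by the minor. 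Alternatively, one can bypass the explicit matrix and verify the three-term Plücker relations among the $P_J$ directly using Kuo's graphical condensation theorem \cite{kuo04}, which expresses a product of two matching partition functions as a signed combination of others having exactly the shape of a three-term Plücker relation; the full system of such relations then cuts out the cone over $\Gr(k,n)$, as assembled in \cite{PSW} and \cite{lam2015}.

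The main obstacle is sign bookkeeping. The partition functions $P_J$ are manifestly nonnegative sums of products of nonnegative weights, whereas both the Plücker relations and Kuo's condensation identity carry signs, and the Cauchy--Binet expansion of $\Delta_J(\tilde{M})$ attaches a sign to each path family. Reconciling these requires a consistent choice of gauge---a rescaling $\text{wt}(e) \mapsto \lambda_u \lambda_v \, \text{wt}(e)$ by vertex factors---together with an orientation convention chosen so that the signs produced by path reversals in the bijection cancel exactly against the signs in the minor expansion, leaving the unsigned sum $P_J$. Once the conventions are pinned down so that every three-term identity matches, the existence of $\tilde{M}$ follows, and the nonnegativity of the weights guarantees that $\tilde{M}$ lies in the totally nonnegative part of the affine cone over $\Gr(k,n)$.
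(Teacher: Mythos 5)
This statement is not proved in the paper at all: it is imported wholesale from the literature (the paper explicitly says it is ``stated concisely in \cite{lam2015} with references to other work''), so there is no internal proof to compare against. Your outline is a reasonable reconstruction of the standard argument in the cited sources: the parity count showing $|\partial(D)| = W - B = k$ is correct, and the two routes you describe --- realizing the $P_J$ as minors of a boundary measurement matrix via the path-to-matching correspondence of \cite{p06}, \cite{Tal}, \cite{PSW}, or verifying Pl\"ucker relations directly via Kuo condensation --- are both genuinely how this result is established. Two caveats on precision. First, a perfect orientation of a plabic graph need not be acyclic, so the matrix entries are a priori signed sums over infinitely many walks; the content of \cite{Tal} is precisely that the minors nevertheless collapse to finite sums over matchings, and your phrase ``fixing an acyclic orientation'' elides this. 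Second, if you take the Kuo-condensation route, the three-term Pl\"ucker relations cut out the cone over $\Gr(k,n)$ only on the locus where some coordinate is nonvanishing (which is fine here unless $G$ admits no almost perfect matchings, in which case $\tilde M=0$ works trivially), and one must check that condensation supplies \emph{all} the needed three-term relations for the given boundary configurations. Neither caveat is a gap in the sense of an unfixable error; both are handled in the references, and as a citation of known work your sketch is at an appropriate level.
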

\noindent Here the affine cone arises since Pl\"ucker coordinates embed the Grassmannian into projective space, so the value of an individual $\Delta_J(M)$ is not well-defined in $\mathbb{R}$. To instead arrive at a point in the Grassmannian, we may identify edge weight functions that yield the same sets of Pl\"uckers up to scaling, or alternatively take the equivalence class of any $\tilde{M}$.

\begin{example}\label{example:almostperfmatching}
Consider the weighted plabic graph in Figure \ref{fig:almostperfmatching}. The edges highlighted in red form a dimer $D$ with $\partial(D) = \{1,4\}$ and edge weight $\text{wt}_e(D)=achnk$.
\begin{figure}[htb] 
\centering

\tikzset{every picture/.style={line width=0.75pt}} 

\begin{tikzpicture}[x=0.75pt,y=0.75pt,yscale=-1,xscale=1]

\draw   (187,153.5) .. controls (187,83.64) and (243.64,27) .. (313.5,27) .. controls (383.36,27) and (440,83.64) .. (440,153.5) .. controls (440,223.36) and (383.36,280) .. (313.5,280) .. controls (243.64,280) and (187,223.36) .. (187,153.5) -- cycle ;
\draw  [fill={rgb, 255:red, 0; green, 0; blue, 0 }  ,fill opacity=1 ] (209,236.5) .. controls (209,233.46) and (211.46,231) .. (214.5,231) .. controls (217.54,231) and (220,233.46) .. (220,236.5) .. controls (220,239.54) and (217.54,242) .. (214.5,242) .. controls (211.46,242) and (209,239.54) .. (209,236.5) -- cycle ;
\draw  [fill={rgb, 255:red, 0; green, 0; blue, 0 }  ,fill opacity=1 ] (326,29.5) .. controls (326,26.46) and (328.46,24) .. (331.5,24) .. controls (334.54,24) and (337,26.46) .. (337,29.5) .. controls (337,32.54) and (334.54,35) .. (331.5,35) .. controls (328.46,35) and (326,32.54) .. (326,29.5) -- cycle ;
\draw  [fill={rgb, 255:red, 0; green, 0; blue, 0 }  ,fill opacity=1 ] (433,132.5) .. controls (433,129.46) and (435.46,127) .. (438.5,127) .. controls (441.54,127) and (444,129.46) .. (444,132.5) .. controls (444,135.54) and (441.54,138) .. (438.5,138) .. controls (435.46,138) and (433,135.54) .. (433,132.5) -- cycle ;
\draw   (263.5,113.5) -- (363.5,113.5) -- (363.5,193.5) -- (263.5,193.5) -- cycle ;
\draw    (313.5,113) -- (313.5,194) ;
\draw [color={rgb, 255:red, 208; green, 2; blue, 27 }  ,draw opacity=1 ][line width=4.5]    (202.5,92.5) -- (263.5,113.5) ;
\draw    (214.5,236.5) -- (263.5,193.5) ;
\draw    (363.5,113.5) -- (331.5,29.5) ;
\draw    (363.5,193.5) -- (438.5,132.5) ;
\draw [color={rgb, 255:red, 208; green, 2; blue, 27 }  ,draw opacity=1 ][line width=3.75]    (313.5,194) -- (381.5,260.5) ;
\draw  [fill={rgb, 255:red, 255; green, 255; blue, 255 }  ,fill opacity=1 ] (258,113.5) .. controls (258,110.46) and (260.46,108) .. (263.5,108) .. controls (266.54,108) and (269,110.46) .. (269,113.5) .. controls (269,116.54) and (266.54,119) .. (263.5,119) .. controls (260.46,119) and (258,116.54) .. (258,113.5) -- cycle ;
\draw  [fill={rgb, 255:red, 255; green, 255; blue, 255 }  ,fill opacity=1 ] (308,194) .. controls (308,190.96) and (310.46,188.5) .. (313.5,188.5) .. controls (316.54,188.5) and (319,190.96) .. (319,194) .. controls (319,197.04) and (316.54,199.5) .. (313.5,199.5) .. controls (310.46,199.5) and (308,197.04) .. (308,194) -- cycle ;
\draw  [fill={rgb, 255:red, 0; green, 0; blue, 0 }  ,fill opacity=1 ] (197,92.5) .. controls (197,89.46) and (199.46,87) .. (202.5,87) .. controls (205.54,87) and (208,89.46) .. (208,92.5) .. controls (208,95.54) and (205.54,98) .. (202.5,98) .. controls (199.46,98) and (197,95.54) .. (197,92.5) -- cycle ;
\draw  [fill={rgb, 255:red, 0; green, 0; blue, 0 }  ,fill opacity=1 ] (376,260.5) .. controls (376,257.46) and (378.46,255) .. (381.5,255) .. controls (384.54,255) and (387,257.46) .. (387,260.5) .. controls (387,263.54) and (384.54,266) .. (381.5,266) .. controls (378.46,266) and (376,263.54) .. (376,260.5) -- cycle ;
\draw [color={rgb, 255:red, 208; green, 2; blue, 27 }  ,draw opacity=1 ][fill={rgb, 255:red, 208; green, 2; blue, 27 }  ,fill opacity=1 ][line width=3.75]    (239,215) -- (263.5,193.5) ;
\draw  [fill={rgb, 255:red, 255; green, 255; blue, 255 }  ,fill opacity=1 ] (233.5,215) .. controls (233.5,211.96) and (235.96,209.5) .. (239,209.5) .. controls (242.04,209.5) and (244.5,211.96) .. (244.5,215) .. controls (244.5,218.04) and (242.04,220.5) .. (239,220.5) .. controls (235.96,220.5) and (233.5,218.04) .. (233.5,215) -- cycle ;
\draw  [fill={rgb, 255:red, 0; green, 0; blue, 0 }  ,fill opacity=1 ] (258,193.5) .. controls (258,190.46) and (260.46,188) .. (263.5,188) .. controls (266.54,188) and (269,190.46) .. (269,193.5) .. controls (269,196.54) and (266.54,199) .. (263.5,199) .. controls (260.46,199) and (258,196.54) .. (258,193.5) -- cycle ;
\draw [color={rgb, 255:red, 208; green, 2; blue, 27 }  ,draw opacity=1 ][fill={rgb, 255:red, 208; green, 2; blue, 27 }  ,fill opacity=1 ][line width=3.75]    (313.5,113) -- (363.5,113.5) ;
\draw  [fill={rgb, 255:red, 255; green, 255; blue, 255 }  ,fill opacity=1 ] (358,113.5) .. controls (358,110.46) and (360.46,108) .. (363.5,108) .. controls (366.54,108) and (369,110.46) .. (369,113.5) .. controls (369,116.54) and (366.54,119) .. (363.5,119) .. controls (360.46,119) and (358,116.54) .. (358,113.5) -- cycle ;
\draw  [fill={rgb, 255:red, 0; green, 0; blue, 0 }  ,fill opacity=1 ] (308,113) .. controls (308,109.96) and (310.46,107.5) .. (313.5,107.5) .. controls (316.54,107.5) and (319,109.96) .. (319,113) .. controls (319,116.04) and (316.54,118.5) .. (313.5,118.5) .. controls (310.46,118.5) and (308,116.04) .. (308,113) -- cycle ;
\draw [color={rgb, 255:red, 208; green, 2; blue, 27 }  ,draw opacity=1 ][fill={rgb, 255:red, 208; green, 2; blue, 27 }  ,fill opacity=1 ][line width=3.75]    (363.5,193.5) -- (401,163) ;
\draw  [fill={rgb, 255:red, 255; green, 255; blue, 255 }  ,fill opacity=1 ] (395.5,163) .. controls (395.5,159.96) and (397.96,157.5) .. (401,157.5) .. controls (404.04,157.5) and (406.5,159.96) .. (406.5,163) .. controls (406.5,166.04) and (404.04,168.5) .. (401,168.5) .. controls (397.96,168.5) and (395.5,166.04) .. (395.5,163) -- cycle ;
\draw  [fill={rgb, 255:red, 0; green, 0; blue, 0 }  ,fill opacity=1 ] (358,193.5) .. controls (358,190.46) and (360.46,188) .. (363.5,188) .. controls (366.54,188) and (369,190.46) .. (369,193.5) .. controls (369,196.54) and (366.54,199) .. (363.5,199) .. controls (360.46,199) and (358,196.54) .. (358,193.5) -- cycle ;

\draw (184,75.4) node [anchor=north west][inner sep=0.75pt]    {$1$};
\draw (383.5,263.9) node [anchor=north west][inner sep=0.75pt]    {$4$};
\draw (197,244.4) node [anchor=north west][inner sep=0.75pt]    {$5$};
\draw (449,121.4) node [anchor=north west][inner sep=0.75pt]    {$3$};
\draw (327,4.4) node [anchor=north west][inner sep=0.75pt]    {$2$};
\draw (230,84.4) node [anchor=north west][inner sep=0.75pt]    {$a$};
\draw (282,96.4) node [anchor=north west][inner sep=0.75pt]    {$b$};
\draw (333,95.4) node [anchor=north west][inner sep=0.75pt]    {$c$};
\draw (351,62.4) node [anchor=north west][inner sep=0.75pt]    {$d$};
\draw (251,143.4) node [anchor=north west][inner sep=0.75pt]    {$e$};
\draw (316,144.4) node [anchor=north west][inner sep=0.75pt]    {$f$};
\draw (366,142.4) node [anchor=north west][inner sep=0.75pt]    {$g$};
\draw (240,187.4) node [anchor=north west][inner sep=0.75pt]    {$h$};
\draw (282,194.4) node [anchor=north west][inner sep=0.75pt]    {$i$};
\draw (335,194.4) node [anchor=north west][inner sep=0.75pt]    {$j$};
\draw (383,177.4) node [anchor=north west][inner sep=0.75pt]    {$k$};
\draw (418,149.4) node [anchor=north west][inner sep=0.75pt]    {$\ell $};
\draw (214,208.4) node [anchor=north west][inner sep=0.75pt]    {$m$};
\draw (334,227.4) node [anchor=north west][inner sep=0.75pt]    {$n$};

\end{tikzpicture}
\caption{An example of a dimer $D$ with $\partial D = \{1,4\}$ on the plabic graph at the right of Figure \ref{fig:plabic}.}
\label{fig:almostperfmatching}
\end{figure}
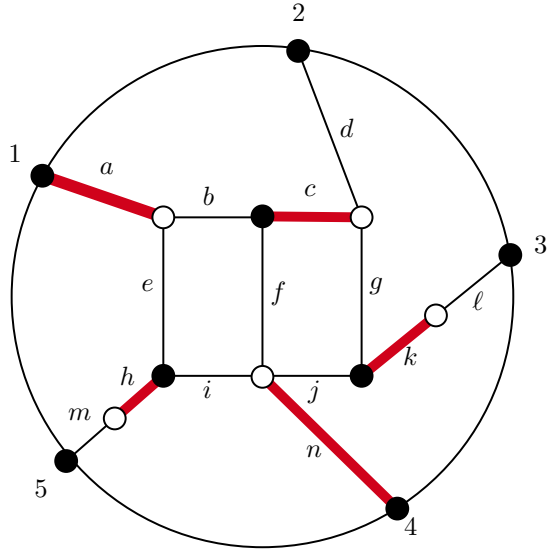
\end{example}

To model products of Pl\"ucker coordinates, we define the notion of higher dimer configurations.
\begin{definition}\label{definition:doubletripledimerconfiguration}
An \textbf{$m$-fold dimer configuration} $D$ of a plabic graph $G$ is a multiset of the edges of $G$ such that each vertex is contained in exactly $m$ edges in $D$. In other words, it is a superimposition of $m$ single dimer configurations of $G$. When $m=2$, we call these \textbf{double dimer configurations}, and when $m=3$ we call them \textbf{triple dimer configurations}. We refer to the set of $m$-fold dimer configurations of $G$ as $\mathcal{D}^m(G)$.
\end{definition}

\begin{example}\label{example:tripledimerB}
In Figure \ref{fig:tripledimerB}, the \textcolor{red}{red} edges, \textcolor{blue}{blue} edges, and \textcolor{forest}{green} edges are individually single dimer configurations of the the plabic graph with boundary conditions $\{4,5,6\}$, $\{2,3,4\}$ and $\{1,7,8\}$ respectively. Forgetting the distinctions between these colors yields a corresponding triple dimer configuration.

\begin{figure}[H]
    \centering
    \includegraphics[scale=.15]{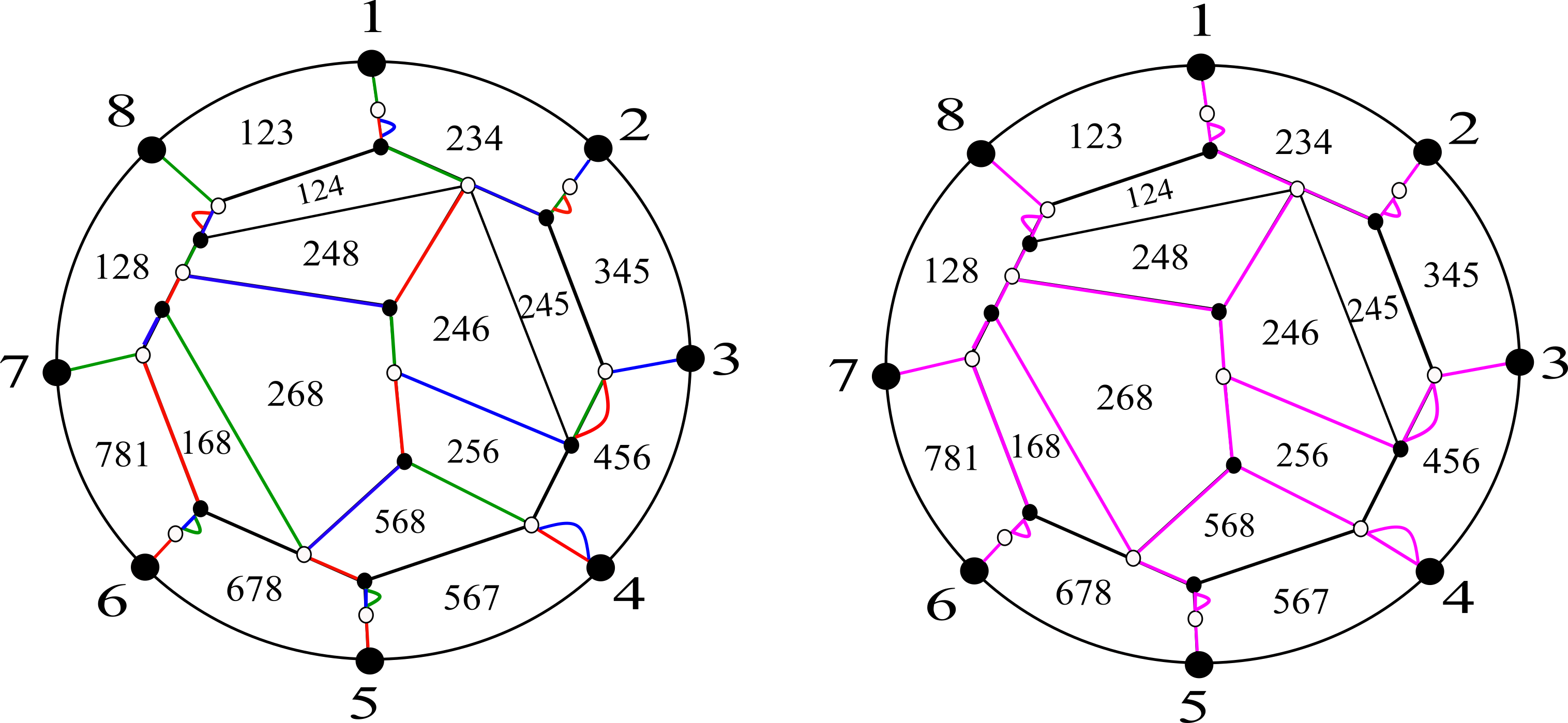}
    \caption{Three overlaid single dimer configurations (left) and the associated triple dimer (right).}
    \label{fig:tripledimerB}
\end{figure}
\end{example}

\subsection{The Twist Map}\label{subsection:twist}
In this section, we define an important cluster algebra automorphism called the \textit{twist map}. This map was first introduced in \cite{bfz96}; Marsh and Scott linked it to dimer partition functions in \cite{marshscott2016}, and Muller and Speyer showed in \cite{mullerspeyer} that it provides an inverse to the famous boundary measurement map introduced by Postnikov \cite{p06}. Each paper uses a slightly different set of conventions, and we will use yet another, but we will clarify the relationships between our twist and those in \cite{marshscott2016} and \cite{mullerspeyer}.

We first give exposition following \cite[Sec. 2]{marshscott2016}.
Given a matrix $M$ representing an element of $\Gr(k,n)$, with column vectors $v_1, v_2, \dots v_n \in \mathbb{R}^k$ (in order), we define the {\bf generalized cross-product} $v = v_1 \times v_2 \times \dots \times v_{k-1}$ to be the unique vector in $\mathbb{R}^k$ satisfying $v \cdot w = \det(v_1 \quad v_2 \dots v_{k-1} \quad w)$ for all $w \in \mathbb{R}^k$.
Then, the
{\bf (left) twist} of $M$, denoted as $\lT(M)$, is defined to be the $k$-by-$n$ matrix whose $i$th column vector is given by 
$\lT(M)_i =\varepsilon_i \cdot v_{i-k+1} \times v_{i-k+2} \times \dots \times v_{i-1}$, where
\[\varepsilon_i = \begin{cases}
(-1)^{i(k-i)} & i \leq (k-1)\\
1 & i \geq k
\end{cases}\] and
the subscripts are taken modulo $n$ with signs introduced when wrapping around.
Explicitly, 
$$\lT(M)_i = \begin{cases} 
(-1)^{k-i} v_1 \times v_2 \times \dots \times v_{i-1} \times v_{i-k+1+n} \times \dots \times v_n & \mathrm{~if~} i\leq k-1 \\
v_{i-k+1} \times v_{i-k+2} \times \dots \times v_{i-1} & \mathrm{~if~} i\geq k \end{cases}.$$
In the special case of this paper where $k=3$, by construction we obtain
$$\lT(M) = [v_{n-1} \times v_n \quad v_n \times v_1 \quad v_1 \times v_2 \dots v_{n-2} \times v_{n-1}]
= [v_{n-1} \times v_n \quad -v_1 \times v_n \quad v_1 \times v_2 \quad \dots \quad v_{n-2} \times v_{n-1}]$$
where only the usual cross-product is required.
To compare the value of Pl{\"u}cker coordinates before and after the twist, let $\Delta_J$ denote the determinant of the submatrix $M_J$ (given by the columns indexed by set $J$), and let $\lT(\Delta_J)$ denote the determinant of the corresponding submatrix of the twisted matrix, i.e. $\det \left(\lT(M)_{J}\right)$. When $J = \{a,b,c\}$, we have
$$\lT(\Delta_{abc}) = \det \bigg( v_{a-2} \times v_{a-1} \quad v_{b-2} \times v_{b-1} \quad v_{c-2} \times v_{c-1}\bigg),$$
where indices are taken modulo $n$.

In \cite{mullerspeyer}, Muller and Speyer define a different version of the left twist, which we denote $\lT_{MuSp}$. They also define a \textbf{right twist} analogously, which is its inverse. We define a right twist $\T$ analogously to the left twist of \cite{marshscott2016} above, via $\T(M)_i= \varepsilon'_i \cdot v_{i+1} \times \dots \times v_{i+k-1}$, where
\[\varepsilon'_i = \begin{cases}
(-1)^{(k-1)(n-i+1)} & i \geq (n-k+2)\\
1 & i \leq (n-k+1)
\end{cases}\]
again with reduction modulo $n$ and appropriate signs. Explicitly,
$$\T(M)_i = \begin{cases} 
(-1)^{k-n+i-1} v_1 \times v_2 \times \dots \times v_{i-n+k-1} \times v_{i+1} \times \dots \times v_n & \mathrm{~if~} i\geq n-k+2 \\
v_{i+1} \times v_{i+2} \times \dots \times v_{i+k-1} & \mathrm{~if~} i\leq n-k+1 \end{cases}.$$
When $k=3$ and $J=\{a,b,c\}$, we have
$$\T(\Delta_{abc}) = \det \bigg(v_{a+1} \times v_{a+2} \quad v_{b+1} \times v_{b+2} \quad 
v_{c+1} \times v_{c+2}\bigg),$$ 
where indices are taken modulo $n$.
This $\T$ is the twist we will use for the rest of the paper.

We now recover a version of \cite[Proposition 3.5]{marshscott2016} for our right twist in the special case of $k=3$, using the cross-product identities of \cite{s06} mentioned in Section \ref{subsection:differences}. If $J =\{ a, a+1, a+2\}$, i.e. $\Delta_J$ is a frozen variable, then 
\begin{eqnarray*}\T(\Delta_J) &= \det \bigg( v_{a+1} \times v_{a+2} \quad v_{a+2} \times v_{a+3} \quad v_{a+3} \times v_{a+4}\bigg) \\ &= \det(v_{a+1} \hspace{3pt} v_{a+2} \hspace{3pt} v_{a+3}) \det (v_{a+2}\hspace{3pt}  v_{a+3}\hspace{3pt} v_{a+4}) - \det (v_{a+1}\hspace{3pt} v_{a+3}\hspace{3pt} v_{a+4}) \det(v_{a+2}\hspace{3pt} v_{a+2}\hspace{3pt} v_{a+3}) \\ &= \Delta_{a+1,a+2,a+3} \Delta_{a+2,a+3,a+4},\end{eqnarray*} 
since $\det(v_{a-1}\hspace{3pt} v_{a-1}\hspace{3pt} v_{a})=\det(v_{a+2}\hspace{3pt} v_{a+2}\hspace{3pt} v_{a+3}) = 0$.
Similarly, if $J = \{a,a+1, b\}$ where $b \neq a-1, a+2$, then 
\begin{eqnarray*}\T(\Delta_J) &= \det \bigg( v_{a+1} \times v_{a+2} \quad v_{a+2} \times v_{a+3} \quad v_{b+1} \times v_{b+2}\bigg) \\ &= \det(v_{a+1}\hspace{3pt} v_{a+2}\hspace{3pt} v_{a+3}) \det(v_{a+2}\hspace{3pt} v_{b+1}\hspace{3pt} v_{b+2}) - \det(v_{a+1}\hspace{3pt} v_{b+1}\hspace{3pt} v_{b+2}) \det(v_{a+2}\hspace{3pt} v_{a+2}\hspace{3pt} v_{a+3}) \\ &= \Delta_{a+1,a+2,a+3} \Delta_{a+2,b+1,b+2}.\end{eqnarray*}
Note that since the sign of a 3-cycle is +1, we may reorder the indices of the resulting Pl\"ucker coordinates to be increasing modulo $n$ without concern for signs.

When $J = \{a,b,c\}$ where none of $a,b,c$ are adjacent mod $n$, none of the Pl{\"u}cker coordinates appearing in the quadratic differences vanish, and hence we recover the expressions 
\begin{eqnarray*}\T(\Delta_J) &= \det \bigg( v_{a+1} \times v_{a+2} \quad v_{b+1} \times v_{b+2} \quad v_{c+1} \times v_{c+2}\bigg) \\
&=\begin{cases}
X^{a+1,~a+2,~b+1,~b+2,~c+1,~c+2} & a,b,c \neq n-1\\
Y^{a+1,~a+2,~b+1,~b+2,~c+1,~c+2} & \text{otherwise}
\end{cases}.\end{eqnarray*}
Recall that order is disregarded for the superscripts, so we may always write them in increasing order modulo $n$.

We conclude by noting that, as stated in \cite[Remark 6.3]{mullerspeyer}, our right twist $\T$ agrees with the right twist $\T_{MuSp}$ up to rescaling. In the case of $\T(\Delta_J)$ where $J = \{a_1,a_2,\dots, a_k\}$, we get
\begin{equation}\label{eq:musprmk}
    \T(\Delta_J) = \Delta_{I_{[a_1]}}\Delta_{I_{[a_2]}}\cdots \Delta_{I_{[a_k]}} \cdot \T_{MuSp}(\Delta_J),
\end{equation}
where the notation $\Delta_{I_{[a_j]}}$ is shorthand for the Pl\"ucker coordinate for the cyclically connected subset $\{a_j, a_j+1, a_j+2, \dots, a_j + k-1\}$ where indices are taken modulo $n$.

\section{Dimer Face Weights}\label{section:facewts}
In this section, we describe a method of weighting dimers using the face labels that arise from strands of a plabic graph, rather than the arbitrary real edge weights described in Section \ref{subsection:dimers}. Work of Marsh and Scott \cite{marshscott2016} and later Muller and Speyer \cite{mullerspeyer} connects a given boundary set to a sum of face weights of its corresponding dimers via the twist map described in Section \ref{subsection:twist}, though Marsh and Scott use different conventions than ours. We show that our face weights coincide with face weights in \cite{mullerspeyer} up to scaling, and conclude that they describe our version of the right twist. We also describe a translation between edge and face weights similar to that in \cite{marshscott2016}.

We begin by establishing the notation that we will use to define our version of face weights. 
Given a plabic graph $G$ and a face $f \in F(G)$, labeled using strands as described in Subsection \ref{subsection: clusteralgbackground}, we define the following quantities:
$$I_f := \text{ the face label of } f \hspace{2em} \text{ and } \hspace{2em} W_f := \#\{\text{white vertices bordering } f\}.$$
We say $f$ is an \textbf{inner face} if it is not adjacent to the boundary of the circle, and \textbf{outer} otherwise. Given a dimer $D$, we define the following quantities based on whether a given face is inner or outer. 
$$D_f := \begin{cases}
    \#\{\text{edges of } D \text{ that border } f\} &\text{ if } f \text{ is inner}\\
    \#\{\text{edges of } D \textit{ not adjacent to boundary vertices}\text{ that border } f\} &\text{ if } f \text{ is outer}
\end{cases} .$$

\begin{example}
Figure \ref{fig:346dimers} depicts the four single dimer configurations on a certain plabic graph for $\Gr(3,7)$ that have the boundary condition $\{3,4,6\}$. Let \textcolor{red}{$D_1$}, \textcolor{orange}{$D_2$}, \textcolor{blue}{$D_3$}, and \textcolor{purple}{$D_4$} be the single dimer configurations shown left to right in colors \textcolor{red}{red}, \textcolor{orange}{orange}, \textcolor{blue}{blue} and \textcolor{purple}{purple} respectively.
\begin{itemize}
    \item For the inner face $\Delta_{367}$, $W_f=2$; we have $(\textcolor{red}{D_1})_f=(\textcolor{orange}{D_2})_f=2$ and $(\textcolor{blue}{D_3})_f=(\textcolor{purple}{D_4})_f=1$.
    \item For the outer face $\Delta_{127}$, $W_f=3$; we have $(\textcolor{red}{D_1})_f=(\textcolor{orange}{D_2})_f=(\textcolor{blue}{D_3})_f=2$ and $(\textcolor{purple}{D_4})_f=1$.
\end{itemize}
\end{example}

We are now ready to define dimer face weights.
\begin{definition}\label{definition:faceweights}
Given an $m$-fold dimer configuration $D$ on a plabic graph $G$, we define the \textbf{face weight of $D$} to be
\[\text{wt}_f(D)=\prod_{f \in F(G)} I_f^{mW_f-D_f-m}.\] 
\end{definition}

\begin{example}\label{ex:346wts}
Again, consider the dimers \textcolor{red}{$D_1$}, \textcolor{orange}{$D_2$}, \textcolor{blue}{$D_3$}, and \textcolor{purple}{$D_4$} on the plabic graph for $\Gr(3,7)$ shown in Figure \ref{fig:346dimers}. There are four possible single dimer configurations with respect to the boundary condition $\{3,4,6\}$. The weights of each of these single dimer configurations are as follows:
$$\text{wt}_f(\textcolor{red}{D_1}) = \Delta_{456}\frac{\Delta_{167}\Delta_{237}\Delta_{567}}{\Delta_{267}\Delta_{367}}~~~;~~~\text{wt}_f(\textcolor{orange}{D_2})  = \Delta_{456}\frac{\Delta_{167}\Delta_{347}\Delta_{567}}{\Delta_{367}\Delta_{467}}$$
$$\text{wt}_f(\textcolor{blue}{D_3})  = \Delta_{456}\frac{\Delta_{167}\Delta_{457}}{\Delta_{467}}~~~;~~~\text{wt}_f(\textcolor{purple}{D_4})  = \Delta_{456}\frac{\Delta_{127}\Delta_{567}}{\Delta_{267}} .$$

\begin{figure}
    \centering
    \includegraphics[scale=.12]{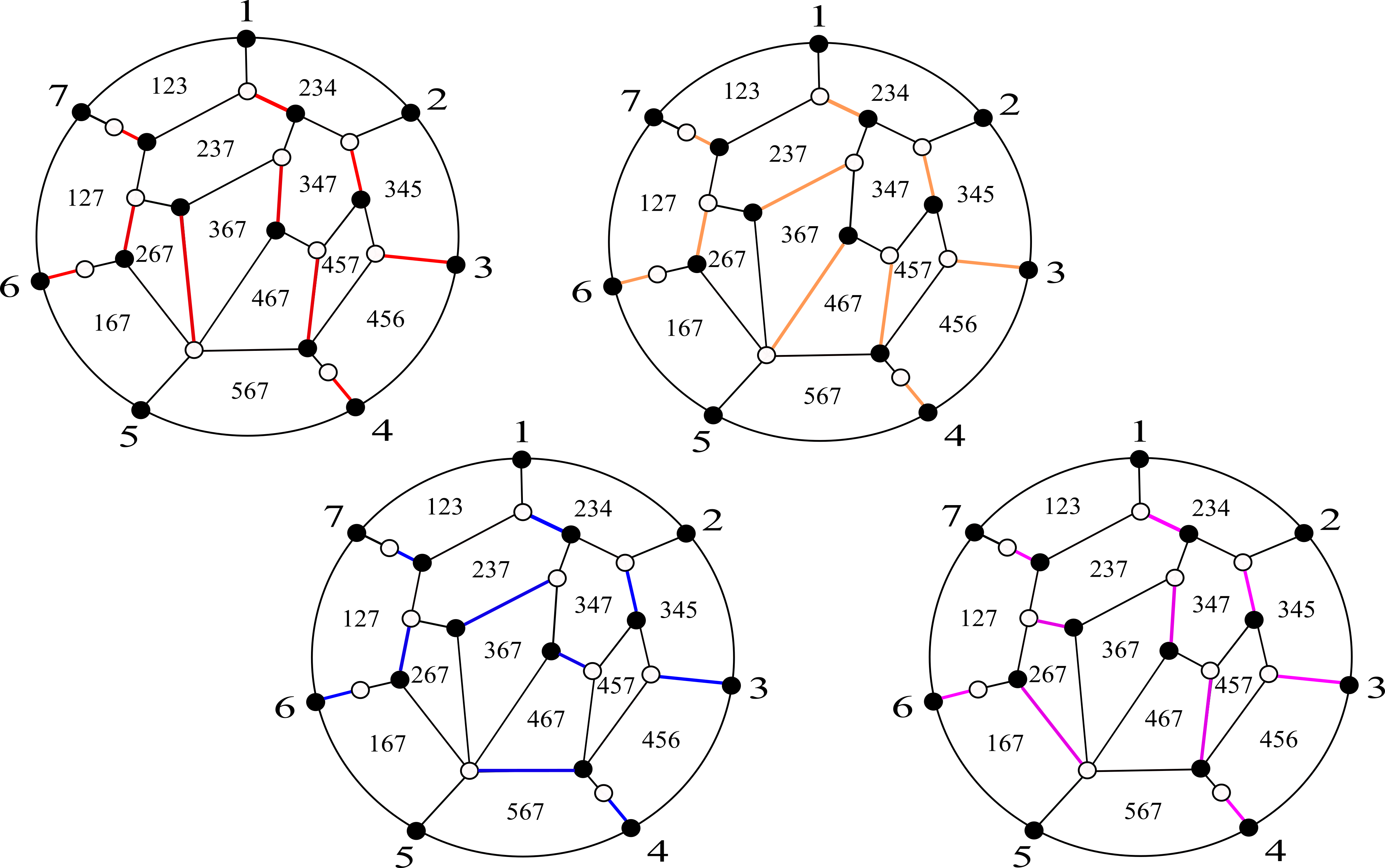}
    \caption{All single dimer configurations with boundary condition $\{3,4,6\}$ on a certain plabic graph for $\Gr(3,7)$.}
    \label{fig:346dimers}
\end{figure}
\end{example}

The following theorem is central to our main results; we will extend it to $m$-fold dimers in future sections.
\begin{theorem}\label{theorem:facewts}
Let $G$ be a plabic graph with black boundary vertices $1,2, \dots, n$, let $k$ be the number of internal white vertices minus the number of internal black vertices in $G$, and let $J$ be a $k$-element subset of $[n]$. Then where $\text{wt}_f(D)=\prod_{f \in F(G)} I_f^{nW_f-D_f-n}$,
\[\T(\Delta_J) = \sum_{D \in \mathcal{D}_J(G)} \text{wt}_f(D).\]
\end{theorem}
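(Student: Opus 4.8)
The plan is to prove Theorem \ref{theorem:facewts} by reducing the face-weight statement to the edge-weight statement of Theorem \ref{theorem:edgewts} together with the twist computations of Section \ref{subsection:twist}, rather than recomputing the twist from scratch. The core idea is that both theorems express a Pl\"ucker-type coordinate as a dimer partition function over the \emph{same} index set $\mathcal{D}_J(G)$; they differ only in the weight assigned to each dimer $D$. So the heart of the argument is to exhibit a \textbf{dictionary} between edge weights and face weights under which $\text{wt}_e(D)$ and $\text{wt}_f(D)$ agree term-by-term, up to a global rescaling that is independent of $D$ (and hence matches the rescaling relating $\T$ to $\T_{MuSp}$ recorded in \eqref{eq:musprmk}). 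This is exactly the type of translation alluded to in the section's opening paragraph (``a translation between edge and face weights similar to that in \cite{marshscott2016}''), so I expect the clean route is to import Muller--Speyer's face-weight formula for $\T_{MuSp}$ and then correct by the monomial of frozens in \eqref{eq:musprmk}.

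Concretely, I would proceed in the following steps. First, I would fix the edge-weighting that realizes the face labels: assign to each edge $e$ the ratio of the labels of the two faces it separates (a ``$Y$-type'' or cross-ratio weighting), so that the edge-weight monomial of a dimer telescopes into a monomial in the face labels. Second, I would compute $\text{wt}_e(D)$ under this weighting and show that collecting the exponent of each face label $I_f$ produces precisely $mW_f - D_f - m$ with $m=1$ (i.e. $W_f - D_f - 1$ in the single-dimer case, matching the displayed exponent $nW_f - D_f - n$ only after the frozen correction — see below), by a local count at each face of how many incident edges lie in $D$ versus how many white vertices border $f$. The combinatorial identity here is that each interior vertex is saturated by exactly one dimer edge, which pins down $D_f$ relative to the degree data $W_f$ of the face. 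Third, I would invoke Theorem \ref{theorem:edgewts} to identify $\sum_{D\in\mathcal{D}_J(G)}\text{wt}_e(D)$ with the value $\Delta_J(\tilde M)$ of an appropriate point $\tilde M$, and then identify that point, via the chosen weighting, with the twisted matrix $\T(M)$, using the explicit cross-product description of $\T$ from Section \ref{subsection:twist} and the comparison \eqref{eq:musprmk}.

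The delicate point — and the reason the stated exponent reads $nW_f - D_f - n$ rather than $W_f - D_f - 1$ — is the passage from the projectively-defined edge partition function to a genuine equality of polynomials, and the accounting for frozen variables. Theorem \ref{theorem:edgewts} only determines the Pl\"uckers up to simultaneous scaling, whereas Theorem \ref{theorem:facewts} asserts an \emph{exact} polynomial identity for $\T(\Delta_J)$. I would resolve this by pinning the scaling using the frozen (boundary) faces: the outer-face convention in the definition of $D_f$ (counting only edges not adjacent to boundary vertices) is engineered precisely so that the boundary contributions reproduce the monomial $\Delta_{I_{[a_1]}}\cdots\Delta_{I_{[a_k]}}$ of frozens in \eqref{eq:musprmk}, converting $\T_{MuSp}$ into $\T$. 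I expect \textbf{this bookkeeping at the outer faces} to be the main obstacle: one must verify that the boundary count in Definition \ref{definition:faceweights}, combined with the global power $n$ in the exponent, exactly absorbs the frozen rescaling and the homogeneity degree, so that the weighting is well-defined projectively yet yields the correct affine representative. Once the frozen monomial is matched, the remaining equality follows formally from Theorem \ref{theorem:edgewts} and the twist formulas already established. I would check the whole calculation against Example \ref{ex:346wts}, where the four listed face weights for boundary condition $\{3,4,6\}$ must sum to $\T(\Delta_{346})$, as a sanity test of the exponent convention and the frozen normalization.
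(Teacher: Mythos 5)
Your opening paragraph names exactly the route the paper takes: import Muller--Speyer's face-weight formula for $\T_{MuSp}(\Delta_J)$ (their Remark 7.11) and convert $\T_{MuSp}$ to $\T$ via the frozen monomial in \eqref{eq:musprmk}; and you correctly identify the outer-face bookkeeping as the crux, since the discrepancy between Muller--Speyer's edge count $\#\{e\in D:\tilde\partial_{fe}=1\}$ and $D_f$ occurs only at the outer faces lying counterclockwise of a boundary edge used by $D$, and those faces are precisely the ones labeled $I_{[a_1]},\dots,I_{[a_k]}$, which produces the frozen monomial. However, your ``concrete'' steps replace this with a different plan that has a genuine gap. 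You propose to invoke Theorem \ref{theorem:edgewts} to see that the edge-weighted partition functions are the Pl\"ucker coordinates of \emph{some} point $\tilde M$, and then to ``identify that point with the twisted matrix $\T(M)$, using the explicit cross-product description of $\T$.'' That identification is not formal and does not follow from the cross-product formulas of Section \ref{subsection:twist}: those formulas only re-express $\T(\Delta_J)$ in terms of Pl\"uckers of $M$, they say nothing about which point the face-label edge weighting parametrizes. The statement that the boundary measurement point of the face-label weighting \emph{is} the twist of $M$ is precisely the theorem of Marsh--Scott/Muller--Speyer that must be imported; trying to derive it from Theorem \ref{theorem:edgewts} plus \eqref{eq:musprmk} is circular, since \eqref{eq:musprmk} only becomes usable once you already know $\T_{MuSp}(\Delta_J)$ equals the face-weighted dimer sum. (The edge-weight/face-weight dictionary you describe is real, but in the paper it is the separate Proposition \ref{prop:edgetoface} with the weighting of Definition \ref{def:MaScEdgeWeights}, not the two-face ratio you suggest, and it is not used to prove Theorem \ref{theorem:facewts}.)

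The second problem is your reading of the exponent $nW_f-D_f-n$. You propose that the global power $n$ is ``engineered to absorb the frozen rescaling and the homogeneity degree.'' It is not: the frozen correction $\Delta_{I_{[a_1]}}\cdots\Delta_{I_{[a_k]}}$ is supported on exactly $k$ specific outer faces and arises from the $+1$ discrepancy in the edge count there, whereas replacing $W_f-D_f-1$ by $nW_f-D_f-n$ multiplies every face's contribution by $I_f^{(n-1)(W_f-1)}$, a completely different (and $D$-independent but nontrivial) monomial. The exponent in the theorem statement should be read as the $m=1$ specialization of Definition \ref{definition:faceweights}, i.e.\ $W_f-D_f-1$, which is what Example \ref{ex:346wts} and the subsequent verification of $\T(\Delta_{346})=\Delta_{456}\Delta_{157}$ use. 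If you drop the detour through Theorem \ref{theorem:edgewts}, take the exponent as $W_f-D_f-1$, and carry out the outer-face comparison you already sketched (checking also that $W_f=\tilde B_f$ because the graph is bipartite with black boundary), your argument becomes the paper's proof.
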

\begin{proof}
We will prove this theorem by relating it to Remark 7.11 of \cite{mullerspeyer}, which provides the following formula for their variant $\T_{MuSp}$ of the right twist of a Pl\"ucker coordinate:
\[\T_{MuSp}(\Delta_J) = \sum_{D \in \mathcal{D}_J(G)} \widetilde{\text{wt}_f(D)},\]
where $\widetilde{\text{wt}_f(D)}$ is defined via \[\prod_{f \in F(G)} I_f^{(\tilde{B}_f-1) - \#\{e \in D : \tilde{\partial}_{fe} = 1\}},\] and $\tilde{B}_f$ and $\tilde{\partial}_{fe}$ are defined in terms of the number of edges $e$ such that face $f$ lies \textbf{directly upstream} of $e$.\footnote{As in \cite[Remark 5.8]{mullerspeyer}, such a weighting appeared previously in \cite{octahedron} but via different exposition.} 
We have from Equation (\ref{eq:musprmk}) that for $J = \{a_1,a_2,\dots, a_k\}$,
\[\T_{MuSp}(\Delta_J) =  \frac{1}{\Delta_{I_{[a_1]}}\Delta_{I_{[a_2]}}\cdots \Delta_{I_{[a_k]}}} \T(\Delta_J),\]
where the notation $\Delta_{I_{[a_j]}}$ is shorthand for the Pl\"ucker coordinate for the cyclically connected subset $\{a_j, a_j+1, a_j+2, \dots, a_j + k-1\}$ where indices are taken modulo $n$. It will therefore suffice to show that
\begin{equation}\label{eq:muspfacewts}
\sum_{D \in \mathcal{D}_J(G)} \widetilde{\text{wt}_f(D)} = \frac{1}{\Delta_{I_{[a_1]}}\Delta_{I_{[a_2]}}\cdots \Delta_{I_{[a_k]}}} \sum_{D \in \mathcal{D}_J(G)} \text{wt}_f(D).
\end{equation}

Note that since our plabic graphs are bipartite, all inner faces are bordered by an even number of edges, and similarly for all outer faces since all boundary vertices are black (and we do not count the ``edges" between boundary vertices). Thus $W_f$, the number of white vertices adjacent to a face $f$, is exactly half the number of edges bordering $f$; we now have from \cite[Section 5.1]{mullerspeyer} that $W_f=\tilde{B}_f$.

If a given face $f$ is inner, we immediately have that $\#\{e \in D : \tilde{\partial}_{fe} = 1\} = D_f$. If $f$ is outer, the equality holds unless $f$ lies immediately counter-clockwise to a boundary edge $e$ that is included in the dimer cover $D$, in which case we have $\#\{e \in D : \tilde{\partial}_{fe} = 1\} = D_f+1$. The boundary edges included in $D$ are exactly those adjacent to boundary vertices $j \in J$. Therefore, by the construction of face labels, when $J = \{a_1,a_2,\dots, a_k\}$, the three outer faces where $\#\{e \in D : \tilde{\partial}_{fe} = 1\} = D_f + 1$ are precisely those labeled $I_{[a_1]}$, $I_{[a_2]},$\dots, $I_{[a_k]}$. Equation (\ref{eq:muspfacewts}) now follows from a comparison of definitions.
\end{proof}

\begin{remark}
\label{rem:twistdegrees}
For $k=3$, the twist map $\T$ doubles degree, while the twist map $\T_{MuSp}$ is of degree $-1$. This is consistent with the claims made in our proof, which assert in this case that images of $\T_{MuSp}(\Delta_J)$ and $\T(\Delta_J)$ agree up to a quotient by three frozen variables.
\end{remark}

The following definition and proposition provide a translation between the face weights of Definition \ref{definition:faceweights} and the edge weights used in the statements of Subsection \ref{subsection:dimers}, creating a streamlined comparison between our results and those of other authors that are phrased in terms of edge weights. In particular, in Section \ref{section:duality}, we will use this propostion to describe our main theorems using the language of web duality introduced in \cite{fll2019}.
\begin{definition}\label{def:MaScEdgeWeights}
Given a plabic graph $G$ with face labels $I_f$, we define the weight of an edge $e$, denoted $wt(e)$, via
$$\text{wt}(e) = \frac{\Delta_{I_1}\cdot \Delta_{I_2} \cdots \Delta_{I_d}}{\Delta_{{I_{e(1)}}}\cdot \Delta_{I_{e(2)}}},$$
where $I_1$ through $I_d$ label the $d$ faces bordering the black endpoint of edge $e$, and $I_{e(1)}$ and $I_{e(2)}$ label the two faces that border edge $e$ itself. See Figure \ref{fig:edgetoface}.
\end{definition}
\begin{remark}
This definition is analogous to \cite[Definition 7.1]{marshscott2016}, except that they consider white vertices where we consider black vertices. The significance of this switch is that by our definition, all edges adjacent to boundary vertices will have weight 1, since all boundary vertices of our plabic graphs are black.
\end{remark}

\begin{proposition}\label{prop:edgetoface}
Given an $m$-fold dimer configuration $D$ on a plabic graph $G$, consider the edge-weighting $\text{wt}(e)$ of Definition \ref{def:MaScEdgeWeights}, and let $\text{wt}_e(D) = \prod_{e \in D} \text{wt}(e)$ as in Section \ref{subsection:dimers}.
Also recall the face-weighting $\text{wt}_f(D)$ of Definition \ref{definition:faceweights}.
Then
\[\text{wt}_e(D)\bigg/\left(\prod_{\text{inner }f \in F(G)} \Delta_{I_f}\right)^m=\text{wt}_f(D).\]

\begin{figure}
    \centering
    \includegraphics[scale=.25]{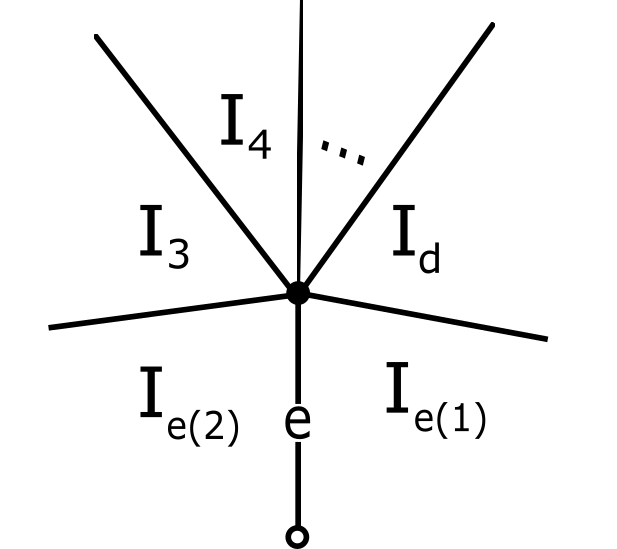}
    \caption{A weight function on the plabic graph $G$; here $\text{wt}(e)=\Delta_{I_3}\cdot\Delta_{I_4} \cdots \Delta_{I_d}$.}
    \label{fig:edgetoface}
\end{figure}
\end{proposition}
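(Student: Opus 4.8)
The plan is to prove the identity $\text{wt}_e(D)/\big(\prod_{\text{inner }f}\Delta_{I_f}\big)^m = \text{wt}_f(D)$ by taking the logarithm (in the free abelian group on the Pl\"ucker symbols $\Delta_{I_f}$) and comparing the exponent of each face label $\Delta_{I_g}$ on both sides. Since both sides are monomials in the $\Delta_{I_f}$, it suffices to fix an arbitrary face $g \in F(G)$ and show that the total exponent of $\Delta_{I_g}$ appearing on the left equals $mW_g - D_g - m$, matching Definition \ref{definition:faceweights}. This reduces a product identity to a purely local bookkeeping statement about how often a given face symbol is produced or cancelled.

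First I would unwind the left-hand side. By definition $\text{wt}_e(D) = \prod_{e\in D}\text{wt}(e)$, and each edge $e$ contributes, via Definition \ref{def:MaScEdgeWeights}, a factor $\Delta_{I_j}$ for every face $I_j$ bordering the \emph{black} endpoint of $e$ in the numerator, and $\Delta_{I_{e(1)}}^{-1}\Delta_{I_{e(2)}}^{-1}$ in the denominator for the two faces flanking $e$. So the exponent of $\Delta_{I_g}$ in $\text{wt}_e(D)$ is
\[
\big(\text{number of edges of }D\text{ whose black endpoint borders }g\big) - \big(\text{number of edges of }D\text{ that themselves border }g\big).
\]
The second quantity is exactly $D_g$ for inner faces. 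For the numerator count, I would use the key structural fact that each \emph{black} interior vertex is covered by exactly $m$ edges of $D$ (as $D$ is $m$-fold); summing over the black vertices on the boundary of $g$ gives $m$ times the number of black vertices bordering $g$. Here I would invoke bipartiteness: a face $g$ is bordered by equally many black and white vertices, so the number of black vertices bordering $g$ equals $W_g$. Hence the numerator count is $mW_g$, and the exponent of $\Delta_{I_g}$ in $\text{wt}_e(D)$ is $mW_g - D_g$ for inner faces. Finally dividing by $\big(\prod_{\text{inner }f}\Delta_{I_f}\big)^m$ subtracts $m$ from the exponent of each inner $\Delta_{I_g}$, yielding exactly $mW_g - D_g - m$, as desired.

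The main obstacle, and the step requiring the most care, is the treatment of \emph{outer} faces and boundary vertices, where the two definitions deliberately diverge. For an outer face $g$, Definition \ref{definition:faceweights} still assigns exponent $mW_g - D_g - m$, but now $D_g$ counts only edges bordering $g$ that are \emph{not adjacent to boundary vertices}, and the division by $\prod_{\text{inner }f}\Delta_{I_f}$ does \emph{not} subtract an $m$. I would need to check that these two adjustments are consistent with the edge-weight side: because all boundary vertices are black and the edge weight $\text{wt}(e)$ counts faces around black endpoints, the boundary edges incident to $g$ contribute to the numerator count but are excluded from $D_g$, and one must verify that these contributions exactly compensate for the missing $-m$ in the outer case. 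Concretely, I would argue that each outer face $g$ borders some boundary vertices, and a careful count of how the $m$ edges at each such boundary vertex are distributed — together with the convention of not counting the ``phantom'' boundary-arc edges — produces precisely the shift that makes the $-m$ unnecessary for outer faces. I expect this boundary-accounting, rather than the interior computation, to be where the bipartite structure and the inner/outer conventions of Definition \ref{definition:faceweights} must be reconciled most delicately, so I would organize the proof as an exponent comparison split into the inner-face case and the outer-face case, handling the latter by tracking boundary edges explicitly.
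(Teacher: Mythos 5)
Your proposal is correct and follows essentially the same route as the paper's proof: a face-by-face comparison of the exponent of each $\Delta_{I_g}$, with the inner-face case handled by bipartiteness and the outer-face case by tracking the black boundary vertices and the boundary edges separately. The only structural difference is that the paper first reduces to $m=1$ (since both sides factor over the $m$ constituent single dimers) whereas you work with the $m$-fold configuration directly via the fact that each interior vertex is covered exactly $m$ times; this is immaterial. One small correction to your outer-face plan: the contributions of the boundary edges of $D$ to the numerator (through their black boundary endpoints) and to the denominator (as edges flanking $g$) cancel \emph{each other} exactly, since each boundary vertex has degree one; the missing $-m$ is instead supplied by the vertex count, namely that an outer face $g$ is flanked by two black boundary vertices, so the number of black \emph{interior} vertices bordering $g$ is $W_g-1$ rather than $W_g$, contributing $m(W_g-1)$ to the numerator exponent. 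With that adjustment your outline closes completely and matches the paper's computation.
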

\begin{proof}
First, note that any $m$-fold dimer $D$ is an overlay of $m$ single dimers $D_1,\dots,D_m$, and from the definitions we have
\[\frac{\text{wt}_e(D)}{\left(\prod_{\substack{\text{inner }\\f \in F(G)}} \Delta_{I_f}\right)^m}=\prod_{i=1}^m \frac{\text{wt}_e(D_i)}{\prod_{\substack{\text{inner }\\f \in F(G)}} \Delta_{I_f}}\]
and
\[\text{wt}_f(D)=\prod_{i=1}^m \text{wt}_f(D_i).\]
It therefore suffices to prove the case where $m=1$.

For any edge $e \in E(G)$, let $e(1)$ and $e(2)$ be the faces adjacent to $e$, as in Figure \ref{fig:edgetoface}. Also, let $V_D(G)$ be the set of vertices of $G$ that are adjacent to some edge of $D$; note that $V_D(G)$ is exactly the set of interior vertices of $G$ together with $\partial D$. Then by definition, we have
\[\frac{\text{wt}_e(D)}{\prod_{\substack{\text{inner }\\f \in F(G)}} \Delta_{I_f}}=\frac{\prod_{\substack{\text{black}\\v \in V_D(G)}}\prod_{\substack{\text{$f \in F(G)$}\\\text{incident to }v}} \Delta_{I_f}}{\prod_{e \in D}I_{e(1)}I_{e(2)}}\cdot \frac{1}{\prod_{\substack{\text{inner }\\f \in F(G)}} \Delta_{I_f}}\]
\[=\frac{\prod_{\substack{\text{inner }\\f \in F(G)}} \Delta_{I_f}^{\#\left(\substack{\text{black }v \in V_D(G)\\\text{incident to }f}\right)-1} \cdot\prod_{\substack{\text{outer }\\f \in F(G)}} \Delta_{I_f}^{\#\left(\substack{\text{black }v \in V_D(G)\\\text{incident to }f}\right)}}
{\prod_{e \in D} I_{e(1)}I_{e(2)}}.\]
For a given $f \in F(G)$, we extract the power of $\Delta_{I_f}$ in the above quotient. If $f$ is inner, every vertex adjacent to $f$ is included in $D$, and there are the same number of white vertices adjacent to $f$ as black vertices. We therefore get
\[\Delta_{I_f}^{\#\left(\substack{\text{black }v \in V_D(G)\\\text{incident to }f}\right)-\#\left(\substack{\text{edges }e \in D\\\text{incident to }f}\right)-1}=\Delta_{I_f}^{\#\left(\substack{\text{white }v \in V(G)\\\text{incident to }f}\right)-\#\left(\substack{\text{edges }e \in D\\\text{incident to }f}\right)-1}=\Delta_{I_f}^{W_f-D_f-1}.\]
If $f$ is outer, 
we get
\[\Delta_{I_f}^{\#\left(\substack{\text{black }v \in V_D(G)\\\text{incident to }f}\right)-\#\left(\substack{\text{edges }e \in D\\\text{incident to }f}\right)}
\]\[=\Delta_{I_f}^{\left(\#\left(\substack{\text{black interior }\\v \in V(G)\\\text{incident to }f}\right)+\#\left(\substack{\text{black }v \in \partial D\\\text{incident to }f}\right)\right)-\left(\#\left(\substack{\text{edges }e \in D\\\text{incident to }f \\\text{~and~to~}\partial G}\right)+\#\left(\substack{\text{edges }e \in D\\\text{incident to }f\\\text{but not }\partial G}\right)\right)}\]
\[=\Delta_{I_f}^{\#\left(\substack{\text{black interior }\\v \in V(G)\\\text{incident to }f}\right)-\#\left(\substack{\text{edges }e \in D\\\text{incident to }f\\\text{but not }\partial G}\right)}\]
\[=\Delta_{I_f}^{\left(\#\left(\substack{\text{white interior}\\v \in V(G)\\\text{incident to }f}\right)-1\right)-\#\left(\substack{\text{edges }e \in D\\\text{incident to }f\\\text{but not }\partial G}\right)}=\Delta_{I_f}^{W_f-D_f-1},\]
where the second-to-last equality follows from the fact that all boundary vertices of $G$ are black.
The product of these powers of face weights is $\text{wt}_f(D)$ by definition.
\end{proof}

\begin{example}
Since $\{3,4,6\}=\{a,a+1,b\}$, we have from Subsection \ref{subsection:twist} that $\T(\Delta_{346})= \Delta_{456}\Delta_{157}.$ Taking the sum of the weights of the single dimer configurations with boundary condition $\{3,4,6\}$, as computed in Example \ref{ex:346wts}, yields
\[
    \Delta_{456} \cdot \dfrac{\Delta_{167}\Delta_{237}\Delta_{467}\Delta_{567}+\Delta_{167}\Delta_{267}\Delta_{347}\Delta_{567} + \Delta_{167}\Delta_{267}\Delta_{367}\Delta_{457}+\Delta_{127}\Delta_{367}\Delta_{467}\Delta_{567}}{\Delta_{267}\Delta_{367}\Delta_{467}}.\]
This is indeed the Laurent expansion for $\Delta_{456}\Delta_{157}$, which is consistent with Theorem \ref{theorem:facewts}.

We also note using Equation (\ref{eq:musprmk}) that $\T_{MuSp}(\Delta_{346}) = \frac{\Delta_{456}\Delta_{157}}{\Delta_{345}\Delta_{456}\Delta_{167}}
= \frac{\Delta_{157}}{\Delta_{345}\Delta_{167}}$, which is consistent with this map being degree $-1$ as in Remark \ref{rem:twistdegrees}.
\end{example}

\section{Double Dimer Configurations for Quadratic Differences}\label{section:doubledimers}
In this section, we give a combinatorial interpretation via double dimer face weights for the twists of the quadratic cluster variables $X$ and $Y$ in $\Gr(3,6)$, and of analogous expressions $X^S$ and $Y^S$ for any $S \subset [n]$ with $|S|=6 \leq n$.

\begin{theorem}\label{theorem:doubledimersXY}
Given a plabic graph $G$ for $\Gr(3,n)$ and a set $S=\{s_1<\dots<s_6\} \subseteq [n]$, let $\mathcal{D}^2_X(G)$ be the set of double dimers on $G$ with paths connecting vertices in pairs $\{s_1,s_6\},\{s_2,s_3\},$ and $\{s_4,s_5\}$, and let $\mathcal{D}^2_Y(G)$ be the set of double dimers with paths connecting vertices in pairs $\{s_1,s_2\},\{s_3,s_4\},$ and $\{s_5,s_6\}$, where the double dimers in each set possibly include internal doubled edges and cycles, but no additional edges adjacent to boundary vertices. Then
\begin{itemize}
    \item $\mathcal{T}^*(X^S)=\sum_{D \in \mathcal{D}^2_X(G)} 2^{\#( \mathrm{cycles~in~}D)} wt_f(D)$
    \item $\mathcal{T}^*(Y^S)=\sum_{D \in \mathcal{D}^2_Y(G)} 2^{\#( \mathrm{cycles~in~}D)} wt_f(D).$
\end{itemize}
\end{theorem}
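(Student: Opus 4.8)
The plan is to reduce the statement about $X^S$ and $Y^S$ to the single-dimer result of Theorem~\ref{theorem:facewts}, exploiting the fact that $X^S$ and $Y^S$ are quadratic differences of Pl\"ucker coordinates and that $\T$ doubles degree (Remark~\ref{rem:twistdegrees}). Recall from Subsection~\ref{subsection:differences} that $X^S=\Delta_{s_1 s_3 s_4}\Delta_{s_2 s_5 s_6}-\Delta_{s_1 s_5 s_6}\Delta_{s_2 s_3 s_4}$ (reindexing $X$ through the projection $\pi_{6,S}$), and similarly for $Y^S$. Applying the twist and using that $\T$ is a ring homomorphism up to the frozen-variable rescaling discussed in Subsection~\ref{subsection:twist}, I would first expand $\T(X^S)$ as a product/difference of the twisted Pl\"uckers, then substitute the single-dimer partition-function expression for each factor coming from Theorem~\ref{theorem:facewts}. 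Concretely, each $\T(\Delta_J)$ becomes a sum over $\mathcal{D}_J(G)$ of face weights, so $\T(\Delta_{J_1})\cdot\T(\Delta_{J_2})$ becomes a sum over \emph{ordered pairs} $(D_1,D_2)$ of single dimers, which is precisely a sum over double dimer configurations with the union boundary condition $J_1\cup J_2$.

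The heart of the argument is the combinatorial bookkeeping that identifies which double dimers survive the difference and with what multiplicity. Overlaying two single dimers $D_1\in\mathcal{D}_{J_1}(G)$ and $D_2\in\mathcal{D}_{J_2}(G)$ produces a multigraph in which every interior vertex has degree two, so the overlay decomposes into doubled edges, cycles, and paths whose endpoints are exactly the boundary vertices in the symmetric-type difference $J_1\triangle J_2$. The boundary vertices of the combined configuration are fixed, but the \emph{pairing} of endpoints into paths is what distinguishes the two monomials in $X^S$ (or $Y^S$): the plan is to show that the non-crossing pairing forced on these configurations is exactly $\{s_1,s_6\},\{s_2,s_3\},\{s_4,s_5\}$ for $X^S$ and $\{s_1,s_2\},\{s_3,s_4\},\{s_5,s_6\}$ for $Y^S$. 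I expect to argue that a double dimer contributes to $\T(\Delta_{J_1})\cdot\T(\Delta_{J_2})$ precisely when its path endpoints can be split into the sets $J_1$ and $J_2$; summing the two monomials of $X^S$ and cancelling the ``wrong'' pairings against the minus sign will leave exactly the configurations with the claimed non-crossing connectivity. The factor of $2^{\#(\text{cycles})}$ arises because each cycle in the overlay can be two-colored in two ways (assigning its alternating edges to $D_1$ versus $D_2$), so a single underlying double dimer with $c$ cycles is realized by $2^c$ ordered pairs of single dimers, all with the same face weight.

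For the weight compatibility, I would invoke the multiplicativity already built into Definition~\ref{definition:faceweights}: since $\text{wt}_f$ on an $m$-fold dimer is defined so that $\text{wt}_f(D)=\prod_i \text{wt}_f(D_i)$ over its constituent single dimers (as used in the proof of Proposition~\ref{prop:edgetoface}), the product $\text{wt}_f(D_1)\text{wt}_f(D_2)$ is exactly the double-dimer face weight $\text{wt}_f(D)$, with the exponent bookkeeping $2W_f-D_f-2$ matching the $m=2$ case. This is where I must be careful that the frozen-variable rescaling relating $\T$ to $\T_{MuSp}$ in Equation~(\ref{eq:musprmk}) applies consistently to the degree-two expression, i.e.\ that the three frozens removed for each of $J_1$ and $J_2$ combine correctly; this should follow from $J_1$ and $J_2$ sharing no circularly consecutive structure beyond what the single-dimer theorem already accounts for.

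\textbf{Main obstacle.} The genuinely hard part is the connectivity analysis: proving that the surviving double dimers are \emph{exactly} those realizing the prescribed non-crossing pairings, and that the cross terms (pairings splitting the endpoints incompatibly with either monomial) cancel in the difference. This requires understanding how the path structure of an overlay constrains its endpoints, and in particular ruling out double dimers whose paths would connect, say, $s_1$ to $s_2$ when computing $X^S$. I anticipate this will need a topological non-crossing argument on the disk, using planarity of $G$ and the cyclic order of the boundary vertices to show that a double dimer with boundary $J_1\cup J_2$ admits a \emph{unique} planar non-crossing pairing, which then dictates membership in $\mathcal{D}^2_X(G)$ or $\mathcal{D}^2_Y(G)$; the minus sign in the quadratic difference is precisely what selects this pairing over the others.
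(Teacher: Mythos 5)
Your proposal follows essentially the same route as the paper: expand $\T(X^S)$ multiplicatively via Theorem \ref{theorem:facewts}, reinterpret each product $\T(\Delta_{J_1})\T(\Delta_{J_2})$ as a sum over overlays, and let the minus sign cancel the unwanted connectivity pattern. The ``main obstacle'' you flag --- that the overlays of dimers with boundaries $J_1$ and $J_2$ are exactly the double dimers whose paths each join a $J_1$-vertex to a $J_2$-vertex in a non-crossing way, with multiplicity $2^{\#(\mathrm{cycles})}$ --- is not re-proved in the paper but imported wholesale as Theorem 3.1 of \cite{lam2015}; your two-coloring heuristic for the cycle multiplicity is precisely the content of that result. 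Once it is in hand, the remaining step is just the observation that there are exactly two non-crossing pairings between $\{s_1,s_3,s_4\}$ and $\{s_2,s_5,s_6\}$ but only one between $\{s_1,s_5,s_6\}$ and $\{s_2,s_3,s_4\}$ (namely the one they share), so the subtraction leaves the single pairing $\{s_1,s_6\},\{s_2,s_3\},\{s_4,s_5\}$; no further topological argument is needed, since the pairing realized by a given double dimer is determined by its connected components and is automatically non-crossing by planarity. Your worry about the frozen-variable rescaling is moot: Theorem \ref{theorem:facewts} is already stated for $\T$ itself, and $\T$ is multiplicative on Pl\"ucker coordinates because it is induced by an operation on matrices, so no $\T_{MuSp}$ bookkeeping enters the quadratic case.
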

\begin{proof}
It follows from Theorem \ref{theorem:facewts} that for any plabic graph $G$ for $\Gr(k,n)$, and for any $I,J \subset [n]$ with $|I|=|J|=k$,
\[\T(\Delta_I\Delta_J) =\T(\Delta_I)\T(\Delta_J)= \left(\sum_{D \in \mathcal{D}_I(G)} \text{wt}_f(D)\right)\left(\sum_{D \in \mathcal{D}_J(G)} \text{wt}_f(D)\right)\]
\[=\sum_{D \in \mathcal{D}_{I,J}^2(G)}M_D \text{wt}_f(D),\]
where $\mathcal{D}_{I,J}^2(G) \subset \mathcal{D}^2(G)$ is the set of double dimer covers $D$ of $G$ formed by overlaying a single dimer with boundary condition $I$ and a single dimer with boundary condition $J$, and the multiplicity $M_D$ is the number of pairs of single dimers that become $D$ when overlaid.

To characterize $\mathcal{D}_{I,J}^2(G)$, we note that the edges contained in any double dimer may be viewed as a union of connected components, each of which is a path between boundary vertices, a doubled edge, or an internal cycle; see \cite{kuo04}. It follows from \cite[Theorem 3.1]{lam2015} that for any $I,J \in \binom{[n]}{k}$, $\mathcal{D}_{I,J}^2(G)$ contains exactly those double dimers consisting of $k$ non-crossing paths, each connecting a vertex with label in $I$ to a vertex with label in $J$, as well as possibly some internal doubled edges and cycles; and that for any double dimer $D$,
\[M_D=2^{\# (\mathrm{cycles~in~}D)}.\]

Now in the case of $X$, where $k=3$, we have
\[\mathscr{T}^*(X^S)=\mathscr{T}^*(\Delta_{134})\mathscr{T}^*(\Delta_{256})-\mathscr{T}^*(\Delta_{156})\mathscr{T}^*(\Delta_{234})\]
\[=\sum_{D \in \mathcal{D}_{134,256}(G)} 2^{\#( \mathrm{cycles~in~}D)}wt_f(D) - \sum_{D \in \mathcal{D}_{156,234}(G)} 2^{\# (\mathrm{cycles~in~}D)}wt_f(D).\]
The first sum contains exactly those double dimers with paths connecting $1$ to $2$, $3$ to $6$, and $4$ to $5$; or $1$ to $6$, $2$ to $3$, and $4$ to $5$. These are the only possible non-crossing matchings. The second, negative sum contains exactly those double dimers with paths connecting $1$ to $2$, $3$ to $6$, and $4$ to $5$. Therefore the terms that remain after cancellation are the weights of all double dimers that connect $1$ to $6$, $2$ to $3$, and $4$ to $5$, which are exactly those included in $\mathcal{D}_{X}^2(G)$.

Similarly, in the case of $Y$, we have
\[\mathscr{T}^*(Y)=\mathscr{T}^*(\Delta_{145})\mathscr{T}^*(\Delta_{236})-\mathscr{T}^*(\Delta_{123})\mathscr{T}^*(\Delta_{456})\]
\[=\sum_{D \in \mathcal{D}_{145,236}(G)} 2^{\#( \mathrm{cycles~in~}D)}wt_f(D) - \sum_{D \in \mathcal{D}_{123,456}(G)} 2^{\# (\mathrm{cycles~in~}D)}wt_f(D).\]
The first sum contains exactly those double dimers with paths connecting $1$ to $6$, $2$ to $5$, and $3$ to $4$; or $1$ to $2$, $3$ to $4$, and $5$ to $6$. The second, negative sum contains exactly those double dimers with paths connecting $1$ to $6$, $2$ to $5$, and $3$ to $4$. Therefore the terms that remain after cancellation are the weights of all double dimers that connect $1$ to $2$, $3$ to $4$, and $5$ to $6$, which are exactly those included in $\mathcal{D}_{Y}^2(G)$.

This completes the proof for $S=\{1,2,3,4,5,6\}$; the argument is identical for general $S$.
\end{proof}

\begin{example}
\begin{figure}[H]
    \centering
    \includegraphics[width=\textwidth]{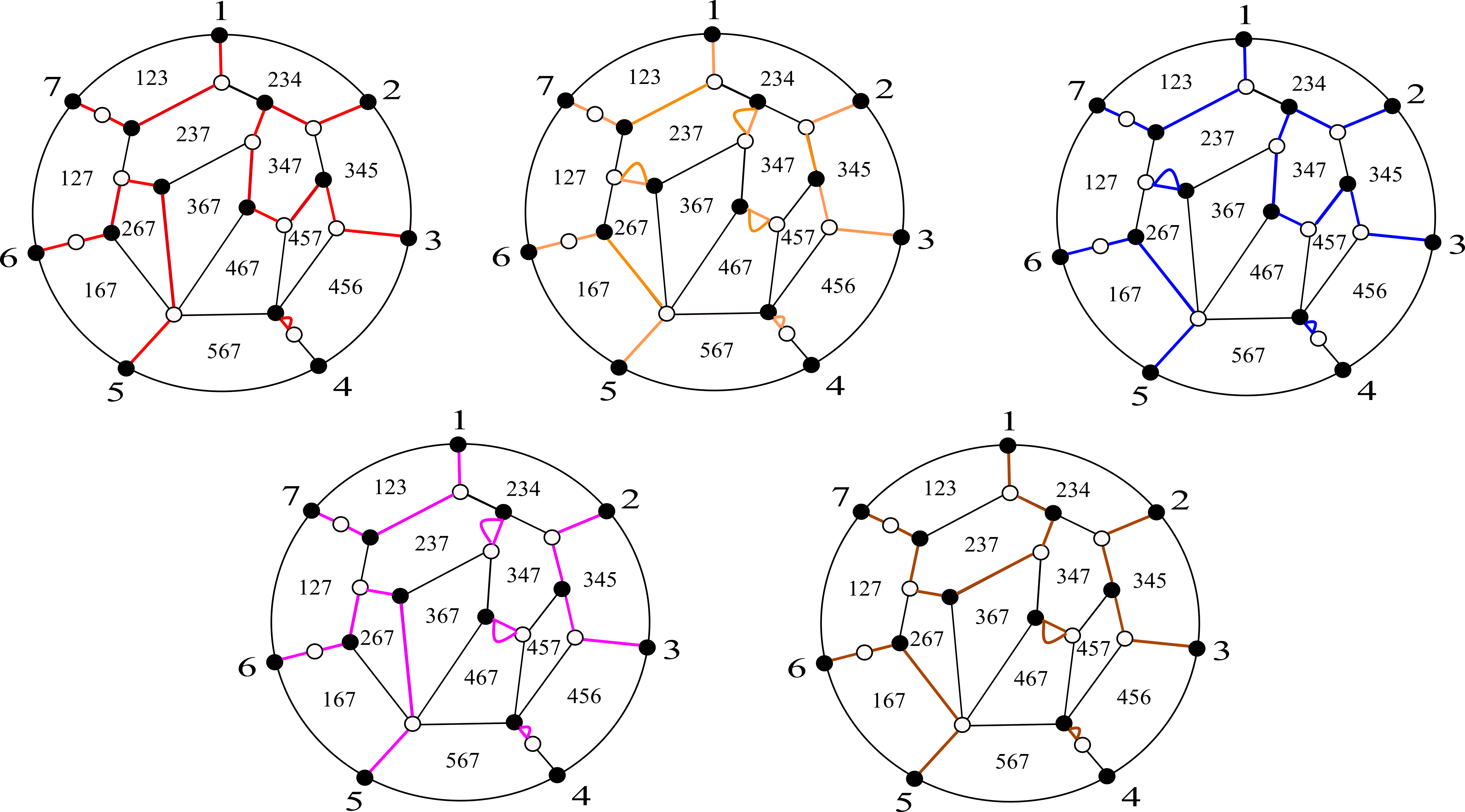}
    \caption{Double dimer configurations satisfying the connectivity pattern for $X^{123567}$.}
    \label{fig:X123567}
\end{figure}
Figure \ref{fig:X123567} depicts all double dimer configurations in $\mathcal{D}_{X^{123567}}^2(G)$, which is defined in Theorem \ref{theorem:doubledimersXY} to be the set of double dimers with paths connecting vertex 1 to 7, 2 to 3, and 5 to 6. One may streamline the construction of these dimers by first finding ``forced edges" that necessarily must be included a certain number of times in any dimer with the desired connectivity. For example, there must be a single dimer edge adjacent to every boundary vertex except vertex 4, which cannot be adjacent to any dimer edges. 

Computing the twist of $X^{123567}$ using the methods described in Section \ref{subsection:twist} yields $\T(X^{123567}) = (127)(234)(X^{134567})$. Theorem \ref{theorem:doubledimersXY} asserts that the Laurent expansion for $\T(X^{123567})$ should be the sum of the face weights of the double dimers in Figure \ref{fig:X123567}, and indeed we have
\begin{align*}
    (127)(234)(X^{134567}) & =\textcolor{red}{(127)(234)\frac{(167)(237)(345)(467)}{(267)(347)}} + \textcolor{orange}{(127)(234)\frac{(127)(234)(367)^2(457)}{(237)(267)(347)}}\\
    &+\textcolor{blue}{(127)(234)\frac{(127)(345)(367)(467)}{(267)(347)}} + \textcolor{purple}{(127)(234)\frac{(167)(234)(367)(457)}{(267)(347)}}\\
    &+ \textcolor{brown}{(127)(234)\frac{(123)(367)(457)}{(237)}}\\&= wt_f(\textcolor{red}{D_1}) + wt_f(\textcolor{orange}{D_2}) + wt_f(\textcolor{blue}{D_3}) + wt_f(\textcolor{purple}{D_4}) + wt_f(\textcolor{brown}{D_5})
\end{align*}
where $D_1, D_2, D_3, D_4, D_5$ are the red, orange, blue, purple and brown double dimer configurations in Figure \ref{fig:X123567}, respectively.
\end{example}

We end this section by answering a question posed to us by David Speyer regarding cluster variables associated to (untwisted) Pl\"ucker coordinates.

\begin{proposition}
\label{prop:untwisted}
For any $n\geq 4$, in the coordinate ring $\mathbb{C}[\widehat{\Gr}(3,n)]$, the Laurent expansion for the cluster variable given by a Pl\"ucker coordinate $\Delta_J$ may be expressed combinatorially as a partition function as given by either Theorem \ref{theorem:facewts} or Theorem \ref{theorem:doubledimersXY}, up to multiplication by frozen variables.
\end{proposition}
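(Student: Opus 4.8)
The plan is to realize each Plücker coordinate $\Delta_J$, up to a Laurent monomial in frozen variables, as the right twist $\T(W)$ of a cluster variable $W$ whose twist is already modeled by one of our two theorems; the partition function computing $\T(W)$ is then precisely the desired Laurent expansion of $\Delta_J$, after dividing out the relevant frozen factors. Recall from the discussion following \cite[Prop. 8.10]{marshscott2016} that, modulo multiplication by frozen variables, $\T$ induces a bijection on the set of cluster variables, and that the inverse of the right twist is the left twist $\lT$. The argument splits according to whether or not $J$ contains two cyclically consecutive indices.

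First I would treat the case where $J$ contains a consecutive pair, say $J = \{x,\, y,\, y+1\}$ with indices read modulo $n$ and $J$ not fully consecutive. Setting $K = \{x-2,\, x-1,\, y-1\}$ and $W = \Delta_K$, the computation of $\T(\Delta_{a,a+1,b})$ in Subsection \ref{subsection:twist} gives $\T(W) = \Delta_{x-1,x,x+1}\,\Delta_{x,y,y+1} = \Delta_{x-1,x,x+1}\cdot \Delta_J$, a frozen variable times $\Delta_J$. Hence $\Delta_J = \T(W)/\Delta_{x-1,x,x+1}$, and Theorem \ref{theorem:facewts} expresses $\T(W) = \sum_{D \in \mathcal{D}_K(G)} \mathrm{wt}_f(D)$ as a single-dimer partition function, which after division by $\Delta_{x-1,x,x+1}$ is the Laurent expansion of $\Delta_J$ up to a frozen factor. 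I would record the routine variant for fully consecutive $J = \{m,m+1,m+2\}$, where $K = \{m-2,m-1,m\}$ is itself consecutive and the identity $\T(\Delta_{a,a+1,a+2}) = \Delta_{a+1,a+2,a+3}\Delta_{a+2,a+3,a+4}$ again produces $\Delta_J$ times frozens.

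Next I would treat the case where no two indices of $J = \{p,q,r\}$ are cyclically consecutive. Here the left-twist analogue of the computation in Subsection \ref{subsection:twist} identifies $\lT(\Delta_J)$ with the quadratic cluster variable $X^S$ (or $Y^S$, depending on whether an index equals $n-1$) for $S = \{p-2,\,p-1,\,q-2,\,q-1,\,r-2,\,r-1\}$; the hypothesis that $p,q,r$ are pairwise non-adjacent guarantees that $S$ has six distinct elements and that none of the Plücker coordinates in the quadratic difference degenerate, so no frozen factors intervene. Since $\T$ and $\lT$ are mutually inverse modulo frozens, applying $\T$ yields $\T(X^S) = \Delta_J$ up to a frozen monomial, and Theorem \ref{theorem:doubledimersXY} expresses $\T(X^S)$ (respectively $\T(Y^S)$) as a double-dimer partition function, giving the Laurent expansion of $\Delta_J$ after dividing by the appropriate frozens.

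The main obstacle I anticipate is bookkeeping rather than conceptual: confirming that the explicit preimages $W$ land in the expected families for every residue class of $J$ modulo $n$ (including small $n$, the frozen cases, and indices picking up unexpected adjacencies), and pinning down the normalization so that the left- and right-twist formulas are genuinely inverse up to frozen factors. The most delicate point is the non-consecutive case, where I must ensure the identification $\lT(\Delta_J) \in \{X^S, Y^S\}$ is exactly as predicted and that applying $\T$ recovers $\Delta_J$ with no stray sign or cancellation; a degree comparison via Remark \ref{rem:twistdegrees} (each twist doubles degree, so $\T(X^S)$ has degree four and matches $\Delta_J$ times three frozens) together with a direct match of the determinantal expressions should settle any ambiguity.
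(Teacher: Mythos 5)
Your proposal is correct, and its overall architecture — the same three-way case split on the adjacency structure of $J$, with the same preimages $\Delta_K$ in the two cases where $J$ contains a cyclically consecutive pair — matches the paper's proof; those two cases are just the paper's computations with the roles of the indices relabeled. The only genuine divergence is in the non-adjacent case $J=\{p,q,r\}$. The paper verifies $\T(X^{p-2,p-1,q-2,q-1,r-2,r-1})=\Delta_{p-1,p,p+1}\Delta_{q-1,q,q+1}\Delta_{r-1,r,r+1}\,\Delta_J$ by a direct forward computation: it expands each $\T(\Delta_I)$ via the adjacent-index formulas and collapses the result with a three-term Pl\"ucker relation, which has the advantage of producing the frozen prefactor explicitly. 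You instead identify $\lT(\Delta_J)$ with $X^S$ (or $Y^S$) via the compound-determinant description and then invoke the fact that $\lT$ and $\T$ are mutually inverse up to frozen rescalings. That route is valid and avoids the Pl\"ucker-relation manipulation, but it carries two obligations you correctly flag as ``delicate'': the paper only records the comparison of $\T$ with $\T_{MuSp}$ (Equation (2.1)), so you would need to state and use the analogous comparison for the left twists to conclude that $\lT\circ\T$ is the identity up to frozens on the level of functions; and your argument yields the frozen factor only implicitly (pinned down by the degree count of Remark \ref{rem:twistdegrees}), whereas the paper's computation exhibits it. Neither point is a gap, just extra bookkeeping your approach trades for the paper's algebra.
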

\begin{proof}
It is sufficient to demonstrate in $\mathbb{C}[\widehat{\Gr}(3,n)]$  that up to multiplication by frozen variables, any Pl\"ucker coordinate may be written as the image of the twist map applied either to a Pl\"ucker coordinate $\Delta_I$ or to a quadratic difference $X^S$. In what follows, we consider indices mod $n$.

If $J=\{a,a+1,a+2\}$, the fact that $\T(\Delta_{a-1,a,a+1})=\Delta_{a+1,a+2,a+3}\Delta_J$ follows directly from a computation at the end of Section \ref{subsection:twist}.
The subsequent computation implies that if $J=\{a,a+1,b\}$ where $b \neq a-1,a+2$, we have $\T(\Delta_{a-1,b-2,b-1})=\Delta_{b-1,b,b+1}\Delta_J$.

Finally, if $J=\{a,b,c\}$ where none of $a,b,c$ are adjacent, we consider the right twist map applied to the quadratic difference 
$X^{a-2,a-1,b-2,b-1,c-2,c-1} = \Delta_{a-2,b-2,b-1} \Delta_{a-1,c-2,c-1} - \Delta_{a-2,c-2,c-1} \Delta_{a-1,b-2,b-1}$.  Since none of $a,b,c$ are adjacent to one another, all six of these superscripts are distinct numbers.  Using the identities of Section \ref{subsection:twist}, we compute
\begin{eqnarray*} 
\T(X^{a-2,a-1,b-2,b-1,c-2,c-1}) &=& [\Delta_{b-1,b,b+1} \Delta_{a-1,a,b}] [\Delta_{c-1,c,c+1} \Delta_{a,a+1,c}] \\
&~~& - ~~ [\Delta_{c-1,c,c+1} \Delta_{a-1,a,c}] [\Delta_{b-1,b,b+1} \Delta_{a,a+1,b}] \\
&=& \Delta_{b-1,b,b+1}\Delta_{c-1,c,c+1} \bigg( \Delta_{a-1,a,b}\Delta_{a,a+1,c} - \Delta_{a-1,a,c}\Delta_{a,a+1,b}\bigg) \\
&=& \Delta_{b-1,b,b+1}\Delta_{c-1,c,c+1} \bigg( \Delta_{a-1,a,a+1} \Delta_{a,b,c}\bigg)
\end{eqnarray*}
where the last equality follows from a three-term Pl{\"u}cker relation. The final expression is the Pl\"ucker variable $\Delta_J$ up to multiplication by the frozen variables
$\Delta_{a-1,a,a+1} \Delta_{b-1,b,b+1}\Delta_{c-1,c,c+1}$.
\end{proof}

\section{Triple Dimer Configurations for Cubic Differences}\label{section:tripledimers}

\begin{subsection}{Webs}\label{subsection:webs}
In order to classify the triple dimer configurations that give expressions for $A$ and $B$, we associate each triple dimer configuration to a \textbf{web}. Webs were first introduced by Kuperberg in \cite{kup96}. 

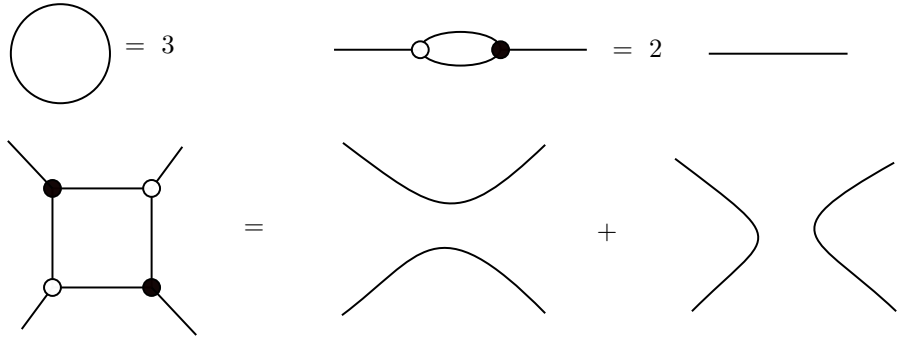
\begin{figure}
\centering

\tikzset{every picture/.style={line width=0.75pt}} 
\begin{tikzpicture}[x=0.75pt,y=0.75pt,yscale=-1,xscale=1]

\draw   (107,34) .. controls (107,20.19) and (118.19,9) .. (132,9) .. controls (145.81,9) and (157,20.19) .. (157,34) .. controls (157,47.81) and (145.81,59) .. (132,59) .. controls (118.19,59) and (107,47.81) .. (107,34) -- cycle ;
\draw    (270,32) -- (313.5,32) ;
\draw    (354,32) -- (397.5,32) ;
\draw   (313.5,31.5) .. controls (313.5,26.81) and (322.57,23) .. (333.75,23) .. controls (344.93,23) and (354,26.81) .. (354,31.5) .. controls (354,36.19) and (344.93,40) .. (333.75,40) .. controls (322.57,40) and (313.5,36.19) .. (313.5,31.5) -- cycle ;
\draw  [fill={rgb, 255:red, 255; green, 255; blue, 255 }  ,fill opacity=0.98 ] (309.25,32) .. controls (309.25,29.65) and (311.15,27.75) .. (313.5,27.75) .. controls (315.85,27.75) and (317.75,29.65) .. (317.75,32) .. controls (317.75,34.35) and (315.85,36.25) .. (313.5,36.25) .. controls (311.15,36.25) and (309.25,34.35) .. (309.25,32) -- cycle ;
\draw  [fill={rgb, 255:red, 13; green, 0; blue, 0 }  ,fill opacity=0.98 ] (349.75,32) .. controls (349.75,29.65) and (351.65,27.75) .. (354,27.75) .. controls (356.35,27.75) and (358.25,29.65) .. (358.25,32) .. controls (358.25,34.35) and (356.35,36.25) .. (354,36.25) .. controls (351.65,36.25) and (349.75,34.35) .. (349.75,32) -- cycle ;
\draw   (128,102) -- (178,102) -- (178,152) -- (128,152) -- cycle ;
\draw  [fill={rgb, 255:red, 13; green, 0; blue, 0 }  ,fill opacity=0.98 ] (123.75,102) .. controls (123.75,99.65) and (125.65,97.75) .. (128,97.75) .. controls (130.35,97.75) and (132.25,99.65) .. (132.25,102) .. controls (132.25,104.35) and (130.35,106.25) .. (128,106.25) .. controls (125.65,106.25) and (123.75,104.35) .. (123.75,102) -- cycle ;
\draw  [fill={rgb, 255:red, 13; green, 0; blue, 0 }  ,fill opacity=0.98 ] (173.75,152) .. controls (173.75,149.65) and (175.65,147.75) .. (178,147.75) .. controls (180.35,147.75) and (182.25,149.65) .. (182.25,152) .. controls (182.25,154.35) and (180.35,156.25) .. (178,156.25) .. controls (175.65,156.25) and (173.75,154.35) .. (173.75,152) -- cycle ;
\draw    (105.5,78) -- (128,102) ;
\draw    (178,152) -- (200.5,176) ;
\draw    (112.5,173) -- (128,152) ;
\draw    (178,102) -- (193.5,81) ;
\draw  [fill={rgb, 255:red, 255; green, 255; blue, 255 }  ,fill opacity=0.98 ] (123.75,152) .. controls (123.75,149.65) and (125.65,147.75) .. (128,147.75) .. controls (130.35,147.75) and (132.25,149.65) .. (132.25,152) .. controls (132.25,154.35) and (130.35,156.25) .. (128,156.25) .. controls (125.65,156.25) and (123.75,154.35) .. (123.75,152) -- cycle ;
\draw  [fill={rgb, 255:red, 255; green, 255; blue, 255 }  ,fill opacity=0.98 ] (173.75,102) .. controls (173.75,99.65) and (175.65,97.75) .. (178,97.75) .. controls (180.35,97.75) and (182.25,99.65) .. (182.25,102) .. controls (182.25,104.35) and (180.35,106.25) .. (178,106.25) .. controls (175.65,106.25) and (173.75,104.35) .. (173.75,102) -- cycle ;
\draw    (274,166) .. controls (314,136) and (320.5,108) .. (376.5,165) ;
\draw    (274.5,79) .. controls (323.5,116) and (331.5,123) .. (376.5,80) ;
\draw    (442,87) .. controls (501.5,132) and (491.5,123) .. (450.5,164) ;
\draw    (552.5,89) .. controls (486.5,126) and (512.5,125) .. (553.5,164) ;
\draw    (459,34) -- (529,34) ;

\draw (163,23.4) node [anchor=north west][inner sep=0.75pt]    {$=\ 3$};
\draw (409,25.4) node [anchor=north west][inner sep=0.75pt]    {$=\ 2$};
\draw (223,118.4) node [anchor=north west][inner sep=0.75pt]    {$=$};
\draw (401,118.4) node [anchor=north west][inner sep=0.75pt]    {$+$};

\end{tikzpicture}
\caption{Planar skein relations for web reduction.}
\label{fig:skeinrelations}
\end{figure}

\begin{definition}\label{definition:web}
A \textbf{web} $W$ is a planar bipartite graph embedded in the disc such that all internal vertices are trivalent. Within the disc, $W$ may also include some vertex-less directed cycles, as well as some directed edges from a black boundary vertex to a white boundary vertex, which we call ``paths."

We will require all boundary vertices of our webs to be either univalent or isolated (0-valent); by an abuse of notation, we will treat this condition as intrinsic to the definition for the remainder of the paper.
\end{definition}

Given a web $W$, we may consider its \textbf{web interior} $\mathring{W}$, which only consists of the internal vertices and edges of $W$. A \textbf{connected component} of $W$ is a connected component of $\mathring{W}$ along with the boundary vertices it attaches to; namely, we do not consider the boundary of the disc as an edge that connects all boundary vertices.

A \textbf{non-elliptic} web $W$ is a web containing no interior faces bounded by four or fewer edges; i.e., it contains no contractible cycles, bigons, or squares. Every web may be expressed as a sum of nonelliptic webs via the reduction moves in Figure \ref{fig:skeinrelations}, see \cite{kup96} and \cite{fominpasha2016}.

Lastly, we introduce terminology for some common web components. We will call an internal white vertex incident to three black boundary vertices a \textbf{tripod}; and we will call a component with one black internal vertex adjacent to three white internal vertices, each of which are adjacent to two black boundary vertices, a \textbf{hexapod}. See Figure \ref{fig:pod}.

\begin{figure}
    \centering
    \includegraphics[scale=.22]{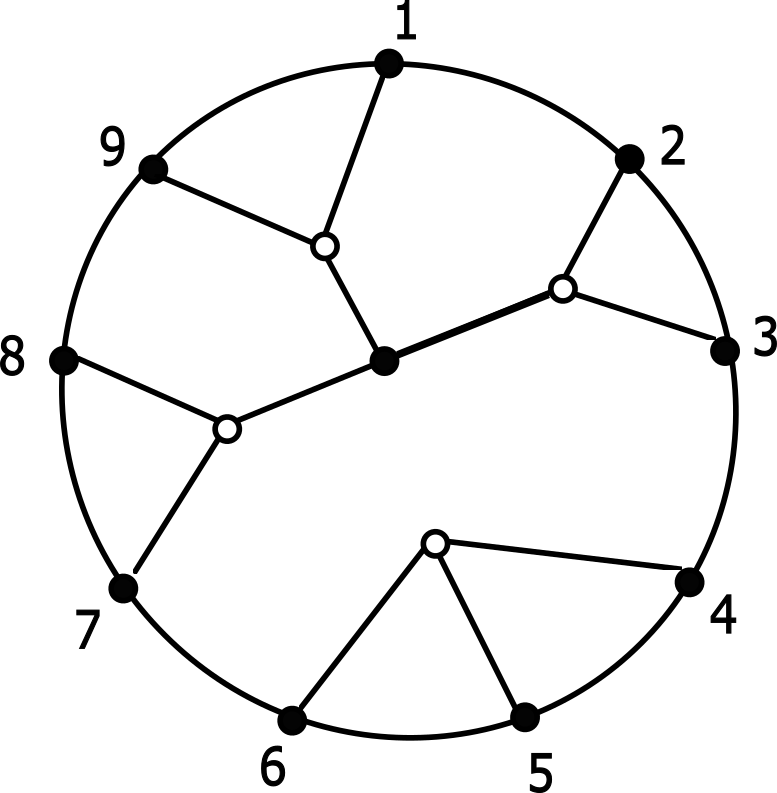}
    \caption{A web composed of a hexapod (on vertices 1, 2, 3, 7, 8, and 9) and a tripod (on vertices 4, 5, and 6)}
    \label{fig:pod}
\end{figure}
\end{subsection}

\begin{subsection}{Triple Dimer Configurations As Webs}\label{subsection:tripledimers}
Given a triple dimer configuration $D$ on a plabic graph $G$, let $G_D$ be the subgraph of $G$ containing all edges included at least once in $D$. We create a web $W(D)$ corresponding to $D$ as follows:
\begin{enumerate}
    \item For each boundary vertex in $G$, create a corresponding boundary vertex in $W(D)$. Color each boundary vertex white if it is adjacent to a doubled edge in $D$, and black otherwise.
    \item For each interior cycle in $G_D$ consisting entirely of bivalent vertices, add a vertexless loop to $W(D)$, oriented arbitrarily.
    \item For each chain of bivalent vertices connecting two boundary vertices $v,v' \in G_D$, construct a path between the corresponding boundary vertices in $W(D)$. To orient this path, note that the chain must correspond to a path alternating between singled and doubled edges in $D$, and since all boundary vertices of $G$ are black, that path must have even length. Thus exactly one of $v$ and $v'$ must be adjacent to a doubled edge in $D$, and therefore colored white in $W(D)$. Orient the path in $W(D)$ towards the white vertex.
    \item For each connected component of $G_D$ containing at least one trivalent vertex, include a corresponding component in $W(D)$ with all bivalent vertices removed, merging each pair of edges that was adjacent to a deleted vertex. Retain the color of all interior trivalent vertices. (Note that all trivalent vertices in $G_D$ must be adjacent to single edges in $D$, so the graph will remain bipartite, again because chains of bivalent vertices in $G_D$ correspond to paths alternating between singled and doubled edges in $D$.)
\end{enumerate}
Note that this construction of $W(D)$ is equivalent to the construction of a web from a triple dimer via ``weblike subgraphs" given in \cite{lam2015}.

\begin{example}\label{example:dimertoweb}
\begin{figure}
    \centering
    \includegraphics[width=\textwidth]{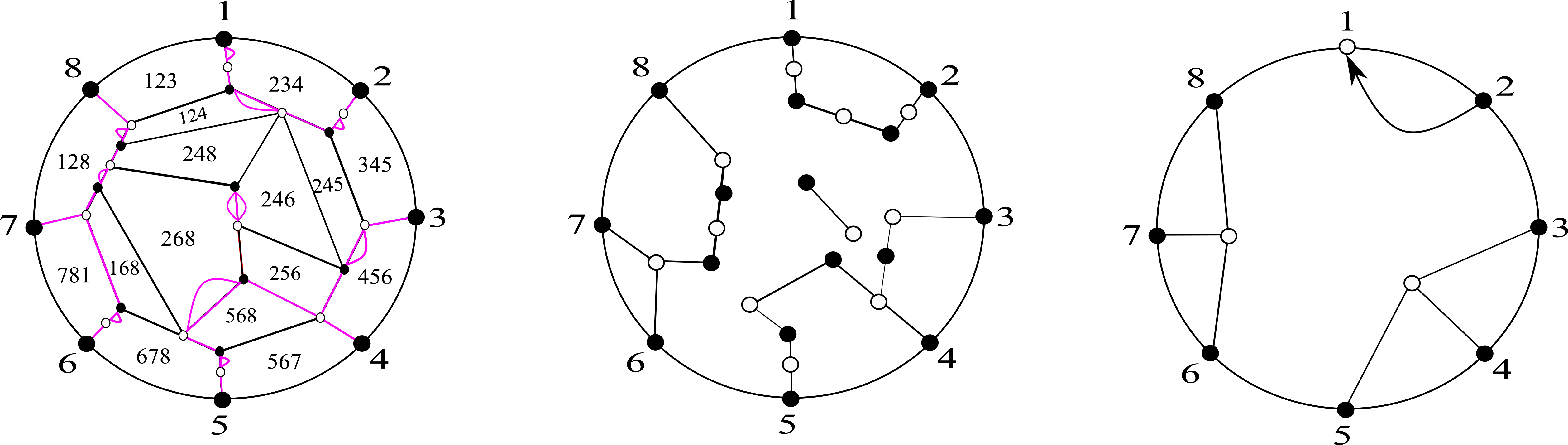}
    \caption{From a triple dimer configuration $D$ (left), to the corresponding graph $G_D$ (middle), to a web $W(D)$ (right).}
    \label{fig:dimertoweb}
\end{figure}

Consider the triple dimer configuration $D$ on the plabic graph $G$ in Figure \ref{fig:dimertoweb}. We create the corresponding subgraph $G_D$ by removing duplicate edges of $D$. In $W(D)$, we include an oriented edge from vertex 2 to vertex 1, since vertex 1 is adjacent to a doubled edge in $D$, and color vertex 1 white, while all other boundary vertices remain black. We also include two connected components corresponding to the components of $G_D$ that contain trivalent vertices, with bivalent vertices removed. Note that we ignore isolated edges in $G_D$, which come from tripled edges in $D$.
\end{example}
\end{subsection}

\begin{subsection}{Enumeration of Non-Elliptic Webs}
The proofs of our main theorems will rely on the enumeration of several classes of non-elliptic webs in Lemmas \ref{lemma:pathlesswebs}, \ref{lemma:webswithpathsadj}, \ref{lemma:webswithpathsnonadj}, and \ref{lemma:pathlesswebs9}. Propositions \ref{prop:eulerchar} and \ref{prop:minvertices} place significant bounds on this enumeration, in particular confirming that the classes are finite. Lemma \ref{lemma:2deg2verts} is used to prove Proposition \ref{prop:minvertices}.

\begin{proposition}\label{prop:eulerchar}
Let $W$ be a nonelliptic web with $n$ boundary vertices. Let $c$ be the number of cycles in $W$, let $k$ be the number of connected components of $W$, and let $|V_{int}|$ be the number of internal vertices in $W$. Then $|V_{int}| = n+2c-2k$.
\end{proposition}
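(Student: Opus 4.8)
The plan is to prove the identity by combining the handshake lemma with the first Betti number (cycle rank), exploiting that all four quantities $|V_{int}|$, $n$, $c$, and $k$ are additive over the connected components of $W$. The key realization is that $c$ must be read as the total cycle rank of $W$: each vertexless directed loop contributes $1$, and any genuine internal cycle (for instance the hexagonal interior face of a larger non-elliptic web) also contributes $1$. Since $k$ counts connected components and $n$ counts boundary vertices, all of these being additive, it suffices to verify $|V_{int}(C)| = n(C) + 2c(C) - 2k(C)$ for a single connected component $C$, for which $k(C) = 1$, and then sum over all components.

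I would first handle the degenerate case: a vertexless cycle has $|V_{int}(C)| = n(C) = 0$ and $c(C) = k(C) = 1$, so the identity reads $0 = 0 + 2 - 2$. For a component $C$ containing actual vertices, I would apply the handshake lemma using the defining degree conditions of a web, namely that every internal vertex is trivalent and every boundary vertex is univalent. This gives $3|V_{int}(C)| + n(C) = 2E(C)$, where $E(C)$ denotes the number of edges of $C$. I would then use the cycle-rank formula for a connected graph, $c(C) = E(C) - (|V_{int}(C)| + n(C)) + 1$, to eliminate $E(C)$; substituting and simplifying yields $|V_{int}(C)| - n(C) = 2c(C) - 2$, which is exactly the claim since $k(C) = 1$. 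Adding this identity over all components of $W$ then produces $|V_{int}| = n + 2c - 2k$.

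The step needing the most care is conceptual rather than computational: one must pin down that $c$ denotes the cycle rank, and that the vertexless loops, having neither vertices nor edges in the graph-theoretic sense, are treated as a separate base case in the per-component reduction rather than folded into the handshake count. I would also make explicit the one structural input that the degree sum relies on, namely that every boundary vertex is univalent, so that the boundary contributes exactly $n(C)$ to the total degree; this is part of the standing hypotheses on the webs considered here, isolated boundary vertices not arising in the relevant configurations.
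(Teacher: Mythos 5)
Your proof is correct and follows essentially the same route as the paper's: both arguments combine the handshake lemma (trivalent internal vertices, univalent boundary vertices) with the Euler-characteristic/cycle-rank identity $|E| = |V| - k + c$ to eliminate the edge count. The only differences are organizational — you decompose into connected components and sum, and you explicitly treat vertexless loops as a base case, whereas the paper applies the global form of the cycle-rank formula in one step.
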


\begin{proof}
By planarity of $W$ and the Euler characteristic, we have that $|E| = |V|-k+c$ where $E$ is the set of edges of $W$ and $V$ is the set of all vertices in $W$. Moreover, we have $\sum_{v \in V} \text{deg}(v) = 2|E|$. Since all internal vertices in the web are trivalent and we have $n$ boundary vertices of degree $1$, we see
\[n + 3(|V|-n) = \sum_{v \in V} \text{deg}(v) = 2|E| = 2|V| - 2k + 2c,\]
which implies that $|V| - 2n = -2k+2c$. Since $|V| = |V_{int}|+n$, it follows that $|V_{int}| = n+2c-2k$. 
\end{proof}

\begin{lemma}\label{lemma:2deg2verts}
Let $\mathring{W}$ be the interior of a nonelliptic web $W$, and assume that $\mathring{W}$ is connected and consists only of cycles. Then there exist two adjacent vertices in $\mathring{W}$ that are bivalent in $\mathring{W}$.
\end{lemma}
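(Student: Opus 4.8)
The plan is to argue by contradiction using a combinatorial Gauss--Bonnet (Euler characteristic) count, with the decisive information coming from the outer face of $\mathring{W}$. First I would record the degree constraints: every interior vertex of $W$ is trivalent, so every vertex of $\mathring{W}$ has degree at most $3$; and the hypothesis that $\mathring{W}$ ``consists only of cycles'' means it has no vertices of degree $0$ or $1$ (no isolated vertices and no dangling edges). Hence every vertex of $\mathring{W}$ is bivalent or trivalent. Writing $n_2$ and $n_3$ for the numbers of bivalent and trivalent vertices, I would then suppose, toward a contradiction, that no two bivalent vertices of $\mathring{W}$ are adjacent.

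The key identity I would invoke is the standard planar relation $\sum_{f}\left(|f|-6\right)+\sum_{v}\left(2\deg(v)-6\right)=-12$, valid for any connected graph embedded in the disk (equivalently the sphere), where the sum is over all faces including the outer one and $|f|$ denotes the length of the boundary walk of $f$. Since bivalent vertices contribute $-2$ and trivalent vertices contribute $0$, the vertex sum equals $-2n_2$. Every bounded face of $\mathring{W}$ is an interior face of the web, so by non-ellipticity and bipartiteness its boundary has even length at least $6$, contributing a nonnegative amount to the face sum. Isolating the outer face $f_{\mathrm{out}}$ then yields $\left(|f_{\mathrm{out}}|-6\right)-2n_2\le -12$, that is $|f_{\mathrm{out}}|\le 2n_2-6$.

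The complementary bound comes from analyzing the outer boundary walk. Each bivalent vertex of $\mathring{W}$ arose from a trivalent web vertex one of whose edges ran to a boundary vertex on the disk; deleting that edge leaves the vertex incident to the boundary region, so every bivalent vertex lies on $f_{\mathrm{out}}$, and both of its remaining edges bound $f_{\mathrm{out}}$. Under the contrary assumption, two bivalent vertices are never consecutive along the outer boundary walk, so the $n_2$ bivalent vertices occupy an ``independent'' set of positions on a closed walk of length $|f_{\mathrm{out}}|$; this forces $n_2\le |f_{\mathrm{out}}|/2$, i.e.\ $|f_{\mathrm{out}}|\ge 2n_2$. Combining with the previous paragraph gives $2n_2\le |f_{\mathrm{out}}|\le 2n_2-6$, an impossibility, so two adjacent bivalent vertices must exist.

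The main obstacle I anticipate is making the outer-face bookkeeping fully rigorous: I must check that every bivalent vertex lies on the outer face and appears exactly once on its boundary walk, and that the nonconsecutiveness bound $n_2\le|f_{\mathrm{out}}|/2$ survives even if $\mathring{W}$ fails to be $2$-connected. Here the hypothesis helps: ``consisting only of cycles'' forces $\mathring{W}$ to be bridgeless, and a bridgeless connected graph of maximum degree $3$ has no cut vertices, hence is $2$-connected, so $f_{\mathrm{out}}$ is bounded by a simple cycle and the counting is clean. Alternatively one can run the same Euler-characteristic argument without assuming $2$-connectivity, at the cost of a slightly more careful treatment of repeated vertices on the boundary walk.
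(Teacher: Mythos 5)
Your proof is correct, but it takes a genuinely different route from the paper's. The paper argues constructively: it asserts that any connected nonelliptic web interior made of cycles can be assembled by starting from a central $2m$-gon and attaching $2m$-gons one at a time so that each intermediate stage is again a valid web interior, and it tracks the statistic $\mathcal{BT}=$ (bivalent exterior vertices) $-$ (trivalent exterior vertices), checking pictorially that attaching a hexagon preserves $\mathcal{BT}$ and that larger polygons only increase it; since the base case has $\mathcal{BT}\geq 6$, a pigeonhole on the outer cycle finishes. Your Euler-characteristic count reaches the same quantitative conclusion — indeed, writing $|f_{\mathrm{out}}|=n_2+n_3^{\mathrm{ext}}$, your inequality $|f_{\mathrm{out}}|\le 2n_2-6$ is exactly the paper's $\mathcal{BT}\ge 6$ — but it does so in one closed-form computation, avoiding both the unproved assertion that such web interiors admit a polygon-by-polygon construction and the figure-based case analysis. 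Your supporting observations are all sound: the identity $\sum_f(|f|-6)+\sum_v(2\deg v-6)=-12$ follows from $V-E+F=2$ together with $\sum_f|f|=\sum_v\deg v=2E$; bounded faces of $\mathring{W}$ are interior faces of $W$ and hence have even length at least $6$; every bivalent vertex of $\mathring{W}$ lies on the outer face because its deleted third edge ran to the disk boundary; and the bridgeless-plus-maximum-degree-$3$ argument for $2$-connectivity (a cut vertex would need two edges into each component of its complement, forcing degree at least $4$) legitimately reduces the outer boundary walk to a simple cycle so that the independence bound $n_2\le|f_{\mathrm{out}}|/2$ is clean. Your version is arguably the more rigorous of the two.
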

\begin{proof}

Given a connected nonelliptic web interior $\mathring{W}$ that consists entirely of cycles, we may construct $\mathring{W}$ by beginning with a central $2m$-gon and adding $2m$-gons exterior
of it one by one, so that every intermediate step remains a valid web interior $\mathring{V}$ of a different web $V$. Since webs are trivalent, all vertices with degree less than three in $\mathring{V}$ must be adjacent to the boundary in $V$; and since webs are planar, only vertices not enclosed by other edges in $V$ (``exterior vertices") may possibly be adjacent to the boundary in $V$. Therefore, at any step of the process of building $\mathring{W}$, the additional $2m$-gon cannot share more than one edge with any one old $2m$-gon (see the left of Figure \ref{fig:bivalent}), and it also must only share one set of adjacent edges along the boundary of the web interior (see the right of Figure \ref{fig:bivalent}): otherwise, in both cases, we would have interior bivalent vertices in $\mathring{V}$.

\begin{figure}
    \centering
    \includegraphics[scale=.14]{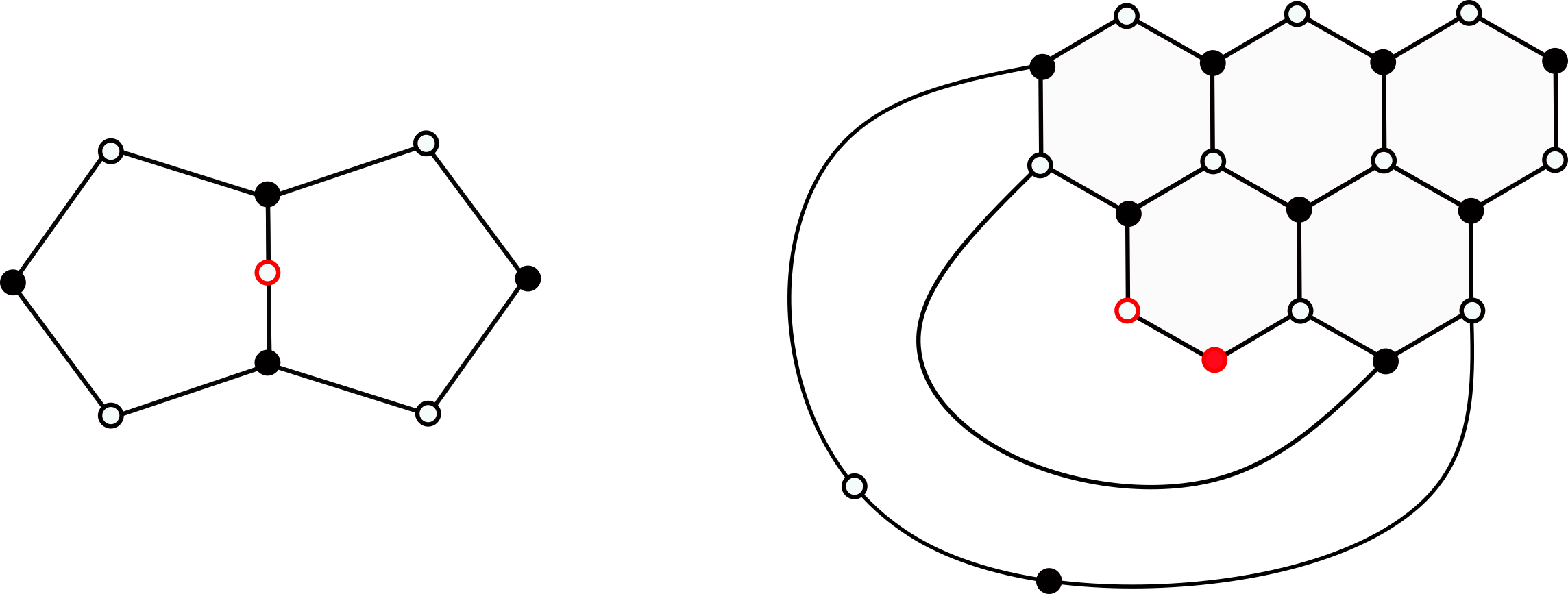}
    \caption{Ways to add a hexagon to a web interior that do not produce another valid web interior.}
    \label{fig:bivalent}
\end{figure}

\begin{figure}
    \centering
    \includegraphics[scale=.15]{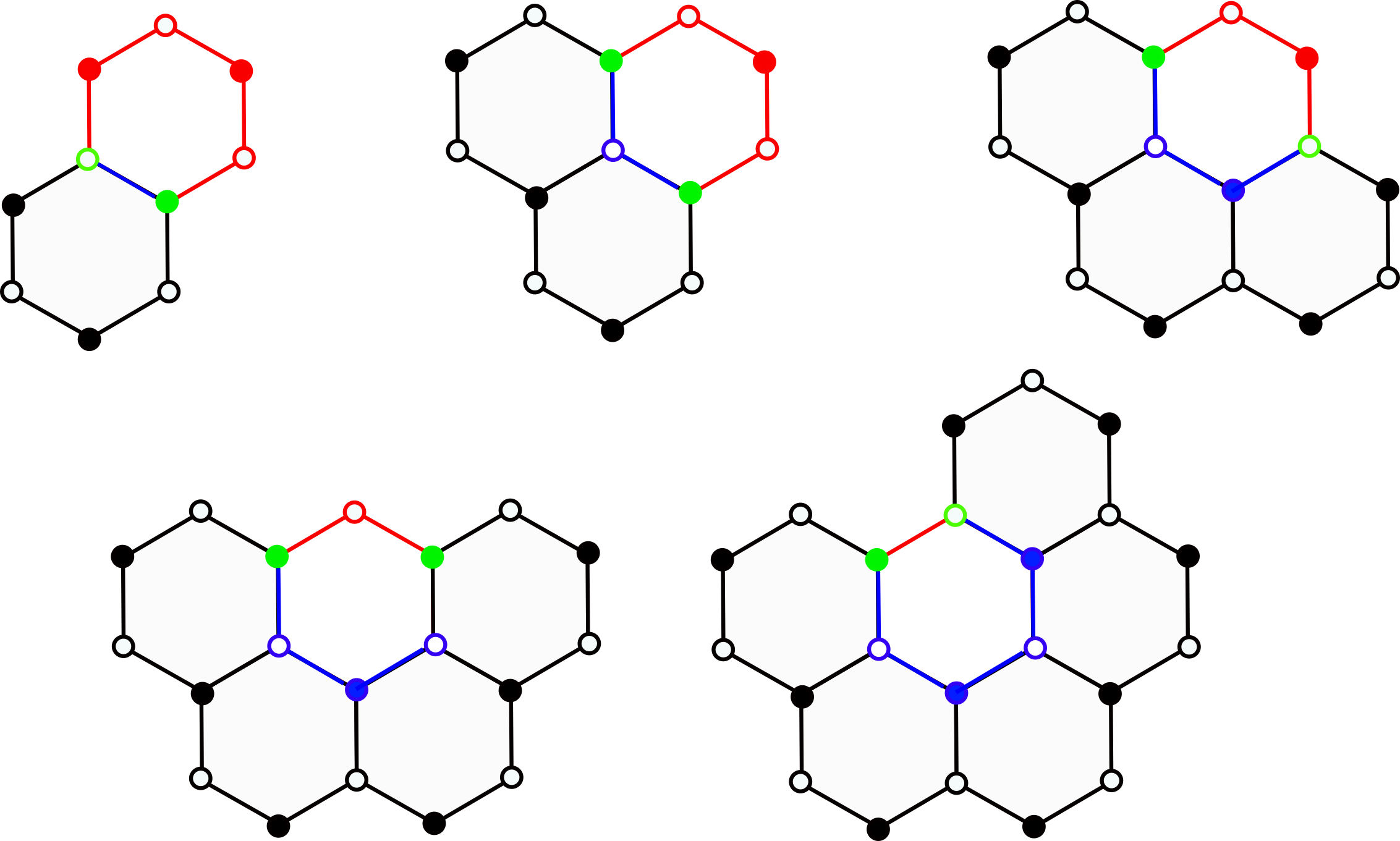}
    \caption{The five ways to add a hexagon to a web interior: the boundary initially consists of the blue and green vertices (where the green vertices are bivalent), and after the hexagon is added, it consists of the red and green vertices (where the green vertices are trivalent). Note that in each case, adding the hexagon does not change $\mathcal{BT}$. For instance, adding a hexagon as in the top leftmost figure replaces two bivalent vertices with four bivalent vertices and two trivalent vertices.}
    \label{fig:addcycles}
\end{figure}

Additionally, note that at any step, every exterior vertex of $\mathring{V}$ must be either bivalent or trivalent, since $\mathring{V}$ consists only of cycles. Let $\mathcal{BT}_{\mathring{V}}$ be the number of bivalent exterior vertices minus the number of trivalent exterior vertices of $\mathring{V}$. When we build $\mathring{W}$ starting from a central $2m$-gon $\mathring{W_0}$, this central
$2m$-gon has at least 6 bivalent vertices and 0 trivalent vertices, so $\mathcal{BT}_{\mathring{W_0}}\geq 6$. Figure \ref{fig:addcycles} shows the possible effects of adding a hexagon; none of these change $\mathcal{BT}$. The effects of adding a larger $2m$-gon would be analogous, with possibly more trivalent vertices being replaced by possibly more bivalent vertices, so adding a larger $2m$-gon would only increase $\mathcal{BT}$. Therefore $\mathring{W}$ has $\mathcal{BT}_{\mathring{W}}\geq 6$ also, i.e. $\mathring{W}$ has more exterior bivalent vertices than exterior trivalent vertices. The exterior vertices of $\mathring{W}$ (each of which is either bivalent or trivalent) form a cycle, and since the discrepancy $\mathcal{BT}_{\mathring{V}}$ is positive for each such intermediate web interior $\mathring{V}$, at least two exterior bivalent vertices must be adjacent.
\end{proof}

\begin{proposition}\label{prop:minvertices}
Let $W$ be a nonelliptic web, let $c$ be the number of cycles in $W$, and let $|V_{int}|$ be the number of internal vertices in $W$ (equivalently, $|V_{int}|$ is the number of vertices in $\mathring{W}$). Then
\begin{itemize}
    \item if $c \geq 1$, $|V_{int}| \geq 2c+4$,
    \item if $c \geq 2$, $|V_{int}| \geq 2c+6$,
    \item if $c \geq 3$, $|V_{int}| \geq 2c+7$,
    \item and if $c \geq 4$, $|V_{int}| \geq 2c + 8$. 
\end{itemize}
\end{proposition}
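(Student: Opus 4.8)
The plan is to convert the statement, via Proposition \ref{prop:eulerchar}, into a lower bound on the length of the outer boundary of $W$, and then to prove that boundary bound using the build-up procedure from Lemma \ref{lemma:2deg2verts}.

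First I would make two reductions. Tree-like appendages and bridges of $\mathring{W}$ carry internal vertices but lie on no cycle, so deleting them leaves a subweb with the same $c$ and no more internal vertices; thus it suffices to treat the ``cycles-only'' core, in which every internal vertex has degree $2$ or $3$ in $\mathring{W}$. If that core splits into $r$ connected components with $c_1,\dots,c_r\ge 1$ cycles and $\sum_i c_i = c$, then proving even the weakest bound $|V_{int}(W_i)|\ge 2c_i+4$ on each component already gives $|V_{int}|\ge 2c+4r$, which yields all four inequalities as soon as $r\ge 2$. Hence everything reduces to the connected case; and since internal faces larger than hexagons only increase the discrepancy $\mathcal{BT}$ introduced below (as noted in the proof of Lemma \ref{lemma:2deg2verts}), the binding case is all-hexagonal.

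For a connected cycles-only web, the bivalent internal vertices are exactly the $n$ boundary-attached perimeter vertices, so Proposition \ref{prop:eulerchar} reads $|V_{int}| = V_2 + 2c - 2$, where $V_2$ denotes their number. Writing $V_3^{\mathrm{ext}}$ for the number of trivalent perimeter vertices and $p = V_2 + V_3^{\mathrm{ext}}$ for the perimeter length, the inequality $\mathcal{BT} = V_2 - V_3^{\mathrm{ext}} \ge 6$ established in the proof of Lemma \ref{lemma:2deg2verts} gives $V_2 = (p+\mathcal{BT})/2 \ge (p+6)/2$, and hence
\[
|V_{int}| \;\ge\; 2c + 1 + \tfrac{p}{2}.
\]
Consequently the four claims follow once I establish the perimeter bounds $p \ge 6,\,10,\,12,\,14$ for $c \ge 1,\,2,\,3,\,4$, respectively; the bound $p \ge 6$ is immediate from non-ellipticity.

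The remaining perimeter bounds are the heart of the matter, and I expect them to be the main obstacle. I would prove them using the build-up of Lemma \ref{lemma:2deg2verts}: attaching a hexagon along a contiguous shared path of $s$ edges changes $c$ by $+1$ and the perimeter by $\Delta p = 6 - 2s$. The subtlety is that $\Delta p$ becomes negative once $s \ge 4$, so $p$ is not monotone along the build-up and one must rule out the formation of deep concave pockets before enough cycles are present. For the small thresholds needed here I would argue directly, classifying the compact minimizers (a single hexagon, then naphthalene-, phenalene-, and pyrene-type configurations) — equivalently, bounding the number $n_i$ of interior (non-perimeter) vertices by $n_i \le 0,\,0,\,1,\,2$ for $c \le 4$ via the fact that each interior vertex meets at least three bounded faces. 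For $c \ge 4$ I would then induct, peeling off a peripheral hexagon attached along a path of length $s \le 3$, so that its removal decreases $c$ by one and $|V_{int}|$ by $5 - s \ge 2$. The crux is guaranteeing that such a shallowly-attached hexagon always exists: this is a discrete convexity fact, coming from the observation that the perimeter carries at least $\mathcal{BT} \ge 6$ more convex ($60^\circ$) corners than concave ones, which forces some hexagon to expose at least three consecutive free edges. Making this convexity argument fully rigorous, ensuring the peeling preserves connectivity and the cycles-only structure, and reconciling it with the possibility of faces larger than hexagons, is where the real work will lie.
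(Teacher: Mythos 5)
Your skeleton ultimately coincides with the paper's: pass to a connected, cycles-only core, verify the cases $c\le 4$ by hand, and then run a recursion trading one cycle for at least two internal vertices. (Your handling of a disconnected core by summing the weakest bound over components is a correct and slightly cleaner variant of the paper's gluing trick.) Where you genuinely differ is in the base cases: the paper determines the minima $6,10,13,16$ for $c=1,2,3,4$ by directly classifying how $2m$-gons can be attached, whereas you derive $|V_{int}|\ge 2c+1+p/2$ from Proposition \ref{prop:eulerchar} together with the discrepancy bound $\mathcal{BT}\ge 6$, reducing everything to perimeter (equivalently, interior-vertex) bounds. That derivation is correct and arguably more systematic; note that $\mathcal{BT}\ge 6$ follows directly from Euler's formula once all bounded faces have at least six sides and all degrees are at most three, so you need not lean on the build-up narrative inside the proof of Lemma \ref{lemma:2deg2verts}. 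The genuine gap on this side is that $n_i\le 1$ for $c=3$ and $n_i\le 2$ for $c=4$ are asserted, not proved: ``each interior vertex meets three bounded faces'' only yields $n_i=0$ for $c\le 2$, and excluding a second interior vertex when $c=3$ (respectively a third when $c=4$) needs a separate planarity argument. This is fillable, and is comparable in rigor to the paper's own one-sentence determinations of $a_3$ and $a_4$, but as written it is missing.

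The step you flag as the crux --- finding a peripheral hexagon attached along a path of length at most $3$ --- is a self-inflicted difficulty. You do not need the peeled face to be a hexagon, nor any control of concave pockets or of the non-monotonicity of $p$: Lemma \ref{lemma:2deg2verts} already hands you two adjacent bivalent vertices of $\mathring{W}$, each of which lies on exactly one bounded face (the same one, since they are adjacent), so deleting the pair removes exactly one cycle and exactly two internal vertices. That is the entire inductive step, and it is precisely what the paper does; inducting on $|V_{int}|$ rather than on the perimeter makes the larger-face and discrete-convexity worries moot. With that substitution, and with actual proofs of the two $n_i$ bounds, your argument closes.
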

\begin{proof}
Let $a_c$ be the minimal number of vertices in a nonelliptic web interior with $c$ cycles, so $|V_{int}| \geq a_c$. We first show that $a_c$ is also the minimum number of vertices in a connected nonelliptic web interior with $c$ cycles that is composed entirely of cycles. To accomplish this, given a nonelliptic web interior $\mathring{W}$ with $c$ cycles and $v$ vertices, we demonstrate that if $\mathring{W}$ is disconnected or contains a vertex or edge that is not part of a cycle, then $v>a_c$.

If $\mathring{W}$ contains a vertex that is not part of a cycle, then we may delete it to arrive at a web interior with $c$ cycles and $v-1$ vertices, so by minimality, $v>a_c$. If $\mathring{W}$ contains edges that are not part of cycles, we may delete them to create a disconnected web interior that has $v$ vertices and is composed entirely of cycles, so the only remaining case is when $\mathring{W}$ is disconnected and composed entirely of cycles. In this case, by Lemma \ref{lemma:2deg2verts}, there exist two adjacent bivalent vertices in each connected component; identifying two such pairs of vertices from separate components yields a web interior with $c$ cycles and $v-2$ vertices, so $v>a_c$ in this case also.

Therefore $a_c$ is the minimum number of vertices in a connected nonelliptic web interior with $c$ cycles that is composed entirely of cycles. Let $\mathring{W}$ be such a web interior with $c$ cycles and $a_c$ vertices. Then from Lemma \ref{lemma:2deg2verts}, there exist two adjacent bivalent vertices in $\mathring{W}$; these vertices must both be part of only one cycle. Therefore removing both vertices creates a web interior with $c-1$ cycles and $a_c-2$ vertices, so $a_{c-1} \leq a_c-2$, so $a_c \geq a_{c-1}+2$. It follows that, for any $i<c$, \[|V_{int}| \geq a_c \geq a_i + 2(c-i) = 2c+ (a_i-2i).\]

Clearly $a_1=6$, the minimal number of vertices in a nonelliptic web interior with one cycle. The only way to construct a connected nonelliptic web interior consisting entirely of two cycles is to overlap the cycles at one edge; the number of vertices is minimized when both cycles are hexagons, in which case there are ten vertices, so $a_2=10$. Next, we must be able to construct any connected nonelliptic web interior consisting entirely of three cycles by adding a cycle to a connected nonelliptic web interior consisting entirely of two cycles. There is one place to add the new cycle that only creates three new vertices, and no way to create fewer; therefore $a_3=13$. The same is true to add a fourth cycle, so $a_4=16$. Now, applying the above formula for $i=1,2,3,4$ yields the three desired statements. See {\tt http://oeis.org/A121149} for more details.
\end{proof}

\begin{figure}
    \centering
    \includegraphics[width=\textwidth]{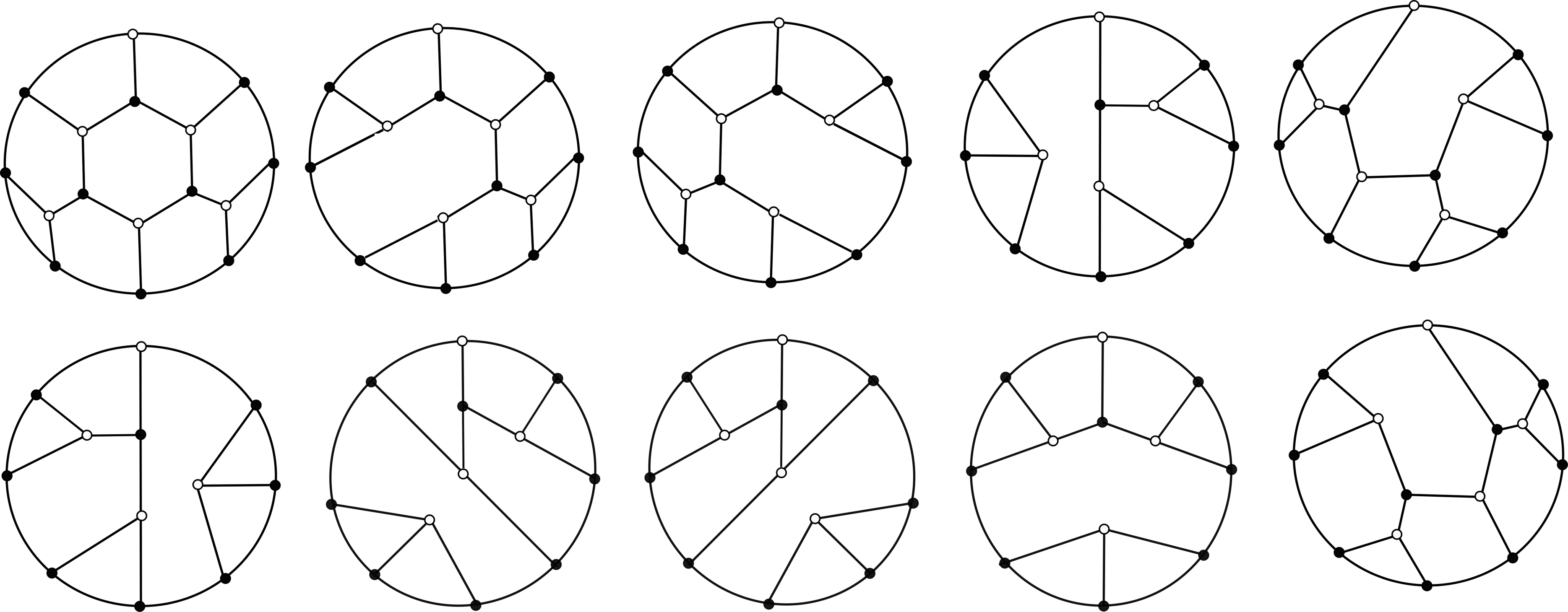}
    \caption{The only non-elliptic webs with seven black boundary vertices and one white boundary vertex that do not contain paths between boundary vertices.}
    \label{fig:pathlesswebs}
\end{figure}

We now begin our enumeration of particular non-elliptic webs with 8 boundary vertices, which will assist in the proofs of Theorems \ref{theorem:connectivityforA} and \ref{theorem:connectivityforB}.

\begin{lemma}\label{lemma:pathlesswebs}
All non-elliptic webs with seven black boundary vertices and one white boundary vertex that do not have have paths between boundary vertices are listed in Figure \ref{fig:pathlesswebs}.
\end{lemma}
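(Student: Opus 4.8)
The plan is to turn the enumeration into a short finite case check by first extracting all available numerical constraints—the edge-counting color balance, the Euler-characteristic identity of Proposition \ref{prop:eulerchar}, and the vertex lower bounds of Proposition \ref{prop:minvertices}—and only then doing a planar, by-hand analysis of the few surviving combinatorial types.

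First I would record the color/degree constraint. Here $n=8$, with seven univalent black boundary vertices and one univalent white boundary vertex. Writing $b,w$ for the numbers of internal black and white vertices and counting edge-endpoints on each side of the bipartition (all internal vertices trivalent), one obtains $3w+1 = 3b+7$, hence $w = b+2$ and $|V_{int}| = 2b+2$. In particular the single white boundary vertex must attach to a black \emph{internal} vertex: it cannot be isolated, since $3w = 3b+7$ has no integer solution, and it cannot begin a path by hypothesis. Thus $b \ge 1$.

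Next I would bound the number of cycles $c$ and components $k$. Proposition \ref{prop:eulerchar} gives $|V_{int}| = 8 + 2c - 2k$, so $2b+2 = 8+2c-2k$, i.e. $b = 3 + c - k$; combined with $b \ge 1$ and $k \ge 1$ this yields $1 \le k \le 2 + c$, and also $|V_{int}| \le 6 + 2c$. Comparing the latter with Proposition \ref{prop:minvertices}, which forces $|V_{int}| \ge 2c+7$ once $c \ge 3$, rules out $c \ge 3$. For $c=2$ we are forced to $k=1$ and $|V_{int}| = 10 = a_2$, so the interior must be the extremal two-cycle configuration of two hexagons sharing an edge; but that configuration has $w-b=0$ among its (necessarily all) internal vertices, contradicting $w = b+2$, so $c=2$ is impossible. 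Hence $c \le 1$, and one checks that $c=1$ forces $k=1$ (at $k=2$ the forced $|V_{int}|=6$ is entirely consumed by one non-elliptic hexagon, leaving a boundary-bearing component with no internal vertex and no path), while at $c=0$ the value $k=3$ is excluded by $b\ge1$. This leaves only the types $(c,k) \in \{(0,1),(0,2),(1,1)\}$, with $b,w,|V_{int}|$ now determined in each.

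Finally I would analyze each type directly, using planarity, bipartiteness, non-ellipticity, and the no-path hypothesis to pin down the admissible shapes: a component containing no black internal vertex must be a tripod, the component carrying the white boundary vertex is forced to be a hexapod with one leg shortened to the single white boundary edge, and the cyclic (non-crossing) placement of the boundary labels then produces exactly the webs drawn in Figure \ref{fig:pathlesswebs}. I expect this last, pictorial step to be the main obstacle—namely verifying that non-ellipticity together with the fixed planar boundary order admits \emph{no} configuration beyond those shown, and in particular eliminating (or accounting for) the remaining $c=1$ type—which is exactly why the detailed figures are naturally deferred to the appendix (Section \ref{section:appendixlemmas}).
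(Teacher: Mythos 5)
Your proposal follows essentially the same route as the paper: use Proposition \ref{prop:eulerchar} together with the vertex lower bounds of Proposition \ref{prop:minvertices} to restrict to the types $(c,k)\in\{(0,1),(0,2),(1,1)\}$, and then finish by a direct pictorial enumeration of the surviving small configurations. The only real difference is your explicit bipartite count $w=b+2$, which cleanly dispatches the $c=2$ case by a parity argument where the paper instead argues that the boundary vertices cannot be attached; both are correct, and your remaining enumeration (including the single hexagon-bearing web in type $(1,1)$) is exactly the finite check the paper also performs by exhibiting Figure \ref{fig:pathlesswebs}.
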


\begin{proof}
Proposition \ref{prop:eulerchar} gives $|V_{int}|=8+2c-2k$ for the number of internal vertices in such a web $W$ that has $k$ connected components and $c$ cycles.

If $k=1$, $|V_{int}|=2c+6$, so from Proposition \ref{prop:minvertices}, $c<3$ in this case. If $c=2$, $|V_{int}|=10$; all of these vertices are required to construct the two cycles, but due to the colors of the boundary vertices, it is impossible to connect all boundary vertices to the interior hexagons without adding more vertices. Therefore $c$ cannot be 2. If $c=1$, $|V_{int}|=8$. Six of these internal vertices must be used to create the hexagon; and since there is only one white boundary vertex, the other two internal vertices must be white and adjacent to two of the black vertices in the hexagon. The resulting web is possible to complete, as shown in the top left web in Figure \ref{fig:pathlesswebs}. Finally, if $c=0$, $|V_{int}|=6$. The only bipartite tree webs with 7 black leaves, 1 white leaf, and 6 internal trivalent vertices are shown in Figure \ref{fig:pathlesswebs}; there are four.

If $k=2$, $|V_{int}|=2c+4$, so from Proposition \ref{prop:minvertices}, $c<2$ in this case. If $c=1$, $|V_{int}|=6$; all of these vertices are required to construct the cycle, and again due to the colors of the boundary vertices, it is impossible to complete the web without adding more vertices. Therefore $c=0$. The only bipartite forest webs with 7 black leaves, 1 white leaf, 4 internal trivalent vertices, and 2 connected components are shown in Figure \ref{fig:pathlesswebs}; there are five.

If $k\geq 3$, we have from Proposition \ref{prop:minvertices} that $c=0$, implying that $|V_{int}|\leq 2$. It is impossible to connect all boundary vertices with this few internal vertices. Therefore Figure \ref{fig:pathlesswebs} enumerates all pathless non-elliptic webs with seven black boundary vertices and one white boundary vertex.
\end{proof}

\begin{figure}
    \centering
    \includegraphics[scale=.13]{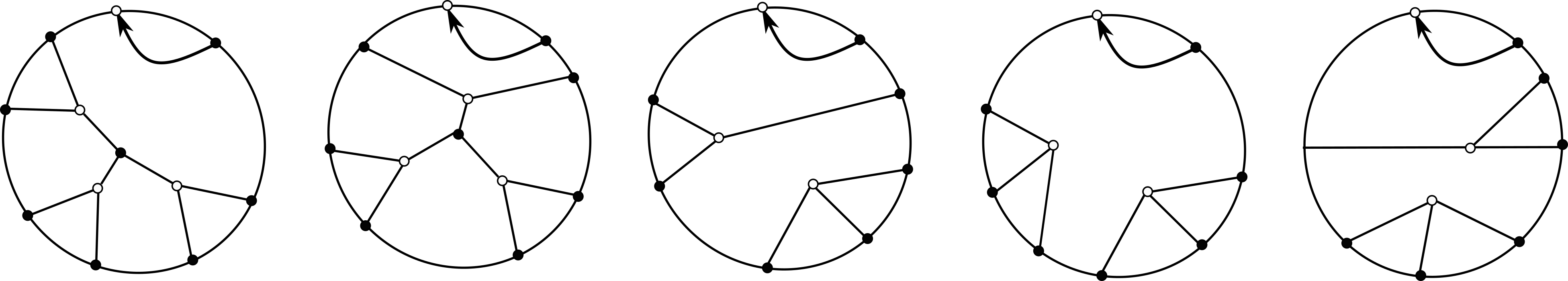}
    \caption{All possible nonelliptic webs with a path between two adjacent boundary vertices and six other black boundary vertices.}
    \label{fig:webswithpathsB}
\end{figure}

\begin{lemma}\label{lemma:webswithpathsadj}
All non-elliptic webs with a path between two adjacent boundary vertices and six other black boundary vertices are listed in Figure \ref{fig:webswithpathsB}, up to reflection.
\end{lemma}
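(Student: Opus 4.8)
The plan is to mirror the proof of Lemma \ref{lemma:pathlesswebs}, first stripping off the path and then enumerating what remains. Since a path is by Definition \ref{definition:web} a single directed edge from a black boundary vertex to an adjacent white boundary vertex, it forms a connected component with no internal vertices; deleting it leaves a non-elliptic web $W'$ whose boundary consists of the six other black vertices. Because every boundary vertex of $W'$ is black, $W'$ contains no paths, and because $W'$ is non-elliptic it contains no vertexless loops (these bound a disk and so are contractible). Thus $W'$ is built only from trivalent trees and cycles of length at least six attached to six black leaves, and I have reduced the problem to enumerating such $W'$.

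Next I would bound the number of cycles $c$ and of connected components $k'$ of $W'$ using Propositions \ref{prop:eulerchar} and \ref{prop:minvertices}. The Euler characteristic gives $|V_{int}(W')| = 6 + 2c - 2k'$. If $c \geq 2$, then Proposition \ref{prop:minvertices} forces $6 + 2c - 2k' \geq 2c + 6$, i.e. $k' \leq 0$, which is impossible; and if $c = 1$, the same bound forces $k' = 1$ and $|V_{int}(W')| = 6$. In this last case the six internal vertices must form a single hexagon, whose three black cycle-vertices each require a third edge: such an edge cannot reach a white boundary vertex (there are none) and cannot be a chord of the hexagon (a chord would create a cycle of length at most four, violating non-ellipticity). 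Hence $c = 0$ and $W'$ is a bipartite forest of trivalent trees with black leaves, so $|V_{int}(W')| = 6 - 2k'$.

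Since each of the eight boundary vertices is covered in the underlying triple dimer, none is isolated, so every component of $W'$ contains an internal vertex; this rules out $k' \geq 3$. For $k' = 2$ we have $|V_{int}(W')| = 2$, and a short count (a trivalent tree with $L$ black leaves has $L-2$ internal vertices, and no such tree with four black leaves is bipartite) shows each component is a tripod, giving two tripods partitioning the six vertices. For $k' = 1$ we have $|V_{int}(W')| = 4$; writing $w$ and $b$ for the numbers of white and black internal vertices, the relations $w + b = 4$ and $3w = 6 + 3b$ (counting leaf-edges and internal edges, using that black internal vertices attach only to white internal vertices) force $b = 1$, $w = 3$, which is exactly the hexapod. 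Finally I would enumerate the planar embeddings of the hexapod and of the two-tripod configuration that are non-crossing and compatible with the fixed adjacent path, then quotient by the reflection fixing the path; the resulting list is Figure \ref{fig:webswithpathsB}.

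The main obstacle is this last planar-embedding enumeration. The vertex-counting above only determines the topological type of $W'$ (a hexapod or two tripods); the genuine work is checking exhaustively which non-crossing arrangements of the petals around the six black vertices avoid crossing both the path and one another, confirming that each is non-elliptic, and verifying that the reflection identifies no further pairs, so that Figure \ref{fig:webswithpathsB} is both complete and irredundant.
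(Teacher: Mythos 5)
Your proposal is correct and follows essentially the same route as the paper: strip off the path, apply the Euler-characteristic count $|V_{int}|=6+2c-2k$ together with Proposition \ref{prop:minvertices} to force $c=0$ and $k\le 2$, and then identify the remaining components as a hexapod (for $k=1$) or two tripods (for $k=2$) before enumerating planar embeddings against the fixed path. Your degree-counting derivation of the hexapod ($w=b+2$, $w+b=4$) is a slightly cleaner packaging of the paper's case analysis on white vertices adjacent to leaves, and the final planar-embedding enumeration you flag as the remaining work is handled in the paper at the same informal, figure-based level.
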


\begin{proof}
Proposition \ref{prop:eulerchar} gives $|V_{int}| = 6+2c-2k$ for the number of internal vertices in such a web $W$ with $k$ connected components and $c$ cycles.

If $k=1$, then $|V_{int}| = 2c+4$, so from Proposition \ref{prop:minvertices}, $c < 2$ in this case. If $c=1$, $|V_{int}| = 6$, and we must use all six internal vertices to make a hexagon. However, the three black internal vertices cannot be connected to the boundary since the web must be bipartite, so it is impossible to complete a valid web where $c=1$. If $c=0$, the web must be composed of a path $2 \to 1$ and a bipartite tree with six black leaves and four trivalent internal vertices. Suppose that we have a white vertex adjacent to a black leaf. We claim that this white vertex must be adjacent to exactly one other leaf. If it were attached to at least two other leaves, that would force us to finish a connected component without including all the vertices. If it was attached to no other leaves, this would force the creation of two other internal black vertices, which would in turn force the creation of four white vertices, two for each new black vertex. This would yield a total of seven internal vertices, which is too many. Therefore, the white vertex must be adjacent to exactly one other black leaf, along with one new black internal vertex adjacent to two more white internal vertices. This tree appears alongside the requisite path in the two webs shown in the top row of Figure \ref{fig:webswithpathsB}.

If $k=2$, then $|V_{int}| = 2c+2$, so from Proposition \ref{prop:minvertices}, $c=0$. There is only one way to construct a bipartite forest with two connected components, six black leaves, and three trivalent internal vertices; the possible configurations of these trees are shown alongside the requisite path in the bottom three webs in Figure \ref{fig:webswithpathsB}.

Finally, if $k \geq 3$, then $|V_{int}| \leq 2c$, but $c=0$ by Proposition \ref{prop:minvertices} and there is no way to complete a web with 0 internal vertices. Therefore Figure \ref{fig:webswithpathsB} enumerates all non-elliptic webs with a path between adjacent boundary vertices and six other black boundary vertices, up to reflection.
\end{proof}

\begin{figure}
    \centering
    \includegraphics[scale=.16]{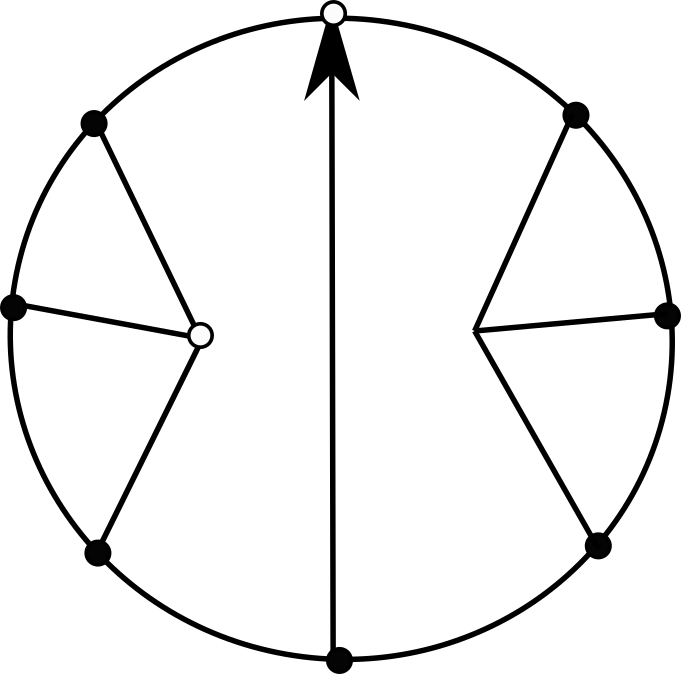}
    \caption{The unique web that connects non-adjacent boundary vertices.}
    \label{fig:5to1}
\end{figure}

\begin{lemma}\label{lemma:webswithpathsnonadj}
The only non-elliptic web with a path between non-adjacent boundary vertices and six other black boundary vertices is depicted in Figure \ref{fig:5to1}.
\end{lemma}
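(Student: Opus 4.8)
The plan is to mirror the strategy of Lemmas \ref{lemma:pathlesswebs} and \ref{lemma:webswithpathsadj}: combine the Euler-characteristic count of Proposition \ref{prop:eulerchar} with the vertex bounds of Proposition \ref{prop:minvertices} to cut down to finitely many cases, and then eliminate all but one using a planarity argument that is genuinely new to the non-adjacent setting. Discarding the path together with its two endpoints, the remaining structure is a web on the six other black boundary vertices, so Proposition \ref{prop:eulerchar} gives $|V_{int}| = 6 + 2c - 2k$, where $k$ is the number of connected components and $c$ the number of cycles.

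First I would dispose of the cases with cycles. If $c \geq 1$, Proposition \ref{prop:minvertices} gives $|V_{int}| \geq 2c+4$; together with $|V_{int}| = 6+2c-2k$ this forces $k=1$, and since $c \geq 2$ would demand $|V_{int}| \geq 2c+6 > 4+2c$, we must have $c=1$ and $|V_{int}|=6$. But then all six internal vertices are consumed by a single hexagon, whose three black internal vertices cannot be joined to the boundary without breaking bipartiteness, exactly as in the two previous lemmas. Hence $c=0$, so $|V_{int}|=6-2k$ and $k \in \{1,2,3\}$.

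The heart of the argument is the planarity constraint forced by non-adjacency. Since the path joins two non-adjacent boundary vertices, it is a chord that separates the disk into two regions; because edges cannot cross, every connected component of the remaining web lies entirely in one region, and the six black vertices are split into two nonempty arcs of sizes $a$ and $6-a$ with $1 \leq a \leq 5$. A short counting argument shows that any bipartite tree all of whose leaves are black and all of whose internal vertices are trivalent has a number of leaves divisible by $3$: writing $W,B,L$ for the numbers of white internal, black internal, and leaf vertices and using that the component is a tree with every edge black--white yields $L = 3(B+1)$ (the minimal cases being the tripod with $L=3$ and the hexapod with $L=6$). With $c=0$, each region is a disjoint union of such trees, so both $a$ and $6-a$ are multiples of $3$; both being nonempty forces $a=3$. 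Since no component crosses the chord there are at least two components, and $|V_{int}| = 6-2k$ with $|V_{int}| \geq 2$ then pins down $k=2$ and one internal vertex per side, i.e.\ a tripod on each consecutive triple of black vertices.

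Finally I would confirm realizability, non-ellipticity, and uniqueness. Planarity within each region forces the three black vertices of each tripod to be consecutive, and the demand that both arcs hold three vertices pins the path's endpoints to be antipodal; the resulting web contains no cycles, hence no bigons, squares, or contractible loops, so it is automatically non-elliptic and is precisely the web of Figure \ref{fig:5to1}. I expect the main obstacle to be making the planarity-plus-divisibility step fully rigorous --- in particular, justifying that each component lies on a single side of the chord and that $3+3$ is the only admissible partition of the six black vertices --- since the cycle elimination and hexagon exclusion are routine adaptations of the earlier lemmas.
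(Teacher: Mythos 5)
Your proof is correct, and it reaches the paper's conclusion by a somewhat different route. The paper fixes the target of the path as vertex $1$ and runs through the possible sources $3,4,5,6,7$ one at a time: for each candidate it applies Proposition \ref{prop:eulerchar} to the portion of the web cut off on the \emph{smaller} side of the chord and shows the required number of internal vertices is negative (paths $3\to 1$, $7\to 1$) or zero with one forced component (paths $4\to 1$, $6\to 1$), which is impossible; only the antipodal path $5\to 1$ survives, and the same count then forces exactly one internal vertex, hence a tripod, on each side. You instead run the Euler-characteristic count globally to get $c=0$ and $k\le 3$, and pin down the position of the chord with the observation that a bipartite tree with black degree-one leaves and trivalent internal vertices has $L=3(B+1)$ leaves, so each arc must have size divisible by $3$ and the only admissible split of the six black vertices is $3+3$. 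That divisibility lemma is correct (the handshake identity gives $W+B=L-2$ and counting edges from each side of the bipartition gives $L+3B=3W$, whence $L=3(B+1)\ge 3$); it replaces the paper's case-by-case elimination and would generalize more cleanly to other boundary sizes, while the paper's version needs no new lemma, only repeated applications of Propositions \ref{prop:eulerchar} and \ref{prop:minvertices} to each side of the chord. Both arguments hinge on the same planarity fact --- the path is a separating chord that no component can cross --- and both terminate in the unique two-tripod web of Figure \ref{fig:5to1}.
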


\begin{proof}
For ease of reference, we label the target of the path vertex 1, and continue labeling vertices clockwise. We first show that the only possible path between non-adjacent boundary vertices must be $5 \to 1$. If we had a path $3 \to 1$ or $7 \to 1$, then since webs are planar, one boundary vertex ($2$ or $8$ respectively) would be isolated, unable to be in the same connected component as any other boundary vertex. Then Proposition \ref{prop:eulerchar} gives $|V_{int}| = 1+2c-2k$ for this portion of the web, and by Proposition \ref{prop:minvertices}, it cannot have any cycles. Therefore this portion of the web must have $-1$ internal vertices, which is impossible, so there is no way to complete the web. Similarly, if we had a path $4 \to 1$ or $6 \to 1$, then two vertices ($2$ and $3$ or $7$ and $8$ respectively) would be isolated. Then Proposition \ref{prop:eulerchar} gives $|V_{int}| = 2+2c-2k$ for this portion of the web, and by Proposition \ref{prop:minvertices}, it cannot have any cycles (since it has at least one connected component). Therefore this portion of the web must have zero internal vertices and only one connected component, which is impossible, so there is no way to complete the web.

We now show that Figure \ref{fig:5to1} depicts the only web with a path $5 \to 1$. This path isolates the vertices $2,3,4$ and $6,7,8$, so Proposition \ref{prop:eulerchar} gives $|V_{int}| = 3+2c-2k$ for the portions of the web on either side of the path. Neither side can have any cycles by Lemma \ref{prop:minvertices}, so the web must have one internal vertex adjacent to each of these boundary vertices. This produces the web in Figure \ref{fig:5to1}.
\end{proof}

The following lemma will be of use in the proofs of Theorems \ref{theorem:connectivityforC} and \ref{theorem:connectivityforZ}.
\begin{figure}
    \centering
    \includegraphics[width=\textwidth]{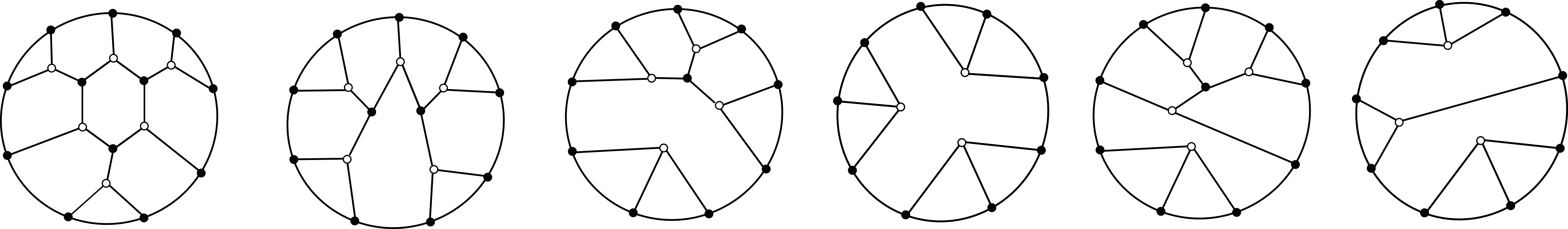}
    \caption{All nonelliptic webs with nine black boundary vertices.}
    \label{fig:pathlesswebs9}
\end{figure}
\begin{lemma}\label{lemma:pathlesswebs9}
All non-elliptic webs with nine black boundary vertices are the dihedral translates of those listed in Figure \ref{fig:pathlesswebs9}. 
\end{lemma}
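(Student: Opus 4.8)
The plan is to follow the template of Lemma~\ref{lemma:pathlesswebs}: since all nine boundary vertices are black, no boundary vertex can be an endpoint of a path (paths run from a black to a white boundary vertex), so every non-elliptic web on this boundary is \emph{pathless} and decomposes into connected components, each of which attaches only to black boundary vertices. I would first record the global constraint from Proposition~\ref{prop:eulerchar}, namely $|V_{int}| = 9 + 2c - 2k$, and then combine it with Proposition~\ref{prop:minvertices} to cut down to finitely many pairs $(c,k)$.

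The sharper tool I would add is a per-component bipartite count. For a single connected component with $\ell$ black leaves and with $b$ internal black and $w$ internal white trivalent vertices, comparing the black and white degree sums gives $\ell + 3b = 3w$, so $3 \mid \ell$: every component attached to an all-black boundary carries a multiple of three leaves. Hence the nine boundary vertices are partitioned as $9$, $6+3$, or $3+3+3$, forcing $k \le 3$. A component containing a cycle must contain a hexagon (the smallest non-elliptic cycle), hence at least three internal black vertices; feeding $b \ge 3$ and $c_i = 1$ into the relation $\ell = 3(b - c_i + 1)$ gives $\ell \ge 9$, so a one-cycle component already absorbs all nine leaves. Together with the vertex bounds of Proposition~\ref{prop:minvertices} (which, via $a_2 = 10$, force at least five internal black vertices for two cycles and so preclude two cycles on only nine leaves) this shows $c \le 1$, and that $c = 1$ forces $k = 1$. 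The surviving cases are therefore $(c,k) \in \{(0,3),(0,2),(0,1),(1,1)\}$.

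I would then enumerate each case explicitly and reduce modulo the dihedral group $D_9$. The case $(0,3)$ is three tripods, i.e.\ a non-crossing partition of the nine cyclically labelled boundary vertices into triples; $(0,2)$ is a hexapod on six of the vertices together with a tripod on the remaining three, again constrained to be non-crossing; $(0,1)$ is a single connected bipartite tree with $b=2$, $w=5$ (nine leaves), whose admissible shapes I would list directly; and $(1,1)$ is a one-cycle web with $b=3$, $w=6$, namely a central hexagon whose three white vertices each carry a single boundary leg and whose three black vertices each feed a white connector attached to two boundary legs, for which I would determine the finitely many planar placements of these legs around the boundary. Each resulting configuration is matched to a dihedral translate of a web in Figure~\ref{fig:pathlesswebs9}.

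The main obstacle is this last step: confirming that the by-hand enumerations in the connected cases $(0,1)$ and $(1,1)$ are exhaustive and that no two of the listed webs are related by a rotation or reflection, so that Figure~\ref{fig:pathlesswebs9} is both complete and irredundant. A secondary subtlety is the gap between the counting formulas and genuine realizability: as in Lemma~\ref{lemma:pathlesswebs}, several $(c,k)$ pass the arithmetic of Proposition~\ref{prop:eulerchar} yet admit no valid non-elliptic web (for instance $c \ge 2$ at $k=1$, $c=1$ at $k=2$, and $k=4$), and each such spurious case must be excluded by the structural observations above rather than by counting alone.
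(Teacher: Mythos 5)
Your proposal follows essentially the same route as the paper's proof: the Euler-characteristic count $|V_{int}| = 9 + 2c - 2k$ from Proposition \ref{prop:eulerchar}, the minimal-vertex bounds of Proposition \ref{prop:minvertices} to cut down the pairs $(c,k)$, and then an explicit case-by-case enumeration terminating in the hexagon web, the single tree, the hexapod-plus-tripod, and the three tripods. Your per-component bipartite count $\ell + 3b = 3w$ (hence $3 \mid \ell$ and $\ell = 3(b+1-c_i)$) is a genuine sharpening that replaces the paper's informal ``due to the colors of the boundary vertices, it is impossible to connect'' exclusions with clean arithmetic, and the remaining gaps you flag -- exhaustiveness of the hand enumeration in the connected cases and irredundancy under $D_9$ -- are left at exactly the same level of assertion in the paper's own proof.
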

\begin{proof}
Proposition \ref{prop:eulerchar} gives $|V_{int}|=9+2c-2k$ for the number of internal vertices in such a web $W$ that has $k$ connected components and $c$ cycles.
Note that since there are no white boundary vertices, this web cannot contain any directed paths.

If $k=1$, $|V_{int}|=2c+7$, so from Proposition \ref{prop:minvertices}, $c<4$ in this case. If $c=3$, $|V_{int}|=13$; all of these vertices are required to construct the three cycles, but due to the colors of the boundary vertices, it is impossible to connect all boundary vertices to the interior hexagons without adding more vertices.  Thus, $c$ cannot be $3$.  If $c=2$, $|V_{int}|=11$; all but one of these vertices are required to construct the two cycles, but again due to the colors of the boundary vertices, it is impossible to connect all boundary vertices to the interior hexagons without adding more vertices. Therefore $c$ cannot be 2. If $c=1$, $|V_{int}|=9$. Six of these internal vertices must be used to create the hexagon; and since there are only black boundary vertices, the other three internal vertices must be white and adjacent to two of the black vertices in the hexagon. The resulting web is possible to complete, as shown in Figure \ref{fig:pathlesswebs9}. Finally, if $c=0$, $|V_{int}|=7$. The only bipartite tree with 9 black leaves and 7 internal trivalent vertices is shown.

If $k=2$, then $|V_{int}| = 2c+5$, which implies that $c < 2$.  If $c=1$, then there are seven internal vertices, of which six of these must comprise a cycle (a hexagon).  However, then there are not enough internal vertices left to connect to nine degree one black vertices as two connected components. On the other hand, if $c=0$, there are five internal vertices, and it is possible to construct configurations consisting of a hexapod and a tripod as its two connected components.

If $k=3$, then $|V_{int}| = 2c+3$, which implies that $c=0$, and there are simply three internal vertices.  Consequently, the only allowable configurations in this case are composed of three tripods.
Lastly, we observe that if $k \geq 4$, we again have $c=0$ and hence only one internal vertex; it is impossible to connect all boundary vertices with this few internal vertices, so no configurations exist in this case.
\end{proof}
\end{subsection}

\begin{subsection}{Webs for Cubic Differences}
Our main theorems give non-elliptic webs that describe triple dimers for twists of $A$, $B$, $C$, $Z$, and their dihedral translates. We work through examples of their use in Section \ref{section:appendixAB}.

We begin by establishing notation. Given $n \geq 8$ and a plabic graph $G$ for $\Gr(3,n)$, recall that $\mathcal{D}^3(G)$ is the set of triple dimers on $G$, and let $\mathcal{W}$ denote the set of nonelliptic webs. For any $D \in \mathcal{D}^3(G)$, we may 
apply the reduction rules of Figure \ref{fig:skeinrelations} to write its corresponding web $W(D)$ as a sum of nonelliptic summands:
\[W(D)=\sum_{W \in \mathcal{W}} C_W^D W,\]
where $C_W^D \in \Z_{\geq 0}$ is the coefficient of the nonelliptic web $W$ in $W(D)$.  Additionally, given any web $W$ with $n'$ boundary vertices, $n\geq n'$, and $S=\{s_1<\dots<s_{n'}\} \subseteq [n]$, let $W^S$ denote the web with $n$ boundary vertices and the following properties:
\begin{itemize}
    \item All boundary vertices with labels in $[n] \setminus S$ are isolated, i.e. they are not adjacent to edges.
    \item When all boundary vertices of $W^S$ with labels not in $S$ are removed, and the remaining vertices are relabeled $s_1 \mapsto 1, s_2 \mapsto 2,\dots, s_{n'} \mapsto n'$, the resulting web is $W$.
\end{itemize}

\begin{figure}
    \centering
    \includegraphics[width=\textwidth]{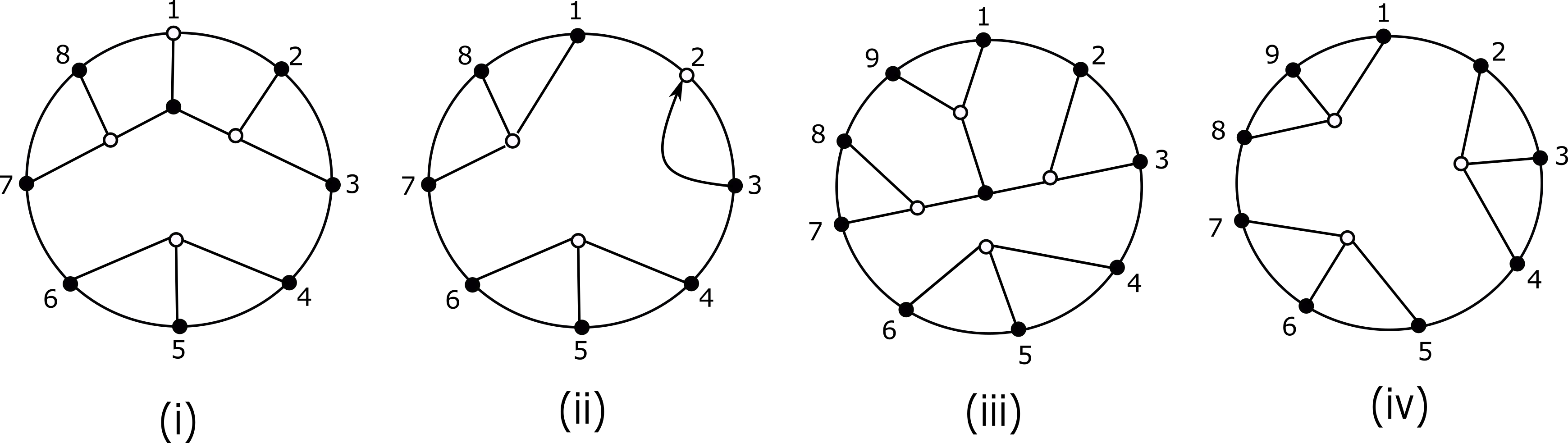}
    \caption{Non-elliptic webs corresponding to cubic differences in $\Gr(3,8)$ and $\Gr(3,9)$. (i) is the batwing corresponding to $A$, (ii) is the octopus corresponding to $B$, (iii) is the hexa-crab corresponding to $C$, and (iv) is the tri-crab corresponding to $Z$.}
    \label{fig:mainresults}
\end{figure}

\begin{theorem} \label{theorem:connectivityforA} In $\Gr(3,8)$, write $A=A_1-A_2-A_3$\[=\Delta_{134}\Delta_{258}\Delta_{167} - \Delta_{134}\Delta_{678}\Delta_{125}-\Delta_{158}\Delta_{234}\Delta_{167}.\]
Then \[\T(A)=\sum_{D \in \mathcal{D}^3(G)} C_{batwing}^D wt_f(D),\]
where the ``batwing" is the nonelliptic web pictured in Figure \ref{fig:mainresults}(i).
Additionally, for any $n \geq 8$, $\sigma \in D_8$, and $S=\{s_1<\dots<s_8\} \subseteq [n]$, we have that $\T(\sigma(A)^S)=\sum_{D \in \mathcal{D}^3} C_{\sigma(batwing)^S}^D wt_f(D)$.
\end{theorem}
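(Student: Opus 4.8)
The plan is to reduce the statement about $\mathscr{T}^*(A)$ to the already-established machinery for twists of Pl\"ucker coordinates (Theorem~\ref{theorem:facewts}) and for double dimers (Theorem~\ref{theorem:doubledimersXY}), and then to perform the appropriate cancellation among the three terms $A_1, A_2, A_3$ at the level of triple dimer configurations. First I would write $\mathscr{T}^*(A) = \mathscr{T}^*(A_1) - \mathscr{T}^*(A_2) - \mathscr{T}^*(A_3)$, and expand each $\mathscr{T}^*(A_i)$ as a product of three single-Pl\"ucker twists using the multiplicativity of $\mathscr{T}^*$. Following the proof of Theorem~\ref{theorem:doubledimersXY}, each such product becomes a sum over triple dimers $D \in \mathcal{D}^3_{I,J,K}(G)$ with fixed boundary conditions, weighted by a multiplicity $M_D$ counting the ordered triples of single dimers that overlay to $D$. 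Using the analogue of \cite[Theorem~3.1]{lam2015} for $m=3$ (the weblike-subgraphs correspondence cited in Subsection~\ref{subsection:tripledimers}), each triple dimer $D$ corresponds to a web $W(D)$, whose nonelliptic expansion $W(D)=\sum_{W\in\mathcal{W}} C_W^D\, W$ records exactly how $D$ resolves into basis webs; the multiplicity $M_D$ is encoded by the coefficients $C_W^D$.

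The heart of the argument is then a cancellation lemma: I would show that after summing the three contributions, all terms survive with coefficient equal to $C_{batwing}^D$, and that the contributions of every other nonelliptic web cancel. This is where the enumeration lemmas do their work. The boundary conditions of $A_1, A_2, A_3$ are $\{1,3,4\},\{2,5,8\},\{1,6,7\}$ for $A_1$, and the correspondingly permuted triples for $A_2, A_3$; the key observation is that the three terms share many common boundary-condition overlays, and the non-batwing nonelliptic webs that arise are precisely those enumerated in Lemmas~\ref{lemma:pathlesswebs}, \ref{lemma:webswithpathsadj}, and \ref{lemma:webswithpathsnonadj}. I would verify, web class by web class, that each nonelliptic web other than the batwing appears with equal total multiplicity in the positive term $\mathscr{T}^*(A_1)$ and in the combined negative terms $\mathscr{T}^*(A_2)+\mathscr{T}^*(A_3)$, so that only the batwing coefficient remains. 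Since the batwing is distinguished as the unique nonelliptic web whose boundary connectivity is compatible with $A_1$ but incompatible with both $A_2$ and $A_3$, its coefficient persists with the correct sign.

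For the generalization to $\sigma(A)^S$, I would argue that both the twist computation and the web correspondence are equivariant under the dihedral action and under the projection/relabeling $\pi_{n,I}$. Applying $\sigma \in D_8$ permutes the Pl\"ucker indices cyclically (using that the image of $A$ under $D_8$ has size $8$, as noted in Subsection~\ref{subsection:differences}), which simply relabels boundary vertices and carries the batwing to $\sigma(batwing)$; the cancellation argument is unchanged because it depends only on the combinatorial pattern of the three boundary conditions, which $\sigma$ permutes uniformly. For the superscript $S$, the defining properties of $W^S$ ensure that isolating the boundary vertices outside $S$ corresponds exactly to the dimer configurations that leave those vertices unmatched, so the same term-by-term cancellation carries over verbatim with $batwing$ replaced by $batwing^S$.

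I expect the main obstacle to be the bookkeeping in the cancellation lemma: one must confirm that the resolutions $C_W^D$ of the shared triple dimers into nonelliptic webs genuinely match up between the positive and negative terms, rather than merely having matching boundary conditions. Concretely, one must check that applying the skein relations of Figure~\ref{fig:skeinrelations} to the triple dimers arising from $A_2$ and $A_3$ produces exactly the non-batwing summands (with correct coefficients) appearing in the resolution of those from $A_1$. This is the place where the finiteness and explicit listing furnished by the enumeration lemmas is essential, and where a careful case analysis (likely the pictorial arguments deferred to Section~\ref{section:appendixlemmas}) cannot be avoided.
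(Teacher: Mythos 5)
Your proposal follows essentially the same route as the paper: multiplicativity of the twist plus Theorem~\ref{theorem:facewts} turns each $\T(A_i)$ into a sum over triple dimers, Lam's weblike-subgraph correspondence converts the multiplicity $M_D$ into $\sum_W C_W^D\, a(A_i;W)$, and the compatibility enumeration (Lemma~\ref{lemma:A123}, proved by the pictorial case analysis in the appendix) shows the non-elliptic webs compatible with $A_2$ and $A_3$ are exactly the non-batwing webs compatible with $A_1$, each uniquely, so everything but $C^D_{batwing}$ cancels. The one place you are less precise than the paper is the multiplicity bookkeeping you flag as the ``main obstacle'': it is resolved directly by \cite[Lemma 4.12]{lam2015} together with the \emph{uniqueness} of the compatible edge colorings, rather than by re-resolving skein relations term by term.
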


\begin{theorem} \label{theorem:connectivityforB} 
In $\Gr(3,8)$, write $B=B_1-B_2-B_3$\[ = \Delta_{258}\Delta_{134}\Delta_{267} - \Delta_{234}\Delta_{128}\Delta_{567}-\Delta_{234}\Delta_{258}\Delta_{167}.\]
Then \[\T(B)=\sum_{D \in \mathcal{D}^3(G)} C_{octopus}^D wt_f(D),\]
where the ``octopus" is the nonelliptic web pictured in Figure \ref{fig:mainresults}(ii).
Additionally, for any $n \geq 8$, $\sigma \in D_8$, and $S=\{s_1<\dots<s_8\} \subseteq [n]$, we have that $\T(\sigma(B)^S)=\sum_{D \in \mathcal{D}^3} C_{\sigma(octopus)^S}^D wt_f(D)$.
\end{theorem}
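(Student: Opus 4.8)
\textbf{Proof proposal for Theorem \ref{theorem:connectivityforB}.}

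The plan is to mirror the structure of the proof of Theorem \ref{theorem:doubledimersXY}, now using triple dimers and the web machinery of Section \ref{subsection:webs}. First I would apply Theorem \ref{theorem:facewts} termwise to the three summands of $B$, writing each $\T(B_i)$ as a sum of face weights over triple dimers. Since $B = B_1 - B_2 - B_3$ with $B_1 = \Delta_{258}\Delta_{134}\Delta_{267}$, and similarly for $B_2, B_3$, and since $\T$ is a ring homomorphism, I get
\[\T(B) = \sum_{D \in \mathcal{D}^3_{B_1}(G)} M_D\, wt_f(D) - \sum_{D \in \mathcal{D}^3_{B_2}(G)} M_D\, wt_f(D) - \sum_{D \in \mathcal{D}^3_{B_3}(G)} M_D\, wt_f(D),\]
where each $\mathcal{D}^3_{B_i}(G)$ is the set of triple dimers obtained by overlaying single dimers with the appropriate three boundary conditions, and $M_D$ is the overlay multiplicity. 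The key reinterpretation, following \cite{lam2015}, is that the overlay multiplicity $M_D$ of a triple dimer $D$ equals the coefficient-sum counting how the associated web $W(D)$ decomposes into nonelliptic webs; that is, $M_D = \sum_{W \in \mathcal{W}} C_W^D \cdot (\text{intrinsic weight of } W)$, so that collecting terms by nonelliptic web $W$ converts the sum over dimers into a sum $\sum_W (\sum_D C_W^D\, wt_f(D))$ over nonelliptic webs.

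Next I would analyze, for each boundary condition $\{2,3,4\}\cup\{1,2,8\}\cup\{5,6,7\}$ and its permutations appearing in $B_1,B_2,B_3$, exactly which nonelliptic webs $W$ can appear with nonzero coefficient. This is where Lemmas \ref{lemma:pathlesswebs}, \ref{lemma:webswithpathsadj}, and \ref{lemma:webswithpathsnonadj} do the heavy lifting: the boundary vertex colorings of $W(D)$ are determined by which boundary vertices carry doubled edges, and the three boundary conditions of each $B_i$ fix the pattern of black versus white boundary vertices. The enumeration lemmas then list all nonelliptic webs compatible with each such coloring, among which the octopus of Figure \ref{fig:mainresults}(ii) is one. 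I would then argue that every nonelliptic web $W$ other than the octopus appears with equal total contribution in the positive term (from $B_1$) and in the negative terms (from $B_2$ and $B_3$), so those contributions cancel, leaving precisely $\sum_D C_{octopus}^D\, wt_f(D)$. Establishing this cancellation requires a careful bijection: for each web $W \ne octopus$ arising from $B_1$, I must exhibit a matching appearance in $B_2$ or $B_3$ with the same face weight and coefficient, and conversely show the octopus arises only from $B_1$.

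The main obstacle will be this cancellation bookkeeping — verifying that the nonelliptic webs other than the octopus pair up correctly across the three terms with matching face weights. Unlike the double-dimer case of Theorem \ref{theorem:doubledimersXY}, where each boundary condition forces a unique or near-unique noncrossing matching, the triple-dimer setting allows a web to reduce into several nonelliptic summands with multiplicities, so I must track the skein-reduction coefficients $C_W^D$ carefully and confirm that the intrinsic web weights (powers of $2$ from cycles, as in Lemma \ref{lemma:2deg2verts} contexts) are respected by the correspondence. I anticipate deferring the most delicate pictorial verifications to the appendix (Section \ref{section:appendixlemmas}), as the paper signals. Finally, the extension to $\sigma(B)^S$ for $\sigma \in D_8$ and $S \subseteq [n]$ follows by the dihedral symmetry of the construction together with the projection conventions defining $W^S$: relabeling boundary vertices via $s_i \mapsto i$ and isolating vertices outside $S$ commutes with both the twist computation and the web reduction, so the identical argument applies verbatim after relabeling.
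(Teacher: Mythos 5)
Your proposal matches the paper's proof essentially step for step: apply Theorem \ref{theorem:facewts} termwise, invoke Lam's multiplicity formula $M_D = \sum_{W} C_W^D\, a(I,J,K;W)$, and use the enumeration of compatible nonelliptic webs (the paper's Lemma \ref{lemma:B123}, proved in the appendix from the enumeration Lemmas \ref{lemma:pathlesswebs}--\ref{lemma:webswithpathsnonadj}) to see that the webs compatible with $B_2$ and $B_3$ are exactly the non-octopus webs compatible with $B_1$, each with a \emph{unique} compatible coloring, so the cancellation is term-by-term and requires no further bijection. One small correction to your anticipated bookkeeping: the per-web multiplicity in the triple-dimer setting is the number of compatible proper $3$-edge-colorings $a(I,J,K;W)$ (which equals $1$ for every web that occurs here), not a power of $2$ from cycles --- that is the double-dimer phenomenon of Theorem \ref{theorem:doubledimersXY}.
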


\begin{theorem} \label{theorem:connectivityforC}
In $\Gr(3,9)$, write $C = C_1+C_2-C_3-C_4$\[= \Delta_{124}\Delta_{357}\Delta_{689} + \Delta_{123}\Delta_{456}\Delta_{789}- \Delta_{124}\Delta_{356}\Delta_{789} - \Delta_{123}\Delta_{457}\Delta_{689}.\]
Then \[\T(C)=\sum_{D \in \mathcal{D}^3(G)} C_{\textrm{hexa-crab}}^D wt_f(D),\]
where the ``hexa-crab" is the nonelliptic web pictured in Figure \ref{fig:mainresults}(iii).
Additionally, for any $n \geq 9$, $\sigma \in D_9$, and $S=\{s_1<\dots<s_9\} \subseteq [n]$, we have that $\T(\sigma(C)^S)=\sum_{D \in \mathcal{D}^3} C_{\sigma(hexa-crab)^S}^D wt_f(D)$.
\end{theorem}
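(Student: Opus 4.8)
The plan is to mimic the structure of the proof of Theorem~\ref{theorem:doubledimersXY}, but now working one categorical level up with triple dimers and webs instead of double dimers and matchings. First I would apply the multiplicativity of the twist together with Theorem~\ref{theorem:facewts} to each of the four terms $C_1, C_2, C_3, C_4$. Since each $C_i$ is a product of three Pl\"ucker coordinates $\Delta_{I}\Delta_{J}\Delta_{K}$, I would expand $\T(\Delta_I)\T(\Delta_J)\T(\Delta_K)$ as a sum over triple dimers $D \in \mathcal{D}^3_{I,J,K}(G)$ obtained by overlaying single dimers with boundary conditions $I, J, K$, each weighted by $\text{wt}_f(D)$ and by a multiplicity $M_D$ counting the ordered triples of single dimers that overlay to $D$. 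The signed combination then gives
\[
\T(C) = \sum_{D \in \mathcal{D}^3(G)} \left(\sum_i \pm M_D^{(i)}\right) \text{wt}_f(D),
\]
and the goal becomes showing the net signed multiplicity equals the web coefficient $C_{\text{hexa-crab}}^D$ for every $D$.

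The conceptual heart is to translate this signed-multiplicity bookkeeping into web language via the construction of Subsection~\ref{subsection:tripledimers}, using the fact (cited from \cite{lam2015}) that $W(D)$ expands into nonelliptic webs with nonnegative integer coefficients $C_W^D$, and that these coefficients record exactly the relevant multiplicities. Concretely, I would argue that for a fixed triple dimer $D$, the coefficient $C_{\text{hexa-crab}}^D$ in the nonelliptic expansion of $W(D)$ equals precisely the net signed multiplicity $\sum_i \pm M_D^{(i)}$ coming from the four terms of $C$. This is where the enumeration lemmas enter: because the boundary connectivity of $W(D)$ is pinned down by which boundary vertices $D$ touches, only webs with nine black boundary vertices (no paths, since all boundary vertices are black) can appear, and Lemma~\ref{lemma:pathlesswebs9} lists all of them exhaustively as dihedral translates of the figures. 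The strategy is then a finite case analysis: for each admissible boundary type, compare the single-dimer decompositions compatible with the $I,J,K$ of each $C_i$ against the skein expansion of the corresponding web, and verify the signs and multiplicities match the hexa-crab coefficient and cancel for all other nonelliptic webs.

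The main obstacle will be the last step: the precise verification that the signed sum of term-by-term multiplicities collapses exactly onto the hexa-crab coefficient, with all contributions to tripods, hexapods, and the other nonelliptic webs of Lemma~\ref{lemma:pathlesswebs9} cancelling. Unlike the $X, Y$ case, where cancellation was transparent because only two non-crossing matchings existed, here the skein relations of Figure~\ref{fig:skeinrelations} introduce the factor $2$ for cycles and genuine multiplicities in the web expansion, so I expect this to require the careful, pictorial bookkeeping that the authors defer to the appendix (Section~\ref{section:appendixlemmas}). I would organize this by first identifying the forced edges determined by the boundary condition of each $C_i$, then showing that any triple dimer whose web is not the hexa-crab either appears with a net coefficient of zero after the signed sum or fails to be compatible with the boundary data of any $C_i$.

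Finally, the extension to arbitrary $n \geq 9$, $\sigma \in D_9$, and $S \subseteq [n]$ with $|S| = 9$ follows formally: the twist and the dimer-to-web correspondence are equivariant under the dihedral action (which permutes boundary labels), and the projection-compatibility built into the $W^S$ notation means the isolated boundary vertices in $W^S$ correspond exactly to the boundary vertices untouched by the relevant dimers, so the $S = \{1,\dots,9\}$, $\sigma = \mathrm{id}$ case propagates verbatim.
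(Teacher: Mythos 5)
Your proposal follows essentially the same route as the paper: multiplicativity of the twist plus Theorem \ref{theorem:facewts} to expand each $\T(C_i)$ over triple dimers with multiplicities, Lam's identity $M_D=\sum_W C_W^D\,a(I,J,K;W)$ to convert those multiplicities into nonelliptic web coefficients, the exhaustive enumeration of Lemma \ref{lemma:pathlesswebs9} feeding the case-by-case compatibility check (the paper's Lemma \ref{lemma:C1-6}, proved pictorially in the appendix), and the observation that each compatibility is unique so the signed sum over $C_1+C_2-C_3-C_4$ collapses onto the hexa-crab coefficient. The generalization to $\sigma$, $S$, and $n\geq 9$ is handled formally in both, so no substantive difference remains.
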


\begin{theorem} \label{theorem:connectivityforZ}
In $\Gr(3,9)$, write $Z = Z_1 - Z_2 - Z_3 - Z_4 $ 
\[=\Delta_{145} \Delta_{278} \Delta_{369} - \Delta_{245} \Delta_{178} \Delta_{369} - \Delta_{123}\Delta_{456}\Delta_{789} - \Delta_{129}\Delta_{345}\Delta_{678}.\]
Then \[\T(Z)=\sum_{D \in \mathcal{D}^3(G)} C_{tri-crab}^D wt_f(D),\]
where the ``tri-crab" is the nonelliptic web pictured in Figure \ref{fig:mainresults}(iv).
\end{theorem}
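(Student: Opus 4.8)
\textbf{Proof proposal for Theorem~\ref{theorem:connectivityforZ}.}

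The plan is to mirror the structure used for Theorems~\ref{theorem:connectivityforA}, \ref{theorem:connectivityforB}, and \ref{theorem:connectivityforC}, expanding $\T(Z)$ as a signed sum over triple dimers and tracking which nonelliptic web summands survive cancellation. First I would apply Theorem~\ref{theorem:facewts} term by term to write
\[
\T(Z) = \T(\Delta_{145})\T(\Delta_{278})\T(\Delta_{369}) - \T(\Delta_{245})\T(\Delta_{178})\T(\Delta_{369}) - \T(\Delta_{123})\T(\Delta_{456})\T(\Delta_{789}) - \T(\Delta_{129})\T(\Delta_{345})\T(\Delta_{678}).
\]
Each $\T(\Delta_J)$ is a sum of face weights over single dimers with boundary condition $J$, so each of the four products $\T(Z_i)$ becomes a sum $\sum_{D}M_D\,wt_f(D)$ over triple dimers $D$ obtained by overlaying three single dimers with the prescribed boundary conditions, with multiplicity $M_D$ counting the ordered triples producing $D$ (as in the proof of Theorem~\ref{theorem:doubledimersXY}). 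The key organizing principle, following \cite{lam2015} and the dimer-to-web construction of Subsection~\ref{subsection:tripledimers}, is that each such triple dimer $D$ contributes its nonelliptic web expansion $W(D)=\sum_{W\in\mathcal{W}}C_W^D\,W$, and the signed contribution of $D$ to $\T(Z)$ is governed by how the web $W(D)$ decomposes.

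The heart of the argument is the web enumeration. Since all nine boundary vertices here are black (every boundary index of $Z$ appears in exactly one of the three Plücker factors of each term, so no boundary vertex is adjacent to a doubled edge, and thus $W(D)$ has no paths), Lemma~\ref{lemma:pathlesswebs9} applies directly and tells us that every nonelliptic web on nine black boundary vertices is a dihedral translate of one of the webs in Figure~\ref{fig:pathlesswebs9}: namely the single-hexagon web, the tree web, the hexapod-plus-tripod configurations, and the three-tripod configurations. I would then compute, for each of the four boundary conditions $\{145,278,369\}$, $\{245,178,369\}$, $\{123,456,789\}$, and $\{129,345,678\}$, which of these nonelliptic webs can arise as summands of some $W(D)$, and with what coefficient. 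The claim to establish is that after taking the signed combination $Z_1-Z_2-Z_3-Z_4$, all contributions cancel except those in which the tri-crab web of Figure~\ref{fig:mainresults}(iv) appears, with the surviving coefficient being exactly $C_{tri-crab}^D$. The three-tripod term $\{3,6,9\}$-type partitions are precisely what links $Z_1$ and $Z_2$ (which share the factor $\Delta_{369}$), while the partitions $\{1,2,3\}\{4,5,6\}\{7,8,9\}$ and $\{1,2,9\}\{3,4,5\}\{6,7,8\}$ are the tripod-triples matching $Z_3$ and $Z_4$; the cancellation bookkeeping is analogous to the matching cancellation in Theorem~\ref{theorem:doubledimersXY} but now mediated by the skein relations of Figure~\ref{fig:skeinrelations}.

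The main obstacle I anticipate is the skein-reduction bookkeeping: unlike the double-dimer case, where overlaid dimers decompose cleanly into disjoint paths and cycles, here a single triple dimer may expand into several nonelliptic webs, and the coefficients $C_W^D$ pick up powers of $2$ from the square and bigon reductions in Figure~\ref{fig:skeinrelations}. Verifying that these coefficients combine correctly across the four signed terms — so that every nonelliptic web other than the tri-crab cancels, and the tri-crab's coefficient is preserved — is the delicate step. I expect this to require the case-by-case, pictorial analysis of how each candidate web from Lemma~\ref{lemma:pathlesswebs9} sits inside the triple dimers for each boundary condition, which is exactly the kind of reasoning deferred to the appendix lemmas referenced in Section~\ref{section:appendixlemmas}. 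A secondary subtlety is confirming that the web associated to the distinguished term $Z_1$ (via the tripod/hexapod decomposition forced by the non-adjacent connectivity of $\{1,4,5\}$, $\{2,7,8\}$, $\{3,6,9\}$) is genuinely the tri-crab and not merely a dihedral translate of another Figure~\ref{fig:pathlesswebs9} web; this is settled by checking the induced connectivity pattern against the explicit picture.
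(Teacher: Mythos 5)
Your overall strategy matches the paper's: expand $\T(Z)$ term by term via Theorem~\ref{theorem:facewts}, interpret each product of three single-dimer partition functions as a sum over triple dimers with multiplicities, restrict the candidate nonelliptic webs using Lemma~\ref{lemma:pathlesswebs9} (your observation that all nine boundary vertices are black, so no paths occur, is exactly right), and then argue that the signed combination $Z_1-Z_2-Z_3-Z_4$ cancels everything except the tri-crab. The paper carries out the case-by-case compatibility check in Lemma~\ref{lemma:Z1-4} (Figures~\ref{fig:Z1proof} and~\ref{fig:Z2proof}), which is the pictorial analysis you correctly anticipate being deferred to the appendix.

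There is, however, one place where your bookkeeping is mislocated, and it matters for making the cancellation claim precise. The multiplicity of a triple dimer $D$ in $\T(\Delta_I)\T(\Delta_J)\T(\Delta_K)$ is, by \cite[Lemma 4.12]{lam2015}, $M_D=\sum_{W}C^D_W\,a(I,J,K;W)$, where $a(I,J,K;W)$ counts the proper $3$-edge-colorings of $W$ compatible with the boundary condition (Remark~\ref{rmk:aIJK}). The skein coefficients $C^D_W$ depend only on $D$ and $W$ -- they are \emph{identical} across all four terms $Z_1,\dots,Z_4$ and simply factor out. What varies from term to term is the compatibility count $a(Z_i;W)$, and the entire content of the cancellation is the identity $a(Z_1;W)-a(Z_2;W)-a(Z_3;W)-a(Z_4;W)=1$ if $W$ is the tri-crab and $0$ for every other nonelliptic web, which is what Lemma~\ref{lemma:Z1-4} establishes (each listed compatibility being unique, i.e., each nonzero $a$ equal to $1$, is essential here). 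Your stated worry -- that the powers of $2$ in the $C^D_W$ arising from bigon and square reductions must ``combine correctly across the four signed terms'' -- is therefore not the delicate step: those coefficients never interact with the signs. Without isolating $a(I,J,K;W)$ as the quantity that carries the term-dependence, the cancellation argument as you describe it does not quite close; with it, the proof reduces exactly to the web-coloring enumeration of the appendix.
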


We will present an example of the application of Theorem \ref{theorem:connectivityforA} in Section \ref{subsection:exampleforA}, and an example of the application of Theorem \ref{theorem:connectivityforB} in Section \ref{subsection:exampleforB}.

In order to prove Theorems \ref{theorem:connectivityforA} through \ref{theorem:connectivityforZ}, we define a notion of \textit{compatibility} between webs and triple products of Pl{\"u}cker coordinates. We enumerate nonelliptic webs that are compatible with $A_1$, and then show that the set of nonelliptic webs compatible with $A_2$ or $A_3$ is exactly the set of nonelliptic webs compatible with $A_1$ that are not the batwing. It follows intuitively that the batwing should be the only web compatible with $A_1-A_2-A_3$.

Rigorously, we show that the twist of $A_1$ is the sum of the face weights of triple dimers that correspond to webs that have as a summand one of the webs with which $A_1$ is compatible. Proving the analogous statement for $A_2$ and $A_3$ allows cancellation to yield the given formula for the twist of $A$. A similar cancellation occurs with Theorem \ref{theorem:connectivityforB} for $B$ and the octopus, Theorem \ref{theorem:connectivityforC} for $C$ and the hexa-crab, and Theorem \ref{theorem:connectivityforZ} for $Z$ and the tri-crab.

\begin{definition}
Let $W$ be a non-elliptic web with $n$ boundary vertices $v_1,\dots, v_n$. Also let $I,J,K$ be subsets of $[n]$ with $|I|=|J|=|K|=3$; we associate them to the Pl{\"u}cker coordinates $\Delta_I$, $\Delta_J$, and $\Delta_K$. By Lemma 4.11 of \cite{lam2015}, $W$ has an edge coloring in the usual sense (that no incident edges have the same color) using the three colors red, blue, and green. We write that $W$ is \textbf{compatible} with the product $\Delta_I\Delta_J\Delta_K$ if there exists such an edge coloring of $W$ that satisfies the following conditions for all $i \in [n]$:
\begin{itemize}
    \item If $i \in I\setminus J \setminus K$ (resp. $J\setminus I \setminus K$, $K\setminus I \setminus J$), then $v_i$ is black and adjacent to a red (resp. blue, green) edge.
    \item If $i \in I\cap J \setminus K$ (resp. $I\cap K \setminus J$, $J\cap K \setminus I$), then $v_i$ is white and adjacent to a green (resp. blue, red) edge.
    \item Otherwise, $v_i$ is adjacent to no edges in $W$.
\end{itemize}
We write that $W$ is \textbf{uniquely compatible} with $\Delta_I\Delta_J\Delta_K$ if there exists a unique such edge coloring. 
\end{definition}
\begin{remark}\label{rmk:aIJK}
This definition is a special case of the notion of \textit{consistent labeling} defined in \cite[Section 4.5]{lam2015}. In Lam's notation, $a(I,J,K;W)$ counts the number of edge colorings that satisfy the above conditions; thus we write that $W$ is compatible with $\Delta_I\Delta_J\Delta_K$ if $a(I,J,K;W) >0$, and uniquely compatible with $\Delta_I\Delta_J\Delta_K$ if $a(I,J,K;W) =1$.
\end{remark}
\begin{remark}
The definition of compatibility is best understood through the lens of dimers. $W$ is compatible with $\Delta_I\Delta_J\Delta_K$ exactly when it is possible to construct a triple dimer (on some plabic graph) that corresponds to $W$ by overlaying a red dimer with boundary condition $I$, a blue dimer with boundary condition $J$, and a green dimer with boundary condition $K$.
\end{remark}

We next present lemmas enumerating the non-elliptic webs corresponding to each term of $A$, $B$, $C$, and $Z$. Each proof involves examining the webs enumerated in Lemmas \ref{lemma:pathlesswebs}, \ref{lemma:webswithpathsadj}, \ref{lemma:webswithpathsnonadj}, and \ref{lemma:pathlesswebs9}, in an attempt to find an edge coloring that satisfies the compatibility conditions. 
Even after some immediate reductions, the lists of webs to be tested are cumulatively quite long, so we defer these proofs to Section \ref{section:appendixlemmas}.

\begin{figure}
    \centering
   \includegraphics[scale=.15]{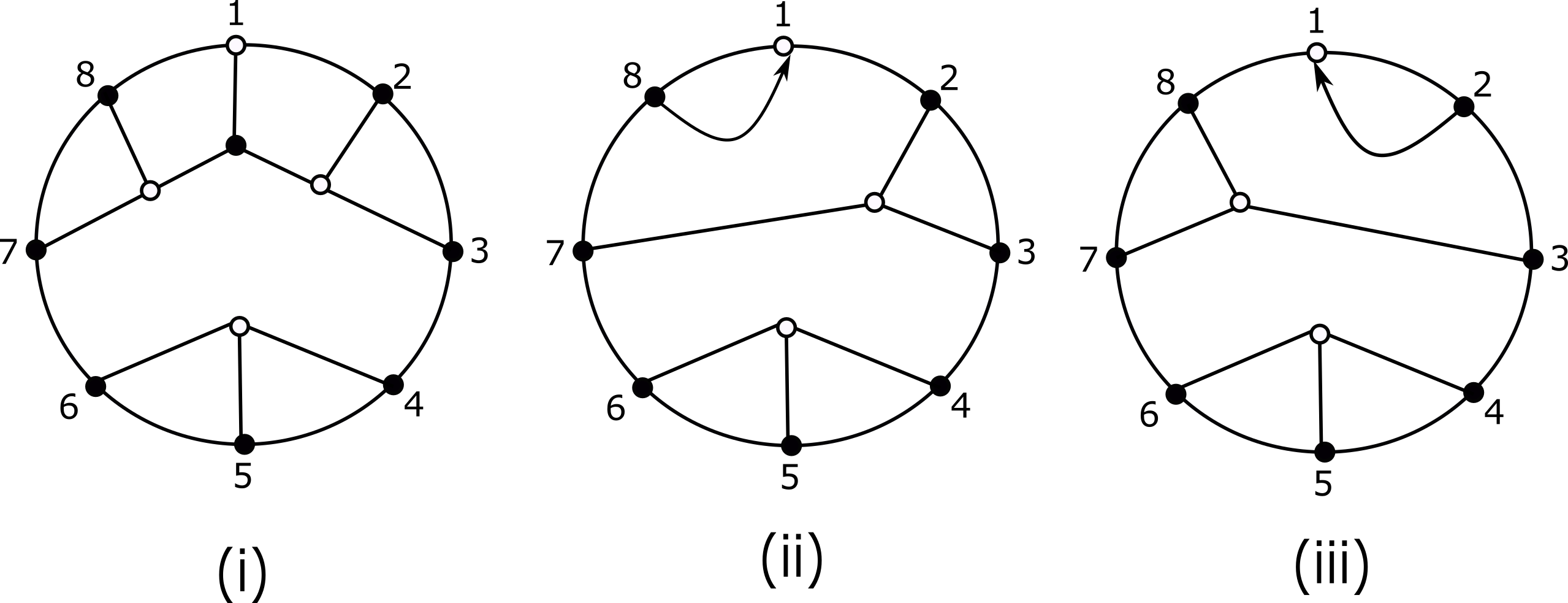}
    \caption{All non-elliptic webs compatible with $A_1$. Web (i) is the batwing, web (ii) is the only non-elliptic web compatible with $A_2$, and web (iii) is the only non-elliptic webs compatible with $A_3$.}
   \label{fig:A123}
   \end{figure}

\begin{restatable}{lemma}{Alemma}\label{lemma:A123}
The non-elliptic webs compatible with $A_1 = \textcolor{red}{\Delta_{134}}\textcolor{blue}{\Delta_{258}}\textcolor{forest}{\Delta_{167}}$ are pictured in Figure \ref{fig:A123}. Web (ii) of Figure \ref{fig:A123} is the only non-elliptic web compatible with $A_2 = \textcolor{red}{\Delta_{134}}\textcolor{blue}{\Delta_{125}}\textcolor{forest}{\Delta_{678}}$, and web (iii) of Figure \ref{fig:A123} is the only non-elliptic web compatible with $A_3 =\textcolor{red}{\Delta_{158}}\textcolor{blue}{\Delta_{234}}\textcolor{forest}{\Delta_{167}}$. Additionally, each compatibility is unique.
\end{restatable}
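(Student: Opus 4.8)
The plan is to collapse all three compatibility questions into a single finite search over one fixed list of non-elliptic webs. First I would unpack the definition of compatibility for each term. Writing $A_1 = \textcolor{red}{\Delta_{134}}\textcolor{blue}{\Delta_{258}}\textcolor{forest}{\Delta_{167}}$ with $I=\{1,3,4\}$, $J=\{2,5,8\}$, $K=\{1,6,7\}$, the color classes partition as $I\setminus J\setminus K=\{3,4\}$, $J\setminus I\setminus K=\{2,5,8\}$, $K\setminus I\setminus J=\{6,7\}$, and $I\cap K\setminus J=\{1\}$; so a compatible web must have vertex $1$ white (meeting a blue edge) and vertices $2,\dots,8$ black, each forced to meet a prescribed color. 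Repeating this computation for $A_2$ (which makes vertex $1$ white with a green edge) and for $A_3$ (vertex $1$ white with a blue edge), I would record the key structural fact: all three terms impose the \emph{same} boundary color pattern, namely one white vertex at position $1$ and seven black vertices. Hence the universe of candidate webs is identical for $A_1$, $A_2$, and $A_3$, and only the required edge colors at the boundary differ between terms.

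Next I would assemble this universe explicitly. Because there is exactly one white boundary vertex, any directed path in such a web must terminate at vertex $1$, so every candidate is either pathless or has a single path into vertex $1$ coming from an adjacent or a non-adjacent boundary vertex. These three cases are enumerated precisely by Lemmas \ref{lemma:pathlesswebs}, \ref{lemma:webswithpathsadj}, and \ref{lemma:webswithpathsnonadj} (after rotating so that the white vertex sits at position $1$), producing a finite, explicit list. For each web on the list and each of $A_1,A_2,A_3$, I would then try to construct a proper $3$-edge-coloring in the sense of Lemma 4.11 of \cite{lam2015} that meets the prescribed boundary colors. The governing principle is that the three edges at every trivalent interior vertex carry distinct colors; once the compatibility conditions fix the colors of the boundary edges, this rule propagates inward and typically determines the remaining edges, so each check ends either in a consistent coloring (compatibility) or a color conflict (incompatibility). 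Uniqueness is then verified by confirming that the forced propagation leaves no free choice, so that the compatible coloring, when it exists, is the only one; this is exactly the statement $a(I,J,K;W)=1$ of Remark \ref{rmk:aIJK}.

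The outcome of the search should be that for $A_1$ exactly the three webs in Figure \ref{fig:A123} admit such a coloring, for $A_2$ only web (ii), and for $A_3$ only web (iii), each uniquely. This is precisely the assertion of the lemma, and it is what drives the cancellation $\T(A)=\T(A_1)-\T(A_2)-\T(A_3)$ isolating the batwing in Theorem \ref{theorem:connectivityforA}. I expect the principal obstacle to be organizational rather than conceptual: the candidate list from the three enumeration lemmas is long, and for each web one must carry out three separate and independent color-constraint checks while tracking which boundary vertex demands which color. This bookkeeping, together with the figures that make each individual coloring transparent, is exactly why the detailed verification is deferred to Section \ref{section:appendixlemmas}; the argument sketched here is the method, and the appendix supplies the case-by-case pictures.
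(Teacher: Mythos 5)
Your proposal is correct and follows essentially the same route as the paper: both reduce the problem to the finite list of non-elliptic webs with one white and seven black boundary vertices supplied by Lemmas \ref{lemma:pathlesswebs}, \ref{lemma:webswithpathsadj}, and \ref{lemma:webswithpathsnonadj}, then check each candidate against the forced boundary edge colors by propagating the proper-coloring constraint inward, with uniqueness following from the absence of free choices. The paper's appendix adds only a small labor-saving observation (webs with paths $v_2\to v_1$ or $v_5\to v_1$ can be discarded outright for $A_2$, and similarly for $A_3$, since the path's two boundary edges would need the same color as the edge at $v_1$) before presenting the case-by-case pictures you anticipate.
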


\begin{figure}
    \centering
    \includegraphics[scale=.15]{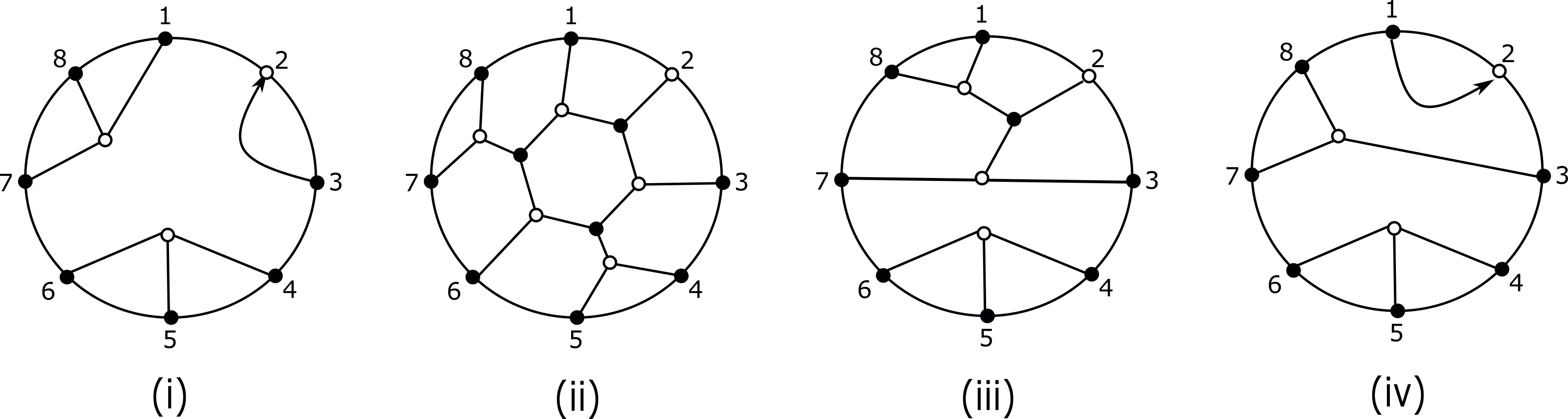}
    \caption{All non-elliptic webs compatible with $B_1$. Web (ii) is the only non-elliptic web compatible with $B_2$, and webs (iii) and (iv) are the only non-elliptic webs compatible with $B_3$.}
    \label{fig:B123}
\end{figure}

\begin{restatable}{lemma}{Blemma}
\label{lemma:B123}
The non-elliptic webs compatible with $B_1 =\textcolor{red}{\Delta_{258}}\textcolor{blue}{\Delta_{134}}\textcolor{forest}{\Delta_{267}}$ are pictured in Figure \ref{fig:B123}. Web (iii) of Figure \ref{fig:B123} is the only non-elliptic web compatible with $B_2 =\textcolor{red}{\Delta_{234}}\textcolor{blue}{\Delta_{128}}\textcolor{forest}{\Delta_{567}}$, and webs (ii) and (iv) of Figure \ref{fig:B123} are the only non-elliptic webs compatible with $B_3 =\textcolor{red}{\Delta_{234}}\textcolor{blue}{\Delta_{258}}\textcolor{forest}{\Delta_{167}}$. Additionally, each compatibility is unique.
\end{restatable}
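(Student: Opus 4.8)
The plan is to prove Lemma~\ref{lemma:B123} by the same compatibility-enumeration strategy outlined for Lemma~\ref{lemma:A123}, applied to the three Pl\"ucker triples defining $B_1$, $B_2$, and $B_3$. The key tool is that compatibility, via Remark~\ref{rmk:aIJK}, reduces to finding a proper $3$-edge-coloring (red/blue/green) of a non-elliptic web whose boundary colors and incidences match the prescribed subsets. Since all three terms of $B$ use $8$ boundary vertices with exactly one repeated index (forcing exactly one white boundary vertex and one isolated vertex, the remaining six being black), I would first read off for each term which boundary vertex is white, which is isolated, and what color constraint each black vertex carries; this tells me immediately which enumeration lemma governs the search. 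For $B_1=\textcolor{red}{\Delta_{258}}\textcolor{blue}{\Delta_{134}}\textcolor{forest}{\Delta_{267}}$, the shared index is $2$ (appearing in $I$ and $K$), so $v_2$ is white and some other index is isolated; I would check whether the white and isolated vertices are adjacent on the boundary, which selects between the pathless webs of Lemma~\ref{lemma:pathlesswebs} and the path-containing webs of Lemmas~\ref{lemma:webswithpathsadj}--\ref{lemma:webswithpathsnonadj}.

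First I would carry out the enumeration for $B_1$, which produces the full list in Figure~\ref{fig:B123}. For each candidate web from the relevant enumeration lemma, I would attempt to assign a compatible $3$-edge-coloring consistent with the boundary data, discarding any web admitting no such coloring and recording those that admit exactly one. This is the longest part of the argument and is exactly what the authors say is deferred to Section~\ref{section:appendixlemmas}; my plan is to organize it by first applying the ``forced edge'' reasoning---at each black boundary vertex incident to a single color, that color is forced along the unique incident edge, and propagating these forcings through trivalent internal vertices (where the three colors must be distinct) rapidly prunes the coloring possibilities. Then I would repeat the enumeration for $B_2=\textcolor{red}{\Delta_{234}}\textcolor{blue}{\Delta_{128}}\textcolor{forest}{\Delta_{567}}$ and for $B_3=\textcolor{red}{\Delta_{234}}\textcolor{blue}{\Delta_{258}}\textcolor{forest}{\Delta_{167}}$, verifying in each case that the compatible webs are precisely those claimed: web (iii) alone for $B_2$, and webs (ii) and (iv) for $B_3$. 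Throughout I would track uniqueness of the coloring, since the final assertion requires $a(I,J,K;W)=1$ in each instance.

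The main obstacle I anticipate is the sheer casework and the need to confirm \emph{uniqueness} rather than mere existence of a compatible coloring: it is not enough to exhibit one valid $3$-edge-coloring for each surviving web, I must also rule out any second coloring, which requires tracing the color-forcing argument to completion at every internal vertex and every cycle. Cycles are the delicate case, because a vertexless loop or an internal hexagon can in principle be $2$-colored in more than one way; here I would lean on the bipartite structure and the boundary-adjacency constraints from the enumeration lemmas to pin down the coloring. A secondary subtlety is that the batwing itself does \emph{not} appear in the $B$ list---the relevant web is the octopus---so when comparing the $B_1$-list against the $B_2$- and $B_3$-lists I must confirm that the octopus is compatible with $B_1$ but with neither $B_2$ nor $B_3$, which is precisely the cancellation that isolates the octopus in Theorem~\ref{theorem:connectivityforB}. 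Once the three enumerations and their uniqueness claims are established, the lemma follows by direct inspection of Figure~\ref{fig:B123}.
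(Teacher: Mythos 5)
Your strategy is the same as the paper's: enumerate all candidate non-elliptic webs with the prescribed boundary coloring via Lemmas~\ref{lemma:pathlesswebs}, \ref{lemma:webswithpathsadj}, and \ref{lemma:webswithpathsnonadj}, prune webs whose paths would force a boundary edge to carry two colors (the paper notes, e.g., that no web compatible with $B_1$ can contain a path $v_6\to v_2$), and then attempt a proper $3$-edge-coloring of each survivor by propagating forced colors through the trivalent internal vertices, recording both existence and uniqueness. One correction to your setup: since each $B_i$ involves $9$ index slots covered by $8$ distinct indices with exactly one repeat, \emph{every} boundary vertex is used --- there is one white vertex and \emph{seven} black vertices, and \emph{no} isolated vertex (an isolated vertex would require an index lying in all three sets or in none). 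Your claim of ``one isolated vertex, the remaining six being black'' is therefore wrong, and if taken literally it would send you to the wrong enumeration class; fortunately the three lemmas you actually cite are exactly the ones for seven black plus one white boundary vertices, which is the correct class and the one the paper uses.
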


\begin{figure}
    \centering
    \includegraphics[width=\textwidth]{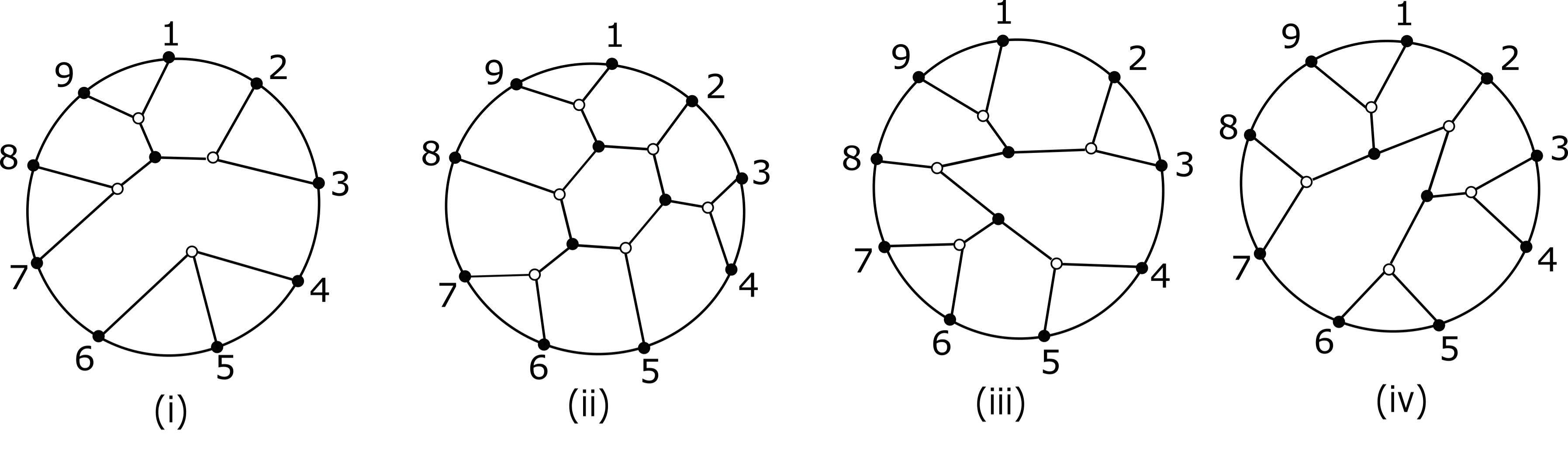}
    \caption{Webs corresponding to the triple products in $C$.}
    \label{fig:C1-6}
\end{figure}
\begin{restatable}{lemma}{Clemma}
\label{lemma:C1-6}
Figure \ref{fig:C1-6} depicts all non-elliptic webs compatible with $C_1 =\textcolor{red}{\Delta_{124}}\textcolor{blue}{\Delta_{357}}\textcolor{forest}{\Delta_{689}}$. Web (ii) is the only non-elliptic web compatible with $C_2 =\textcolor{red}{\Delta_{123}}\textcolor{blue}{\Delta_{456}}\textcolor{forest}{\Delta_{789}}$, webs (ii) and (iii) are the only non-elliptic webs compatible with $C_3 =\textcolor{red}{\Delta_{124}}\textcolor{blue}{\Delta_{356}}\textcolor{forest}{\Delta_{789}}$, and webs (ii) and (iv) are the only non-elliptic webs compatible with $C_4 =\textcolor{red}{\Delta_{123}}\textcolor{blue}{\Delta_{457}}\textcolor{forest}{\Delta_{689}}$.
Additionally, each compatibility is unique.
\end{restatable}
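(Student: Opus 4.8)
The plan is to reduce the claim to a finite edge-coloring check on the webs already classified in Lemma \ref{lemma:pathlesswebs9}. First I would observe that in each of the four products $C_1,C_2,C_3,C_4$ the three index sets $I,J,K$ are pairwise disjoint with union all of $[9]$; for instance for $C_1$ we have $\{1,2,4\}\sqcup\{3,5,7\}\sqcup\{6,8,9\}=[9]$, and the same holds for $C_2,C_3,C_4$. Consequently, by the definition of compatibility, every boundary vertex lies in exactly one of $I,J,K$, so every boundary vertex is \emph{black} and must be adjacent to a single edge of the prescribed color (red for $I$, blue for $J$, green for $K$). Thus the only candidate compatible webs are the non-elliptic webs with nine black boundary vertices, which are exactly the dihedral translates of the four families enumerated in Lemma \ref{lemma:pathlesswebs9}: the hexagonal web ($c=1$), the tree ($c=0,k=1$), the hexapod-plus-tripod ($k=2$), and the three-tripod ($k=3$).

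The engine of the proof is a small collection of local forcing rules for proper $3$-edge-colorings. At any trivalent internal vertex the three incident edges receive three distinct colors; in particular, a tripod on a triple $\{a,b,c\}$ forces $\{a,b,c\}$ to be a transversal of $\{I,J,K\}$ (one red, one blue, one green vertex), and each white vertex of a hexapod receives on its central edge the unique color \emph{missing} from the colors of its two attached boundary vertices. Because the boundary colors are themselves dictated by $I,J,K$-membership, these rules make every admissible coloring forced rather than merely existent: once a web admits any compatible coloring, that coloring is unique, which is precisely the ``additionally, each compatibility is unique'' clause. I would therefore organize the argument so that existence of a compatible coloring and uniqueness of that coloring are established simultaneously.

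With the forcing rules in hand, the check proceeds family by family. For the three-tripod webs, compatibility amounts to the purely combinatorial question of whether the nine boundary vertices admit a non-crossing partition into three triples, each a transversal of $\{I,J,K\}$; I would list these non-crossing transversal partitions for each of $C_1,\dots,C_4$. For the hexapod-plus-tripod webs the tripod contributes one vertex of each color and the hexapod must contribute two of each, so compatibility reduces to checking that the hexapod's three boundary pairs realize the three two-element color-types $\{I,J\},\{I,K\},\{J,K\}$ in a planar-consistent way (equivalently, that the three ``missing colors'' at the central black vertex are distinct). The tree and hexagonal webs require genuinely propagating colors through the graph: for the tree I would color outward from a leaf and check for contradictions, and for the hexagonal web I would track the constraint imposed by the $6$-cycle, which is where the single web surviving for $C_1$ (the hexa-crab) arises. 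Throughout, I would exploit the dihedral symmetry of Lemma \ref{lemma:pathlesswebs9} to cut down the number of genuinely distinct translates that must be tested against each fixed product $C_i$.

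The main obstacle is bookkeeping rather than conceptual: the four families together have many dihedral translates, and for each fixed $C_i$ one must test every translate against the fixed boundary coloring, rejecting those whose planar and transversal constraints are inconsistent and recording the unique coloring of those that survive. The genuinely delicate cases are the hexagonal web and the tree, where colorability is not visibly local and the $6$-cycle (respectively the branching of the tree) can obstruct a globally consistent coloring; verifying that exactly webs (i)--(iv) survive for $C_1$, and the asserted sub-lists for $C_2,C_3,C_4$, is the crux, and is the reason this verification is deferred to the pictorial case analysis of Section \ref{section:appendixlemmas}.
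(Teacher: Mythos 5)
Your proposal is correct and follows essentially the same route as the paper: reduce to the finite list of non-elliptic webs with nine black boundary vertices from Lemma \ref{lemma:pathlesswebs9} and exhaustively test each against the forced boundary edge colors, with uniqueness of any completed proper coloring observed along the way. The only organizational difference is that the paper first prunes the candidate lists for $C_2$, $C_3$, $C_4$ via Lemma \ref{lemma:frozenwebs} (exploiting the factor $\Delta_{\{i-1,i,i+1\}}$ in those terms) before doing the pictorial check, whereas you test all dihedral translates directly; this changes only the amount of bookkeeping, not the argument.
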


\begin{figure}
    \centering
    \includegraphics[scale=.13]{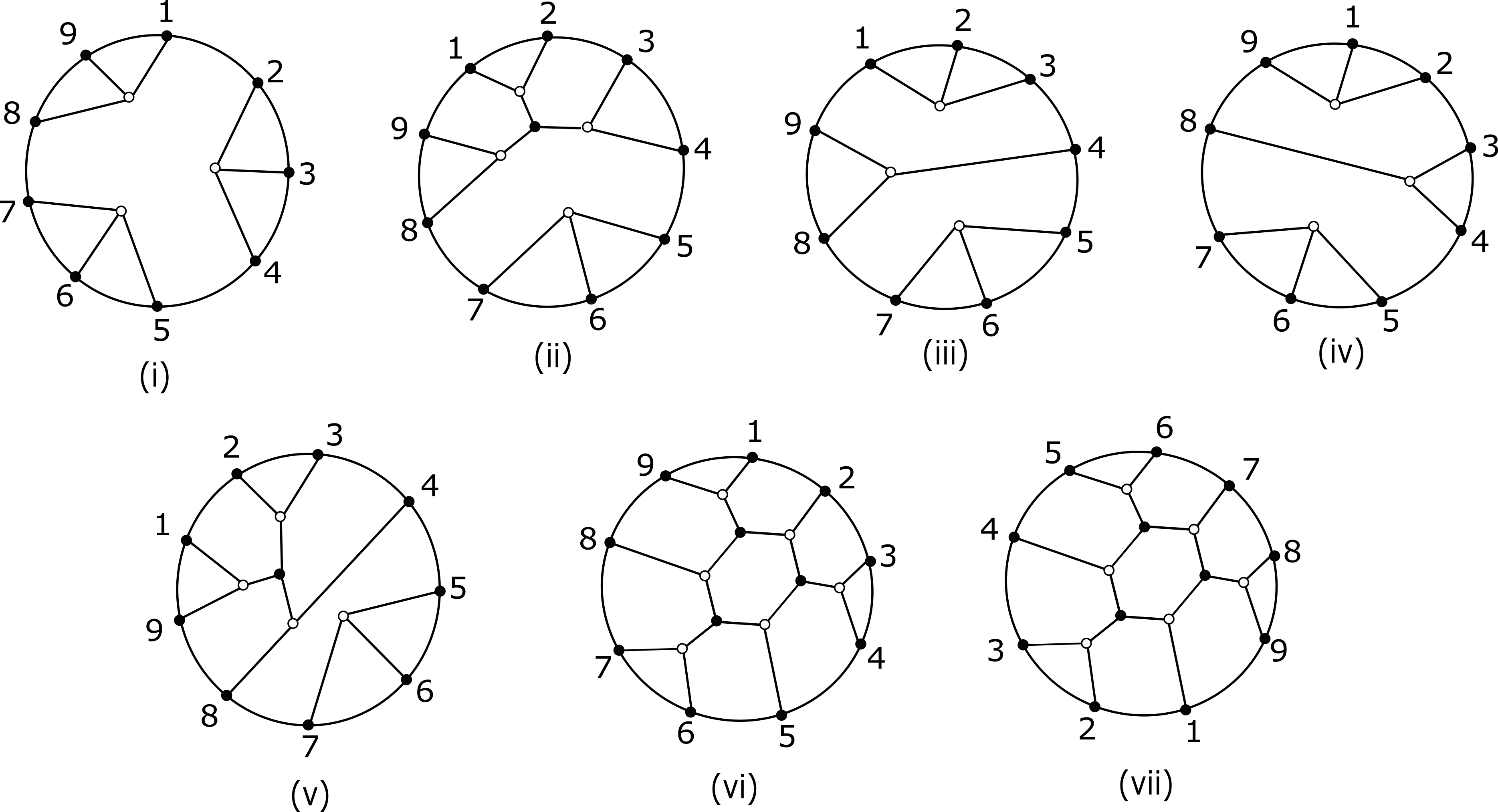}
    \caption{Webs corresponding to the triple products in $Z$.}
    \label{fig:Z1-4}
\end{figure}
\begin{restatable}{lemma}{Zlemma}\label{lemma:Z1-4} Figure \ref{fig:Z1-4} depicts all non-elliptic webs compatible with $Z_1=\textcolor{red}{\Delta_{145}}\textcolor{blue}{\Delta_{278}}\textcolor{forest}{\Delta_{369}}$. Webs (ii) through (v) are the only non-elliptic webs compatible with $Z_2=\textcolor{red}{\Delta_{245}}\textcolor{blue}{\Delta_{178}}\textcolor{forest}{\Delta_{369}}$, web (vi) is the only non-elliptic web compatible with $Z_3=\textcolor{red}{\Delta_{123}}\textcolor{blue}{\Delta_{456}}\textcolor{forest}{\Delta_{789}}$, and web (vii) is the only non-elliptic web compatible with $Z_4=\textcolor{red}{\Delta_{129}}\textcolor{blue}{\Delta_{345}}\textcolor{forest}{\Delta_{678}}$. Additionally, each compatibility is unique.
\end{restatable}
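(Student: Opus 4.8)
The plan is to reduce the problem to the finite enumeration of Lemma~\ref{lemma:pathlesswebs9} and then carry out a local color-propagation analysis. The first observation is that each of $Z_1,Z_2,Z_3,Z_4$ has the form $\Delta_I\Delta_J\Delta_K$ where $\{I,J,K\}$ partitions $[9]$ into three triples; for instance $Z_1$ uses $\{1,4,5\}$, $\{2,7,8\}$, and $\{3,6,9\}$. Consequently, in the definition of compatibility no index lies in two of the sets nor in none or all three, so every boundary vertex must be black and adjacent to a single edge whose color (red, blue, or green) is dictated by which block contains that vertex's label. In particular, any non-elliptic web compatible with some $Z_i$ has nine black boundary vertices, so by Lemma~\ref{lemma:pathlesswebs9} it must be a dihedral translate of one of the webs in Figure~\ref{fig:pathlesswebs9}. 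This confines the search to a finite list.

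Next I would translate compatibility into purely local constraints, using that a proper $3$-edge-coloring assigns all three colors to the edges at each trivalent vertex. This yields a clean rule for each component type appearing in Figure~\ref{fig:pathlesswebs9}. A tripod forces its three boundary vertices to receive three distinct colors, hence to lie one in each of $I$, $J$, $K$. A hexapod forces its six boundary vertices to split into two per color, subject to the constraint that the two vertices on a common leg receive different colors and that the three ``leftover'' colors at the central vertex are distinct. For the connected webs, namely the single tree with seven internal vertices and the one-cycle web, the boundary colors propagate inward along tree edges and around the hexagon, with the cycle case requiring a check that the induced edge-coloring of the $6$-cycle is consistent. In every case the propagation is forced once the boundary colors are fixed, which is precisely what will yield the uniqueness assertion; the only place needing extra care is the hexagon, where one verifies that exactly one of the two a priori cycle-colorings survives.

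With these rules in hand the proof becomes a systematic case check. For $Z_1$ I would run through each web type and each dihedral translate, retaining those for which the required boundary coloring (red on $\{1,4,5\}$, blue on $\{2,7,8\}$, green on $\{3,6,9\}$) extends to a proper coloring, and verify that the survivors are exactly the seven webs (i)--(vii) of Figure~\ref{fig:Z1-4}. I would then repeat the check for the partitions associated to $Z_2$, $Z_3$, and $Z_4$, obtaining webs (ii)--(v), (vi), and (vii) respectively, and confirm in each instance that the extending coloring is unique.

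The main obstacle is the sheer volume of the case analysis rather than any single conceptual difficulty: one must test every web of Lemma~\ref{lemma:pathlesswebs9}, together with all of its dihedral translates, against four different partitions, and for the bulk of these exhibit an explicit obstruction—a tripod whose three vertices fail to hit all three blocks, a hexapod violating the two-per-color condition, or an inconsistency forced around the hexagon—that rules out compatibility. Organizing this bookkeeping cleanly, and settling the cycle-coloring consistency for the one-cycle webs, is the delicate part, which is why the detailed verification is deferred to Section~\ref{section:appendixlemmas}.
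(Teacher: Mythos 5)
Your proposal is correct and follows essentially the same route as the paper: reduce to the finite list of nine-black-boundary-vertex non-elliptic webs from Lemma~\ref{lemma:pathlesswebs9} (since each $Z_i$ partitions $[9]$ into three triples), then test each web and each dihedral translate for a proper edge coloring meeting the boundary conditions, with uniqueness following from forced color propagation. The only cosmetic difference is that the paper shortcuts $Z_3$ and $Z_4$ by identifying $Z_3=C_2$ and $Z_4$ as its cyclic shift and citing Lemma~\ref{lemma:C1-6}, whereas you would check them directly; both are valid.
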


At last, we complete the proofs of our formulas for the twists of $A$, $B$, $C$, and $Z$, as well as their projections and dihedral translates. The arguments are similar at this stage, so we combine them.
\begin{proof}[Proof of Theorems \ref{theorem:connectivityforA}, \ref{theorem:connectivityforB}, \ref{theorem:connectivityforC}, and \ref{theorem:connectivityforZ}]
It follows from Theorem \ref{theorem:facewts} that for any plabic graph $G$ for $\Gr(3,n)$, and for any $I,J,K \subset [n]$ of size 3,
\[\T(\Delta_I\Delta_J\Delta_K) =\T(\Delta_I)\T(\Delta_J)\T(\Delta_K)\]
\[= \left(\sum_{D \in \mathcal{D}_I(G)} \text{wt}_f(D)\right)\left(\sum_{D \in \mathcal{D}_J(G)} \text{wt}_f(D)\right)\left(\sum_{D \in \mathcal{D}_K(G)} \text{wt}_f(D)\right)\]
\[=\sum_{D \in \mathcal{D}_{I,J,K}^3(G)} M_D\text{wt}_f(D),\]
where $\mathcal{D}_{I,J,K}^3(G) \subset \mathcal{D}^3(G)$ is the set of triple dimer configurations $D$ on $G$ formed by overlaying three single dimers with boundary conditions $I$, $J$, and $K$ respectively, and where the multiplicity $M_D$ is the number of triples of single dimers that become $D$ when overlaid.

It follows from \cite[Lemma 4.12]{lam2015} that $\mathcal{D}_{I,J,K}^3(G)$ contains all dimers $D$ such that $W(D)$ is compatible with $\Delta_I\Delta_J\Delta_K$, each with multiplicity
\[M_D=\sum_{W \in \mathcal{W}} C^D_{W} a(I,J,K;W),\]
where $a(I,J,K;W)$ is the number of edge colorings of $W$ compatible with $\Delta_I\Delta_J\Delta_K$, as in Remark \ref{rmk:aIJK}.

Lemma \ref{lemma:A123} asserts that $a(A_1;W)$ is 0 for all nonelliptic webs except those shown in Figure \ref{fig:A123}, for which it is 1; that $a(A_2;W)$ is 0 for all nonelliptic webs except the one shown in Figure \ref{fig:A123}(ii), for which it is 1; and that $a(A_3;W)$ is 0 for all nonelliptic webs except the one shown in Figure \ref{fig:A123}(iii), for which it is 1. It follows that
\[\T(A)=\T(A_1)-\T(A_2)-\T(A_3)=\sum_{D \in \mathcal{D}_{A_1}^3(G)} \text{wt}_f(D)-\sum_{D \in \mathcal{D}_{A_2}^3(G)} \text{wt}_f(D)-\sum_{D \in \mathcal{D}_{A_3}^3(G)} \text{wt}_f(D)\]
\[= \sum_{\substack{D \in \mathcal{D}^3(G)}} \left(\sum_{\substack{W \in \mathcal{W}\\\text{in Figure \ref{fig:A123}}}} C^D_{W}\right) wt_f(D)- \sum_{\substack{D \in \mathcal{D}^3(G)}} \left(\sum_{\substack{W \in \mathcal{W}\\\text{in Figure \ref{fig:A123}(ii)}}} C^D_{W} \right)wt_f(D)\]\[- \sum_{\substack{D \in \mathcal{D}^3(G)}} \left(\sum_{\substack{W \in \mathcal{W}\\\text{in Figure \ref{fig:A123}(iii)}}} C^D_{W}\right) wt_f(D)\]
\[= \sum_{\substack{D \in \mathcal{D}^3(G)}} \sum_{\substack{W \in \mathcal{W}\\\text{in Figure \ref{fig:A123}(i)}}} C^D_{W}wt_f(D) = \sum_{\substack{D \in \mathcal{D}^3(G)}} C^D_{batwing}wt_f(D)\]
since there is only one web, the batwing, in Figure \ref{fig:A123}(i). This is the formula stated in Theorem \ref{theorem:connectivityforA}.

Analogous arguments using Lemmas \ref{lemma:B123}, \ref{lemma:C1-6}, and \ref{lemma:Z1-4} similarly link $B$ to the octopus, $C$ to the hexa-crab, and $Z$ to the tri-crab, completing the specialized statements of each theorem. The generalized statements about dihedral translates and projections follow immediately.
\end{proof}
\end{subsection}

\begin{section}{Comparison to Web Duality} \label{section:duality}
In this section, we describe the results of Theorems \ref{theorem:doubledimersXY}, 
\ref{theorem:connectivityforA}, \ref{theorem:connectivityforB}, \ref{theorem:connectivityforC}, and \ref{theorem:connectivityforZ} in the language of the \textbf{web duality} defined in \cite{fll2019}.
Our results provide rigorous graph-theoretic justification for Observation 8.2 and several of the dualities pictured in Figure 3 of \cite{fll2019}, as well as extending the sense of web duality beyond the case considered in \cite{fll2019} where $k$ divides $n$; these results are depicted in Figure \ref{fig:webdualities}.
Additionally, since our theorems are phrased in terms of face weights rather than the edge weights considered in \cite{fll2019}, they introduce a direct link between web duality and formulas for Laurent expansions of cluster variables.
Finally, we demonstrate calculations with standard Young tableaux that corroborate Observation 8.3 of \cite{fll2019} in the context of our results.

Fix $k<n$, an $r$-dimensional vector space $U$, and a sequence $\lambda=(\lambda_1,\dots,\lambda_n)$ of integers between 1 and $r$ such that $\lambda_1+\lambda_2+\dots+\lambda_n=kr$.
Now $\mathcal{W}_\lambda(U)$ is a certain space of \textbf{tensor invariants} that is in particular spanned by $\text{SL}_r$-webs with $n$ boundary vertices $v_1,\dots,v_n$ such that $v_i$ satisfies a condition determined by $\lambda_i$. We describe some special cases:
\begin{itemize}
    \item If $r=1$, then $\text{SL}_1$-webs are sets of boundary vertices, and $v_i$ should be included in the set if and only if $\lambda_i=1$.
    \item If $r=2$, then $\text{SL}_2$-webs are spanned by non-crossing matchings, and $v_i$ should be included in the matching if and only if $\lambda_i=1$.
    \item If $r=3$, then $\text{SL}_3$-webs are those defined in Section \ref{subsection:webs}. We color $v_i$ black if $\lambda_i=1$ and white if $\lambda_i=2$, and we do not include $v_i$ in the web if $\lambda_i \in \{0,3\}$.
\end{itemize}
We also have a $\mathbb{Z}^n$-grading of $\mathbb{C}[\widehat{\Gr}(k,n)]$ given by the number of times each column appears in a given product of Pl\"ucker coordinates. In particular, $\mathbb{C}[\widehat{\Gr}(k,n)]_\lambda$ consists of linear combinations of $r$-fold products of Pl\"ucker coordinates such that in each product, column $i$ appears $\lambda_i$ times. Note that in \cite{fll2019}, a Pl\"ucker coordinate is shorthand for the sum of dimer weights given by the boundary measurement map of Theorem \ref{theorem:edgewts}. We may identify elements of $\mathbb{C}[\widehat{\Gr}(k,n)]_\lambda$ with webs as in \cite{fominpasha2016}.

Given a plabic graph $G$, the notation $\text{Web}_r(G;\lambda)$ refers to the weighted sum of $r$-weblike subgraphs of $G$ (equivalently, of $r$-fold dimers on $G$) that satisfy the boundary conditions given by $\lambda$. For instance, $\text{Web}_3(G;(2,1,1,1,1,1,1,1)$ would be the weighted sum of all triple dimers $D$ on $G$ such that boundary vertex 1 of $W(D)$ is white and $W(D)$ has 7 boundary black vertices. Finally, we define the \textbf{immanant map} \[\text{Imm}: \mathcal{W}_\lambda(U)^* \rightarrow \mathbb{C}[\widehat{\text{Gr}}(k,n)]_\lambda\]
via
\[\text{Imm}(\varphi)(\tilde{X}(G))=\varphi(\text{Web}_r(G;\lambda))\]
for any edge-weighted plabic graph $\tilde{X}(G)$.
Effectively, the weight of an $r$-fold dimer $D$ in $\text{Web}_r(G;\lambda)$ is included in $\text{Imm}(\varphi)(\tilde{X}(G))$ with multiplicity equal to the value of $\varphi$ on $W(D)$. We say that an $\text{SL}_r$-web $W$ and an $\text{SL}_k$-web $W'$ (viewed as an element of $\mathbb{C}[\widehat{\Gr}(k,n)]$) are \textbf{dual} if the functional $\varphi$ that is 1 on $W$ and 0 on all other independent webs has
$\text{Imm}(\varphi)(\tilde{X}(G))=W'$.

We may now restate Theorems \ref{theorem:doubledimersXY}, \ref{theorem:connectivityforA}, \ref{theorem:connectivityforB}, \ref{theorem:connectivityforC}, and \ref{theorem:connectivityforZ}. For clarity, the following theorem lacks full generality with respect to projections and dihedral translates, but the generalizations should be clear from the original theorem statements. We rely on Proposition \ref{prop:edgetoface} to transition between our face weights and the edge weights of \cite{fll2019}.
\begin{theorem}\label{theorem:dualityrestatement}
Let $G$ be a plabic graph with face weights as in Definition \ref{definition:faceweights}, and assign it edge weights as in Definition \ref{def:MaScEdgeWeights}.
\begin{enumerate}
    \item[(\ref{theorem:doubledimersXY})] Let $U$ be a 2-dimensional vector space, and define $\varphi_{X} \in \mathcal{W}_{[1^6]}(U)^*$ to be 1 on the $\text{SL}_2$ basis web with paths connecting six boundary vertices in pairs $\{1, 6\}, \{2,3\},$ and $\{4,5\}$. Then $\text{Imm}(\varphi_{X})(\tilde{X}(G))=\T(X)$. Similarly, define $\varphi_{Y} \in \mathcal{W}_{[1^6]}(U)^*$ to be 1 on the $\text{SL}_2$ basis web with paths connecting vertices in pairs $\{1, 2\}, \{3,4\},$ and $\{5,6\}$. Then $\text{Imm}(\varphi_{Y})(\tilde{X}(G))=\T(Y)$.
    \item[(\ref{theorem:connectivityforA})] Let $U$ be a 3-dimensional vector space, and define $\varphi_{A} \in \mathcal{W}_{[2,1^7]}(U)^*$ to be the batwing as in Figure \ref{fig:mainresults}. Then $\text{Imm}(\varphi_{A})(\tilde{X}(G))=\T(A)$.
    \item[(\ref{theorem:connectivityforB})] Let $U$ be a 3-dimensional vector space, and define $\varphi_{B} \in \mathcal{W}_{[2,1^7]}(U)^*$ to be the octopus as in Figure \ref{fig:mainresults}. Then $\text{Imm}(\varphi_{B})(\tilde{X}(G))=\T(B)$.
    \item[(\ref{theorem:connectivityforC})] Let $U$ be a 3-dimensional vector space, and define $\varphi_{C} \in \mathcal{W}_{[1^9]}(U)^*$ to be the hexa-crab as in Figure \ref{fig:mainresults}. Then $\text{Imm}(\varphi_{C})(\tilde{X}(G))=\T(C)$.
    \item[(\ref{theorem:connectivityforZ})] Let $U$ be a 3-dimensional vector space, and define $\varphi_{Z} \in \mathcal{W}_{[1^9]}(U)^*$ to be the tri-crab as in Figure \ref{fig:mainresults}. Then $\text{Imm}(\varphi_{Z})(\tilde{X}(G))=\T(Z)$.
\end{enumerate}
\end{theorem}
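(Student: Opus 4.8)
The plan is to recognize that Theorem \ref{theorem:dualityrestatement} is a repackaging of Theorems \ref{theorem:doubledimersXY} and \ref{theorem:connectivityforA}--\ref{theorem:connectivityforZ} in the immanant/web-duality language, with the only genuinely new ingredient being the dictionary between the edge weights underlying the immanant map of \cite{fll2019} and the face weights in which our earlier theorems are phrased. Each of the five cases will follow the same template, so I would prove them uniformly and treat the named webs ($X$- and $Y$-matchings, batwing, octopus, hexa-crab, tri-crab) in parallel.

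First I would unpack the definition of the immanant map. For the relevant functional $\varphi$ we have
\[\text{Imm}(\varphi)(\tilde{X}(G)) = \varphi(\text{Web}_r(G;\lambda)) = \sum_{D \in \mathcal{D}^r(G)} \text{wt}(D)\,\varphi(W(D)),\]
where the sum ranges over the $r$-fold dimers satisfying the boundary conditions encoded by $\lambda$, $\text{wt}(D)$ is the weight of $D$ on the edge-weighted graph $\tilde{X}(G)$, and $W(D)=\sum_{W}C_W^D W$ is the nonelliptic expansion of $W(D)$ obtained from the skein relations of Figure \ref{fig:skeinrelations}. Since each $\varphi$ is defined to equal $1$ on a single basis web $W_0$ and $0$ on every other independent nonelliptic web, linearity gives $\varphi(W(D))=C_{W_0}^D$. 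In the two $\text{SL}_2$ cases this already carries the factor $2^{\#(\text{cycles})}$ of Theorem \ref{theorem:doubledimersXY}: a vertexless loop resolves to the scalar $2$ under the $\text{SL}_2$ skein relation, so $C_{W_0}^D=2^{\#(\text{cycles in }D)}$ precisely when $D$ has the connectivity of $W_0$ and $0$ otherwise. Substituting back yields $\text{Imm}(\varphi)(\tilde{X}(G))=\sum_{D}\text{wt}(D)\,C_{W_0}^D$.

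Next I would convert edge weights to face weights. Taking the edge weights of Definition \ref{def:MaScEdgeWeights} on $\tilde{X}(G)$, Proposition \ref{prop:edgetoface} relates $\text{wt}_e(D)$ to $\text{wt}_f(D)$ through the fixed normalizing factor $\left(\prod_{\text{inner }f}\Delta_{I_f}\right)^r$, which is independent of $D$ and hence factors out of the sum. After this normalization the immanant becomes $\sum_{D}\text{wt}_f(D)\,C_{W_0}^D$. Comparing term by term with the right-hand sides of Theorems \ref{theorem:doubledimersXY} and \ref{theorem:connectivityforA}--\ref{theorem:connectivityforZ}---each of which asserts that $\T$ of the corresponding cluster variable equals $\sum_{D}C_{W_0}^D\,\text{wt}_f(D)$ for exactly the basis web $W_0$ named in the functional---identifies the immanant with the desired twist in all five cases. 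The dihedral-translate and projection versions then follow verbatim from the generalizations already recorded in those theorems.

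The hard part will be the bookkeeping of the weight conventions rather than any new combinatorics: I must match the edge weights implicit in $\tilde{X}(G)$ and in \cite{fll2019} against Definition \ref{def:MaScEdgeWeights}, correctly carry the frozen-variable normalization supplied by Proposition \ref{prop:edgetoface}, and verify that the abstract multiplicity $\varphi(W(D))$ really coincides with the skein coefficient $C_{W_0}^D$---in particular the $2^{\#(\text{cycles})}$ weighting in the double-dimer setting and the vanishing of contributions from all other nonelliptic summands in the triple-dimer setting. Once these identifications are pinned down, the substance of the statement is inherited directly from the previously established theorems.
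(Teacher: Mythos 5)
Your proposal is correct and matches the paper's treatment: the paper presents this theorem as a direct restatement of Theorems \ref{theorem:doubledimersXY} and \ref{theorem:connectivityforA}--\ref{theorem:connectivityforZ}, with Proposition \ref{prop:edgetoface} supplying the edge-weight/face-weight dictionary and the identification of $\varphi(W(D))$ with the skein coefficient $C_{W_0}^D$ (including the $2^{\#(\text{cycles})}$ multiplicity in the $\text{SL}_2$ case) doing the rest. You have simply written out explicitly the bookkeeping that the paper leaves implicit.
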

Note that in particular, removing the stated choice of edge weights and considering $X$, $Y$, $C$, and $Z$ only as arising abstractly from the boundary measurement map recovers several of the dualities in the top and bottom left of Figure 3 of \cite{fll2019} (reproduced in the top left and bottom of Figure \ref{fig:webdualities}). Our work presents the twist map as concretely realizing this duality.

It also adds an interpretation of duality for $A$ and $B$ in $\Gr(3,8)$, which appears pictorially on the top right of Figure \ref{fig:webdualities}. We reference \cite[Figure 22]{fominpasha2016} for the tensor diagrams corresponding to $A$ and $B$, the duals of the batwing and octopus, respectively.
Figure \ref{fig:webdualities} displays elements of $\mathcal{W}_\lambda(U)^*$ on the left, and their dual tensor diagrams in $\mathbb{C}[\widehat{\Gr}(k,n)]$ on the right.\footnote{We only study one direction of duality for $\Gr(3,8)$, since diagrams with white boundary vertices represent more general $SL_3$-invariants than elements of the coordinate ring of the Grassmannian.} Unlike our setting, Fomin and Pylyavskyy \cite{fominpasha2016} notably allow tensor diagrams with boundary vertices of valence higher than one.

Note that an analogue of Observation 8.2 of \cite{fll2019} continues to hold in the listed examples for $\Gr(3,8)$: the dual of a non-elliptic web arises from clasping boundary vertices of another non-elliptic web. It would be interesting to check this statement for the other non-elliptic webs depicted in Lemmas \ref{lemma:pathlesswebs}, \ref{lemma:webswithpathsadj}, and \ref{lemma:webswithpathsnonadj} using the methods we will describe in Section \ref{section:appendixlemmas}.

\begin{figure}
    \centering
    \includegraphics[scale=.1]{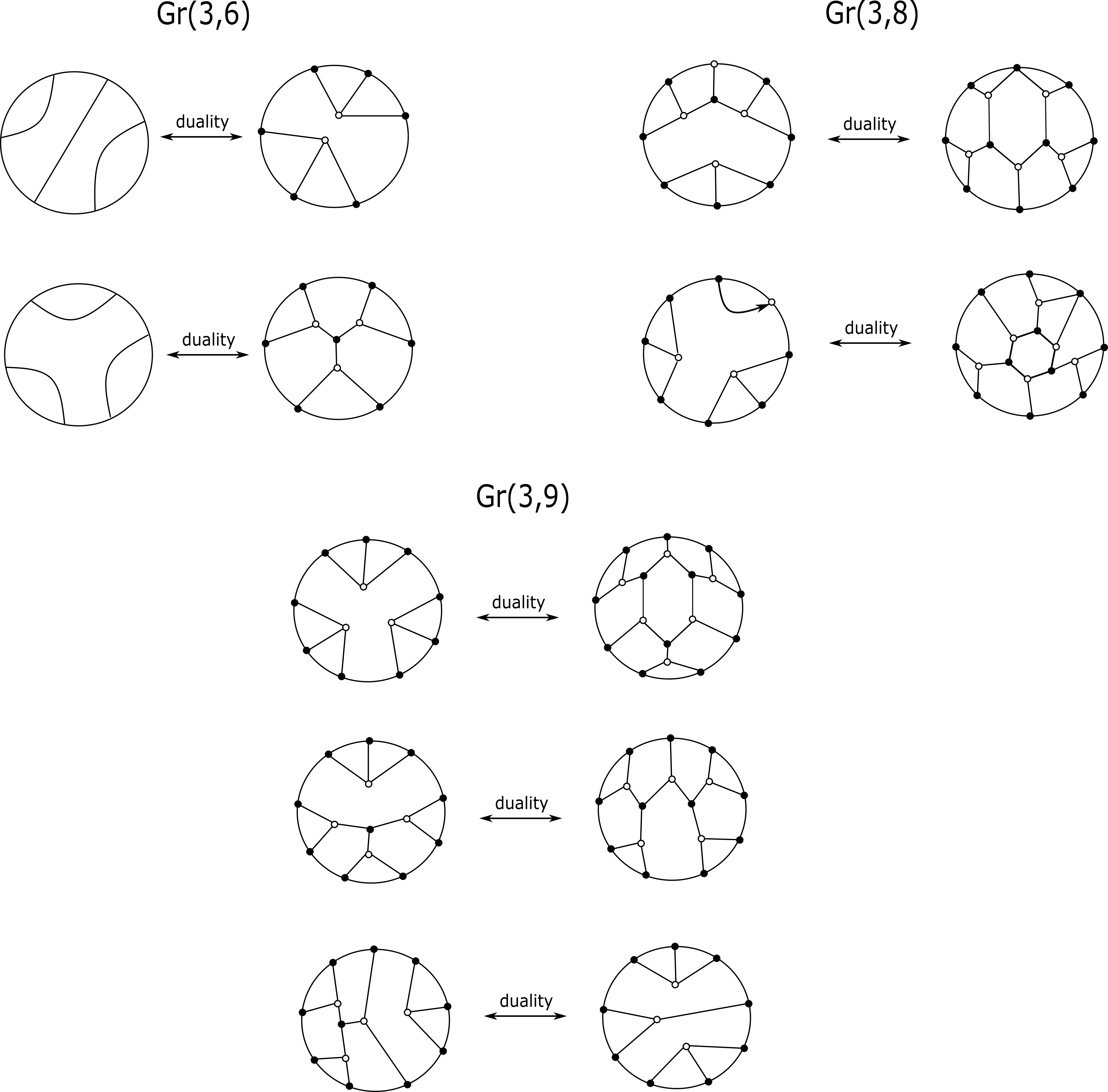}
    \caption{Duality between webs in small cases. Note that the top-left and bottom of the figure depicts webs shown in \cite[Figure 3]{fll2019}; the top-right is new.}
    \label{fig:webdualities}
\end{figure}

\begin{subsection}{Young Tableaux} \label{section:young}
We provide some explicit computations with tableaux associated to \cite[Observation 8.3]{fll2019}, which notes that in small cases, the duality map aligns with a combination of the transpose map applied to a rectangular standard Young tableau and the Khovanov-Kuperberg bijection \cite{KhovKup} between two-row or three-row standard Young tableaux and non-crossing matchings or non-elliptic webs, respectively.

We begin with $\Gr(3,6)$. In what follows, for every standard Young tableau of rectangular shape $[3,3]$, we demonstrate the effect of the Khovanov-Kuperberg bijection: we read the top row from left to right, and match each entry $j$ to the largest unmatched value $i$ on the row below such that $i < j$.

\begin{center}
    $\begin{ytableau}
 4 & 5 & 6 \\ 1 & 2 & 3
 \end{ytableau} \leftrightarrow (3,4), (2,5), (1,6)$,

\vspace{1em}

$\begin{ytableau}
 2 & 5 & 6 \\ 1 & 3 & 4
 \end{ytableau}\leftrightarrow (1,2), (4,5), (3,6) $,
 
 \vspace{1em}

$\begin{ytableau}
 3 & 4 & 6 \\ 1 & 2 & 5
 \end{ytableau}\leftrightarrow (2,3), (1,4), (5,6)$,
 \vspace{1em}

 $\begin{ytableau}
 3 & 5 & 6 \\ 1 & 2 & 4 
 \end{ytableau} \leftrightarrow (2,3), (4,5), (1,6)$, ~~~and~~~
 
 \vspace{1em}

$\begin{ytableau}
 2 & 4 & 6  \\ 1 & 3 & 5
 \end{ytableau} \leftrightarrow (1,2), (3,4), (5,6)$.
\FloatBarrier

\end{center}
Additionally, as computed in \cite{tymo}, the transposes of these five standard Young tableaux (of shape $[2,2,2]$) biject to non-elliptic webs, with 
\begin{center}
$\begin{ytableau}
 3 & 6 \\ 2 & 5 \\ 1 & 4
 \end{ytableau}$ corresponding to the union of the two tripods $[1,2,3]$ and $[4,5,6]$,

\vspace{1em}

$\begin{ytableau}
 4 & 6 \\ 3 & 5 \\ 1 & 2
 \end{ytableau}$ corresponding to the union of the two tripods $[1,5,6]$ and $[2,3,4]$, and

\vspace{1em}

$\begin{ytableau}
 5 & 6 \\ 2 & 4 \\ 1 & 3
 \end{ytableau}$ corresponding to the union of the two tripods $[1,2,6]$ and $[3,4,5]$,
\vspace{1em}
\end{center}
while 
 $\begin{ytableau}
 4 & 6 \\ 2 & 5 \\ 1 & 3
 \end{ytableau}$ and  
  $\begin{ytableau}
 5 & 6 \\ 3 & 4 \\ 1 & 2
 \end{ytableau}$ correspond to two rotations of a hexapod (the second entry in the right column at the top left of Figure \ref{fig:webdualities}), which encode the compound determinants $X =\det \bigg( v_1 \times v_2 \quad v_3 \times v_4 \quad v_5 \times v_6\bigg)$ and $Y=\det \bigg( v_6 \times v_1 \quad v_2 \times v_3 \quad v_4 \times v_5\bigg)$, respectively.
We now observe that, as shown in Figure \ref{fig:webdualities}, duality links a non-crossing matching to the non-elliptic web corresponding to the transpose of its standard Young tableau:
\[(3,4),(2,5),(1,6) \leftrightarrow \text{the union of the two tripods $[1,2,3]$ and $[4,5,6]$}\]
\[(1,2),(4,5),(3,6) \leftrightarrow \text{the union of the two tripods $[1,5,6]$ and $[2,3,4]$}\]
\[(2,3),(1,4),(5,6) \leftrightarrow \text{the union of the two tripods $[1,2,6]$ and $[3,4,5]$}\]
\[(2,3),(4,5),(1,6) \leftrightarrow \text{the hexapod corresponding to }X\]
\[(1,2),(3,4),(5,6) \leftrightarrow \text{the hexapod corresponding to }Y.\]

We perform the analogous computations for the dualities in $\Gr(3,9)$ given by twisting the degree three cluster algebra elements $C$ and $Z$. Under the Khovanov-Kuperberg bijection, the tensor diagram for $C$ (the second entry in the right column at the bottom of Figure \ref{fig:webdualities}) corresponds to the standard Young tableau 
 $$T_C = \begin{ytableau}
6 & 8 & 9 \\ 3& 5& 7 \\ 1 & 2 & 4
 \end{ytableau}.$$
The web corresponding to its transpose
 $$T_C^* = \begin{ytableau}
 4 & 7 & 9 \\ 2 & 5 & 8 \\ 1 & 3 & 6
 \end{ytableau}$$ 
 is the hexa-crab depicted in Figure \ref{fig:mainresults}, which aligns with the duality depicted in the middle row in the bottom of Figure \ref{fig:webdualities}.
Additionally, Example 8.1 and Remark 8.2 of \cite{quantum20} verify that the tensor diagram for $Z$ (the first entry in the right column on the bottom of Figure \ref{fig:webdualities}) corresponds to the standard Young tableau
 $$T_Z = \begin{ytableau}
5 & 8 & 9 \\ 2& 6 & 7 \\ 1 & 3 & 4
 \end{ytableau}.$$
The web corresponding to its transpose
 $$T_Z^* = \begin{ytableau}
 4 & 7 & 9 \\ 3 & 6 & 8 \\ 1 & 2 & 5
 \end{ytableau}$$
is in fact the three tripods $(1,8,9), (2,3,4), (5,6,7)$, i.e. the fourth web illustrated in Figure \ref{fig:pathlesswebs9}, the tri-crab. Again, this result is consistent with the duality depicted in the top row in the bottom left quadrant of \cite[Figure 3]{fll2019}.

It would be interesting to extend these computations to our additional dualities in $\Gr(3,8)$, depicted in Figure \ref{fig:webdualities}.
We expect that they relate to {\bf semi-standard Young tableaux} of shape $[3,3,3]$, using entries only involving $1,2,\dots, 8$; conjecturally, these would be the 24 tableaux provided in \cite[Section 3.1]{clustering} for $\Gr(3,8)$.

\end{subsection}
\end{section}

\begin{section}{Construction of $C$}\label{section:constructionofC}
In this section, we justify our claim from Section \ref{subsection:differences} that the element
\[C=\Delta_{124}\Delta_{357}\Delta_{689} + \Delta_{123}\Delta_{456}\Delta_{789} - \Delta_{124}\Delta_{356}\Delta_{789}-\Delta_{123}\Delta_{457}\Delta_{689} \]
of $\mathbb{C}[\Gr(3,9)]$ is a cluster variable. It will suffice to show that the above expression corresponds to the tensor diagram on the right of the middle row in the bottom left of Figure \ref{fig:webdualities}, since that diagram is a planar tree, and therefore corresponds to a cluster variable by \cite[Corollary 8.10]{fominpasha2016}.

We explained in Section \ref{section:duality} that under the Khovanov-Kuperberg bijection, the cluster algebra element $C$ and its corresponding web invariant $[W_C]$ correspond to a particular standard Young tableau, namely
 $$T_C = \begin{ytableau}
6 & 8 & 9 \\ 3& 5& 7 \\ 1 & 2 & 4
 \end{ytableau}.$$
We algebraically express $[W_C]$ as a polynomial in Pl\"ucker coordinates by producing a tensor diagram from the rows of $T_C$ and then resolving crossings to create a summation in terms of planar webs. This process is illustrated in Figure \ref{fig:computationforC}: we consider the triple product $\Delta_{124}\Delta_{357}\Delta_{689}$ formed from the rows\footnote{On further review of this process, as we apply the Khovanov-Kuperberg bijection described in Section \ref{section:young} to the transpose $T_C^*$, the first step yields the triples $(6,8,9),~(3,5,7),~(1,2,4)$, which are natural to utilize to obtain a leading monomial for an expression of a web invariant in terms of Pl\"ucker coordinates based on the theory of non-crossing tableaux, see \cite{pylyavskyy2009non, PPS}.  However, it is only a coincidence rather than a more general phenomenon that these triples agree with the three rows of $T_C$.} of $T_C$, construct a corresponding tensor diagram by superimposing the three associated tripods, and apply the $SL_3$-web Skein relation shown in Figure \ref{fig:tensordiagramskein} to resolve two of the crossings that appear.

\begin{figure}[H]
\centering

\tikzset{every picture/.style={line width=0.75pt}} 

\begin{tikzpicture}[x=0.75pt,y=0.75pt,yscale=-.75,xscale=.75]

\draw    (108,200) -- (200.5,260) ;
\draw    (108.5,261) -- (200.5,201) ;
\draw  [fill={rgb, 255:red, 255; green, 255; blue, 255 }  ,fill opacity=0.98 ] (103.75,200) .. controls (103.75,197.65) and (105.65,195.75) .. (108,195.75) .. controls (110.35,195.75) and (112.25,197.65) .. (112.25,200) .. controls (112.25,202.35) and (110.35,204.25) .. (108,204.25) .. controls (105.65,204.25) and (103.75,202.35) .. (103.75,200) -- cycle ;
\draw  [fill={rgb, 255:red, 255; green, 255; blue, 255 }  ,fill opacity=0.98 ] (104.25,261) .. controls (104.25,258.65) and (106.15,256.75) .. (108.5,256.75) .. controls (110.85,256.75) and (112.75,258.65) .. (112.75,261) .. controls (112.75,263.35) and (110.85,265.25) .. (108.5,265.25) .. controls (106.15,265.25) and (104.25,263.35) .. (104.25,261) -- cycle ;
\draw  [fill={rgb, 255:red, 13; green, 0; blue, 0 }  ,fill opacity=0.98 ] (196.25,201) .. controls (196.25,198.65) and (198.15,196.75) .. (200.5,196.75) .. controls (202.85,196.75) and (204.75,198.65) .. (204.75,201) .. controls (204.75,203.35) and (202.85,205.25) .. (200.5,205.25) .. controls (198.15,205.25) and (196.25,203.35) .. (196.25,201) -- cycle ;
\draw  [fill={rgb, 255:red, 13; green, 0; blue, 0 }  ,fill opacity=0.98 ] (196.25,260) .. controls (196.25,257.65) and (198.15,255.75) .. (200.5,255.75) .. controls (202.85,255.75) and (204.75,257.65) .. (204.75,260) .. controls (204.75,262.35) and (202.85,264.25) .. (200.5,264.25) .. controls (198.15,264.25) and (196.25,262.35) .. (196.25,260) -- cycle ;
\draw    (269,201) -- (380.5,201) ;
\draw    (270,260) -- (380.5,260) ;
\draw  [fill={rgb, 255:red, 255; green, 255; blue, 255 }  ,fill opacity=0.98 ] (264.75,201) .. controls (264.75,198.65) and (266.65,196.75) .. (269,196.75) .. controls (271.35,196.75) and (273.25,198.65) .. (273.25,201) .. controls (273.25,203.35) and (271.35,205.25) .. (269,205.25) .. controls (266.65,205.25) and (264.75,203.35) .. (264.75,201) -- cycle ;
\draw  [fill={rgb, 255:red, 255; green, 255; blue, 255 }  ,fill opacity=0.98 ] (265.75,260) .. controls (265.75,257.65) and (267.65,255.75) .. (270,255.75) .. controls (272.35,255.75) and (274.25,257.65) .. (274.25,260) .. controls (274.25,262.35) and (272.35,264.25) .. (270,264.25) .. controls (267.65,264.25) and (265.75,262.35) .. (265.75,260) -- cycle ;
\draw  [fill={rgb, 255:red, 13; green, 0; blue, 0 }  ,fill opacity=0.98 ] (376.25,201) .. controls (376.25,198.65) and (378.15,196.75) .. (380.5,196.75) .. controls (382.85,196.75) and (384.75,198.65) .. (384.75,201) .. controls (384.75,203.35) and (382.85,205.25) .. (380.5,205.25) .. controls (378.15,205.25) and (376.25,203.35) .. (376.25,201) -- cycle ;
\draw  [fill={rgb, 255:red, 13; green, 0; blue, 0 }  ,fill opacity=0.98 ] (376.25,260) .. controls (376.25,257.65) and (378.15,255.75) .. (380.5,255.75) .. controls (382.85,255.75) and (384.75,257.65) .. (384.75,260) .. controls (384.75,262.35) and (382.85,264.25) .. (380.5,264.25) .. controls (378.15,264.25) and (376.25,262.35) .. (376.25,260) -- cycle ;
\draw    (429.5,199) -- (469.5,232) ;
\draw    (429.5,261) -- (469.5,232) ;
\draw    (540.5,232) -- (581.5,264) ;
\draw    (578.5,198) -- (540.5,232) ;
\draw    (469.5,232) -- (540.5,232) ;
\draw  [fill={rgb, 255:red, 255; green, 255; blue, 255 }  ,fill opacity=0.98 ] (425.25,199) .. controls (425.25,196.65) and (427.15,194.75) .. (429.5,194.75) .. controls (431.85,194.75) and (433.75,196.65) .. (433.75,199) .. controls (433.75,201.35) and (431.85,203.25) .. (429.5,203.25) .. controls (427.15,203.25) and (425.25,201.35) .. (425.25,199) -- cycle ;
\draw  [fill={rgb, 255:red, 255; green, 255; blue, 255 }  ,fill opacity=0.98 ] (425.25,261) .. controls (425.25,258.65) and (427.15,256.75) .. (429.5,256.75) .. controls (431.85,256.75) and (433.75,258.65) .. (433.75,261) .. controls (433.75,263.35) and (431.85,265.25) .. (429.5,265.25) .. controls (427.15,265.25) and (425.25,263.35) .. (425.25,261) -- cycle ;
\draw  [fill={rgb, 255:red, 255; green, 255; blue, 255 }  ,fill opacity=0.98 ] (536.25,232) .. controls (536.25,229.65) and (538.15,227.75) .. (540.5,227.75) .. controls (542.85,227.75) and (544.75,229.65) .. (544.75,232) .. controls (544.75,234.35) and (542.85,236.25) .. (540.5,236.25) .. controls (538.15,236.25) and (536.25,234.35) .. (536.25,232) -- cycle ;
\draw  [fill={rgb, 255:red, 13; green, 0; blue, 0 }  ,fill opacity=0.98 ] (465.25,232) .. controls (465.25,229.65) and (467.15,227.75) .. (469.5,227.75) .. controls (471.85,227.75) and (473.75,229.65) .. (473.75,232) .. controls (473.75,234.35) and (471.85,236.25) .. (469.5,236.25) .. controls (467.15,236.25) and (465.25,234.35) .. (465.25,232) -- cycle ;
\draw  [fill={rgb, 255:red, 13; green, 0; blue, 0 }  ,fill opacity=0.98 ] (574.25,198) .. controls (574.25,195.65) and (576.15,193.75) .. (578.5,193.75) .. controls (580.85,193.75) and (582.75,195.65) .. (582.75,198) .. controls (582.75,200.35) and (580.85,202.25) .. (578.5,202.25) .. controls (576.15,202.25) and (574.25,200.35) .. (574.25,198) -- cycle ;
\draw  [fill={rgb, 255:red, 13; green, 0; blue, 0 }  ,fill opacity=0.98 ] (577.25,264) .. controls (577.25,261.65) and (579.15,259.75) .. (581.5,259.75) .. controls (583.85,259.75) and (585.75,261.65) .. (585.75,264) .. controls (585.75,266.35) and (583.85,268.25) .. (581.5,268.25) .. controls (579.15,268.25) and (577.25,266.35) .. (577.25,264) -- cycle ;

\draw (223,222.4) node [anchor=north west][inner sep=0.75pt]    {$=$};
\draw (401,221.4) node [anchor=north west][inner sep=0.75pt]    {$+$};

\end{tikzpicture}
\caption{Additional Skein relation for tensor diagrams, which are not necessarily planar.}
\label{fig:tensordiagramskein}
\end{figure}
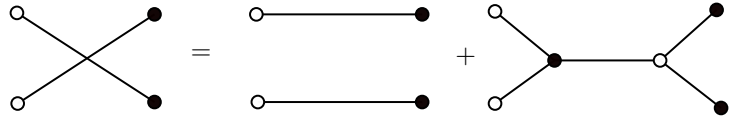

 \begin{figure}[H]
     \centering
     \includegraphics[scale=.15]{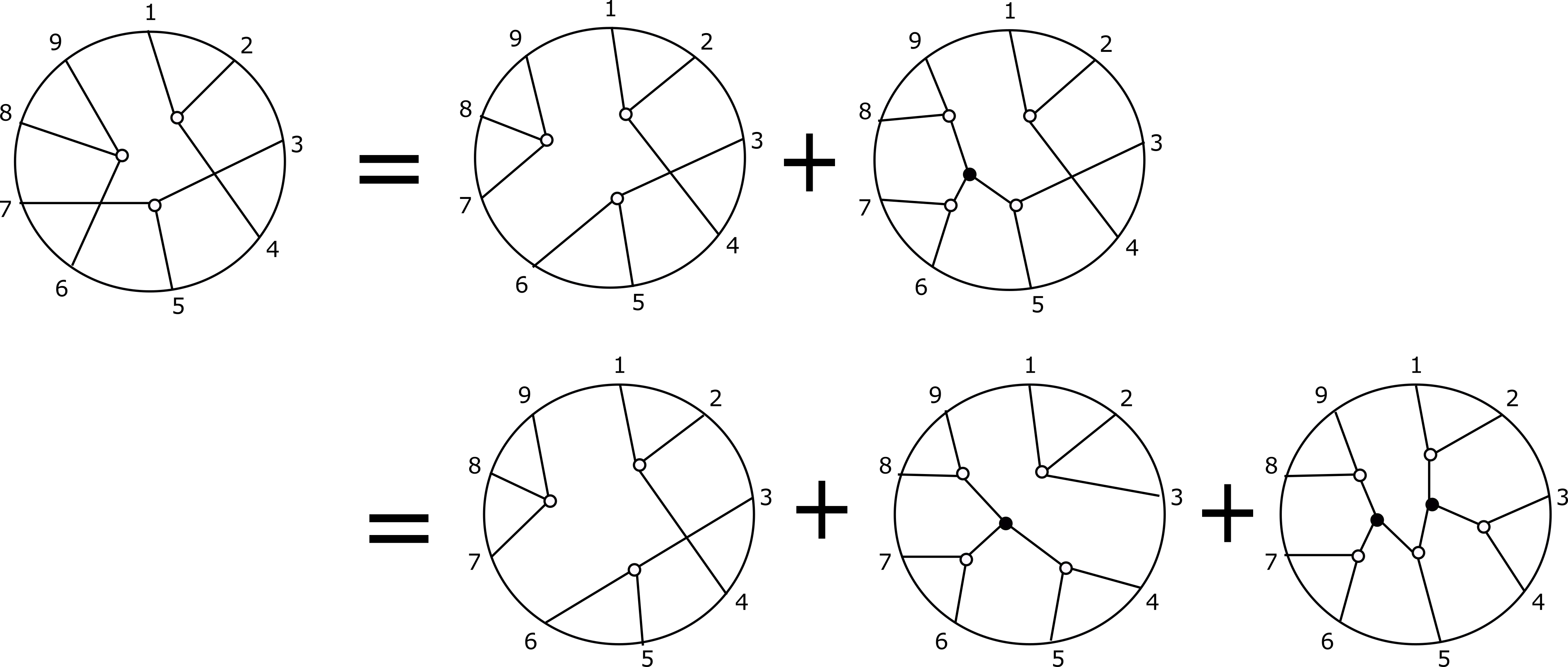}
     \caption{Applying the Skein relation of Figure \ref{fig:tensordiagramskein} to the tensor diagram for $\Delta_{124}\Delta_{357}\Delta_{689}$ to obtain the tensor diagram for $C$.}
     \label{fig:computationforC}
 \end{figure}
 
To translate the tensor diagrams in Figure \ref{fig:computationforC} into an expression in Pl\"ucker coordinates, we note that the tensor diagram on the left side of the equation consists of three tripods, and therefore corresponds to a product of three Pl\"ucker cluster variables, in particular $\Delta_{124}\Delta_{357}\Delta_{689}$. The first summand in the bottom line of Figure \ref{fig:computationforC} similarly corresponds to $\Delta_{124}\Delta_{356}\Delta_{789}$. The next summand has two components: a tripod, which corresponds to the Pl{\"u}cker coordinate $\Delta_{123}$; and a hexapod, which corresponds to the compound determinant $\det(v_4 \times v_5, v_6 \times v_7, v_8 \times v_9)$, i.e. the quadratic difference $X^{456789}=\Delta_{457}\Delta_{689} - \Delta_{456}\Delta_{789}$. Therefore, we may express this summand as $\Delta_{123}(\Delta_{457}\Delta_{689} - \Delta_{456}\Delta_{789})$. Finally, the rightmost summand at the bottom of Figure \ref{fig:computationforC} is the desired web $[W_C]$. We have therefore recovered the relation
\[\Delta_{124}\Delta_{357}\Delta_{689} = \Delta_{124}\Delta_{356}\Delta_{789} +\Delta_{123}(\Delta_{457}\Delta_{689} - \Delta_{456}\Delta_{789})+C,\]
and our expression for $C$ follows. 

We note that similar tensor diagram manipulations are sufficient to calculate expansions of $A$, $B$, and $Z$; however, such expansions are already present in the literature.
\end{section}

\begin{section}{Appendix: Proofs of Lemmas}\label{section:appendixlemmas}
In this Appendix, we prove the lemmas from Section \ref{section:tripledimers}, restating them for convenience.

\Alemma*

\begin{proof}[Proof of Lemma \ref{lemma:A123}]
In Figures \ref{fig:A1proof}, \ref{fig:A2proof}, and \ref{fig:A3proof}, we enumerate all nonelliptic webs with boundary vertices $\{v_1,\dots,v_n\}$ such that $v_1$ is colored white and $v_2,\dots,v_n$ are colored black. Those compatible with $A_1$, $A_2$, or $A_3$ respectively are fully colored and boxed; for all others, the impossibility of a proper edge coloring with the corresponding boundary edge colors is demonstrated. Note that no web compatible with $A_2$ can contain a path $v_2 \to v_1$ or $v_5 \to v_1$, since the edges incident to $v_2$ and $v_5$ cannot be the same color as the edge incident to $v_1$; similarly, no web compatible with $A_3$ can contain a path $v_8 \to v_1$ or $v_5 \to v_1$. We therefore omit such webs in the corresponding figures. Also note that any completed proper coloring is unique given the compatibility conditions.

\begin{figure}
    \centering
    \includegraphics[width=\textwidth]{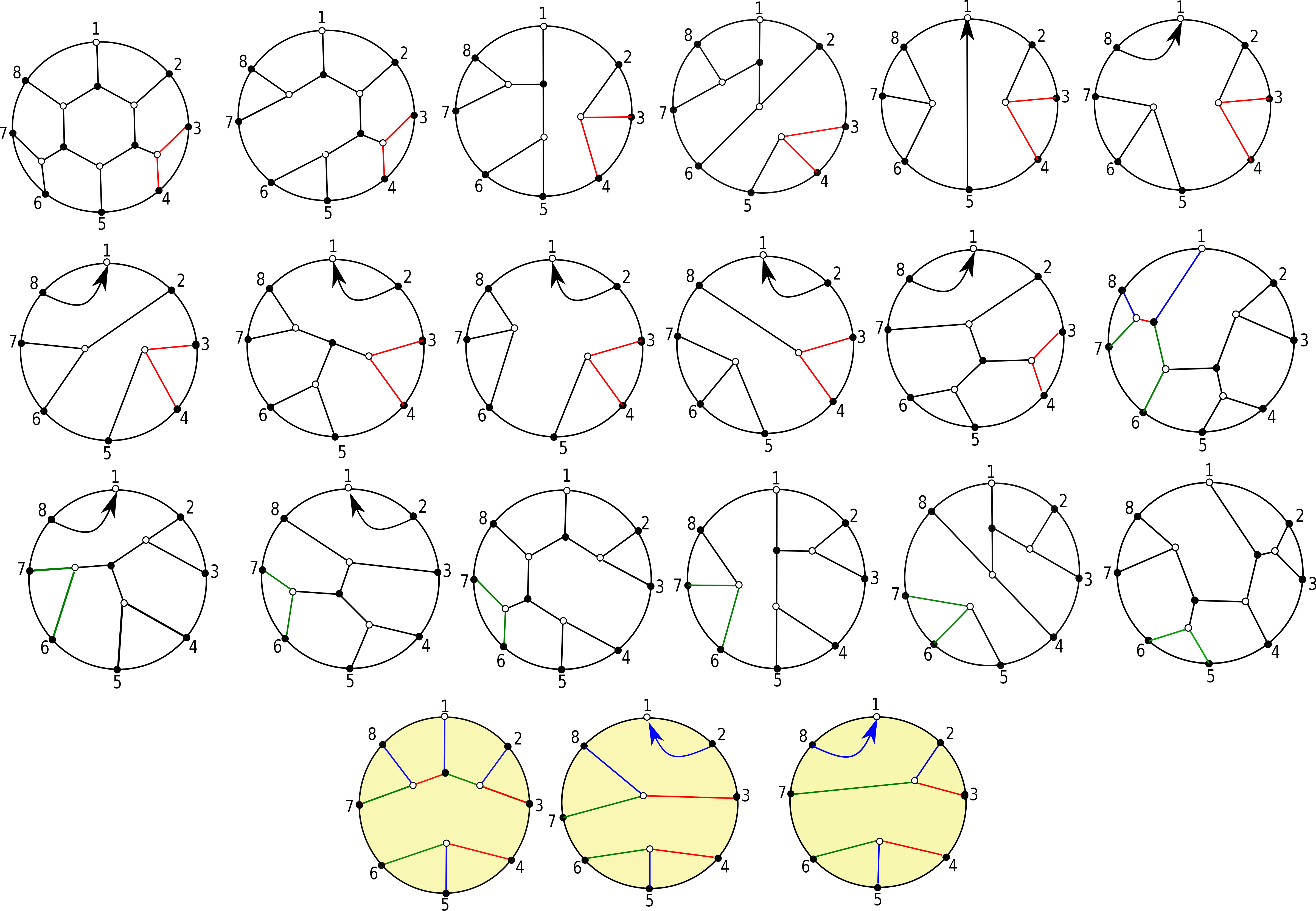}
    \caption{Enumeration of non-elliptic webs compatible (highlighted in yellow for emphasis) and incompatible with $A_1=(134)(258)(167)$.}
    \label{fig:A1proof}
\end{figure}
\begin{figure}
    \centering
    \includegraphics[width=\textwidth]{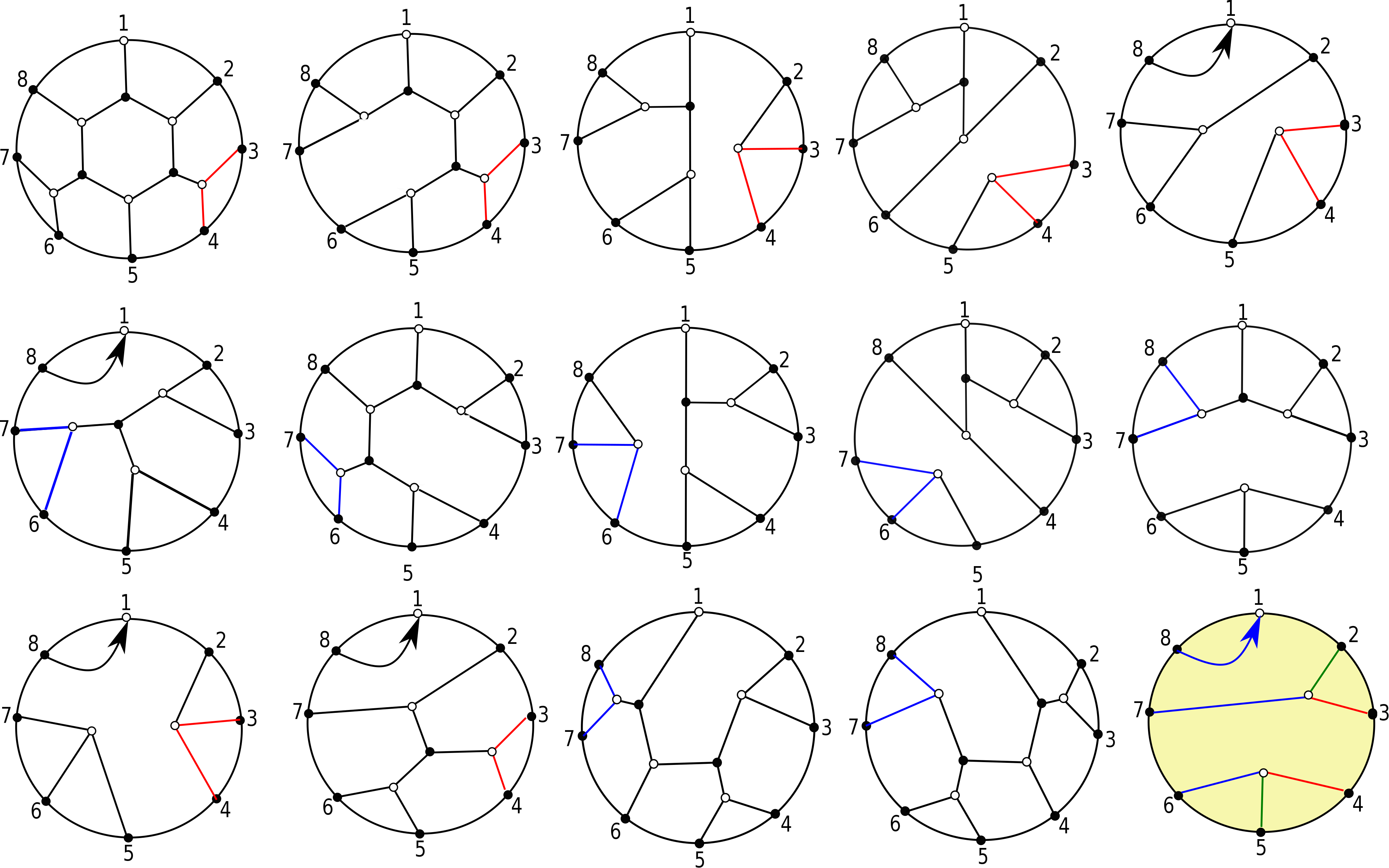}
    \caption{Enumeration of non-elliptic webs compatible (highlighted in yellow for emphasis) and incompatible with $A_2=(134)(125)(678)$.}
    \label{fig:A2proof}
\end{figure}
\begin{figure}
    \centering
    \includegraphics[width=\textwidth]{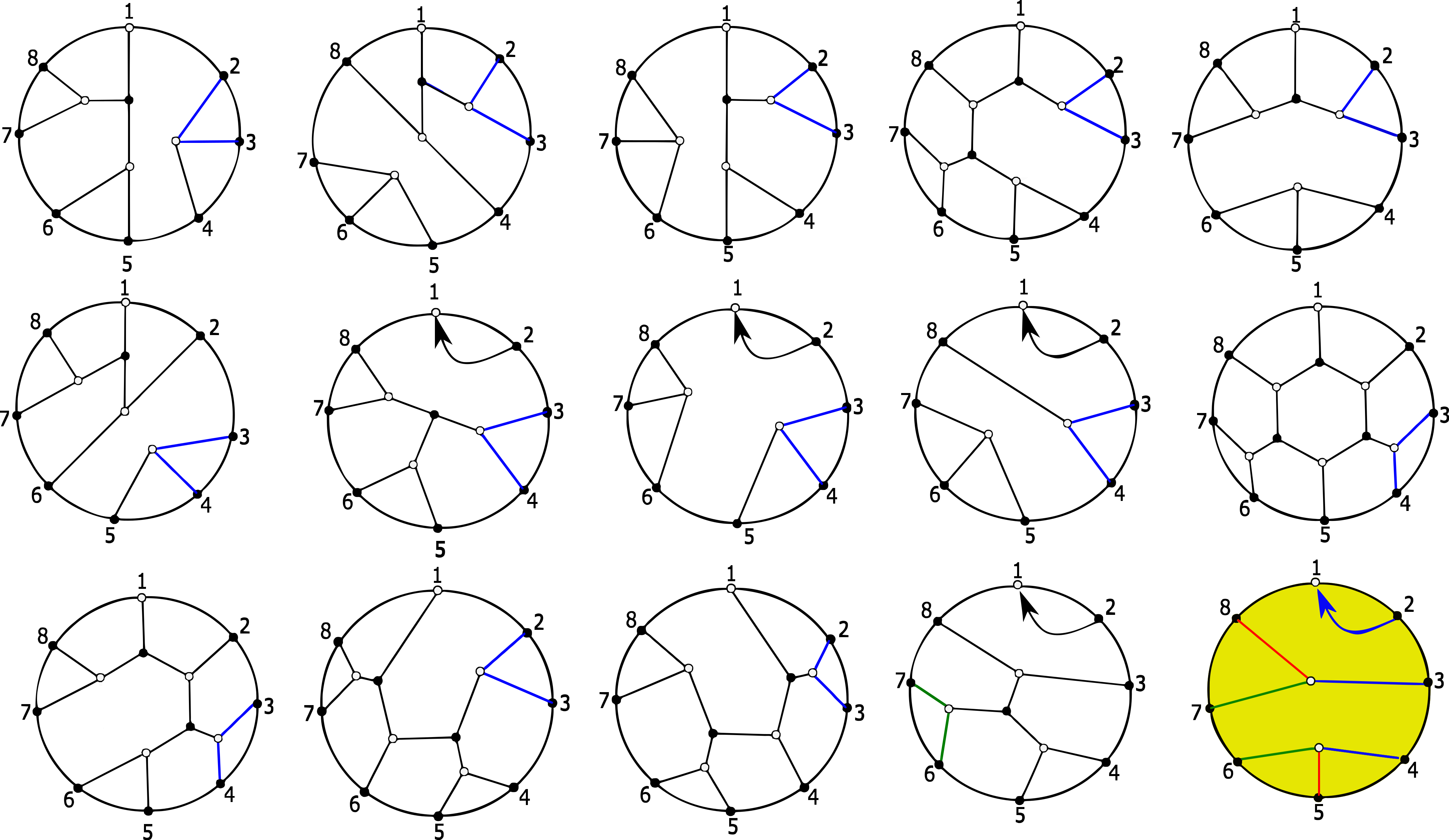}
    \caption{Enumeration of non-elliptic webs compatible (highlighted in yellow for emphasis) and incompatible with $A_3=(158)(234)(167)$.}
    \label{fig:A3proof}
\end{figure}
\end{proof}

\Blemma*

\begin{proof}[Proof of Lemma \ref{lemma:B123}]
Similarly to the previous proof, for each term of $B$, we enumerate all nonelliptic webs with boundary vertices $\{v_1,\dots,v_n\}$ such that $v_2$ is colored white and $v_1,v_3,\dots,v_n$ are colored black. Note that no web compatible with $B_1$ can contain a path $v_6 \to v_2$, no web compatible with $B_2$ can contain any path, and no web compatible with $B_3$ can contain a path $v_3 \to v_2$, so we omit these webs in the corresponding figures. Also note that any completed proper coloring is unique given the compatibility conditions.
\begin{figure}
    \centering
    \includegraphics[width=\textwidth]{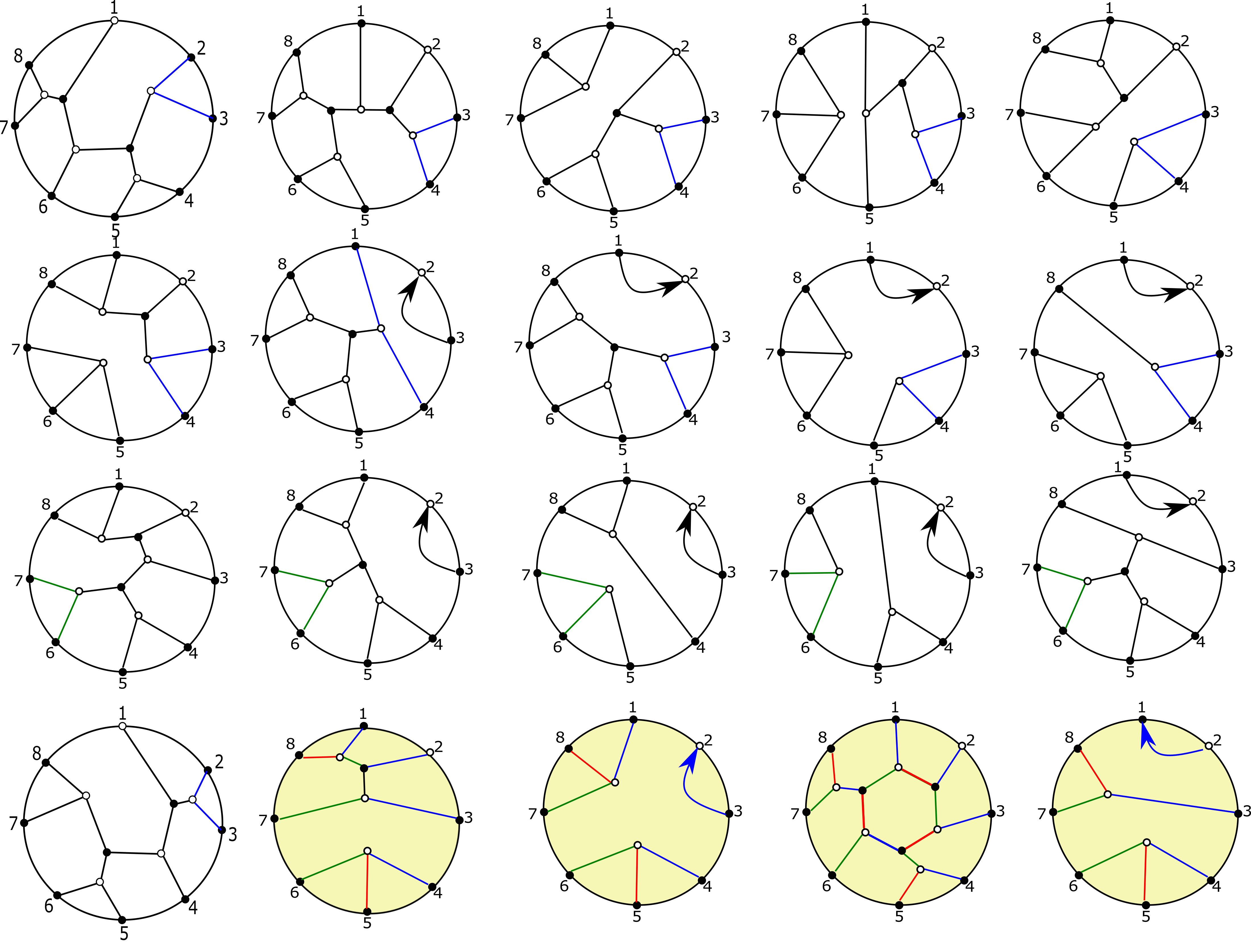}
    \caption{Enumeration of non-elliptic webs compatible (highlighted in yellow for emphasis) and incompatible with $B_1=(258)(134)(267)$.}
    \label{fig:B1proof}
\end{figure}
\begin{figure}
    \centering
    \includegraphics[width=\textwidth]{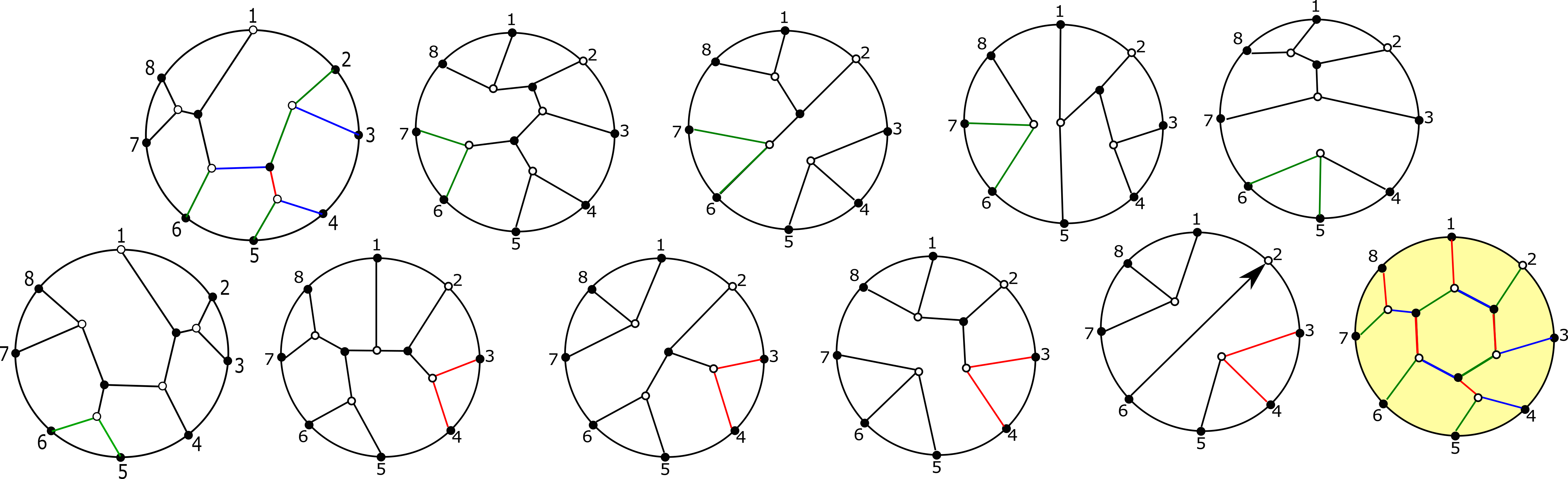}
    \caption{Enumeration of non-elliptic webs compatible (highlighted in yellow for emphasis) and incompatible with $B_2=(234)(128)(567)$.}
    \label{fig:B2proof}
\end{figure}
\begin{figure}
    \centering
    \includegraphics[width=\textwidth]{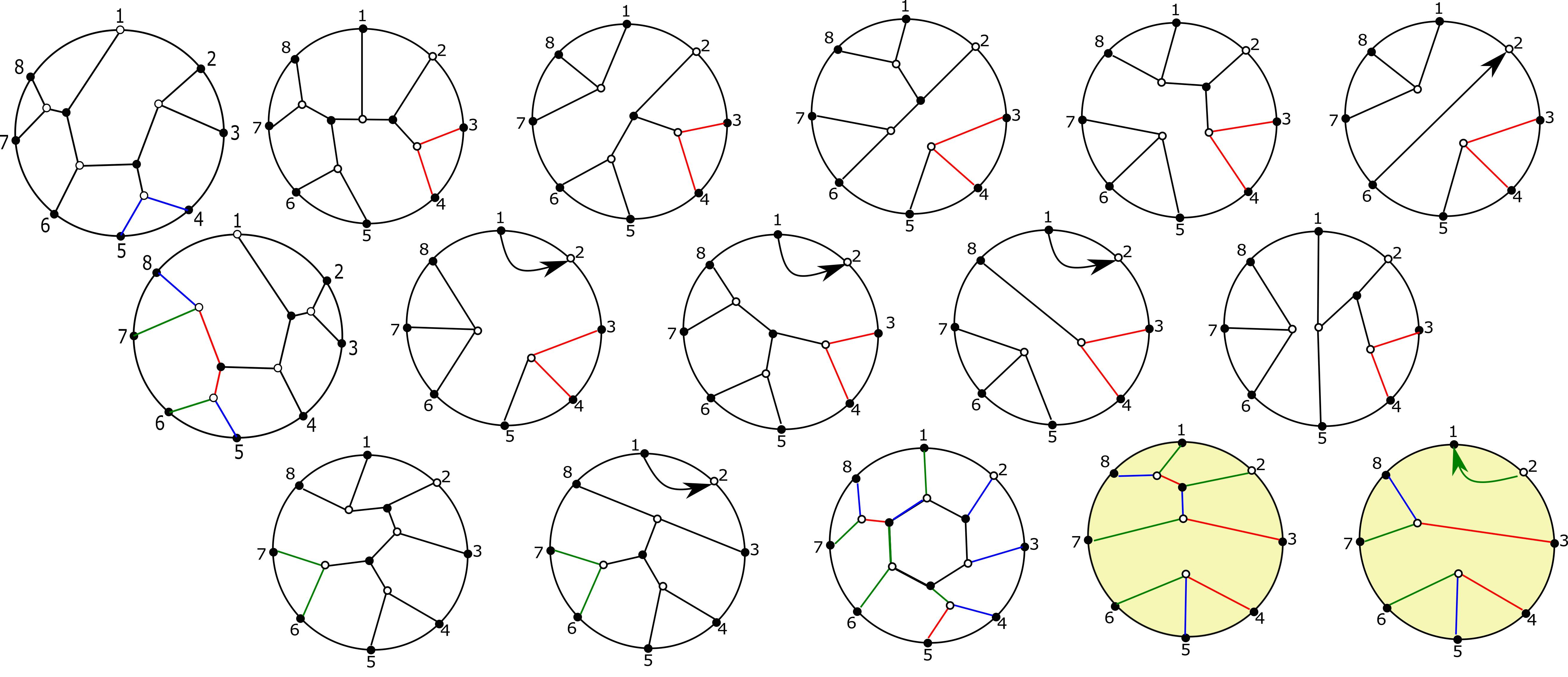}
    \caption{Enumeration of non-elliptic webs compatible (highlighted in yellow for emphasis) and incompatible with $B_3=(234)(258)(167)$.  
    Note that the middle web of the third row is incompatible due to a collision on an internal vertex of the hexagon, where two \textcolor{forest}{green} edges are forced to meet. The rightmost two webs of the third row give the only webs compatible with the boundary condition $B_3$.}
    \label{fig:B3proof}
\end{figure}
\end{proof}

We use the following lemma to limit our lists of webs for terms of $C$.
\begin{lemma}\label{lemma:frozenwebs}
Let $I=\{i-1, i,i+1\}$ mod 9, and $J,K \in \binom{[9]\setminus I}{3}$.
If a non-elliptic web is compatible with $\Delta_I\Delta_J\Delta_K$, it must be one of the webs pictured in Figure \ref{fig:frozenwebs}. 
\begin{figure}
    \centering
    \includegraphics[width=\textwidth]{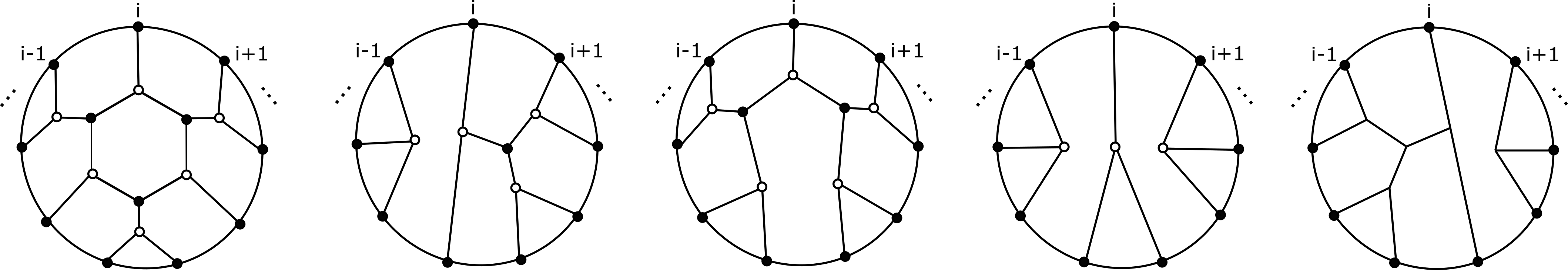}
    \caption{All non-elliptic webs compatible with some $\Delta_I\Delta_J\Delta_K$, where $I=\{i-1,i,i+1\}$ mod 9 and $J,K \in \binom{[9]\setminus I}{3}$.}
    \label{fig:frozenwebs}
\end{figure}
\end{lemma}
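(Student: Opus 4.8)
The plan is to reduce the statement to a constrained enumeration of non-elliptic webs, using the consecutiveness of $I$ together with the vertex-count bounds of Propositions \ref{prop:eulerchar} and \ref{prop:minvertices}. First I would fix the colour convention of the compatibility definition, assigning red to $\Delta_I$, blue to $\Delta_J$, and green to $\Delta_K$. Since $I=\{i-1,i,i+1\}$ is disjoint from both $J$ and $K$, the three consecutive boundary vertices $v_{i-1},v_i,v_{i+1}$ are each black and incident to a single red edge, while a boundary vertex indexed by $[9]\setminus I$ is white precisely when it lies in $J\cap K$, black when it lies in exactly one of $J,K$, and isolated otherwise. In particular the number of white boundary vertices is $|J\cap K|\in\{0,1,2,3\}$ and the number of non-isolated boundary vertices is $9-|J\cap K|$. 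Using the dihedral symmetry of the class of webs under consideration, I may assume $i=2$, so that $I=\{1,2,3\}$.

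Next I would bound the relevant parameters. Writing $n'=9-|J\cap K|$ for the number of non-isolated boundary vertices, Proposition \ref{prop:eulerchar} gives $|V_{int}|=n'+2c-2k$, and Proposition \ref{prop:minvertices} then forces the number of cycles $c$ to be small in every case. I would organize the argument by the value of $|J\cap K|$. When $|J\cap K|=0$ the sets $I,J,K$ partition $[9]$, so all nine boundary vertices are black and every candidate web is among those classified in Lemma \ref{lemma:pathlesswebs9}; I would then intersect that list with the colouring requirement, keeping only those webs admitting a proper $3$-edge-colouring in which $v_1,v_2,v_3$ all receive red edges and in which red, blue, and green meet the boundary exactly along $I$, $J$, and $K$. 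When $|J\cap K|\geq 1$ the web contains paths into the white boundary vertices, and I would run the analogous Euler-characteristic count, enumerating the possible connected components (tripods, hexapods, paths, and cycles) compatible with the prescribed boundary colours much as in Lemmas \ref{lemma:pathlesswebs}, \ref{lemma:webswithpathsadj}, and \ref{lemma:webswithpathsnonadj}. In each case the requirement that the three consecutive vertices $1,2,3$ lie in a single colour class, combined with planarity and the absence of bigons and squares, should eliminate all but the configurations in Figure \ref{fig:frozenwebs}.

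The step I expect to be the main obstacle is the colour-elimination itself: ruling out, for each surviving candidate web, every proper $3$-edge-colouring that would place $\{1,2,3\}$ in a single colour class while realizing $J$ and $K$ as the other two colour classes. This is delicate because the constraint is global rather than local — the three red edges at the consecutive vertices $1,2,3$ interact with the rest of the colouring through the planar, non-crossing structure of web colourings, and need not terminate in a single local gadget. Concretely, for each discarded web I would trace the red--blue and red--green Kempe paths emanating from $v_2$ and show that completing them consistently with the boundary data forces a contractible cycle, a bigon, or a square, contradicting non-ellipticity; this is the same flavour of pictorial casework carried out for the other compatibility lemmas, and once it is dispatched the classification in Figure \ref{fig:frozenwebs} follows.
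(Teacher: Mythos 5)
For the case the paper actually needs --- $J\cap K=\emptyset$, so that $I,J,K$ partition $[9]$ and all nine boundary vertices are black --- your plan coincides with the paper's proof: enumerate the webs of Lemma \ref{lemma:pathlesswebs9} (placing $v_i$ at the top by rotational symmetry) and discard those admitting no proper $3$-edge-colouring in which the three boundary edges at $v_{i-1},v_i,v_{i+1}$ share a colour. The paper carries out the colour check by direct pictorial inspection rather than by Kempe-chain arguments, but that is a difference of bookkeeping, not of substance.

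The genuine problem is your treatment of $|J\cap K|\ge 1$. You assert that these cases ``should eliminate all but the configurations in Figure \ref{fig:frozenwebs},'' but that elimination cannot succeed: take $i=2$, $J=\{4,5,6\}$, $K=\{4,5,7\}$. Then $v_4,v_5$ are white and must meet red edges, $v_6$ is black with a blue edge, $v_7$ is black with a green edge, $v_8,v_9$ are isolated, and the non-elliptic web consisting of the nested red paths $3\to 4$ and $2\to 5$ together with a tripod on $\{1,6,7\}$ (edges coloured red, blue, green) is compatible with $\Delta_I\Delta_J\Delta_K$ --- yet it has white boundary vertices and so is not among the all-black webs of Figure \ref{fig:frozenwebs}. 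The resolution is that the lemma is implicitly restricted to disjoint $J,K$: the paper's proof enumerates only the webs of Lemma \ref{lemma:pathlesswebs9}, and the only downstream uses (the terms $C_2,C_3,C_4$ in Lemma \ref{lemma:C1-6}) have $I,J,K$ pairwise disjoint. You should either impose the hypothesis $J\cap K=\emptyset$ at the outset, or accept that the non-disjoint cases produce additional compatible webs; as written, the second half of your argument proves a false statement.
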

\begin{proof}
In Figure \ref{fig:frozenwebspf}, we enumerate all nonelliptic webs listed in Lemma \ref{lemma:pathlesswebs9}, drawing vertex $i$ at the top of each web without loss of generality, and eliminate any that do not admit a proper coloring such that the edges adjacent to vertices $i-1$, $i$, and $i+1$ are the same color.
\begin{figure}
    \centering
    \includegraphics[width=\textwidth]{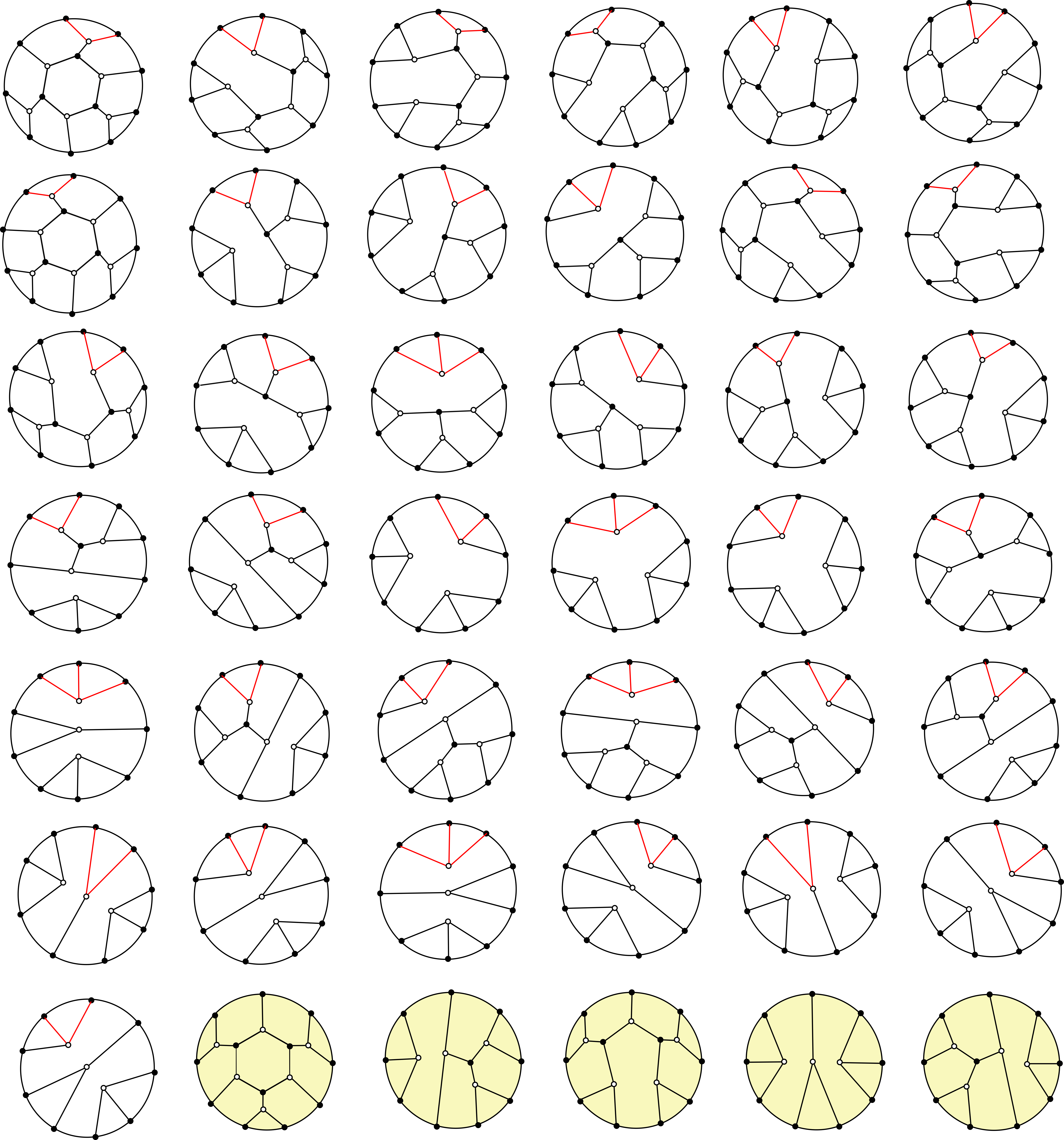}
    \caption{Enumeration of non-elliptic webs compatible (highlighted in yellow for emphasis) and incompatible with some $\Delta_I\Delta_J\Delta_K$, where $I=\{i-1,i,i+1\}$ mod 9 with vertex $i$ drawn at the top of each web, and $J,K \in \binom{[9]\setminus I}{3}$.}
    \label{fig:frozenwebspf}
\end{figure}
\end{proof}

We now prove Lemmas \ref{lemma:C1-6} and \ref{lemma:Z1-4}.

\Clemma*

\begin{proof}[Proof of Lemma \ref{lemma:C1-6}]
For each term of $C$, we enumerate non-elliptic webs with nine black boundary vertices. Since every term of $C$ besides $C_1$ has a factor of $\Delta_I$ where $I=\{i-1,i,i+1\}$ mod 9, for these terms we only check the appropriate rotations of the webs listed in Lemma \ref{lemma:frozenwebs}; see Figure \ref{fig:C2-6proof}. For $C_1$, in Figure \ref{fig:C1proof} we check all webs listed in Lemma \ref{lemma:pathlesswebs9}. Note that any completed proper coloring is unique given the compatibility conditions.
\begin{figure}
    \centering
    \includegraphics[width=\textwidth]{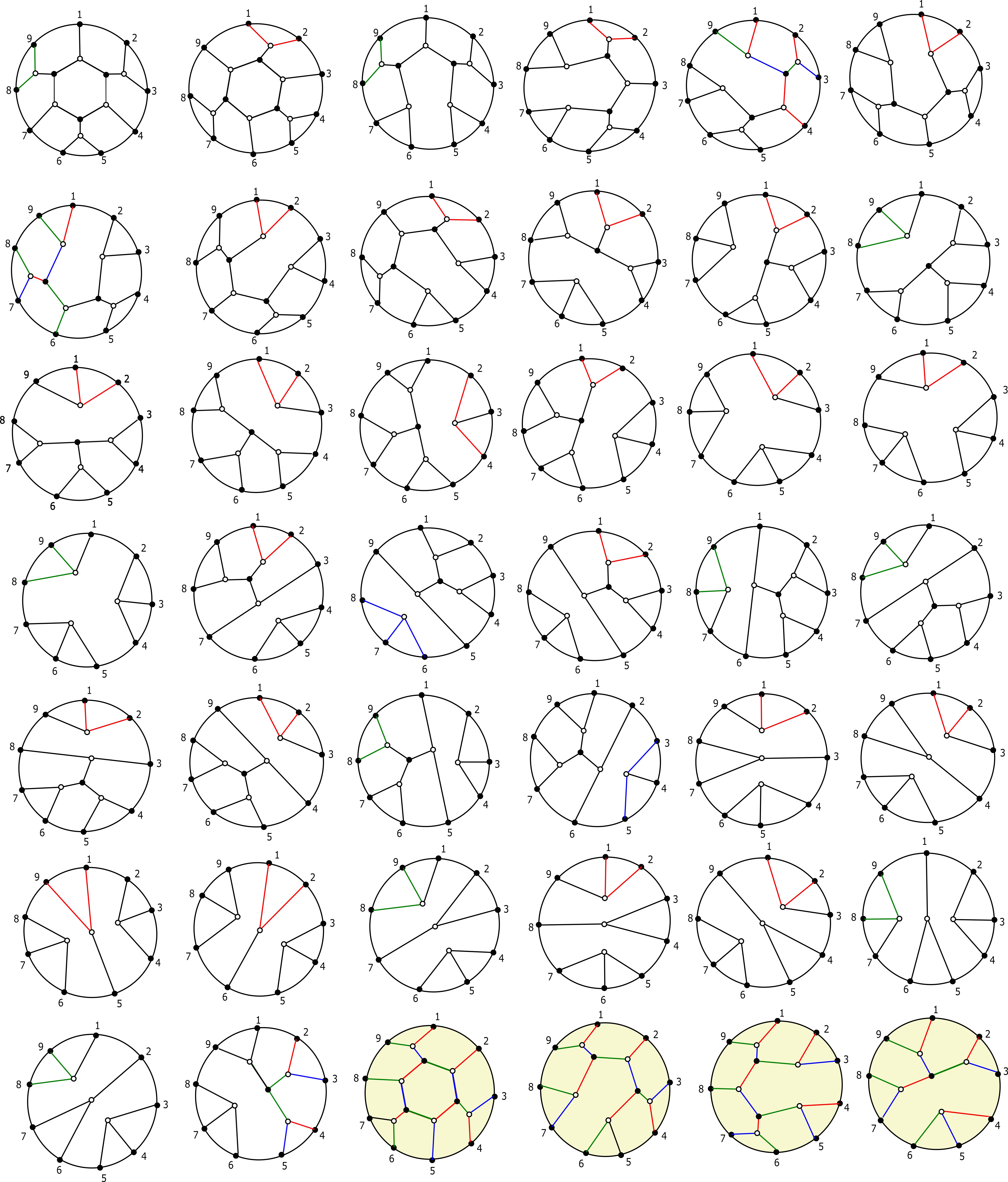}
    \caption{Enumeration of non-elliptic webs compatible (highlighted in yellow for emphasis) and incompatible with $C_1$, checking all webs listed in Lemma \ref{lemma:pathlesswebs9}.}
    \label{fig:C1proof}
\end{figure}
\begin{figure}
    \centering
   \includegraphics[width=\textwidth]{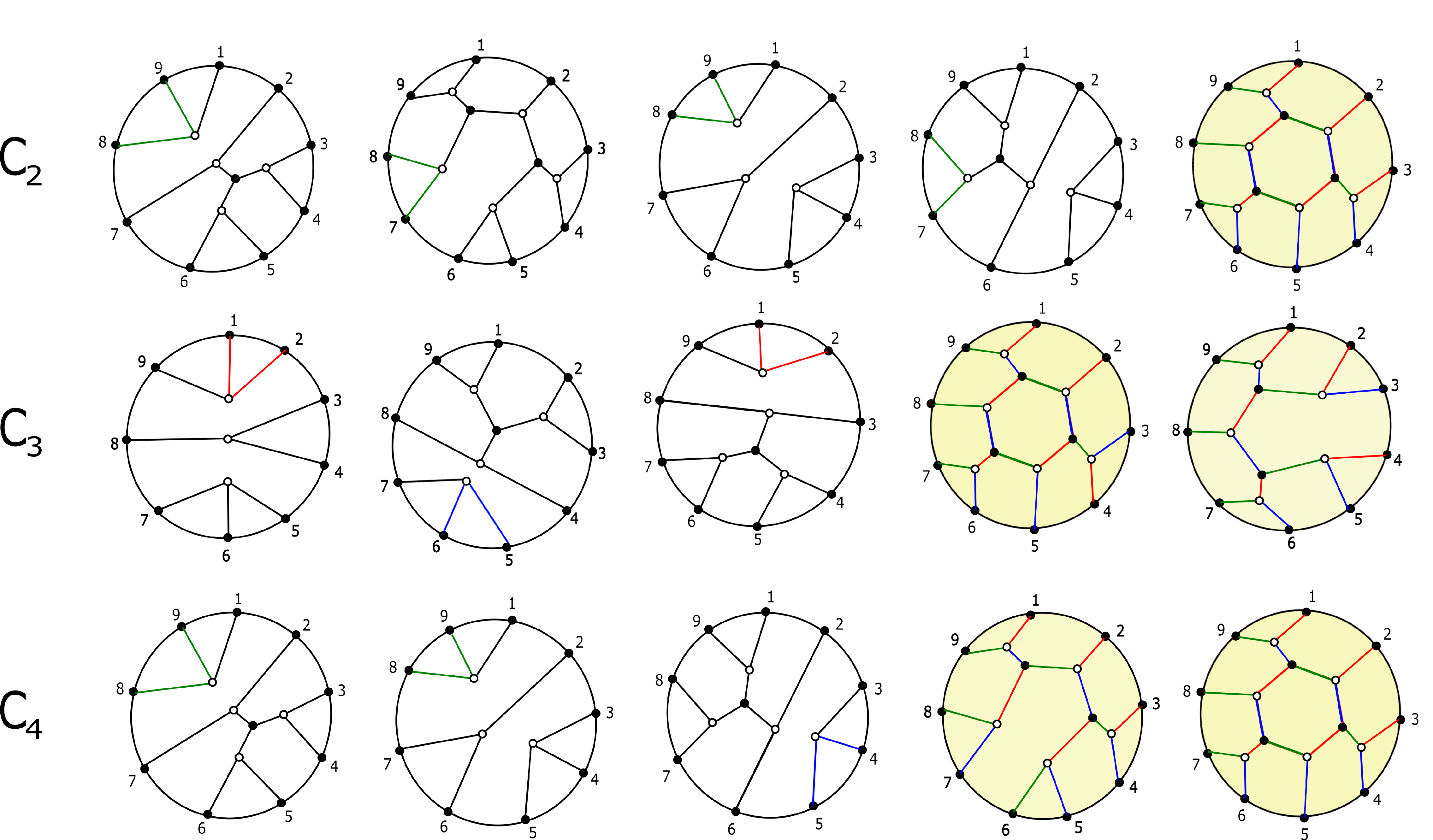}
    \caption{Enumeration of non-elliptic webs compatible (highlighted in yellow for emphasis) and incompatible with $C_2, C_3,$ and $C_4$, checking appropriate rotations of the webs listed in Lemma \ref{lemma:frozenwebs} for $\Delta_I = \Delta_{123}$, $\Delta_{789}$, and $\Delta_{123}$, respectively.}
    \label{fig:C2-6proof}
\end{figure}
\end{proof}

\begin{figure}
    \centering
    \includegraphics[width=\textwidth]{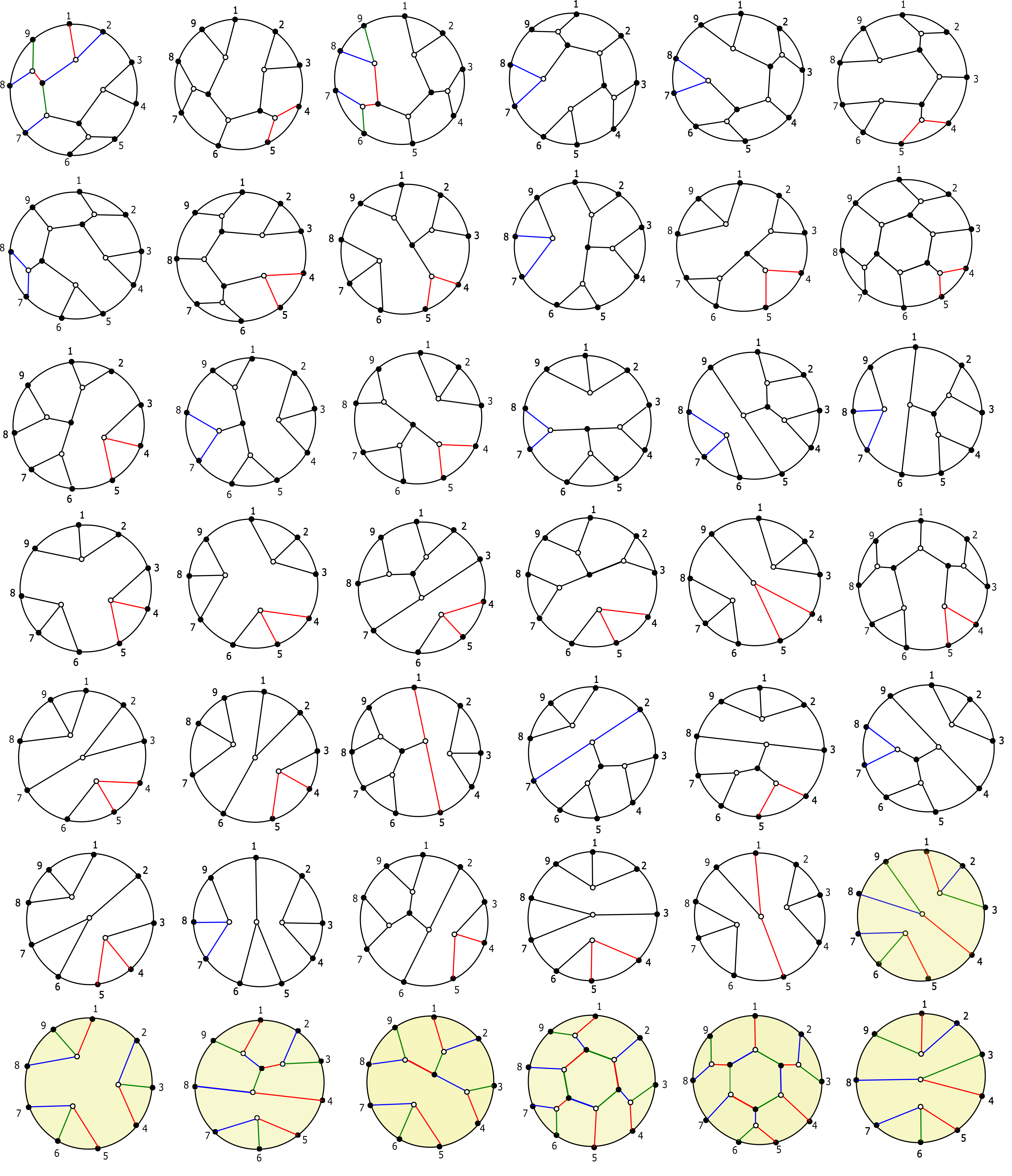}
    \caption{Enumeration of non-elliptic webs compatible (highlighted in yellow for emphasis) and incompatible with $Z_1$.}
    \label{fig:Z1proof}
\end{figure}
\begin{figure}
    \centering
    \includegraphics[width=\textwidth]{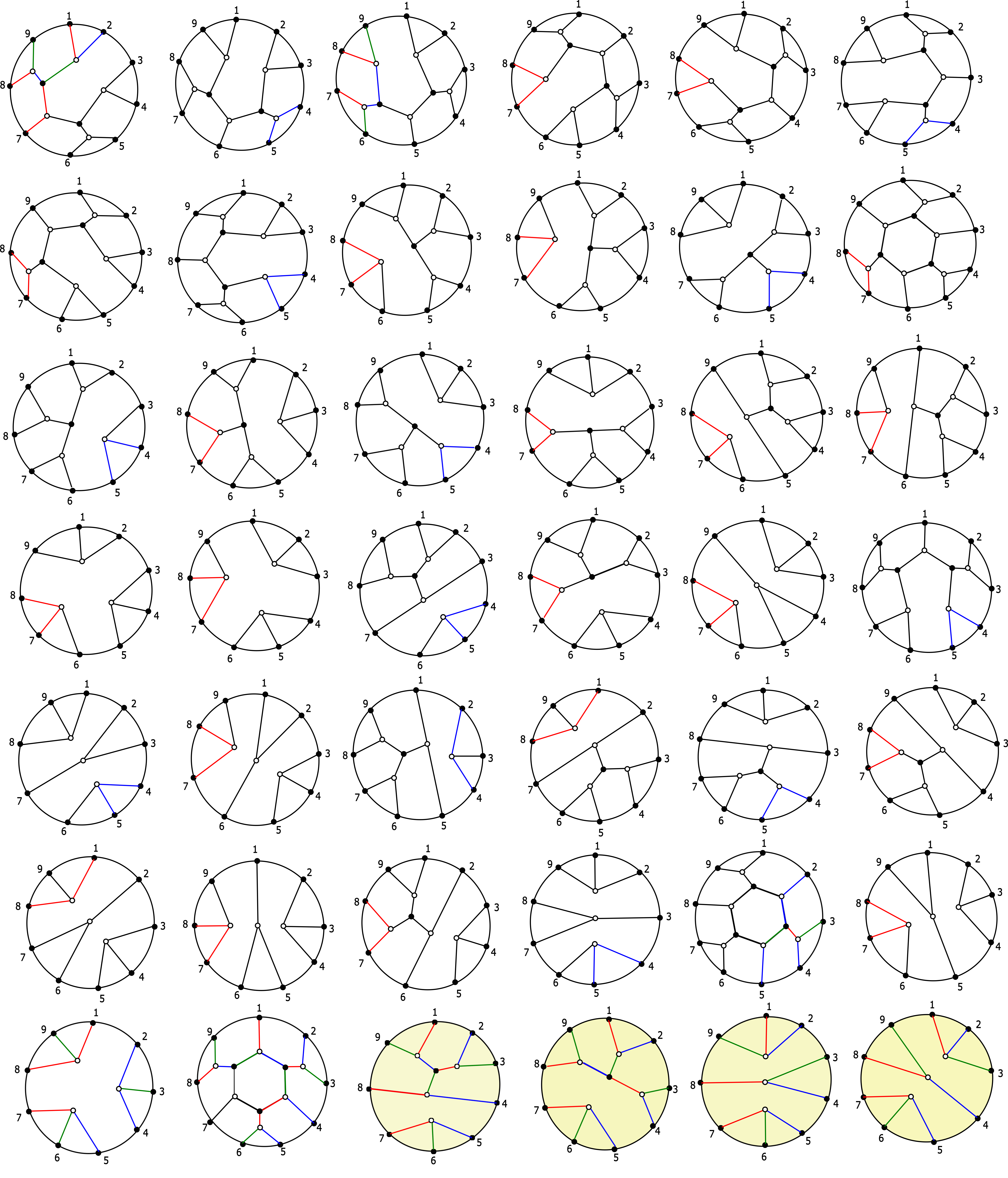}
    \caption{Enumeration of non-elliptic webs compatible (highlighted in yellow for emphasis) and incompatible with $Z_2$.}
    \label{fig:Z2proof}
\end{figure}

\Zlemma*

\begin{proof}[Proof of Lemma \ref{lemma:Z1-4}]
Similarly to the previous proof, in Figures \ref{fig:Z1proof} and \ref{fig:Z2proof} we enumerate non-elliptic webs with nine black boundary vertices for each of $Z_1$ and $Z_2$. Note that $Z_3=C_2=(123)(456)(789)$, and $Z_4$ is a cyclic shift of $Z_3$ corresponding to a counterclockwise rotation by one vertex, so we may refer to Lemma \ref{lemma:C1-6} for the webs compatible with those triple products. Also note that any completed proper coloring is unique given the compatibility conditions.
\end{proof}
\end{section}\FloatBarrier

\begin{section}{Appendix: Computations of Twists}\label{section:appendixAB}
In this Appendix, we demonstrate computations using Theorems \ref{theorem:connectivityforA} and \ref{theorem:connectivityforB}. Since this section contains many long expressions in Pl\"ucker coordinates, in what follows we will use the shorthand $(I)$ to denote the Pl\"ucker coordinate $\Delta_I$. We note that we chose to compute $\T(\sigma^2(A))$ and $\T(\sigma^7(B))$ (where $\sigma \in D_8$ represents clockwise rotation by one vertex) in particular because, for the initial seed we will consider, the numbers of terms in their Laurent expansions are relatively small compared to the corresponding numbers for other dihedral images of $A$ and $B$.

To check the Laurent expressions arising from our computations, we used {\sc SageMath} \cite{sagemath} and Pavel Galashin's applet \cite{applet} to explore the full set of Pl\"ucker cluster variables in a particular quiver for $Gr(3,8)$, via sequences of square moves on the corresponding plabic graph.  This allowed us to identify the resulting cluster variables as Pl\"ucker coordinates, as well as record a sample mutation sequence to get to each such variable. We then used {\sc SageMath}, including the ClusterSeed and ClusterAlgebra packages (thanks to the second author and Christian Stump; Dylan Rupel and Salvatore Stella) to arrive at Laurent polynomial expressions for all such cluster variables.  Computations in {\sc SageMath} subsequently allowed us to compute Laurent polynomial expressions for the entire mutation class of 128 cluster variables. With this list, we were able to identify the Laurent expansions for the remaining $56$ non-Pl\"ucker cluster variables as versions of $X$, $Y$, $A$, and $B$. We will refer to these expansions in the following two examples.

For reference, we list our computations for the twists of $A$ and $B$ as in \cite{marshscott2016} here:
    \begin{equation}\label{eq:twistA}
    \begin{split}
    \T(A) &=(123)(234)(456)(178)[(267)(358) - (235)(678)]\\
     &=(123)(234)(456)(178)Y^{235678}
     \end{split}
    \end{equation}
    \begin{equation}\label{eq:twistB}
    \begin{split}
    \T(B) &= (178)(456)(234)[(348)(367)(125) - (348)(567)(123) - (345)(367)(128)]\\
    &=(178)(456)(234)\sigma^4\rho(B)
     \end{split}
    \end{equation}
where $\rho$ reflects indices via $i \rightarrow 9-i$.
We justify these computations algebraically in Section \ref{subsection:tableoftwists}, along with providing a table of the twists of all $\Gr(3,7)$ cluster variables.

\subsection{Computing $\T(\sigma^2(A))$}\label{subsection:exampleforA}
We give a complete description of the triple dimer partition function given in Theorem \ref{theorem:connectivityforA} for
\[\sigma^2(A) = (356)(247)(138)-(356)(347)(128)-(237)(456)(138),\]
the cyclic rotation of $A$ clockwise by two vertices. In what follows, we will refer to the rotation of the batwing connectivity pattern clockwise by two vertices as the $\text{batwing}^2$.

From the theorem, the only triple dimers $D$ whose weights contribute to $\T(\sigma^2(A))$ are those such that $W(D)$ contains the $\text{batwing}^2$ as a nonelliptic summand. We were able to list these by first drawing all triple dimers such that $W(D)$ is the $\text{batwing}^2$, and then adding squares, bigons, and internal cycles wherever possible.
Tables \ref{tab:dimersforA} and \ref{tab:dimersforAcont} contain a complete list of these triple dimers, and their associated weights; note that only one triple dimer has a bigon in its corresponding web, causing its weight to have a coefficient of $C_{batwing^2}^D=2$ in $\T(\sigma^2(A))$.

According to Theorem \ref{theorem:connectivityforA}, summing the weights listed in Tables \ref{tab:dimersforA} and \ref{tab:dimersforAcont} should yield $\T(\sigma^2(A))$. Indeed, it follows from (\ref{eq:twistA}) that $\T(\sigma^2(A))= (123)(345)(456)(678)Y^{124578}$,
and our code yielded the following Laurent polynomial expression for this product of cluster variables:
\begin{align*}
    \T(\sigma^2(A))&= 
    \frac{(123)(345)(456)(678)}{(246)(568)(268)^2(168)} [(248)^2(256)(268)(168)^2(567) +\\
    &+(248)(245)(268)^2(168)^2(567) + (248)(246)(568)(268)(168)(128)(567)\\
    &+(248)^2(256)^2(168)^2(567) + (248)(246)(568)(268)(168)(128)(567)\\
    &+2 \cdot (248)(246)(256)(568)(168)(128)(678) + (245)(246)(568)(268)(168)(128)(678)\\
    &+(246)^2(568)^2(128)^2(678) + (248)(246)(256)(568)(268)(168)(178)\\
    &+ (246)^2(568)^2(268)(128)(178)].
\end{align*}
The terms of this Laurent expansion corresponding to each triple dimer are also listed in Tables \ref{tab:dimersforA} and \ref{tab:dimersforAcont}, confirming that the sums agree.

\begin{subsection}{Computing $\T(\sigma^7(B))$} \label{subsection:exampleforB}
We similarly give a complete description of the triple dimer partition function given in Theorem \ref{theorem:connectivityforB} for
\[\sigma^7(B) = (147)(156)(238)-(123)(178)(456)-(123)(147)(568),\]
the cyclic rotation of $B$ clockwise by seven vertices. 

From the theorem, the only triple dimers $D$ whose weights contribute to $\T(\sigma^7(B))$ are those such that $W(D)$ contains the appropriate rotation of the octopus as a nonelliptic summand. We were able to list these using the same method as in the previous section; Tables \ref{tab:dimersforB}, \ref{tab:dimersforBcont}, and \ref{tab:dimersforBcontcont} contain a complete list of these triple dimers and their associated weights and coefficients.

To check that the sum of these weights (with multiplicity) indeed yields $\T(\sigma^7(B))$, we have from the expression (\ref{eq:twistB}) for $\T(B)$ that
\begin{align*}
\T(\sigma^7(B))
& =(123)(345)(678)\bigg[(256)[(247)(138) - (347)(128)] - (237)(456)(128)\bigg]\\
&=(123)(345)(678)\left( \sigma^3 \rho(B)\right)\\
\end{align*}
where $\rho$ reflects indices via $i \to 9-i$ and $\sigma$ follows this by clockwise rotation. Our code yielded the following Laurent polynomial for this expression: 

\begin{align*}
    \T(\sigma^7(B))
    &= \frac{(123)(345)(678)}{(248)(124)(568)(268)^2(168)} [(248)^3(256)(268)(168)^2(123)(567)\\
    &+(248)^2(246)(568)(268)(168)(128)(123)(567) + (248)^2(256)(268)(168)^2(128)(234)(567)\\
    &+(248)(246)(568)(268)(168)(128)^2(234)(567) + (248)^3(256)^2(168)^2(123)(678)\\
    &+2 \cdot (248)^2(246)(256)(568)(168)(128)(123)(678) + (248)(246)^2(568)^2(128)^2(123)(678)\\
    &+(248)(124)(256)(568)(268)(168)(128)(234)(678) + (248)^2(256)^2(168)^2(128)(234)(678)\\
    &+(124)(246)(568)^2(268)(128)^2(234)(678) + 2 \cdot (248)(246)(256)(568)(168)(128)^2(234)(678)\\
    &(246)^2(568)^2(128)^3(234)(678) + (248)^2(246)(256)(568)(268)(168)(123)(178)\\
    &+(248)(246)^2(568)^2(268)(128)(123)(178) + (124)(246)(568)^2(268)^2(128)(234)(178)\\
    &+(248)(246)(256)(568)(268)(168)(128)(234)(178) + (246)^2(568)^2(268)(128)^2(234)(178)]
\end{align*}
The terms of this Laurent expansion corresponding to each triple dimer are also listed in Tables \ref{tab:dimersforB}, \ref{tab:dimersforBcont}, and \ref{tab:dimersforBcontcont}, confirming that the sums agree.
\end{subsection}

\subsection{Computing Twists Algebraically}\label{subsection:tableoftwists}
In this section, we list the twists of all cluster variables in $\Gr(3,7)$, and algebraically justify our computations for the twists of $A$ and $B$ in $\Gr(3,8)$. We utilize the following results from Section \ref{subsection:twist} (where indices are taken in increasing order modulo $n$):
\begin{itemize}
    \item We have $\T(\Delta_{a,a+1,a+2}) = \Delta_{a+1,a+2,a+3}\Delta_{a+2,b+1,b+2}$.
    \item When $b\not = a-1, a+2$, we have $\T(\Delta_{a,a+1,b}) = \Delta_{a+1,a+2,a+3}\Delta_{a+2,b+1,b+2}$.
    \item When $J = \{a,b,c\}$ where none of $a,b,c$ are adjacent, we have
    \begin{eqnarray*}\T(\Delta_J) &= \det \bigg( v_{a+1} \times v_{a+2} \quad v_{b+1} \times v_{b+2} \quad v_{c+1} \times v_{c+2}\bigg) \\
&=\begin{cases}
X^{a+1,~a+2,~b+1,~b+2,~c+1,~c+2} & a,b,c \neq n-1\\
Y^{a+1,~a+2,~b+1,~b+2,~c+1,~c+2} & \text{otherwise}
\end{cases}.\end{eqnarray*}
\end{itemize}

Table \ref{tab:gr37twists} lists twists in $\Gr(3,7)$.
In this setting, all calculations of twists of quadratic differences of form $X^S$ are special cases of the computation appearing in the proof of Proposition \ref{prop:untwisted}; for instance, when $(a-2,a-1,b-1,b-2,c-2,c-1) = (1,2,3,4,5,6)$ we obtain $\T(X^{123456}) = \Delta_{167}\Delta_{456}\Delta_{234}[\Delta_{357}]$. The computations for those of form $Y^S$ are similar.
%
%

We note that the non-crossing matching corresponding to a given quadratic difference provides a convenient heuristic for computing frozen factors of twists of quadratic differences. In particular, these frozen factors are indexed by all face labels appearing between two boundary vertices that are included in the corresponding matching, but not connected to each other. We also observe overall that in $\Gr(3,7)$, we either have $$\T(X^{s_1,s_2,s_3,s_4,s_5,s_6}) = \Delta_{s_2,s_2+1,s_2+2}\Delta_{s_4,s_4+1,s_4+2}\Delta_{s_6,s_6+1,s_6+2}\Delta_{s_2+1,s_4+1,s_6+1}$$ or that $\T(X^{s_1,s_2,s_3,s_4,s_5,s_6})$ is the product of two frozen variables and a quadratic difference. 

\begin{table}[H]
\centering
\begin{tabular}{|c|c||c|c|}
\hline
\textbf{Variable} & \textbf{Twist} & \textbf{Variable} & \textbf{Twist} \\
        \hline
       $\Delta_{124}$  & $\Delta_{234}\Delta_{356}$ & $\Delta_{126}$ & $\Delta_{234}\Delta_{137}$\\
       \hline
       $\Delta_{235}$  & $\Delta_{345}\Delta_{467}$ & $\Delta_{237}$ & $\Delta_{345}\Delta_{124}$\\
       \hline
       $\Delta_{346}$  & $\Delta_{456}\Delta_{157}$ & $\Delta_{134}$ & $\Delta_{456}\Delta_{235}$\\
       \hline
       $\Delta_{457}$  & $\Delta_{567}\Delta_{126}$ & $\Delta_{245}$ & $\Delta_{567}\Delta_{346}$\\
       \hline
       $\Delta_{156}$  & $\Delta_{167}\Delta_{237}$ & $\Delta_{356}$ & $\Delta_{167}\Delta_{457}$\\
       \hline
       $\Delta_{267}$  & $\Delta_{127}\Delta_{134}$ & $\Delta_{467}$ & $\Delta_{127}\Delta_{156}$\\
       \hline
       $\Delta_{137}$  & $\Delta_{123}\Delta_{245}$ & $\Delta_{157}$ & $\Delta_{123}\Delta_{267}$\\
       \hline\hline
       $\Delta_{125}$  & $\Delta_{234}\Delta_{367}$ & $\Delta_{135}$ & $X^{234567}$\\
       \hline
       $\Delta_{236}$ & $\Delta_{345}\Delta_{147}$ & $\Delta_{246}$ & $Y^{134567}$\\
       \hline
       $\Delta_{347}$ & $\Delta_{456}\Delta_{125}$ & $\Delta_{357}$ & $X^{124567}$\\
       \hline
       $\Delta_{145}$ & $\Delta_{567}\Delta_{236}$ & $\Delta_{146}$ & $Y^{123567}$\\
       \hline
       $\Delta_{256}$ & $\Delta_{167}\Delta_{347}$ & $\Delta_{257}$ & $X^{123467}$\\
       \hline
       $\Delta_{367}$ & $\Delta_{127}\Delta_{145}$ & $\Delta_{136}$ & $Y^{123457}$\\
       \hline
       $\Delta_{147}$ & $\Delta_{123}\Delta_{256}$ & $\Delta_{247}$ & $X^{123456}$\\
       \hline\hline
       $X^{123456}$ & $\Delta_{167}\Delta_{234}\Delta_{456}\Delta_{357}$ & $Y^{123456}$ & $\Delta_{345}\Delta_{567}Y^{123467}$\\
       \hline
       $X^{234567}$& $\Delta_{127}\Delta_{345}\Delta_{567}\Delta_{146}$ & $Y^{234567}$ & $\Delta_{167}\Delta_{456}X^{123457}$\\
       \hline
       $X^{134567}$ & $\Delta_{127}\Delta_{567}Y^{123456}$ & $Y^{134567}$ & $\Delta_{123}\Delta_{167}\Delta_{456}\Delta_{257}$\\
       \hline
       $X^{124567}$ & $\Delta_{127}\Delta_{234}\Delta_{567}\Delta_{136}$ & $Y^{124567}$ & $\Delta_{123}\Delta_{167}Y^{234567}$\\
       \hline
       $X^{123567}$ & $\Delta_{127}\Delta_{234}X^{134567}$ & $Y^{123567}$ & $\Delta_{123}\Delta_{167}\Delta_{345}\Delta_{247}$\\
       \hline
       $X^{123467}$ & $\Delta_{127}\Delta_{234}\Delta_{456}\Delta_{135}$ & $Y^{123467}$ & $\Delta_{123}\Delta_{345}Y^{124567}$\\
       \hline
       $X^{123457}$ & $\Delta_{234}\Delta_{456}X^{123567}$ & $Y^{123457}$ & $\Delta_{123}\Delta_{345}\Delta_{567}\Delta_{246}$\\
       \hline
    \end{tabular}
    \caption{Twists of Gr(3,7) cluster variables, arranged by cyclic orbits.}
    \label{tab:gr37twists}
\end{table}

We now compute the twists of $A$ and $B$ in $\Gr(3,8)$. Applying the twist to the expression
$$A = (134)(258)(167) - (134)(125)(678) - (158)(234)(167)$$ 
yields
\begin{align*}
    \T(A) &= (235)(456)[(347)(126)-(346)(127)](238)(178)\\
    &-(235)(456)(234)(367)(178)(128) - (123)(267)(345)(456)(238)(178)\\
    &=(178)(456)[(235)(347)(126)(238) - (235)(346)(127)(238)\\
    &-(235)(234)(367)(128) - (123)(267)(345)(238).]
\end{align*}
We use the Pl\"ucker relation $(123)(467)-(124)(367)+(126)(347) - (127)(346) = 0$ to arrive at the expression
\begin{align*}
    \T(A) &= (178)(456)[(235)(238)(124)(367) - (235)(238)(123)(467)\\
    &-(235)(234)(367)(128) - (123)(267)(345)(238)],
\end{align*}
and further simplify using the Pl\"ucker relation $(124)(238) - (234)(128) = (123)(248)$ to arrive at
\begin{align*}
    \T(A) &= (178)(456)(123)[(235)(367)(248) - (235)(238)(467) - (267)(238)(345)].
\end{align*}
The Pl\"ucker relation $(267)(348)-(367)(248) + (467)(238) - (678)(234) = 0$ implies that $(235)(367)(248) - (235)(238)(467) = (235)[(267)(348) - (234)(678)]$ and $(267)(238)(345) = (267)[(235)(348) - (234)(358)]$. Substituting yields
\begin{align*}
    \T(A) &= (178)(456)(123)[(267)(234)(358) - (234)(235)(678)]\\
    &=(123)(234)(456)(178)[(267)(358) - (235)(678)]\\
     &=(123)(234)(456)(178)Y^{235678}.
    \end{align*}

We now apply the twist to the expression
$$B = (258)(134)(267) - (234)(158)(267) - (234)(125)(678),$$
which yields
\begin{align*}
    \T(B) &= [(134)(267) - (234)(167)](235)(456)(348)(178)\\
    &- (345)(456)(267)(123)(348)(178) - (345)(456)(234)(367)(178)(128)\\
    &= (456)(178)[(134)(267)(235)(348) - (234)(167)(235)(348)\\
    &-(345)(267)(123)(348) - (345)(234)(367)(128).]
\end{align*}
Using the Pl\"ucker relation $(134)(235) = (123)(345) + (135)(234)$, we arrive at the expression
\begin{align*}
    \T(B) &= (178)(456)(234)[(135)(348)(267) - (235)(167)(348) - (345)(367)(128)],
\end{align*}
and using the Pl\"ucker relation $(167)(235) - (267)(135) + (367)(125) - (567)(123) = 0$ yields
\begin{align*}
    \T(B) &= (178)(456)(234)[(348)(367)(125) - (348)(567)(123) - (345)(367)(128)]\\
    &=(178)(456)(234)\sigma^5\rho(B),
\end{align*}
where $\rho$ reflects indices via $i \to 9-i$ and $\sigma$ follows this by clockwise rotation.
\begin{table}[!htbp]
\caption{Triple Dimers for $\T(\sigma^2(A))$.} \label{tab:dimersforA}
  \begin{tabular}
      {lcc} \hline Triple Dimer Configuration & Weight of Dimer & Term in Laurent Expansion\\
      \hline
      \parbox[c]{1em}{
      \includegraphics[width=1.5in]{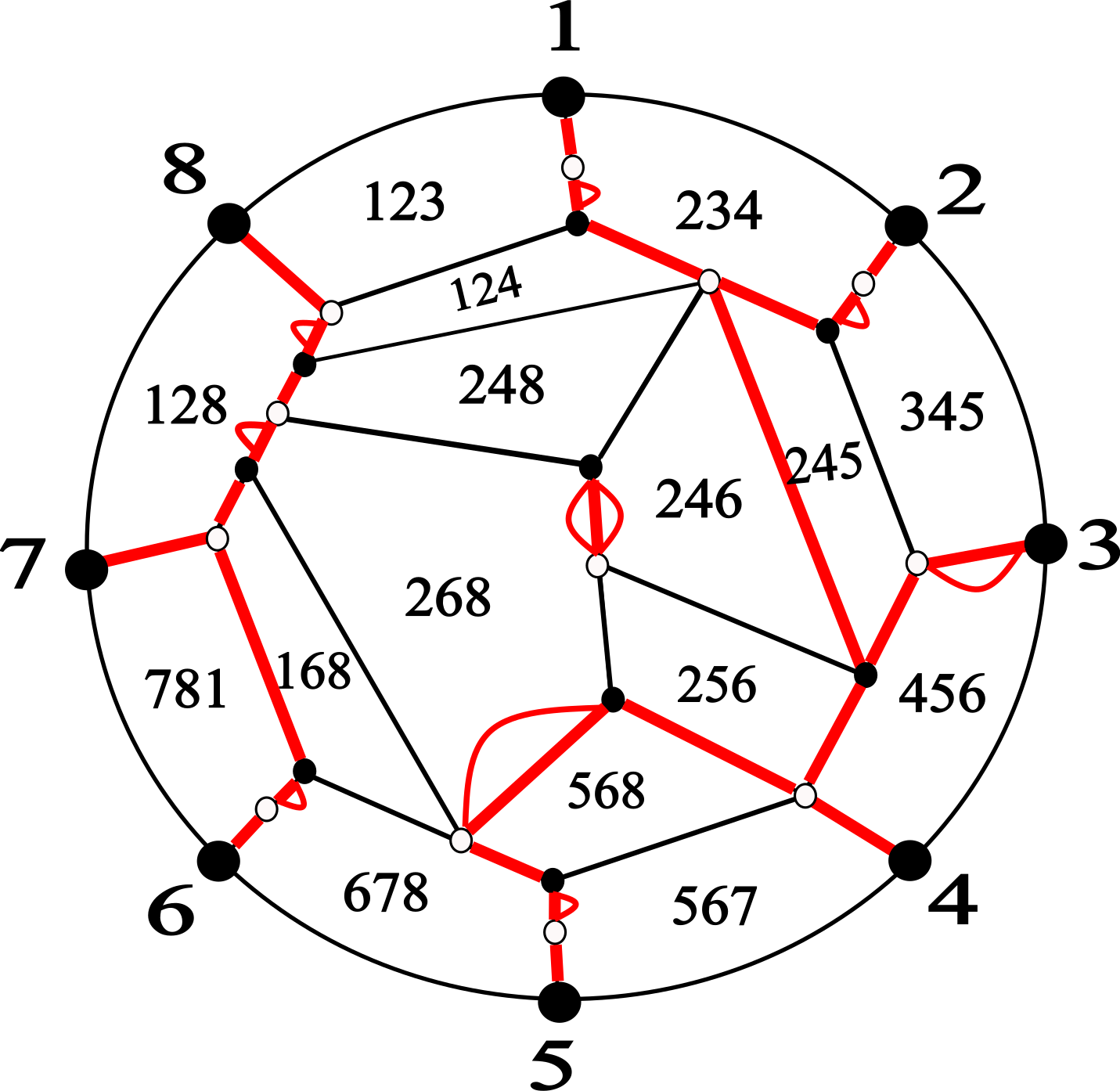}} & $\frac{(123)(345)(456)(567)(678)(248)^2(168)(256)}{(268)(246)}$ & $(248)^2(256)(268)(168)^2(567)$  \\
      \parbox[c]{1em}{
      \includegraphics[width=1.5in]{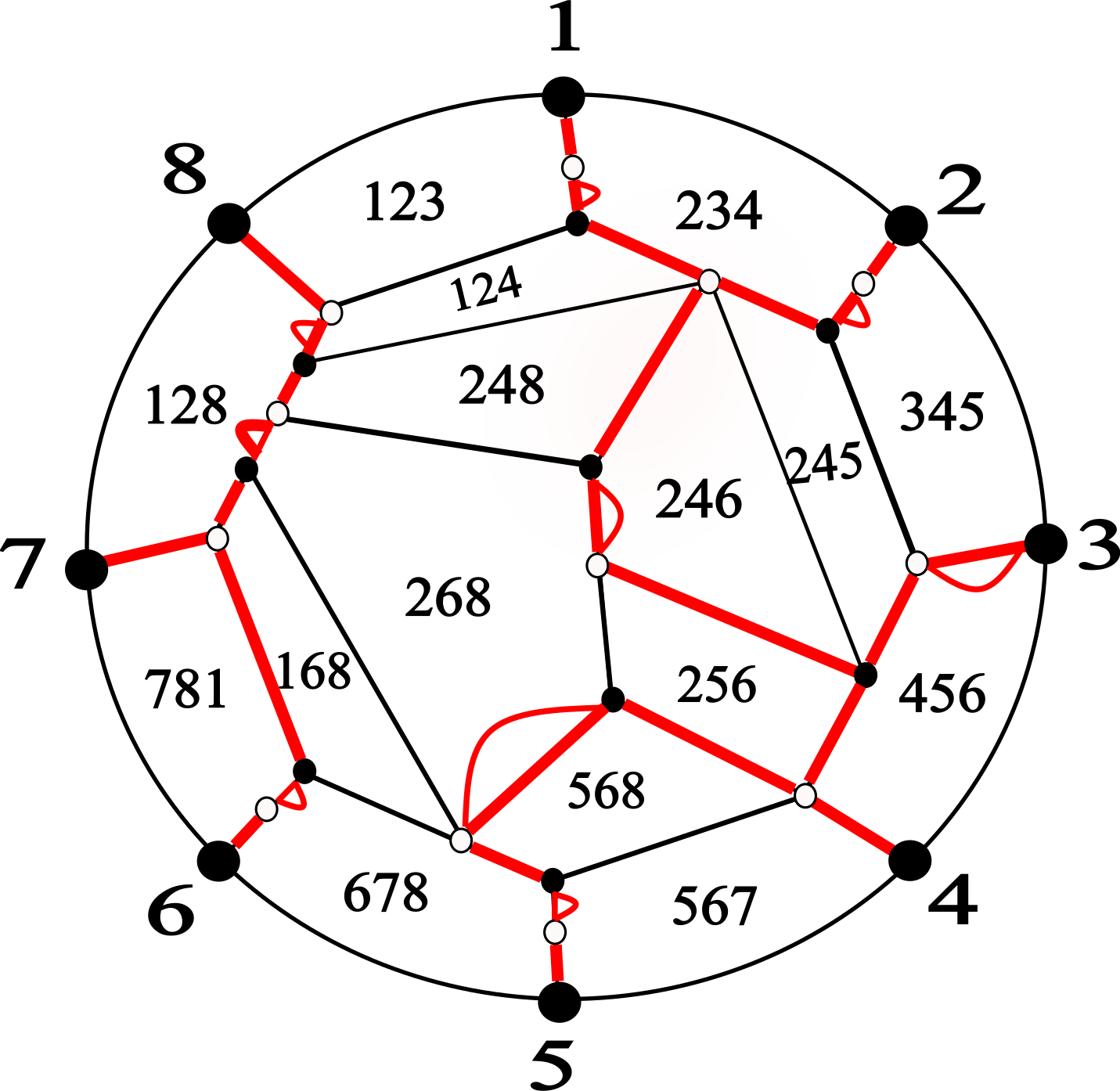}} & $\frac{(123)(345)(456)(567)(678)(248)(168)(245)}{(568)(246)}$ & $(248)(245)(268)^2(168)^2(567)$\\
       \parbox[c]{1em}{
      \includegraphics[width=1.5in]{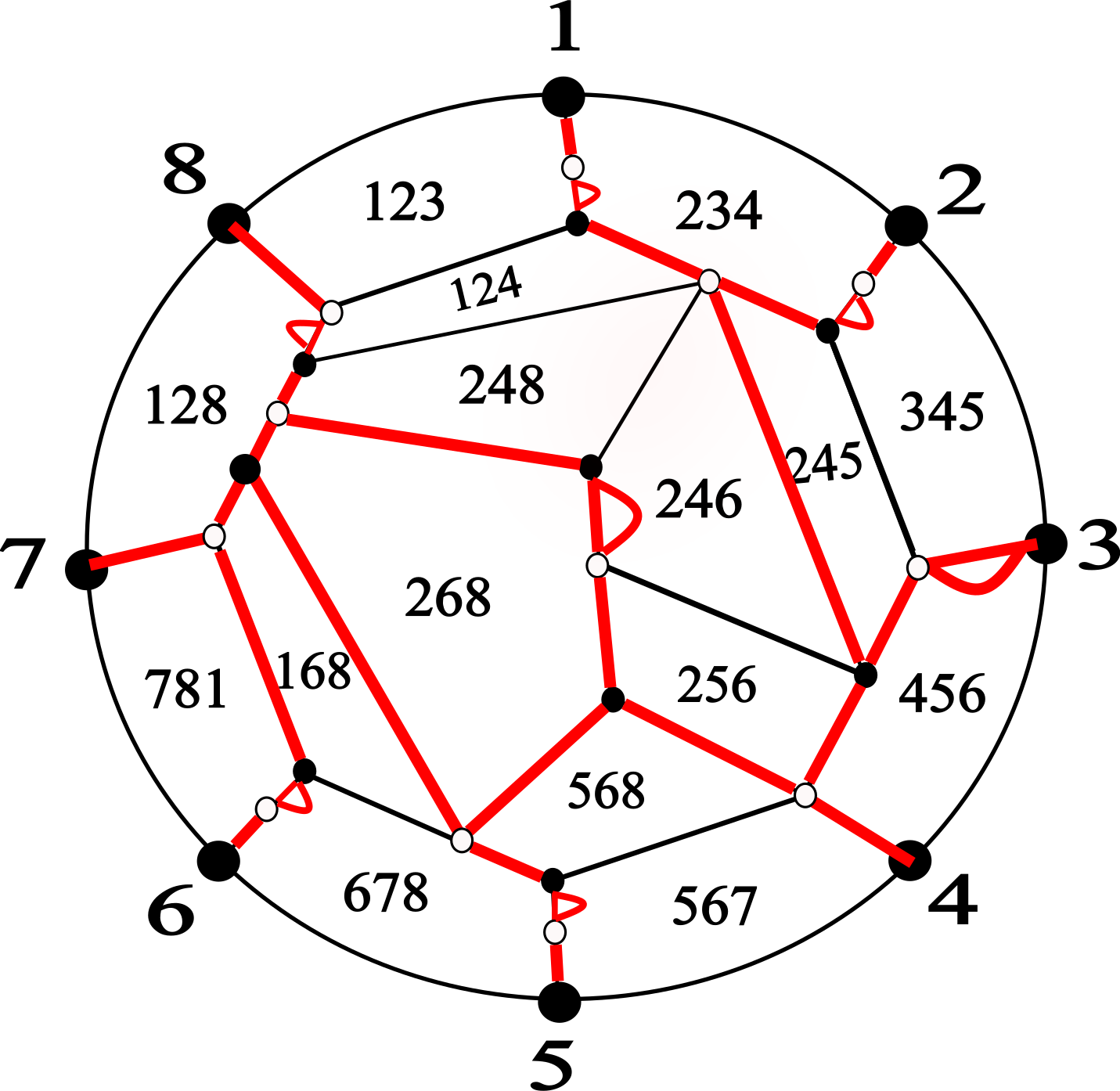}} & $\frac{(123)(345)(456)(567)(678)(128)(248)}{(268)}$ & $(248)(246)(568)(268)(168)(128)(567)$\\
       \parbox[c]{1em}{
      \includegraphics[width=1.5in]{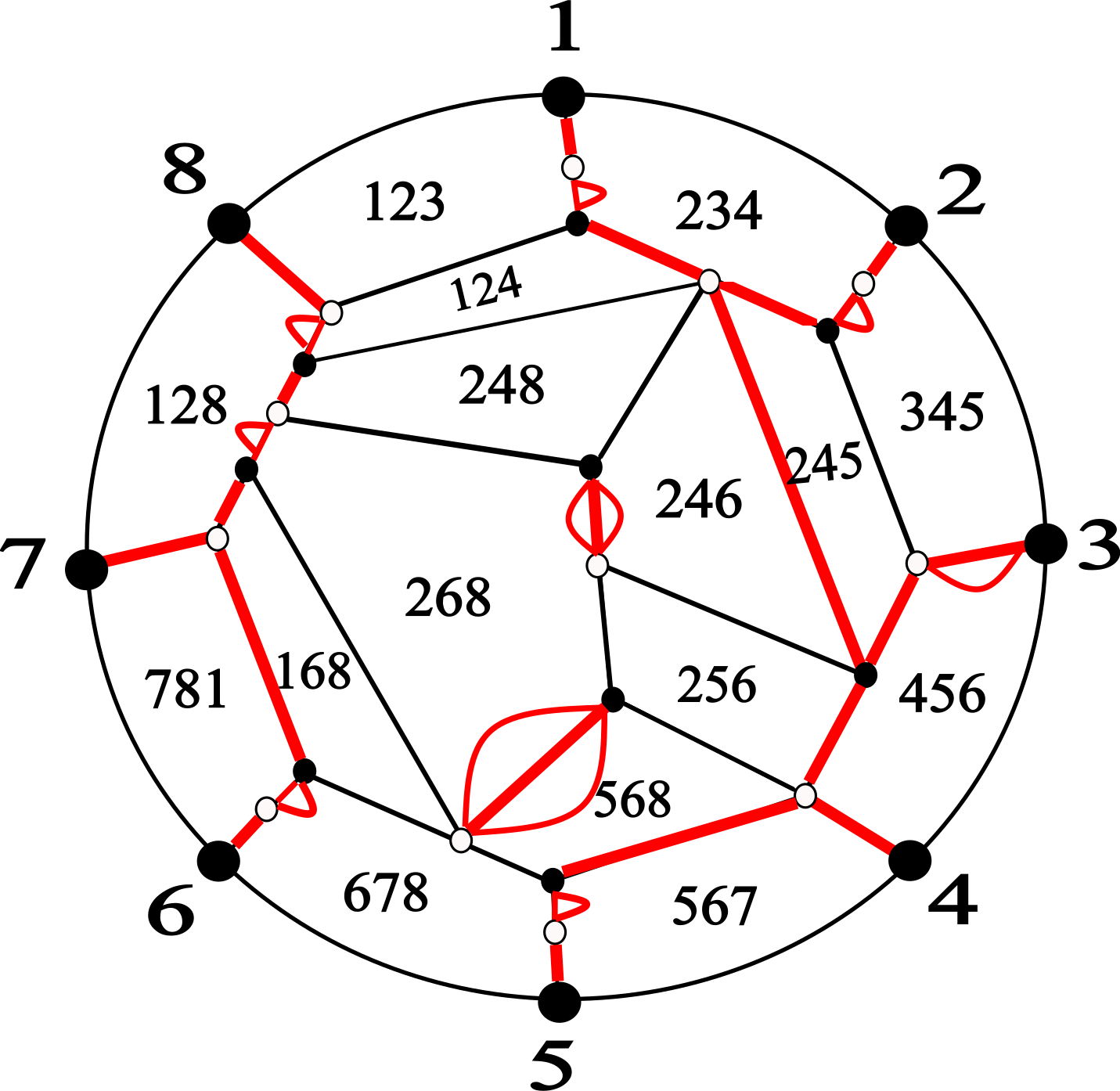}} & $\frac{(123)(345)(456)(678)^2(248)^2(168)(256)^2}{(568)(246)(268)^2}$ & $(248)^2(256)^2(168)^2(567)$ \\
       \parbox[c]{1em}{
      \includegraphics[width=1.5in]{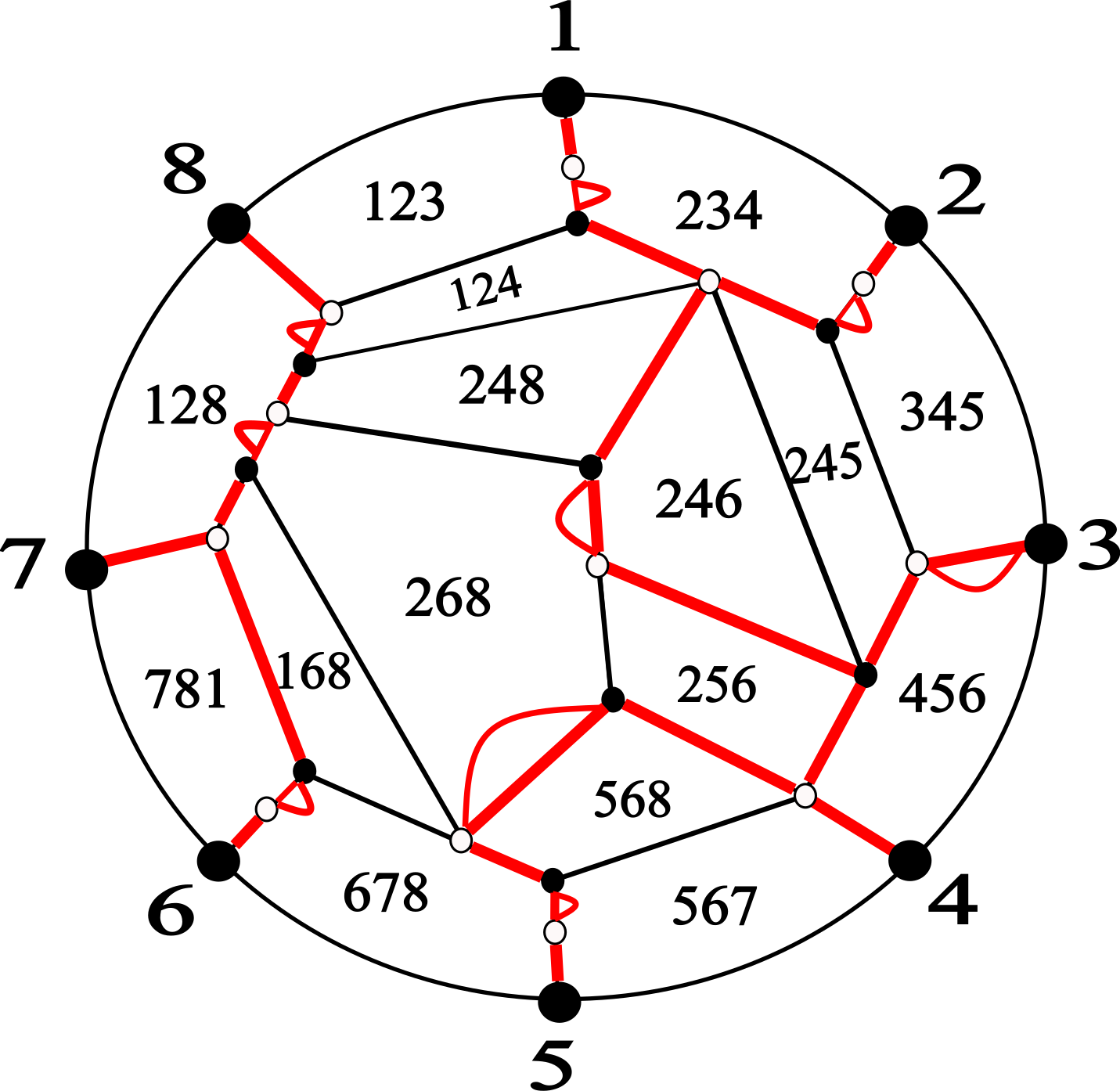}} & $\frac{(123)(345)(456)(678)^2(248)(168)(256)(245)}{(268)(246)(568)}$ & $(248)(246)(568)(268)(168)(128)(567)$  \\
  \end{tabular}
\end{table}

\begin{table}[htbp] 
\caption{Triple Dimers for $\T(\sigma^2(A))$ Continued.} \label{tab:dimersforAcont}
  \begin{tabular}
      {lcc} \hline Triple Dimer Configuration & Weight of Dimer & Term in Laurent Expansion\\
      \hline
       \parbox[c]{1em}{
      \includegraphics[width=1.5in]{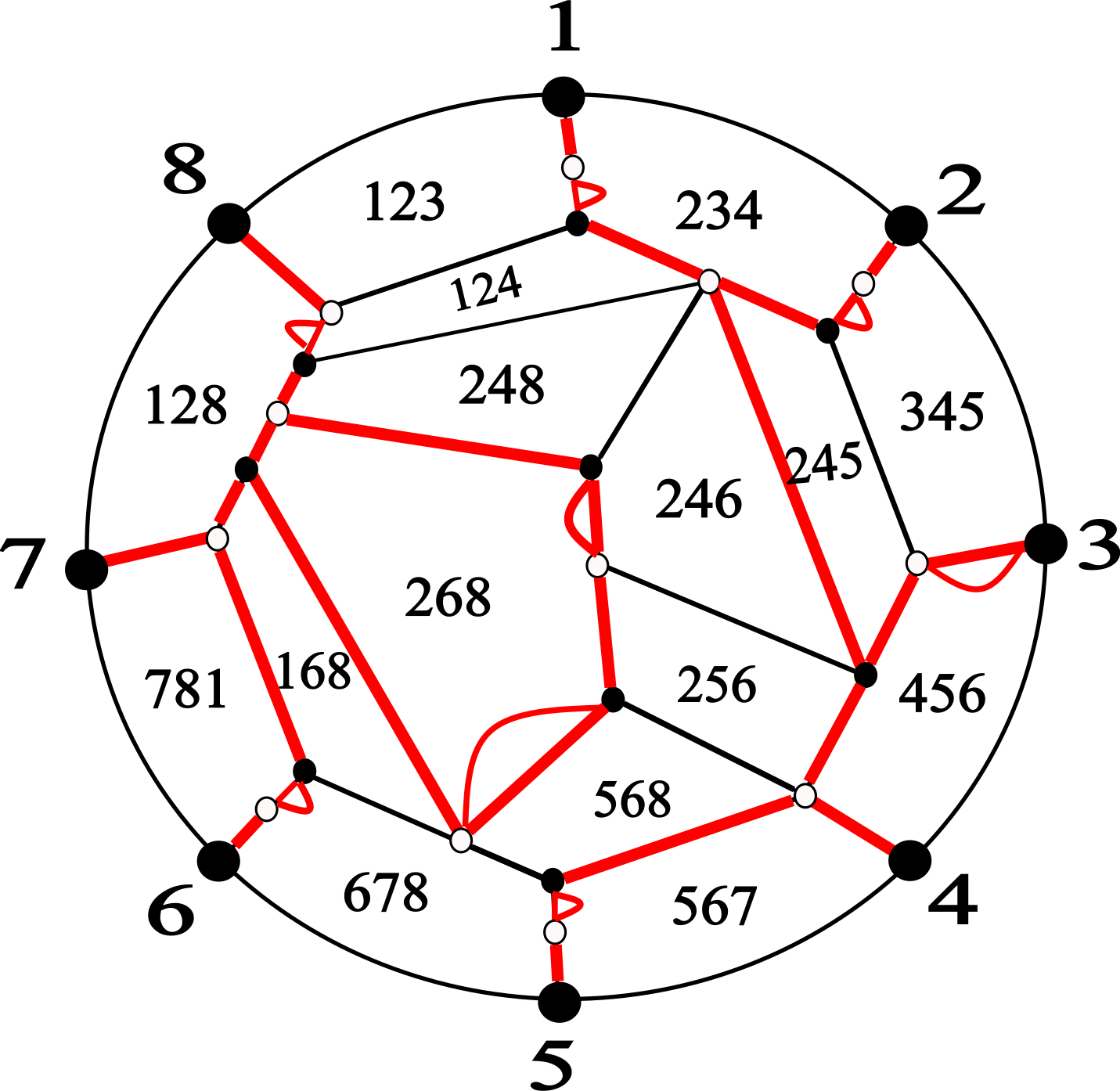}} & $2 \cdot \frac{(123)(345)(456)(678)^2(128)(248)(256)}{(268)^2}$ & $2 \cdot (248)(246)(256)(568)(168)(128)(678)$\\
       \parbox[c]{1em}{
      \includegraphics[width=1.5in]{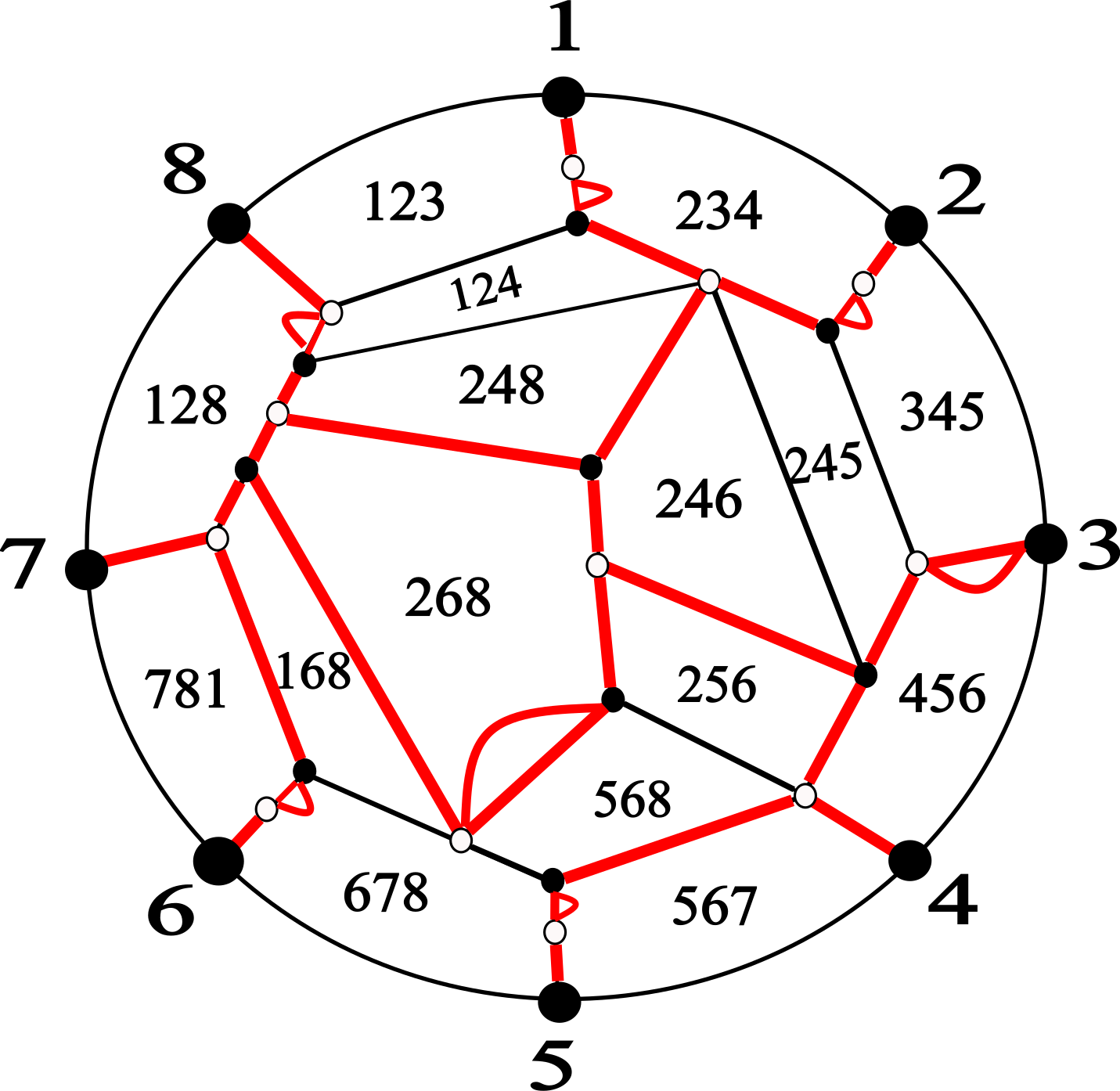}} & $\frac{(123)(345)(456)(678)^2(128)(245)}{(268)}$ & $(245)(246)(568)(268)(168)(128)(678)$ \\
       \parbox[c]{1em}{
      \includegraphics[width=1.5in]{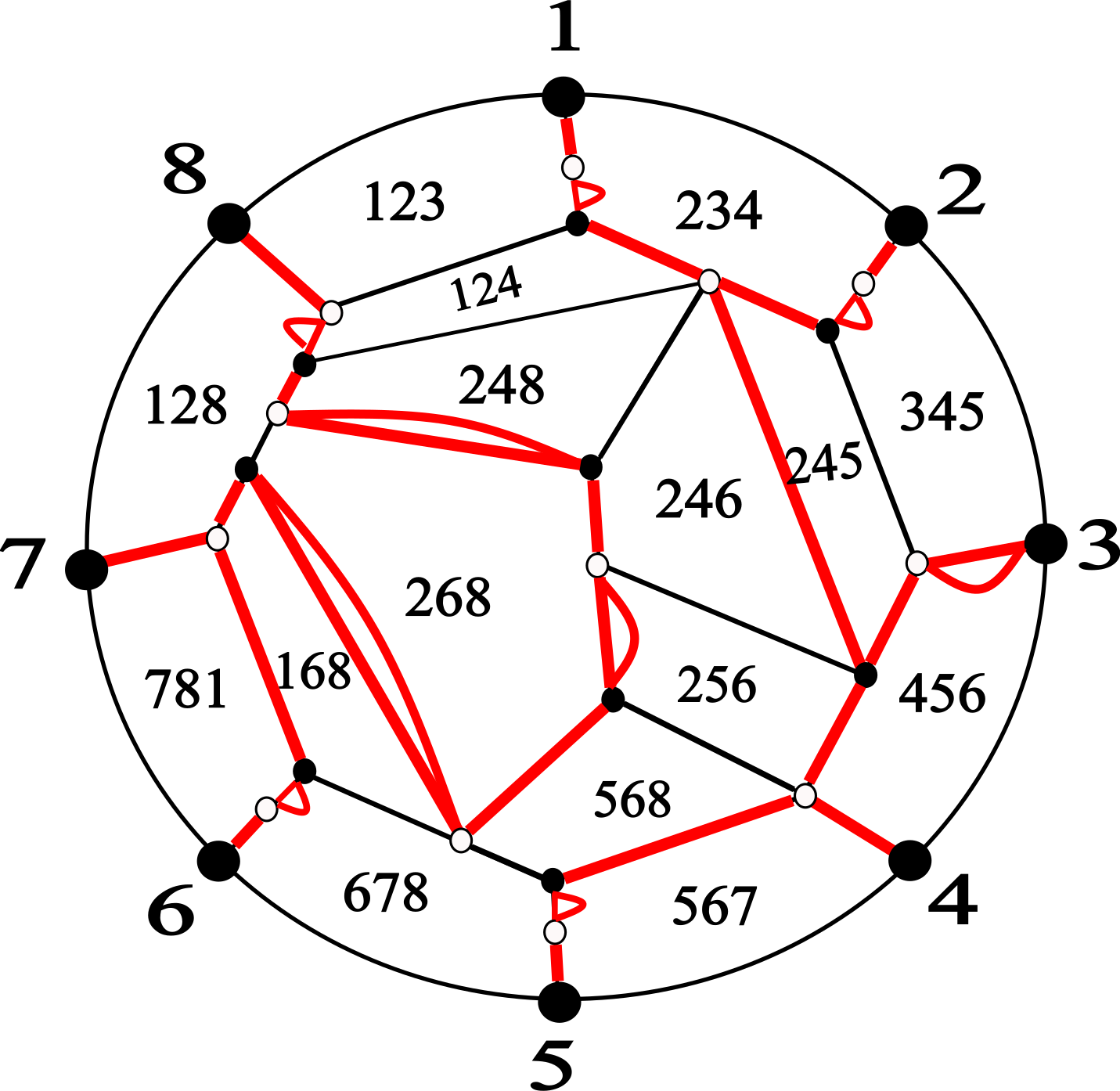}} & $\frac{(123)(345)(456)(678)^2(128)^2(568)(246)}{(268)^2(168)}$ & $(246)^2(568)^2(128)^2(678)$ \\
       \parbox[c]{1em}{
      \includegraphics[width=1.5in]{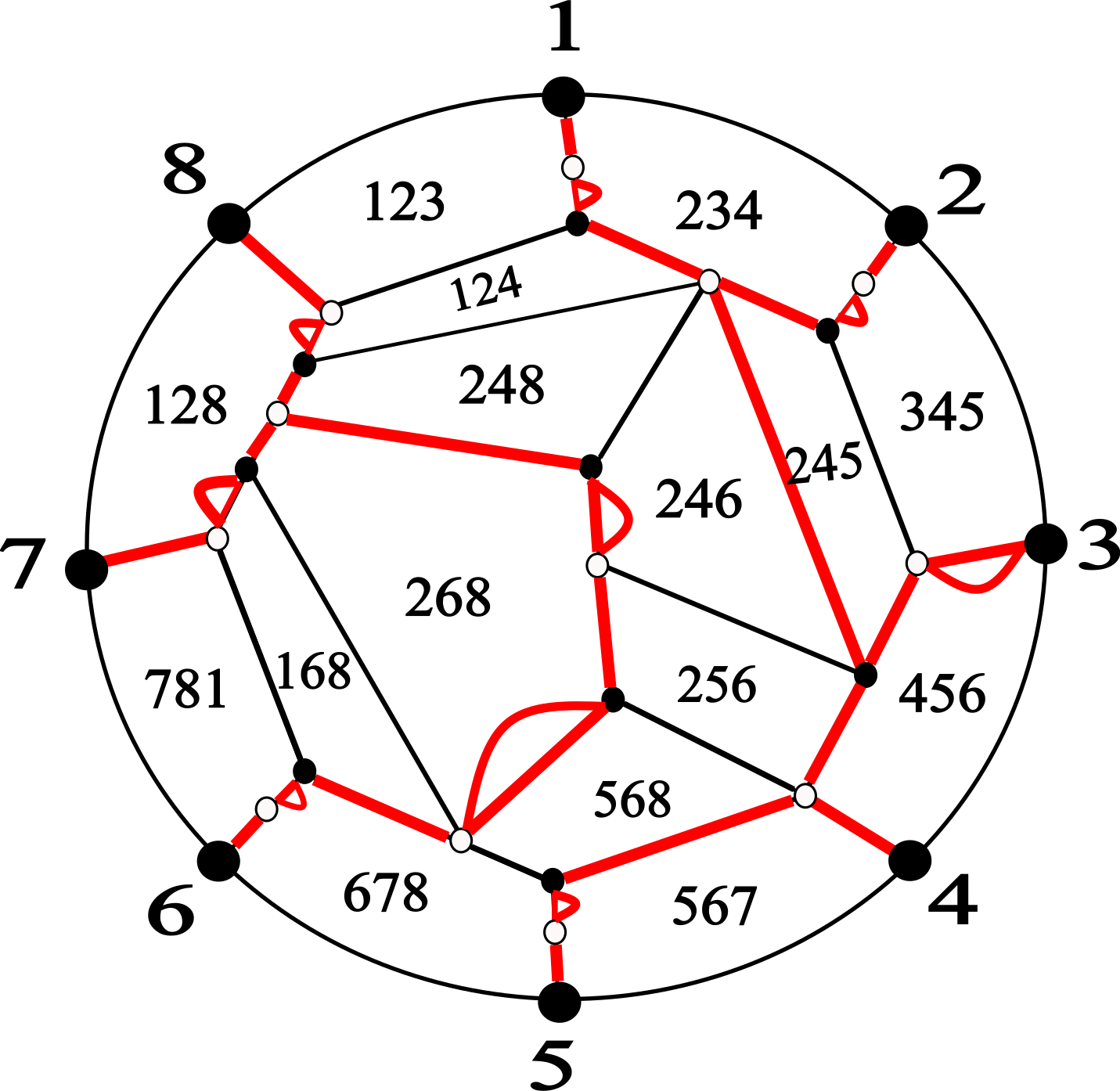}} & $\frac{(123)(345)(456)(678)(178)(248)(256)}{(268)}$ & $(248)(246)(256)(568)(268)(168)(178)$\\
       \parbox[c]{1em}{
      \includegraphics[width=1.5in]{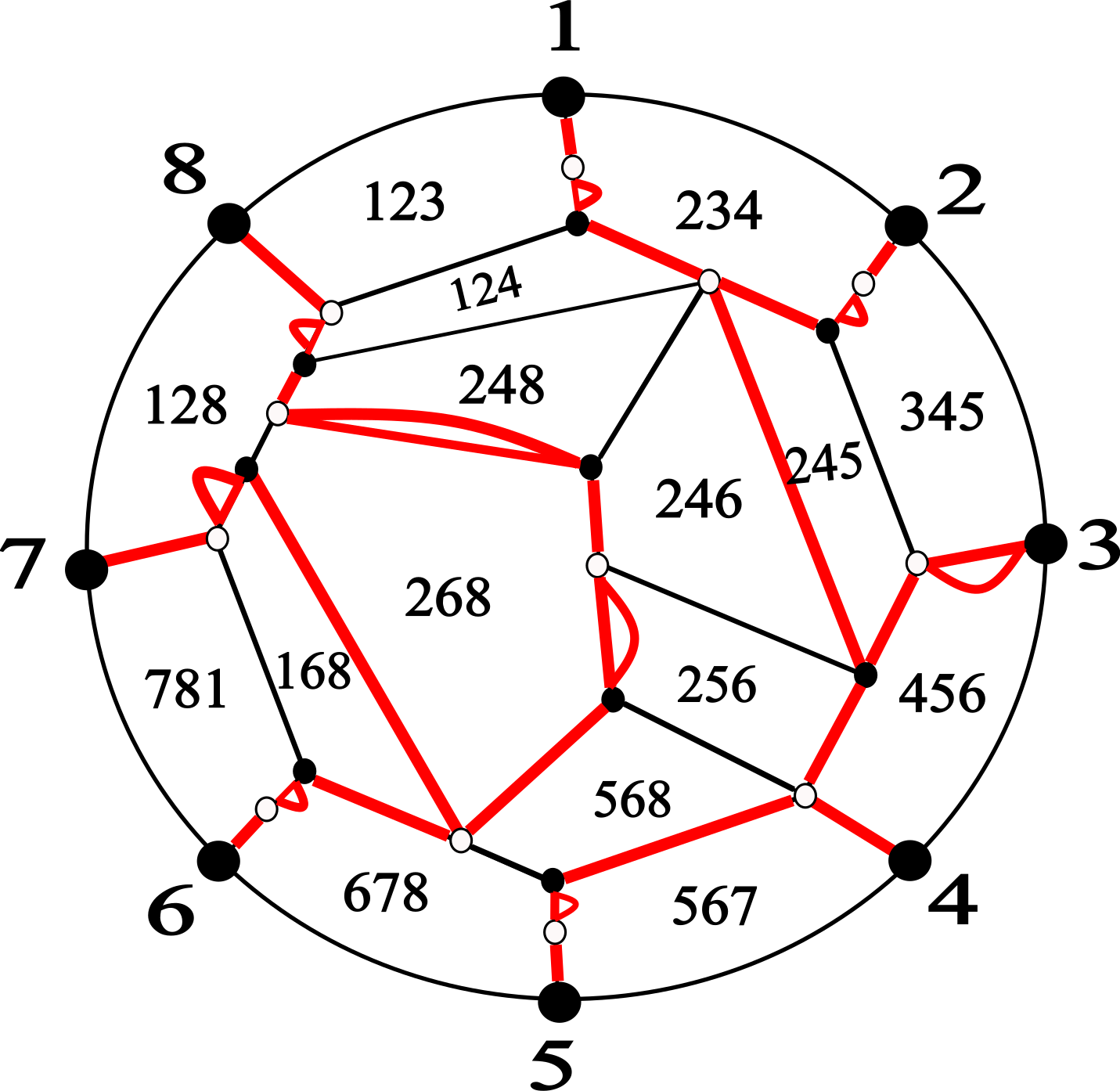}} & $\frac{(123)(345)(456)(678)(178)(128)(568)(246)}{(268)(168)}$ & $(246)^2(568)^2(268)(128)(178)$ \\
  \end{tabular}
\end{table}

\begin{table}
  [htbp] \caption{Triple Dimers for $\T(\sigma^7(B))$.} \label{tab:dimersforB}
  \begin{tabular}
      {lcc} \hline Triple Dimer Configuration & Weight of Dimer & Term in Laurent Expansion\\
      \hline
       \parbox[c]{1em}{
      \includegraphics[width=1.5in]{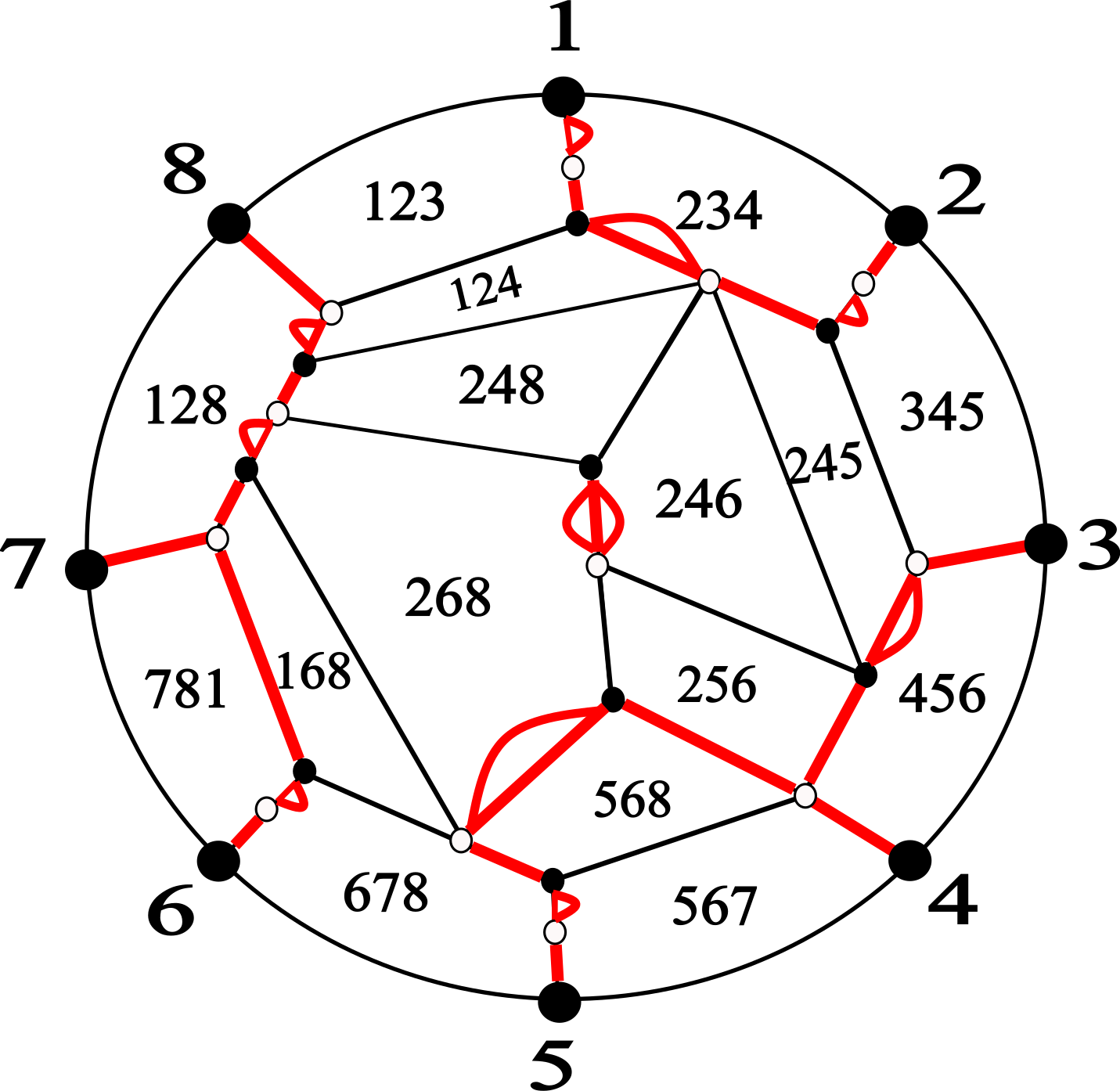}} & $\frac{(248)^2(168)(256)(123)^2(345)(567)(678)}{(124)(268)(568)}$ & $(248)^3(256)(268)(168)^2(123)(567)$\\
       \parbox[c]{1em}{
      \includegraphics[width=1.5in]{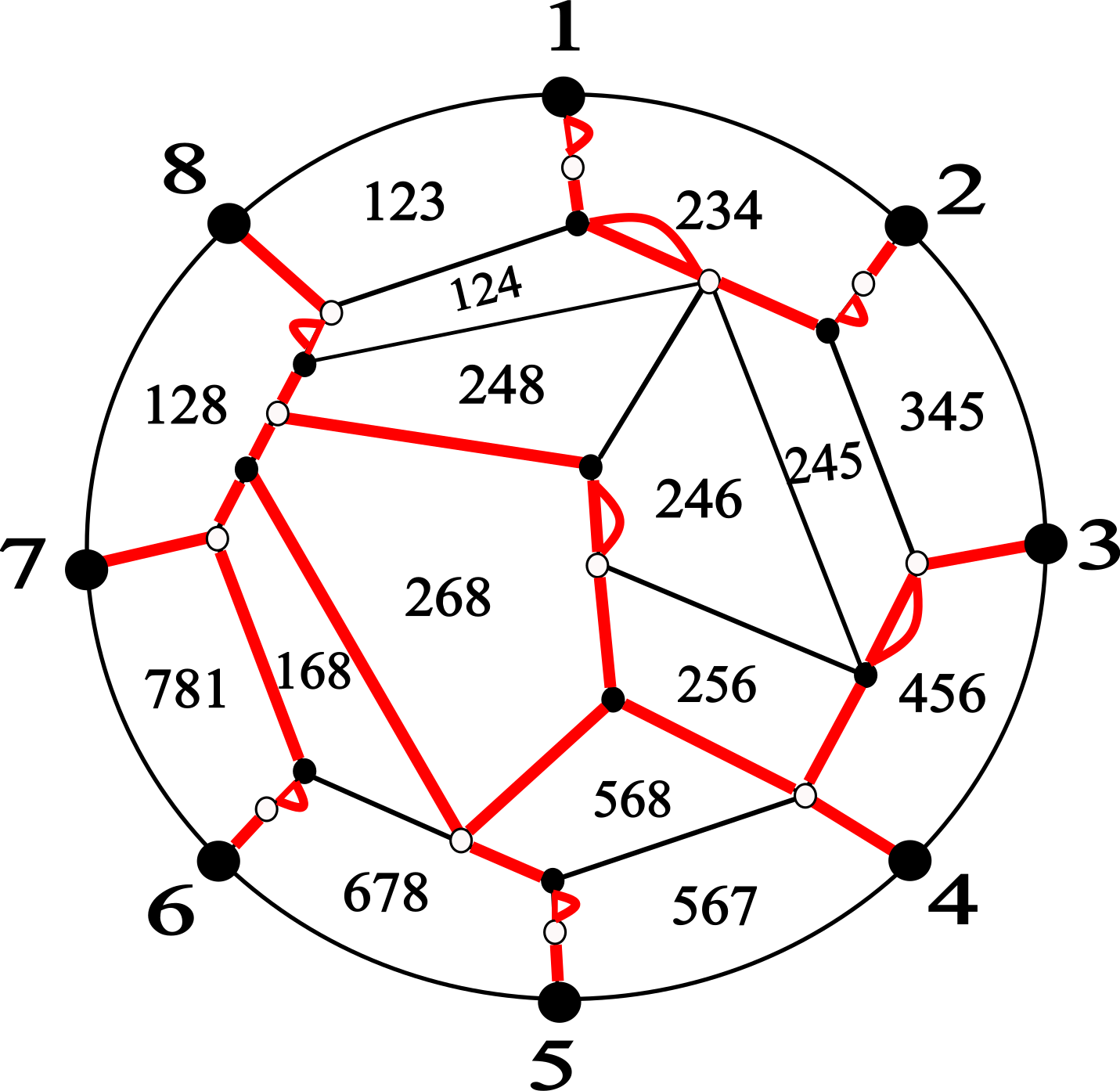}} & $\frac{(248)(246)(123)^2(345)(567)(678)(128)}{(124)(268)}$ & $(248)^2(246)(568)(268)(168)(128)(123)(567)$ \\
       \parbox[c]{1em}{
      \includegraphics[width=1.5in]{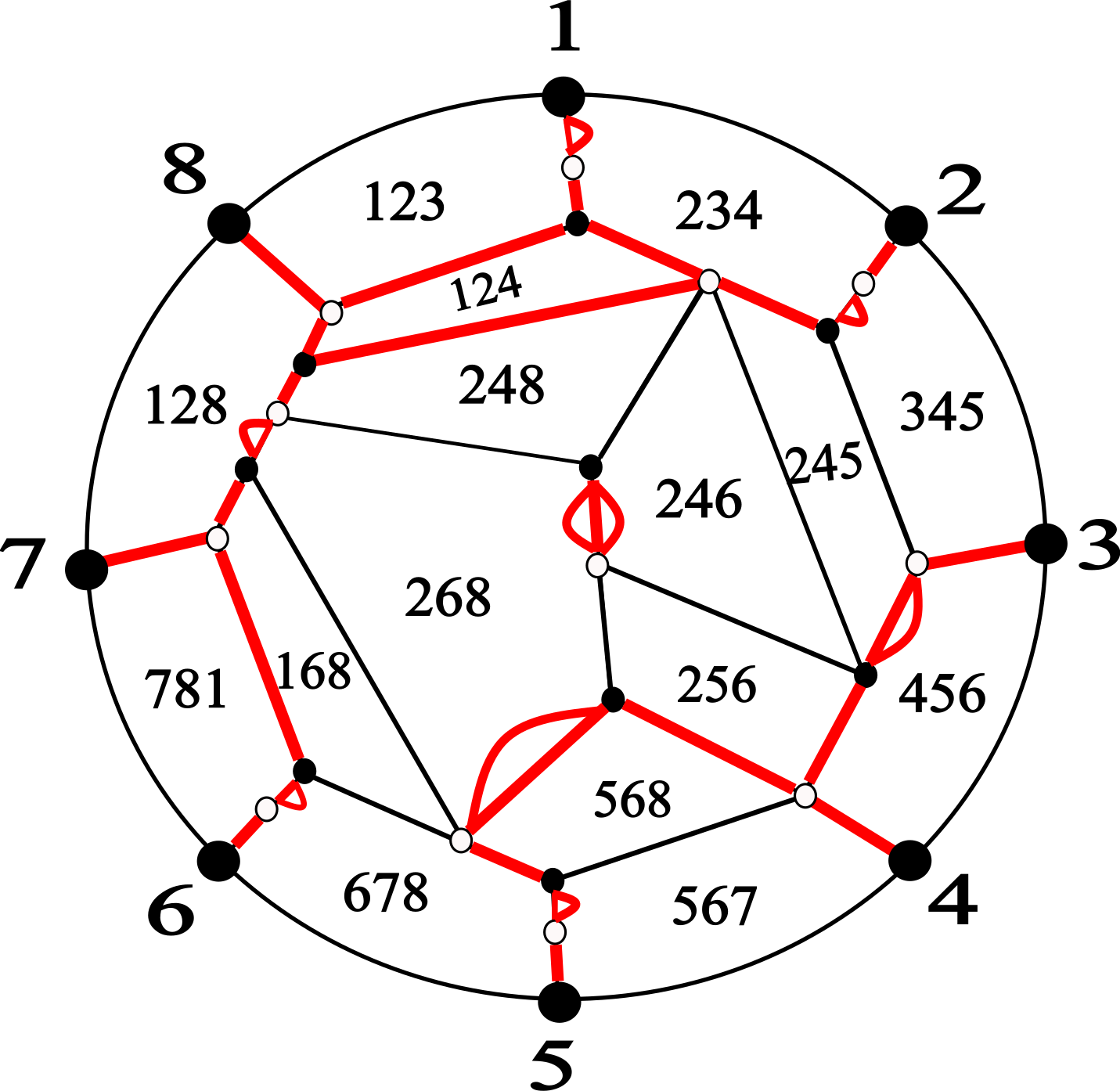}} & $\frac{(248)(168)(256)(123)(234)(345)(567)(678)(128)}{(124)(268)(568)}$ & $(248)^2(256)(268)(168)^2(128)(234)(567)$ \\
       \parbox[c]{1em}{
      \includegraphics[width=1.5in]{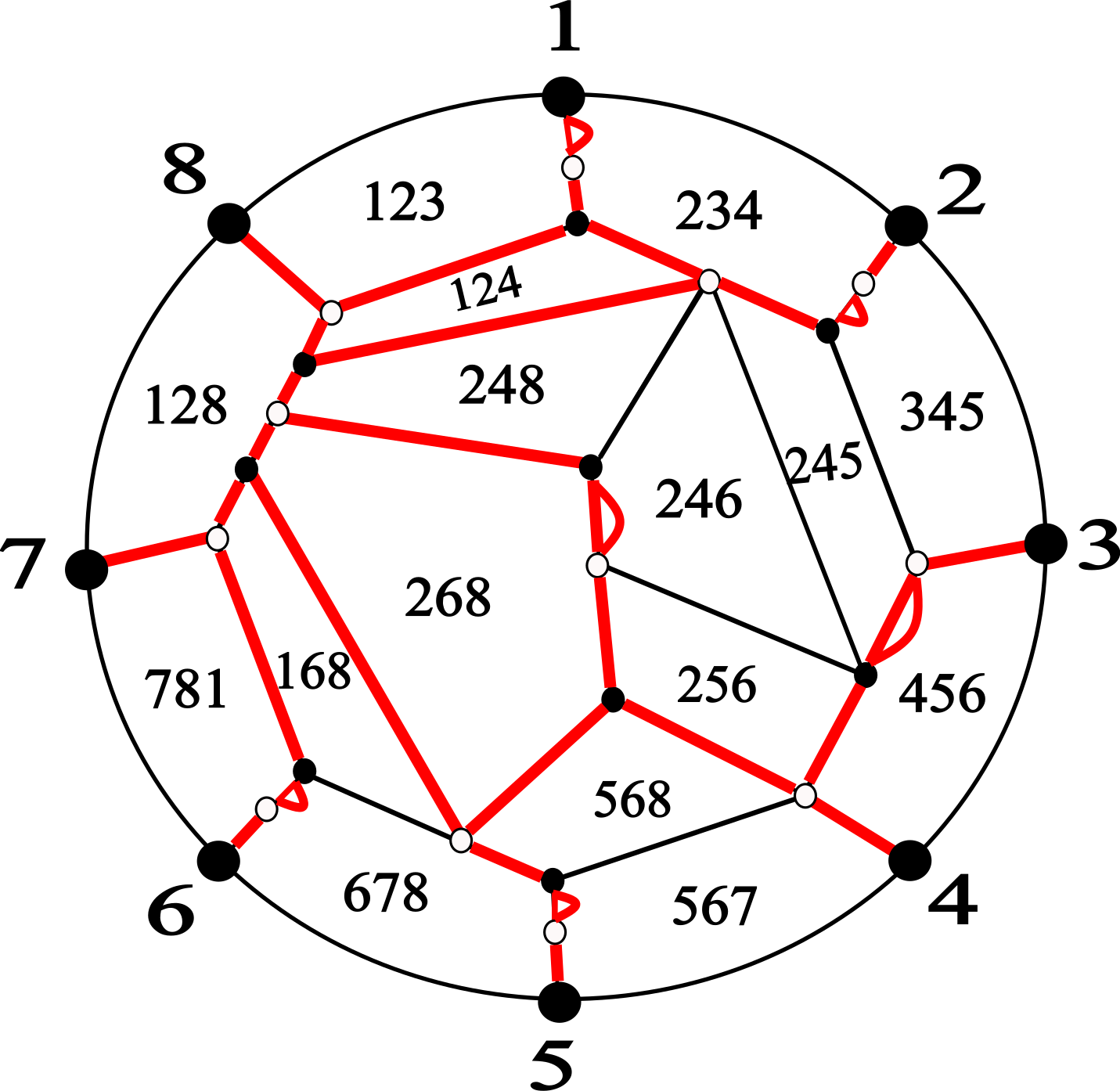}} & $\frac{(246)(123)(234)(345)(567)(678)(128)^2}{(124)(268)}$ & $(248)(246)(568)(268)(168)(128)^2(234)(567)$\\
       \parbox[c]{1em}{
      \includegraphics[width=1.5in]{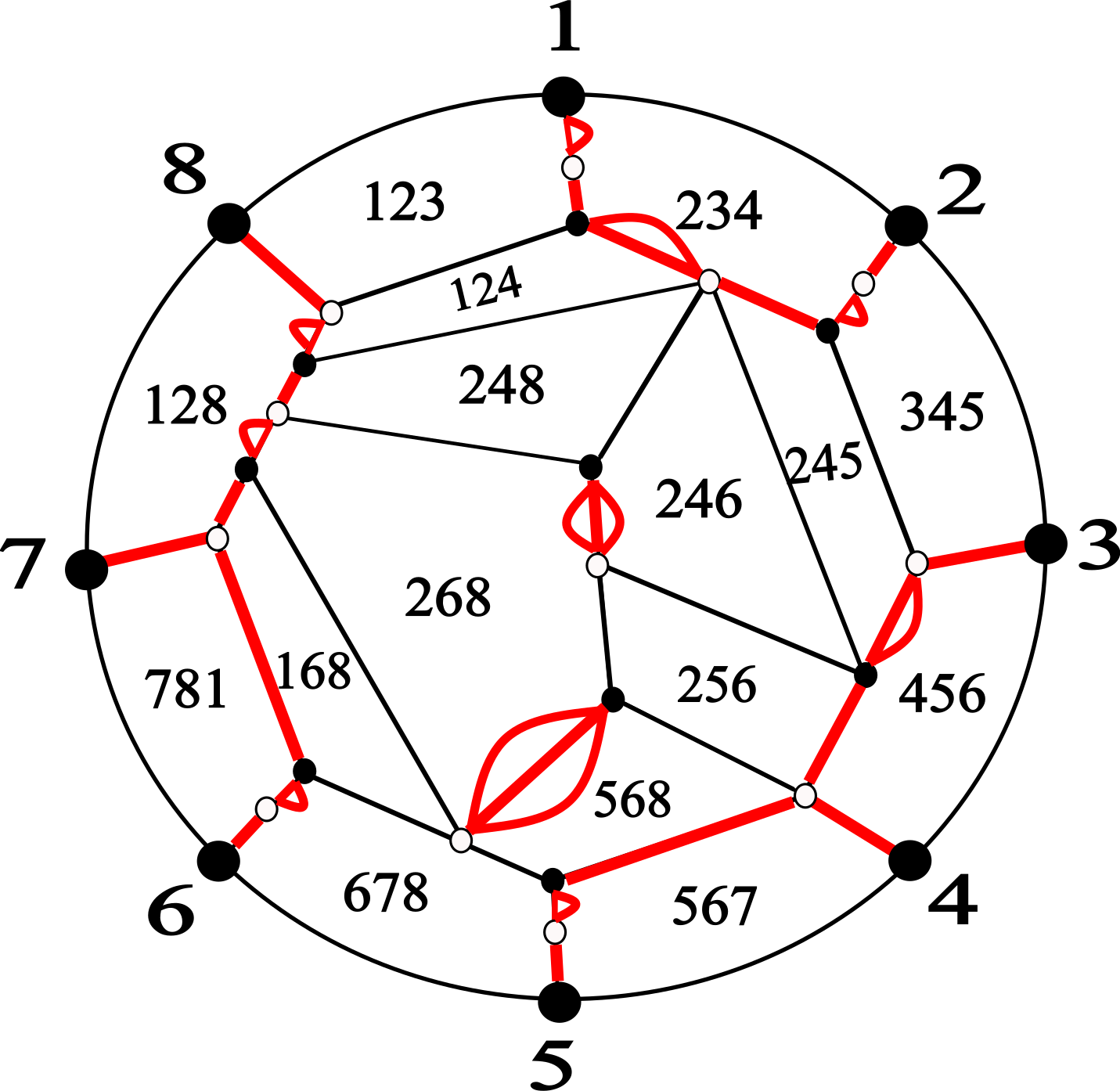}} & $\frac{(248)^2(168)(256)^2(123)^3(345)(678)^2}{(124)(268)^2(568)}$ & $(248)^3(256)^2(168)^2(123)(678)$ \\
      \parbox[c]{1em}{
      \includegraphics[width=1.5in]{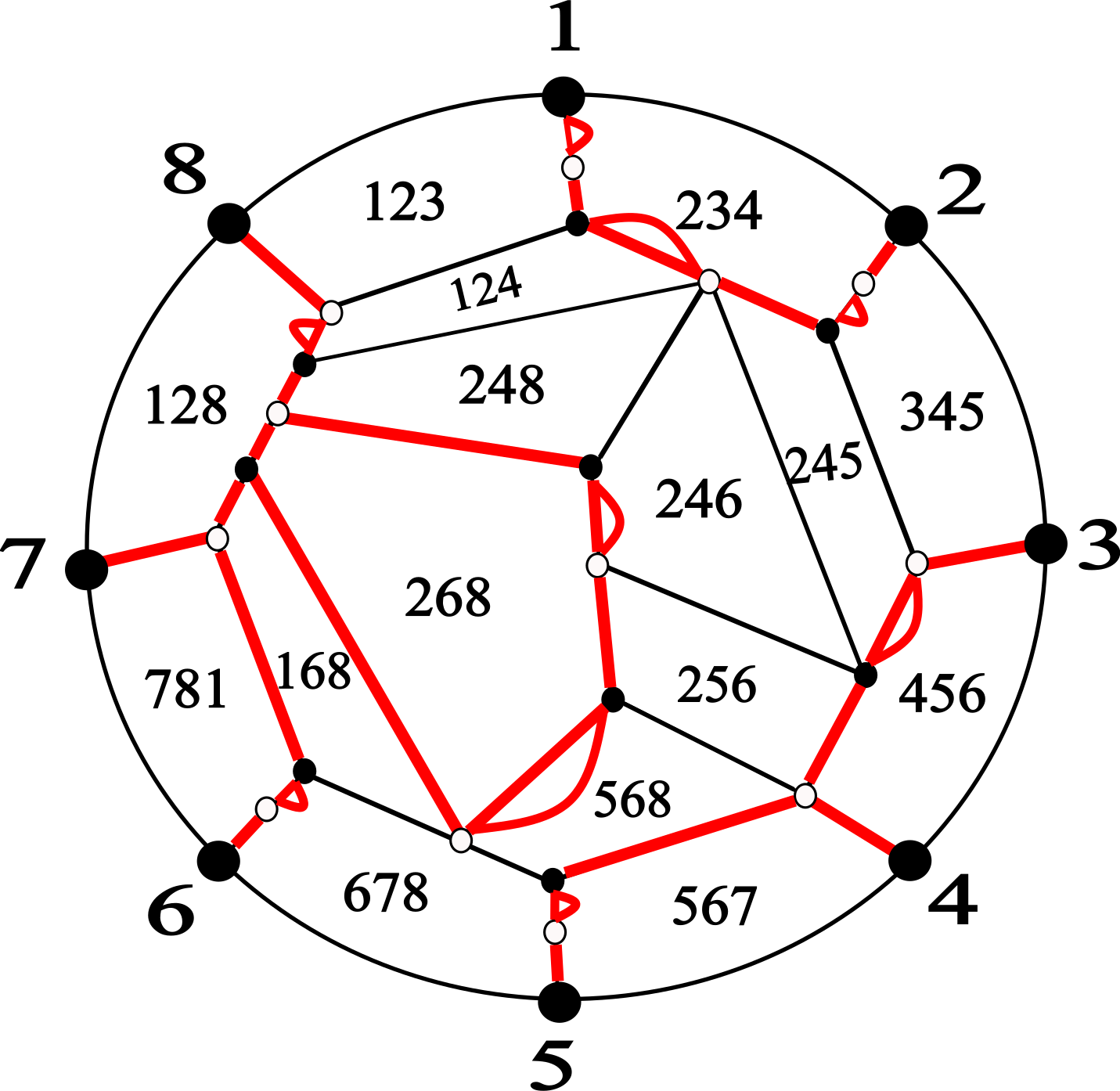}} & $2 \cdot \frac{(248)(246)(256)(123)^2(345)(678)^2(128)}{(124)(268)^2}$ & $2 \cdot (248)^2(246)(256)(568)(168)(128)(123)(678)$ \\
  \end{tabular}
\end{table}

\begin{table}[htbp] 
\caption{Triple Dimers for $\T(\sigma^7(B))$ Continued.} \label{tab:dimersforBcont}
  \begin{tabular}
      {lcc} \hline Triple Dimer Configuration & Weight of Dimer & Term in Laurent Expansion\\
      \hline
       \parbox[c]{1em}{
      \includegraphics[width=1.5in]{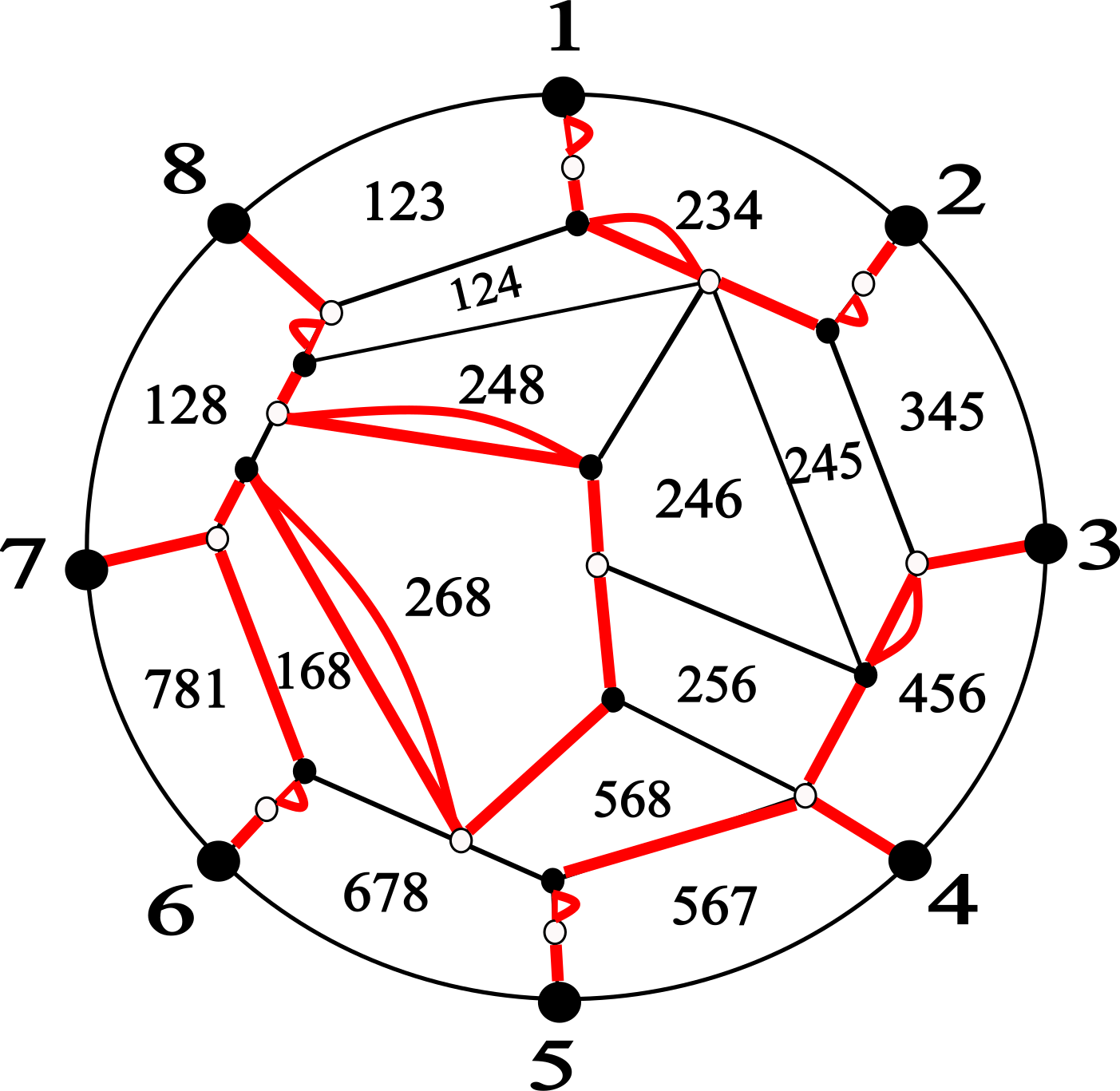}} & $\frac{(246)^2(568)(123)^2(345)(678)^2(128)^2}{(124)(268)^2(168)}$ & $(248)(246)^2(568)^2(128)^2(123)(678)$\\
       \parbox[c]{1em}{
      \includegraphics[width=1.5in]{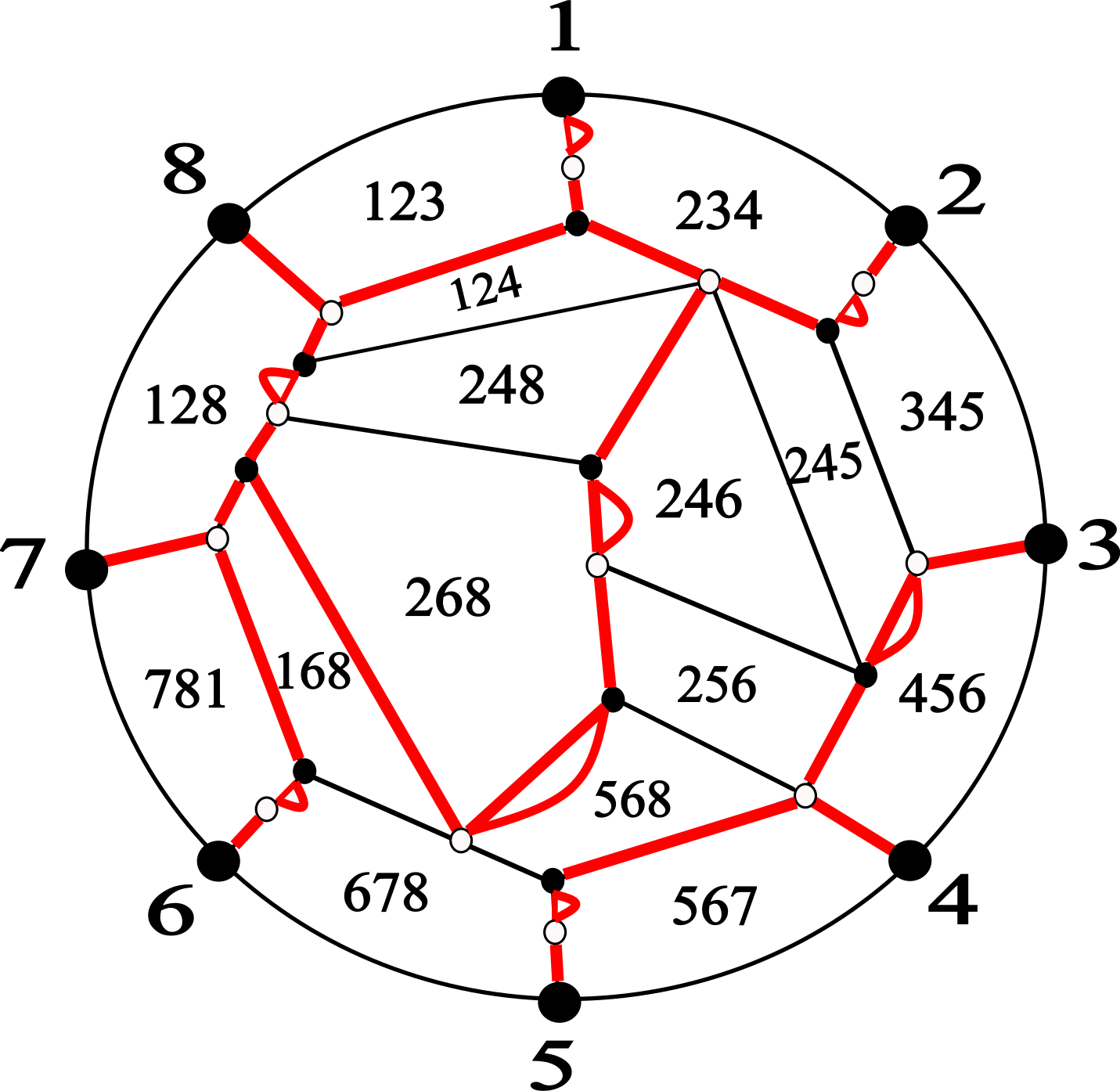}} & $\frac{(256)(123)(234)(345)(678)^2(128)}{(268)}$ & $(248)(124)(256)(568)(268)(168)(128)(234)(678)$ \\
       \parbox[c]{1em}{
      \includegraphics[width=1.5in]{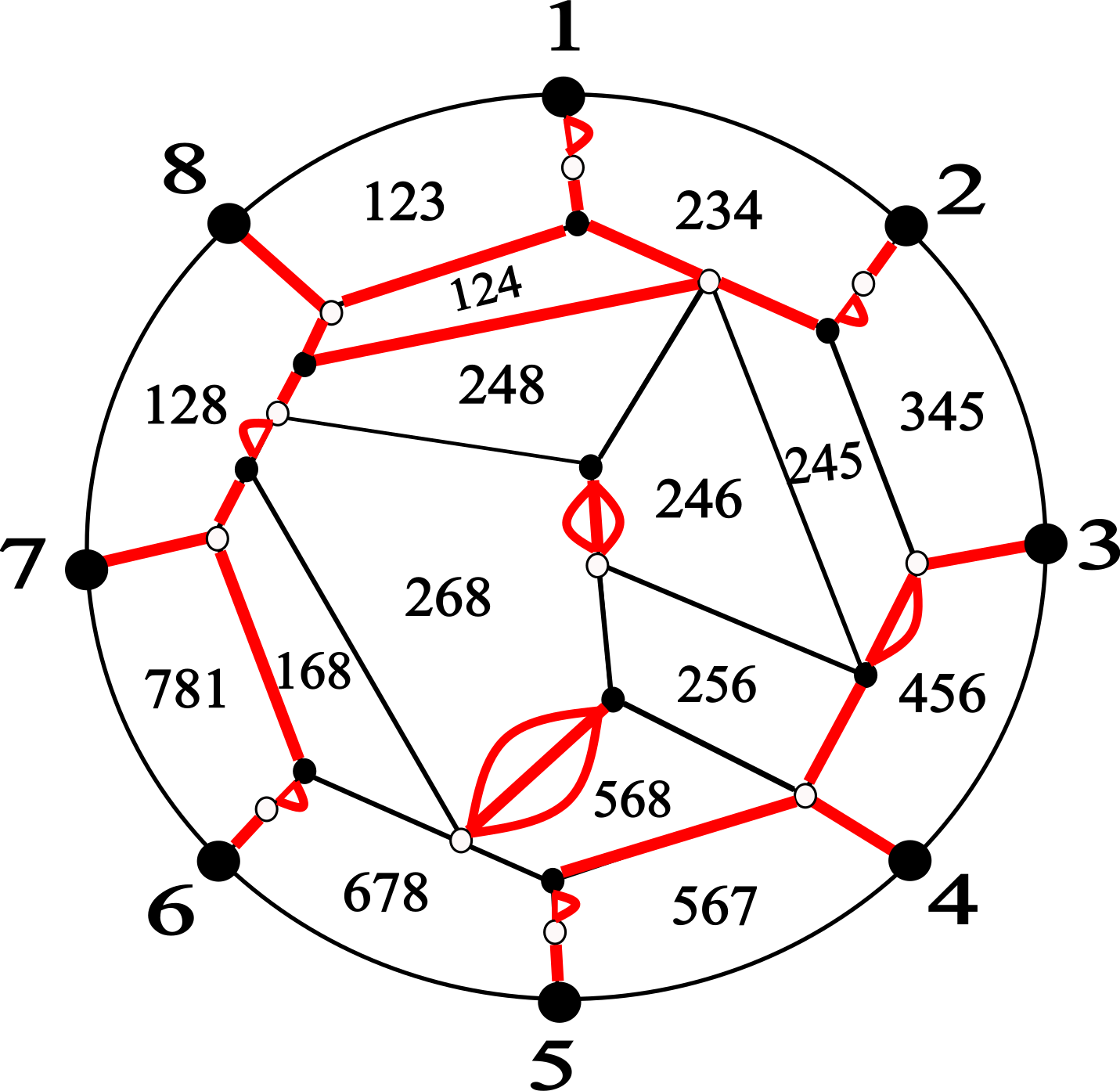}} & $\frac{9248)(168)(256)^2(123)(234)(345)(678)^2}{(124)(268)^2(568)}$ & $(248)^2(256)^2(168)^2(128)(234)(678)$ \\
       \parbox[c]{1em}{
      \includegraphics[width=1.5in]{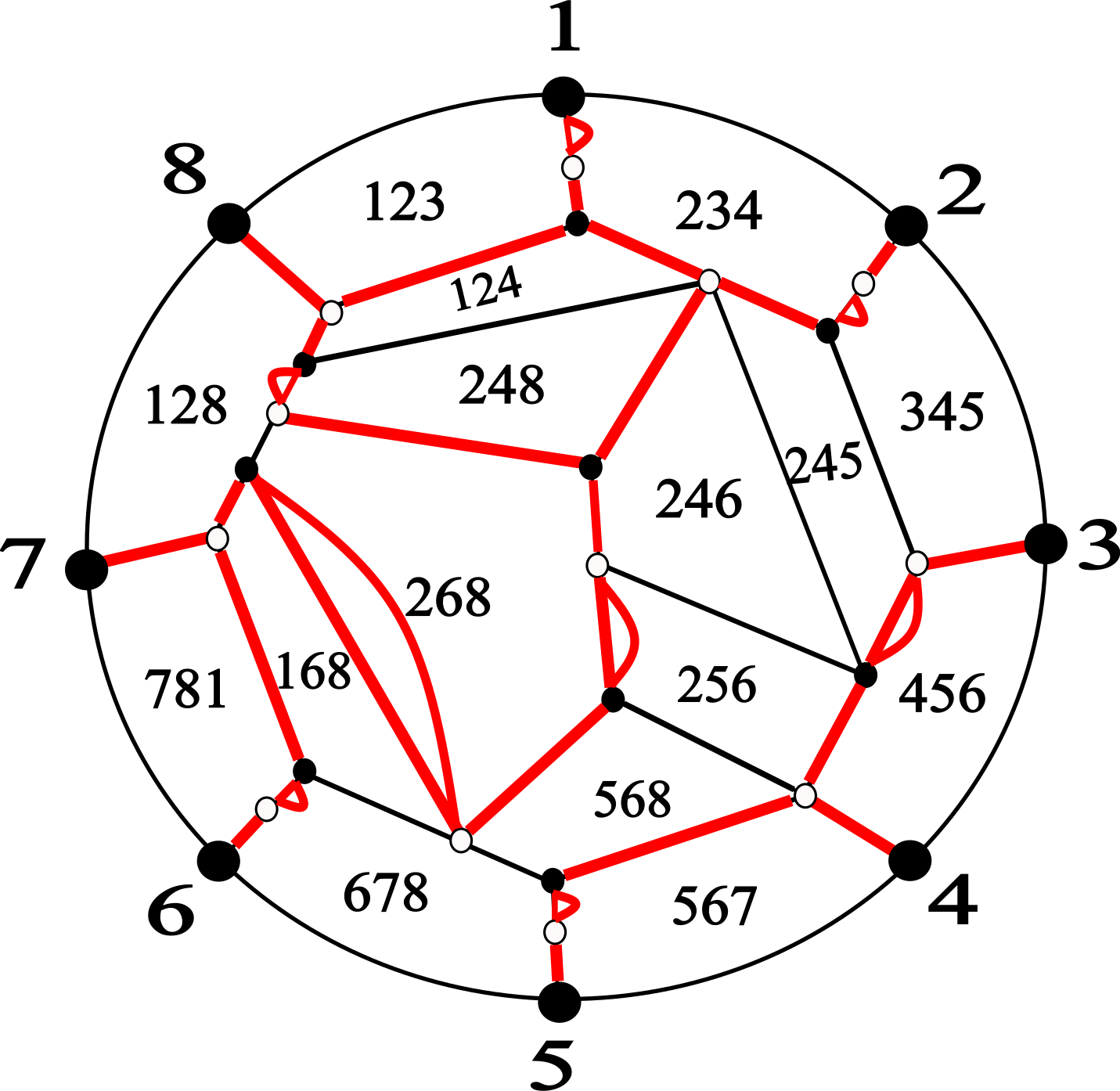}} & $\frac{(246)(568)(123)(234)(345)(678)^2(128)^2}{(248)(268)(168)}$ & $(124)(246)(568)^2(268)(128)^2(234)(678)$\\
       \parbox[c]{1em}{
      \includegraphics[width=1.5in]{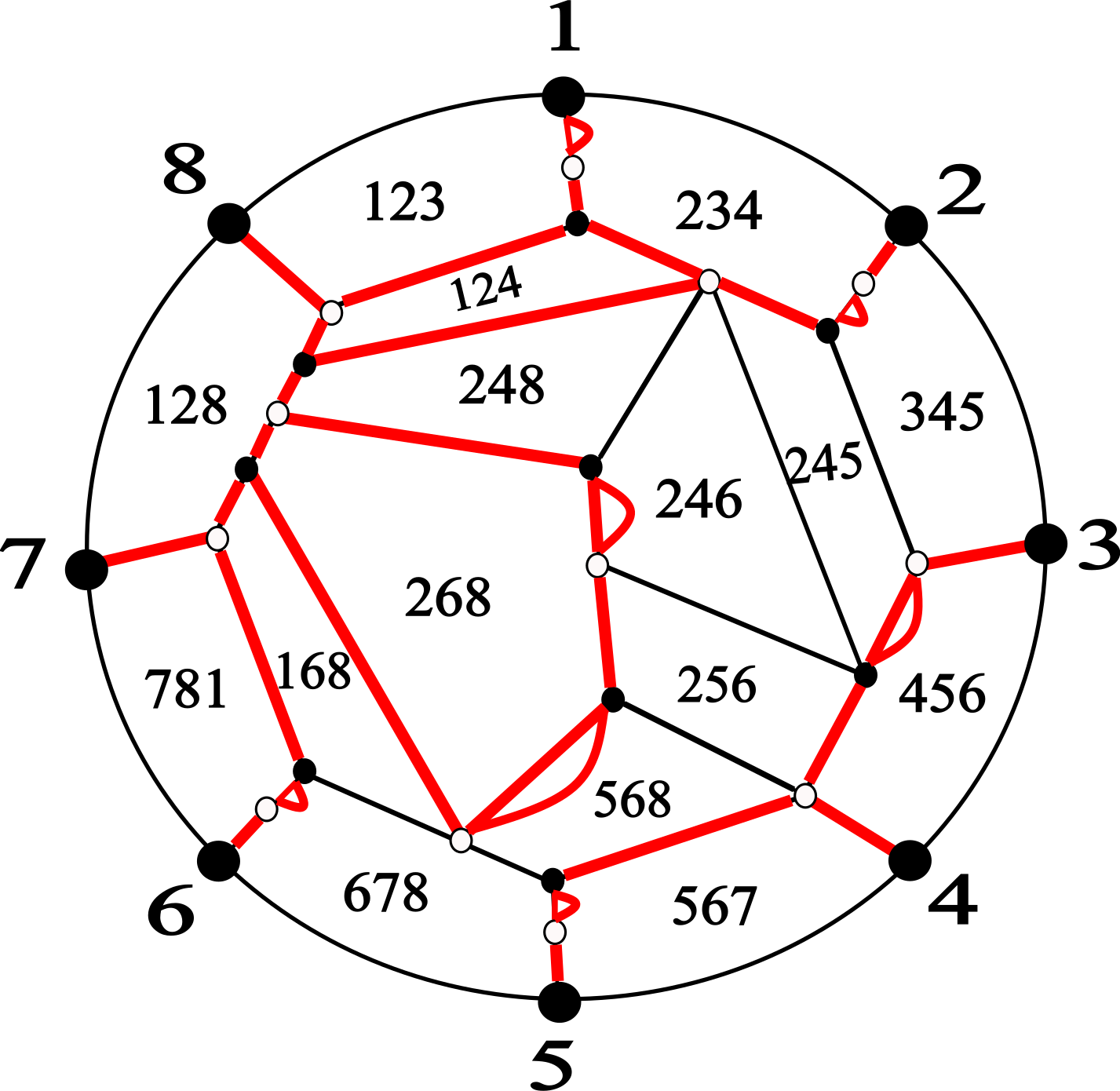}} & $2 \cdot \frac{(246)(256)(123)(234)(345)(678)^2(128)^2}{(124)(268)^2}$ & $2 \cdot (248)(246)(256)(568)(168)(128)^2(234)(678))$ \\
      \parbox[c]{1em}{
      \includegraphics[width=1.5in]{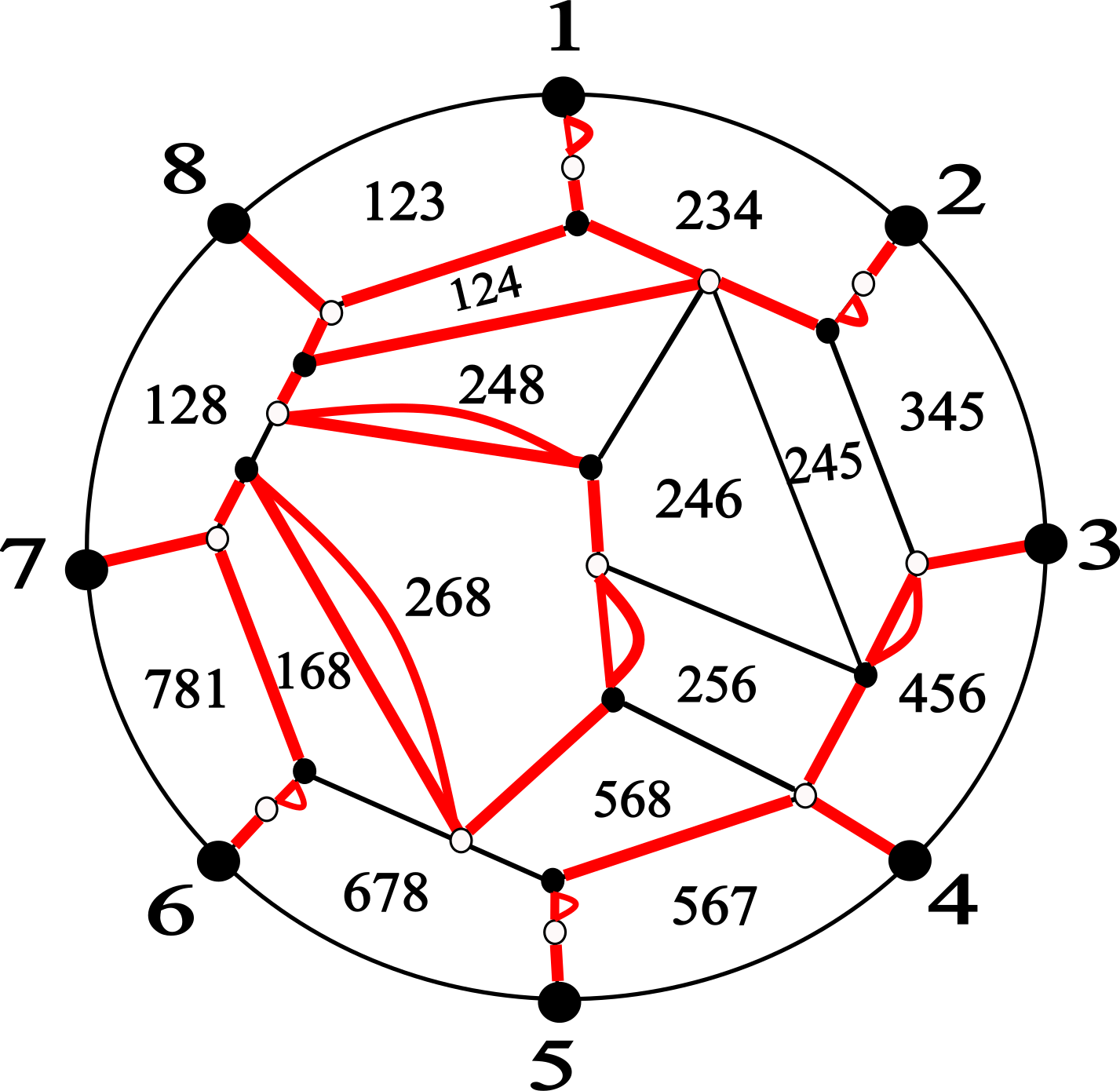}} & $\frac{(246)^2(568)(123)(234)(345)(678)^2(128)^3}{(124)(248)(268)^2(168)}$ & $(246)^2(568)^2(128)^3(234)(678)$ \\
  \end{tabular}
\end{table}

\begin{table}
  [htbp] \caption{Triple Dimers for $\T(\sigma^7(B))$ Continued.} \label{tab:dimersforBcontcont}
  \begin{tabular}
      {lcc} \hline Triple Dimer Configuration & Weight of Dimer & Term in Laurent Expansion\\
      \hline
       \parbox[c]{1em}{
      \includegraphics[width=1.5in]{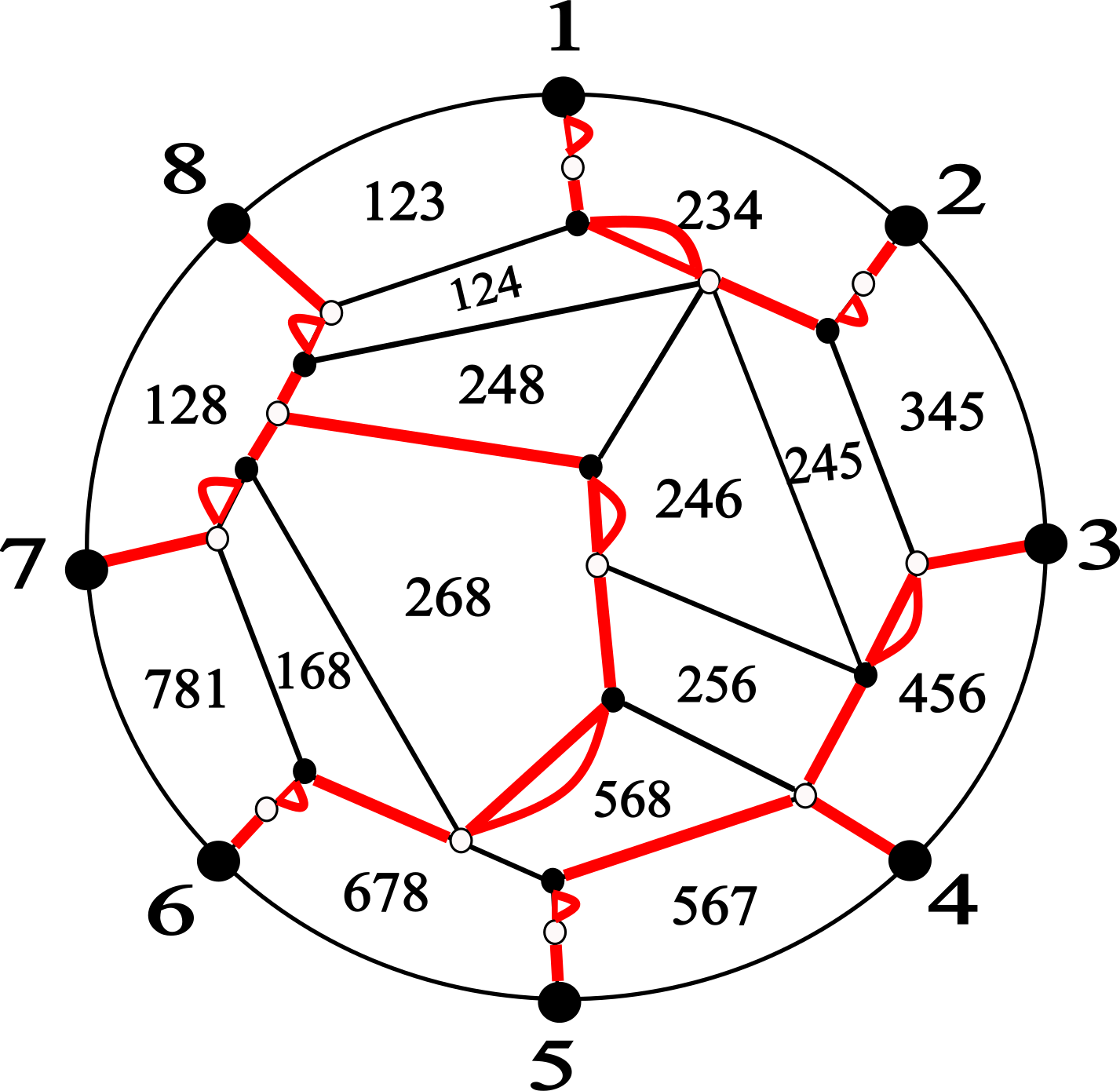}} & $\frac{(248)(246)(256)(123)^2(345)(678)(178)}{(124)(268)}$ & $(248)^2(246)(256)(568)(268)(168)(123)(178)$\\
       \parbox[c]{1em}{
      \includegraphics[width=1.5in]{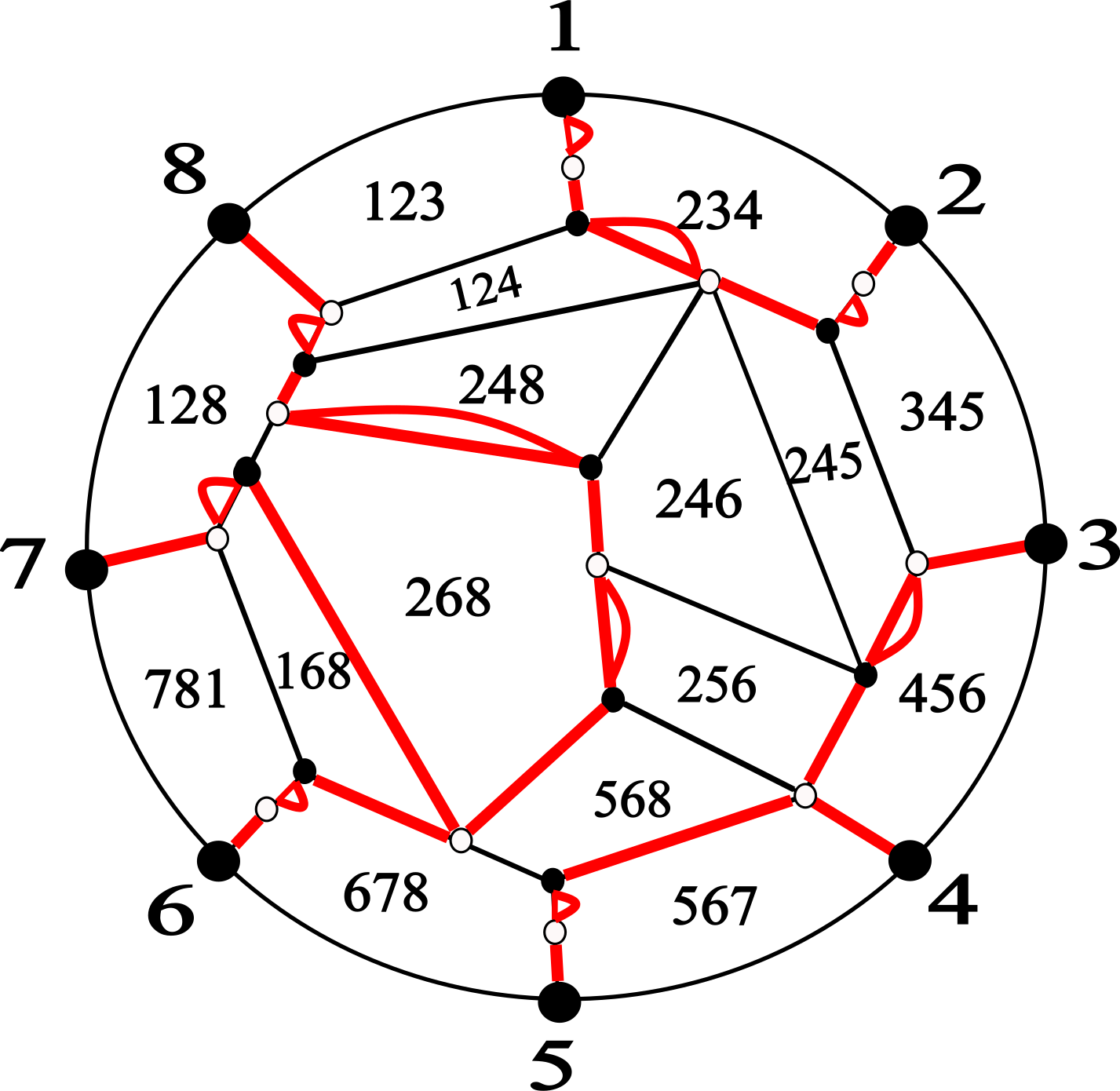}} & $\frac{(246)^2(568)(123)^2(345)(678)(178)(128)}{(124)(268)(168)}$ & $(248)(246)^2(568)^2(268)(128)(123)(178)$ \\
       \parbox[c]{1em}{
      \includegraphics[width=1.5in]{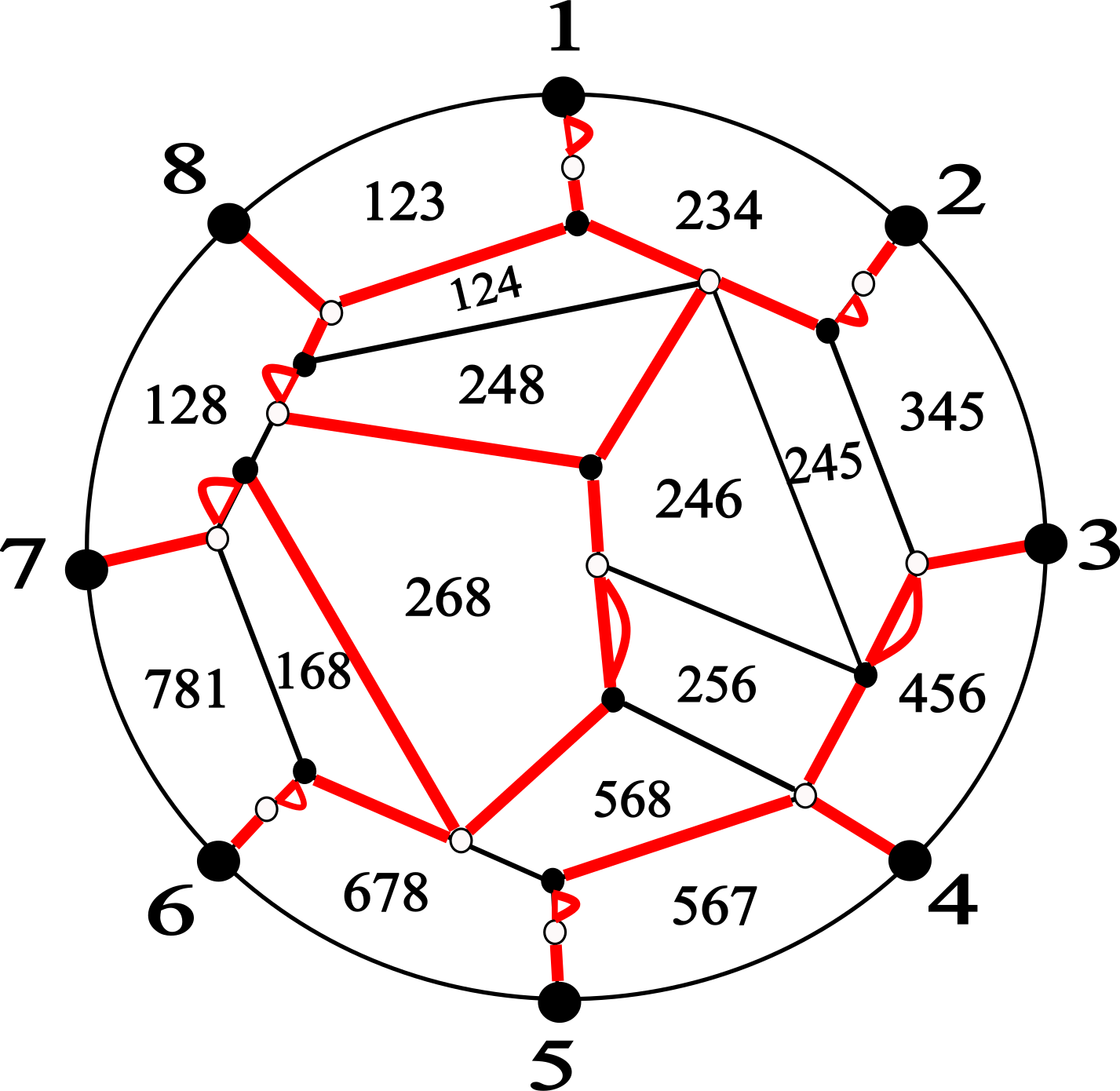}} & $\frac{(246)(568)(123)(234)(345)(678)(178)(128)}{(248)(168)}$ & $(124)(246)(568)^2(268)^2(128)(234)(178)$ \\
       \parbox[c]{1em}{
      \includegraphics[width=1.5in]{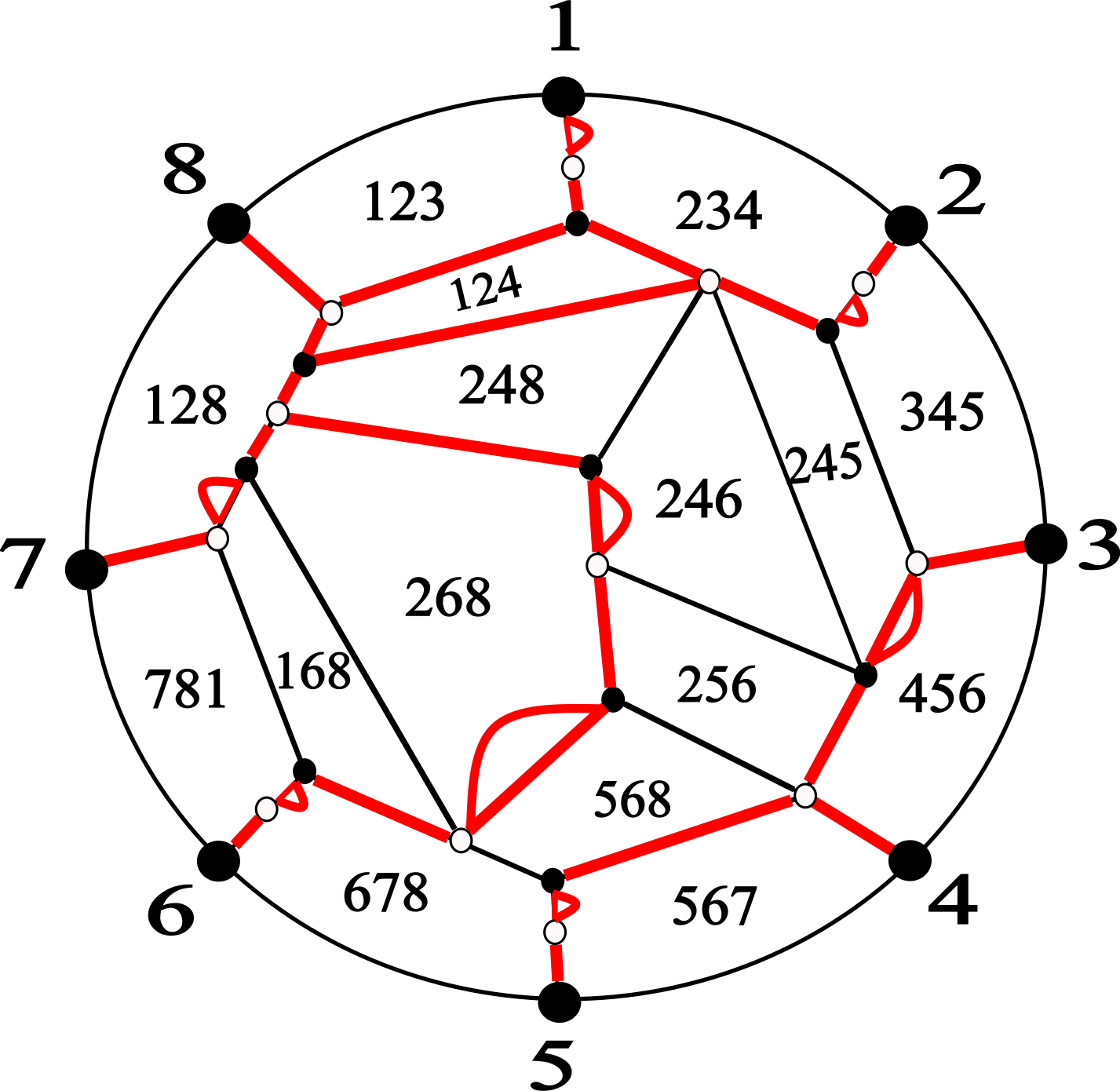}} & $\frac{(246)(256)(123)(234)(245)(678)(178)(128)}{(124)(268)}$ & $(248)(246)(256)(568)(268)(168)(128)(234)(178)$\\
       \parbox[c]{1em}{
      \includegraphics[width=1.5in]{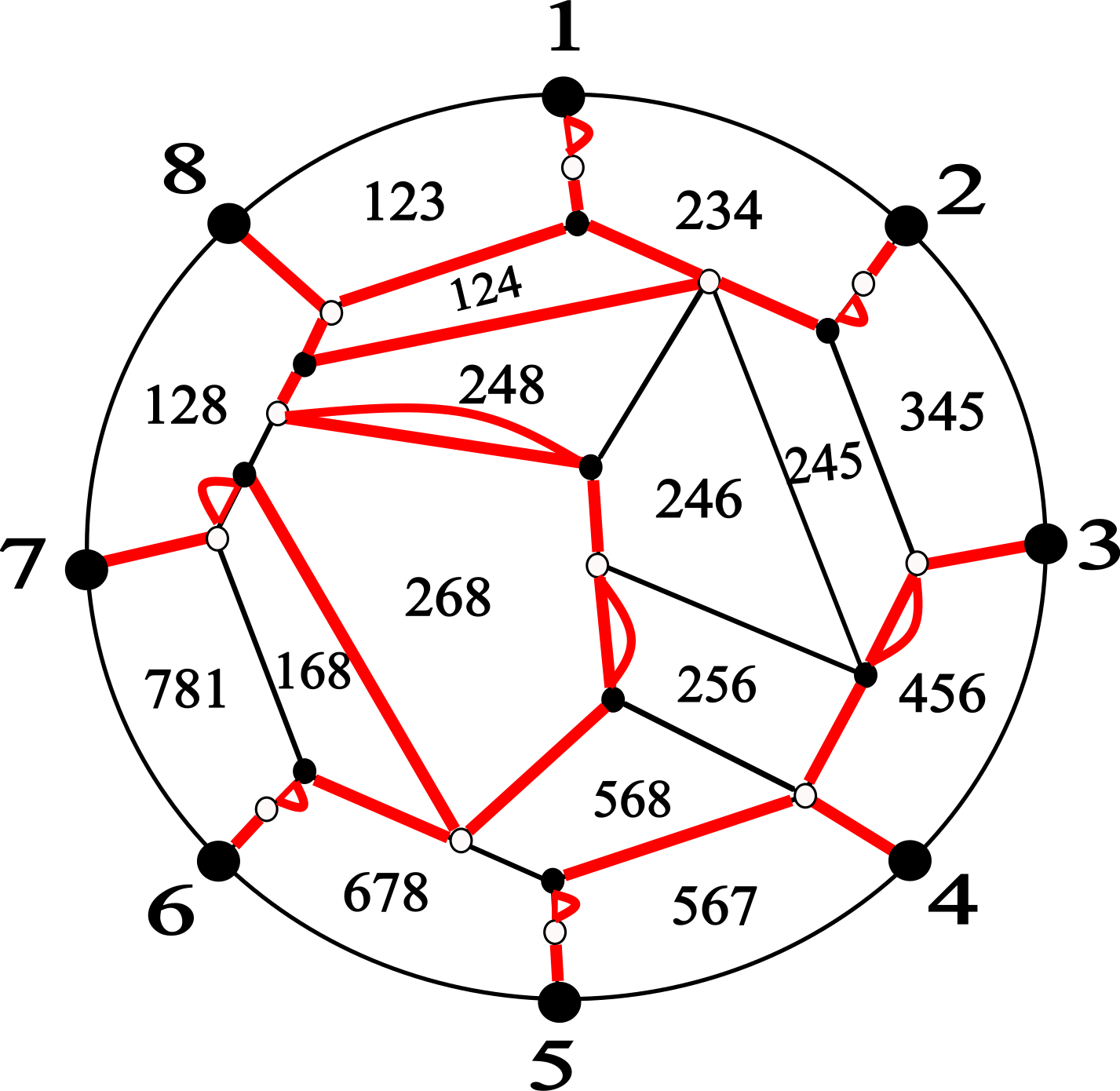}} & $\frac{(246)(568)(123)(234)(345)(678)(178)(128)^2}{(124)(248)(268)(168)}$ & $(246)^2(568)^2(268)(128)^2(234)(178)$ \\
  \end{tabular}
\end{table}
\end{section}

\newpage 

\FloatBarrier
\bibliographystyle{alpha}
\bibliography{main}
\end{document}